\definecolor{rouge}{rgb}{0.7,0.00,0.00}
\definecolor{vert}{rgb}{0.00,0.5,0.00}
\definecolor{bleu}{rgb}{0.00,0.00,0.8}
\newtheorem{theorem}{Theorem}[section]
\newtheorem*{theorem*}{Theorem}
\newtheorem{lemma}[theorem]{Lemma}
\newtheorem{corollary}[theorem]{Corollary}
\newtheorem{proposition}[theorem]{Proposition}
\newtheorem{condition}{Condition}
\newtheorem{conditionA}{A\kern-0.1mm}
\newtheorem{conditionEmpty}{\kern-0.1mm}
\renewcommand\dots{\hbox to 1em{.\hss.\hss.}}
\newenvironment{conditionAstar}[1]
{
	\begin{conditionEmpty}}
	{\end{conditionEmpty}}  
\theoremstyle{definition}
\newtheorem{remark}[theorem]{Remark}
\numberwithin{equation}{section}
\newcommand*{\abs}[1]{\left\lvert#1\right\rvert}
\newcommand*{\norm}[1]{\left\lVert#1\right\rVert}
\def\bb#1{\mathbb{#1}}
\def\bf#1{\mathbf{#1}}
\def\scr#1{\mathscr{#1}}
\def\wt{\widetilde}
\def\wh{\widehat}
\def\ds#1{\mathds{#1}}
\newcommand\ee{\varepsilon}
\DeclarePairedDelimiter\floor{\lfloor}{\rfloor}
\def\Rd {\mathbb{R}^d}
\def\Rd*{(\mathbb{R}^d)^*}
\def\Pd{{\mathbb{P}}^{d-1}}
\def\Pd*{(\mathbb{P}^{d-1})^*}
\begin{document}

\title[Seneta-Heyde scaling for matrix branching random walks]  
{Spinal decomposition, martingale convergence and the Seneta-Heyde scaling for matrix branching random walks}

\author{Ion Grama}
\author{Sebastian Mentemeier}\thanks{S.M. was supported by DFG grant ME 4473/2-1.} 
\author{Hui Xiao}

\curraddr[Grama, I.]{Univ Bretagne Sud, CNRS UMR 6205, LMBA, Vannes, France}
\email{ion.grama@univ-ubs.fr}

\curraddr[Mentemeier, S.]{Universität Hildesheim, Institut für Mathematik und Angewandte Informatik, Hildesheim, Germany}
\email{mentemeier@uni-hildesheim.de}

\curraddr[Xiao, H.]{Academy of Mathematics and Systems Science, Chinese Academy of Sciences, Beijing 100190, China}
\email{xiaohui@amss.ac.cn}


\begin{abstract}
We consider a matrix branching random walk on the semi-group of nonnegative matrices, where we are able to derive, under general assumptions, an analogue of Biggins' martingale convergence theorem for the additive martingale $W_n$, a spinal decomposition theorem, convergence of  the derivative martingale $D_n$, and finally, the Seneta-Heyde scaling stating that in the boundary case $c \sqrt{n} W_n \to D_\infty$ a.s., where $D_\infty$ is the limit of the derivative martingale and $c$ is a positive constant.  

As an important tool that is of interest in its own right, we provide explicit duality results for the renewal measure of centered Markov random walks, relating the renewal measure of the process, killed when the random walk component becomes negative, to the renewal measure of the ascending ladder process.
\end{abstract}

\date{\today}
\subjclass[2020]{Primary 60J80; 
	Secondary 60B15, 
	60J05, 
	60K15, 
	60G42, 
	}
\keywords{Branching random walks; Derivative martingale; Products of random matrices, Seneta-Heyde scaling; Markov renewal theory; Duality}

\maketitle

\tableofcontents

\section{Introduction}
To introduce a matrix branching random walk which is the object of our study, 
let us first recall the definition of a branching random walk on $\bb R^d$. 
The process begins with a single particle at the origin at time $n=0$, 
which at time $1$ produces a random number of offspring, that is displaced from their parent's position. 
The probability law of the displacements as well as the number of children is fully determined by a point process on $\bb R^d$. 
The children of the initial particle form the first generation of the branching random walk, 
which subsequently reproduce independently according to the same law given by the point process. 
Each subsequent generation is obtained according to the same rule. 

In a matrix branching random walk, the additive group $\bb R^d$ is replaced by a multiplicative (semi-)group of $d \times d$ matrices ($d \ge 2$), hence the \emph{positions} of the particles become matrices, with the initial value being the identity matrix.
 Here, we consider a matrix branching random walk on the semigroup of nonnegative matrices, and its action on nonnegative vectors.  
This is a very versatile model: on one hand, it can be seen as the natural extension of branching random walk from additive abelian groups to multiplicative, noncommutative (semi-)groups. On the other hand, when considering the action of the matrix branching random walk on vectors, this generalizes (additive) branching random walk in $\bb R^d$ in the sense that the displacement is no longer additive and independent of the parent's position, but given by multiplying the parent's position with a random matrix, thereby also introducing a Markovian dependence. The latter relates the model also to multitype branching process with continuous state space, as treated for instance in \cite{BK04}.

In recent years, there has been significant interest in studying branching random walks 
and Brownian motions on $\bb R^d$, 
particularly their extremal behavior. For an overview, we refer for example to \cite{BG23,BCMZ24,KLZ23,Mal15,SBM21}.
The extremal position for a branching matrix random walk was considered in the authors' paper \cite{GMX22}. 
In the present paper, we shall complement \cite{GMX22} by obtaining new results.
 
 A fundamental object in the study of branching random walks is the additive martingale, for which we are going to prove a full analogue of Biggins' martingale convergence theorem \cite{Big77, IKM20, Lyo97}, in the sense that we obtain explicit equivalent conditions for the mean convergence of the additive martingale. Previous works in this direction \cite{BGL20, Men16} have only provided necessary or sufficient conditions separately. As a well-established tool for proving this and further results, we prove a spinal decomposition theorem (cf. \cite[Section 4.4]{Shi12} and references therein). Our main objective is to study martingale convergence in the so-called boundary case, where the mean drift of particles is zero. Then the limit of the additive martingale vanishes almost surely, and we introduce for our setting the analogue of the so-called derivative martingale \cite{BK04,Kyp98}. Firstly, we show its convergence to a positive limit, conditioned on the systems survival. Secondly, we prove a Seneta-Heyde scaling \cite{AS14, BK97} for the additive martingale in the boundary case, which upon multiplication by $\sqrt{n}$ will converge to the limit of the derivative martingale. Here, we will follow the recent strategy developed in \cite{BM19}. In our setting, this amounts to the study of renewal theory for a {\em centered} Markov random walk driven by products of random matrices. We prove an explicit duality relation, relating the renewal measure of the process killed when becoming negative to the renewal measure of the ascending ladder process; with explicit bounds for the Markov renewal function.


\textit{Related works and history of the problem.}
For general information about branching random walks (on the real line), we refer to the textbook \cite{Shi12} or the survey article \cite{Zei16}. The study of random walks on matrix (semi-)groups goes back to \cite{FK60}, see the recent book \cite{BQ16b} for a survey of the topic. Matrix branching random walks have been studied previously in 
\cite{BGL20,BDGM14,GMX22, IM15, KM15,Men16}. 

The {\em additive martingale} with a parameter, evaluated at time $n$, can be seen as the Laplace transform of the intensity measure of the point process generated by the branching random walk in generation $n$ and therefore -- if it exists on an open parameter set -- encodes all information about the branching random walk. It figures prominently e.g. when studying the relative frequencies of particles that are at a distance from the origin that grows at a fixed linear rate in the number of the generation, see \cite{Big79}. A corresponding result for matrix branching random walk is in \cite{BGL20}. The leftmost particle moves with a linear speed an logarithmic correction term, see \cite{AR09,Hu16,HS09} and it converges after centering to a limiting law that is a Gumbel distribution with a random shift  given by the logarithm of the limit of the derivative martingale \cite{Aid13}. Therefore, the {\em derivative martingale} figures in the description of the extremal behaviour of the branching random walk and this motivates our study of the derivative martingale. The weak law of large numbers for the minimum of a matrix branching random walk is studied in \cite{GMX22}; a first convergence result for the derivative martingale in a matrix branching random walk was obtained in \cite{KM15}, under restrictive assumptions. The maximal displacement of a spherically symmetric branching random walk in $\bb R^d$ is studied in \cite{BG23}.

In the context of branching Brownian motion on $\bb R^d$, \cite{SBM21} consider the fluctuations of the minimum in a given direction and therefore introduce a version of the derivative martingale with a parameter on the sphere. Their objective is to study the preferred direction when conditioning on the initial behaviour of the branching Brownian motion up to a small time -- for the initial problem is spherically symmetric. In our setting of a matrix branching random walk, the derivative martingale naturally has an initial value from the sphere as a parameter, see Def. \ref{def-derivative-martingale} below.

The {\em spinal decomposition} in multitype branching is constructed in \cite[Section 12]{BK04}, with the spine being called trunk there. The use of spinal decomposition goes back to \cite{Lyo97}. 

{\em Renewal theory for Markov random walks driven by products of random matrices} was pioneered in \cite{Kes73} in the case with drift. The main obstacle in deriving results for centered processes is the lack of a simple duality lemma, due to the non-commutative structure. This is why so far, no explicit results for centered processes exist. There are only abstract versions  of Wiener-Hopf factorization for Markov random walks, see \cite{AS73} or results for finite state space Markov random walks \cite{Asm03}.

\section{Notation and conditions}\label{Sec-notation-condi}

\subsection{Basic notation}
Let $\bb R_{+} := [0, \infty)$ and denote by  $\bb N=\{0,1,2,\ldots\}$ the set of non-negative integers.
For $t \in \bb R$, denote $t^+ = \max \{ t, 0 \}$ and $\log^+ t = \max \{ \log t,  0\}$. 
Let $d\geq 1$ be an integer 
and let $\bb R^d$  be the  $d$-dimensional Euclidean space  
equipped with the scalar product $\langle x, y \rangle :=\sum_{i=1}^d x_i y_i$ for $x = (x_1, \ldots, x_d)$ and $y = (y_1, \ldots, y_d)$. 
Denote by $\bb R^d_+$ the positive quadrant of $\bb R^d$. 
We fix an orthonormal basis $(e_i)_{1 \leq i \leq d}$ of $\bb R^d$ 
and define the associated norm by $\|v\| := \sum_{i=1}^d |\langle v, e_i\rangle |$ for $v \in \bb R^d$. 
A $d \times d$ matrix $g$ with non-negative entries is said to be \emph{allowable},
if every row and every column of $g$ has a strictly positive entry. 
Let $\bb M$ be the semigroup of $d\times d$  matrices with non-negative entries which are allowable. 
Further let $\bb M_+$ be the subsemigroup of $\bb M$ with strictly positive entries.
The action of $g \in \bb M$ on a vector $v \in \bb R_+^d$ is denoted by $gv$.
For any $g \in \bb M$, let $\| g \| := \sup_{v \in \bb R^d_+ \setminus \{0\} } \frac{\| g v \|}{\|v\|}$
and $\iota(g) : = \inf_{v \in \bb R^d_+ \setminus \{0\} } \frac{\| g v \|}{\|v\|}$.

The unit sphere in $\bb R^d$ is denoted by $\bb S^{d-1}:=\{ v \in \bb R^d : \| v \| =1 \}.$ 
Let  $\bb S_+^{d-1}$ be the intersection of the unit sphere with the positive quadrant: $\bb S_+^{d-1} := \bb S^{d-1} \cap \bb R_+^{d}$.  For any  $g\in \bb M_+$, we can define its action on $x \in \bb S_+^{d-1} $ by $g\cdot x := \frac{g x}{\| g x \|}$. Note that this is well defined by the requirement that $g$ is allowable.
For any  $g \in \bb M$ and $x \in \bb S_+^{d-1}$, let
\begin{align}\label{Def-dot-cocycle}
\sigma \, :\, \bb M \times \bb S_+^{d-1} \to \bb R, \qquad \sigma(g, x) := \log \|gx\|, 
\end{align}
which satisfies the {\em cocycle property}: for  $g_2, g_1 \in \bb M$ and $x \in \bb S_+^{d-1}$, 
\begin{align}\label{eq-cocycle-property}
\sigma(g_2 g_1, x) = \sigma(g_2, g_1 \cdot x) + \sigma(g_1, x).
\end{align}

Throughout the paper, the symbols $c, C, c', C'$ denote positive and finite constants that may vary at each occurrence. 

\subsection{Branching random walk, probability measures and filtrations}
To define a branching random walk on the semigroup $\bb M$, we  
assume that we are given a point process $\mathscr N$ on the allowable matrices in $\bb M$, 
that is a random counting measure on the Borel subsets of the semigroup $\bb M$. 
Let $\bb U:= \bigcup_{n=0}^\infty \bb N^n$ be the infinite tree 
with Ulam-Harris labelling and root $\o$, where by convention $\bb N^0:=\{\o\}$.  
We say that $u \in \bb U$ belongs to generation $n$ (shortly $|u|=n$) iff $u \in \bb N^n$.
Hence, each node of generation $n$ will be identified with an $n$-tuple  $u=(u_1, \ldots, u_n)$, where $u_i\in \bb N$. For notational simplicity, we write $u=u_1\dots u_n$ and further denote by $u|k:=u_1 \dots u_k$ the restriction of $u$ to its first $k$ components. Given $u=u_1 \dots u_n$ and $v=v_1 \dots v_m$, we write $uv=u_1 \dots u_nv_1 \dots v_m$ for the concatenation.

  Let $(\mathscr{N}_u)_{u \in \bb U}$ be a family of independent and identically distributed (i.i.d) copies of $\mathscr{N}$
  indexed by the elements of $\bb U$.
For each $u \in \bb U$, let $N_u:=\mathscr{N}_u(\bb M)$ denote the number of points of $\mathscr{N}_u$, which we will consider as the number of children of the particle pertaining to  the node $u$.
In particular $N:=N_{\o}$ represents the number of children pertaining to the node $\o$. 
Define $g_{\o}$ as the identity matrix. 
Then, upon using the representation 
\begin{align}\label{def-basic-point-process}
\mathscr{N}_u = \sum_{j=1}^{N_u} \delta_{g_{uj}}, 
\end{align}
we obtain an associated family $(g_v)_{v \in \bb U}$ of random matrices. 
From $(\mathscr{N}_u)_{u \in \bb U}$, we construct a random subtree $\bb T \subset \bb U$ as follows: 
let $\bb T_0:=\{\o\}$, $\bb T_1:=\{i \in \bb N \, : \, 1\leq i \leq N_{\o}\}$ and thereupon, we define recursively
$$ \bb T_n := \{ vi \, : \, v \in \bb T_{n-1},\ 1 \leq i \leq N_v\}. $$
Then $\bb T := \bigcup_{n=0}^\infty \bb T_n$. 
For $u, v \in \bb T$, we write $u < v$ if there is $k < |v|$ with $u = v|k$
and $u \nless v$ otherwise. 
For any $u \in \bb T \setminus \o$, we write $\overset{\leftarrow}{u}$ for its parent, 
and ${\rm brot}(u)$ for the set of brothers of $u$, i.e., $v \in {\rm brot}(u)$ means that $v$ and $u$ have the same parent
and $v \neq u$.
Note that the set ${\rm brot}(u)$ can be empty.

 The branching random walk $(G_u)_{u\in \bb T}$  on the semigroup $\bb M$ 
 is defined then by taking the left product of the random matrices  
along the branch leading from $\o$ to $u \in \bb T$:  
\begin{align}\label{def-product-Gu}
G_u := g_u g_{u|n-1}\ldots g_{u|2} g_{u|1}, \ u\in \bb T.
\end{align} 
Note that these factors are independent, but not necessarily identically distributed.

The family $(\mathscr{N}_u)_{u \in \bb U}$ can be redefined on a canonical probability space. 
Write $\mathscr{R}:=\mathscr{R}(\bb M)$ for the set 
of Radon measures on $\bb M$ and note that the  point process $\mathscr{N}$ is a $\mathscr{R}$-valued random variable.
Let $\mathscr{A}$ be the Borel $\sigma$-field on $\mathscr{R}$ and let $\eta$ denote the law of $\mathscr{N}$.
Then, the family of the point process $(\mathscr{N}_u)_{u\in \bb T}$ 
has the same law as the coordinate process on the "branching product" space
$$ 
(\Omega := \mathscr{R}^{\bb U}, \mathscr{F}:= \mathscr B ( \mathscr{R}^{\bb U} ), \bb P:=\eta^{\otimes{\bb U}}),
$$
where $\mathscr{R}^{\bb U}$ is understood as the product space
$\mathscr R \times {\mathscr R}^{\bb N} \times {\mathscr R}^{\bb N^2}\times \cdots $
and $ \mathscr B ( \mathscr{R}^{\bb U} )$ is the corresponding Borel $\sigma$-algebra. 
In order to define 
the probability $\eta^{\otimes{\bb U}}$,  
one can apply the general form of the Kolmogorov extension theorem for compatible systems of probability measures: 
first, for $k\geq 1$, define the measures $\eta_k= \eta ^{\otimes{\bb N}^k}$,
and then set $\eta^{\otimes{\bb U}}= \eta\otimes \eta_1\otimes  \eta_2\otimes \cdots $.
The expectation corresponding to the probability $\mathbb P$ will be denoted by $\mathbb E.$  

The event corresponding to the system's survival after $n$ generations is denoted by
$\mathscr S_n   = \{ \sum_{|u|=n} 1 \mbox >0 \}.$ 
Then $\mathscr S = \cap_{n = 1}^{\infty} \mathscr S_n$ is the event corresponding to the system's ultimate survival.
We will introduce the assumption $\bb E N>1$ (see \ref{Condi_ms}) which guarantees that $\bb P(\scr S)>0$. This allows to define the probability measure $\bf P=\bb P( \cdot | \scr{S})$.
Below, we say that a sequence of random variables $(\xi_n)_{n\geq 1}$ converges in probability to the random variable $\xi$
under the system's survival if for any $\ee >0$ it holds 
$
\lim_{n\to\infty} \bf P( |  \xi_n - \xi | \geq \ee ) =0. 
$
The convergence almost surely under the system's survival is defined in the same way: 
$
\bf P( \lim_{n\to\infty} \xi_n = \xi ) =1.
$

To consider the action of the branching product of random matrices on vectors in $\bb R^d_+$, we will also need to include information about an initial direction $X_{\o} \in \bb S^{d-1}_+$ and position $S_{\o} \in \bb R$. 
To do so, consider the space $\Omega^{(1)} := \Omega \times \bb S^{d-1}_+ \times \bb R$ with corresponding product Borel $\sigma$-field $\mathscr{F}^{(1)}$ with coordinate maps $X_{\o} : \Omega^{(1)} \to \bb S^{d-1}_+$ and $S_{\o} : \Omega^{(1)} \to \bb R$. 
For each $x \in \bb S^{d-1}_+$ and $b \in \bb R$, define a probability measure $\bb P_{x,b}$ on $\Omega^{(1)}$ by setting 
$ \bb P_{x,b} = \bb P \otimes \delta_{(x,b)}$. 
In particular, $\bb P_{x,b}((X_{\o},S_{\o})=(x,b))=1$. 
In the same way, we define $\bf P_{x,b}:=\bf P \otimes \delta_{(x,b)}$.
Moreover, we will use the convention that $\bb P_{x, 0} = \bb P_x$ and $\bf P_{x,0} = \bf P_x$.

For each node $u\in \bb T$, 
we introduce the following functions of the branching process:  
\begin{align} \label{Def-Xku_Sku}
	X_u = G_u \cdot X_{\o}   \quad \mbox{and} \quad  S_u =  S_{\o} + (- \sigma (G_{u}, X_{\o}) ), 
\end{align} 
where $G_u$ is defined in \eqref{def-product-Gu}. 
There is a natural filtration given by 
\begin{align}\label{def-filtration-Fn}
\scr{F}_n:=\sigma ( X_{\o}, S_{\o}, \{ u, g_u \, : \, u \in \bb T, |u| \leq n\} ).
\end{align}
Hence $\mathscr{F}_\infty := \bigcup_{n=0}^\infty \scr{F}_n$ contains all the information of the matrix branching random walk.

We now introduce the following shift operator which will help us to write conveniently the branching property all over the paper. 
For any function $F=F\big(X_{\o}, S_{\o}, (g_u)_{u \in \bb T} \big)$ 
of the branching random walk $(G_u)_{u\in \bb T}$ and any node $w \in \bb T$, we define 
\begin{equation}\label{eq:shift-operator}
	[F]_w:=F \big( X_w, S_w, (g_{wu})_{u \in \bb T_w} \big), 
\end{equation}
where $\bb T_w$ denotes the random subtree rooted at $w$, its first generation being formed by the children of the particle at $w$. That is, $[F]_w$ is evaluated on the subtree started at $w$.
In particular, we can apply this notation when the function $F$ selects a single branch of this tree.  
For example, $[G_u]_w$ means the product $g_{wu} g_{wu|n-1} \ldots g_{wu|2} g_{wu|1}$ for $u=u_1 u_2 \dots u_n \in \bb T_{w}$, 
i.e. the product of the elements of $\bb M$ along the branch leading from $w$ to $wu$.

\subsection{Assumptions}
We denote by $\Gamma$ the smallest closed subsemigroup of $d\times d$ 
matrices with nonnegative entries that covers $ \{G_u:  |u|=1\}$ with probability 1. 
\begin{conditionA}\label{CondiAP}  
We assume the following allowability and positivity conditions: 
\begin{itemize}
\item[(i)] (Allowability) Every $g\in\Gamma$ is allowable.
\item[(ii)] (Positivity) $\Gamma$ contains at least one matrix belonging to $\bb M_+$.
\end{itemize}
\end{conditionA}

We shall also use the following Furstenberg-Kesten condition 
which implies condition \ref{CondiAP}
and is introduced in their pioneering work \cite{FK60}. 

\begin{conditionAstar}{CondiAP}\label{Condi-Furstenberg-Kesten}
There exists a constant $\varkappa > 1$ such that  for any $g = (g^{i,j})_{1 \leq i, j \leq d} \in \Gamma$, 
\begin{align*} 
g>0 \quad \mbox{ and } \quad 
\frac{\max_{1\leq i, j  \leq d} g^{i,j} }{ \min_{1\leq i, j  \leq d} g^{i,j} } \leq \varkappa.
\end{align*}
\end{conditionAstar}

It follows from the Perron-Frobenius theorem that every  $g \in \bb M_+$
has a dominant eigenvalue $\lambda_g>0$, with the corresponding eigenvector $v_g \in \bb S_+^{d-1}$. 
Let
\begin{equation} \label{def-VGamma-pos}
\Lambda (\Gamma) = \overline{\{v_{g} \in \bb S_+^{d-1}: g\in \Gamma \cap \bb M_+ \}}. 
\end{equation}

We say that the semigroup $\Gamma$
is \emph{arithmetic}, if there exists $t>0$ together with $\beta \in[0,2\pi)$ and a continuous function
$\vartheta: \bb S_+^{d-1} \to \mathbb{R}$ such that
$\exp[2 \pi it\sigma(g, x)-i\beta + i\vartheta(g\!\cdot\!x)-i \vartheta(x)]=1 $
for any $g\in \Gamma$ and $x\in \Lambda (\Gamma)$.
In other words, $\sigma(g, x)$ is contained in $t^{-1} \bb Z$ up to a shift that may depend on $g$ and $x$ through the function $\vartheta$.
We impose the following non-arithmetic condition:
\begin{conditionA}\label{CondiNonarith}
	{\rm (Non-arithmeticity) }
	The semigroup $\Gamma$ is non-arithmetic.
\end{conditionA}
A simple sufficient condition introduced in \cite{Kes73} for $\Gamma$
to be non-arithmetic is that the additive subgroup of $\mathbb{R}$ generated by the set
$\{ \log \lambda_{g} : g \in \Gamma \cap \bb M_+ \}$
is dense in $\mathbb{R}$, see \cite[Lemma 2.7]{BM16}.

Let
$$ I := \Big\{ s \in \bb R_+ \, : \,  \bb E \Big( \sum_{|u| = 1}  \norm{G_u}^s \Big) < \infty  \Big\}.$$ 
We will require, as a part of assumption \ref{Condi_ms} below, that $I$ has nonempty interior. This implies in particular that the intensity measure $\mu$ of $\scr{N}$,
\begin{align}\label{def-measure-mu}
\mu(A):= \bb E \Big[ \sum_{ |u| = 1 } \mathds{1}_A(G_u) \Big]
\end{align}
is $\sigma$-finite, since it integrates an everywhere positive function (cf. \cite{Kal17} p.\ 21).

Let $\scr C(\bb S_+^{d-1})$ be the space of continuous functions on $\bb S_+^{d-1}$. 
For any $s \in I$, define the transfer operator $P_s$ as follows: 
for $\varphi \in \scr C(\bb S_+^{d-1})$ and $x \in \bb S_+^{d-1}$,
\begin{align}\label{Def-Ps}
P_s \varphi(x) := \bb E \Big( \sum_{|u| = 1}  e^{s \sigma (G_u, x)} \varphi(G_u \cdot x) \Big) 
= \int_{\Gamma} e^{s\sigma(g,x)} \varphi(g \cdot x) \mu(dg).
\end{align}
Spectral gap properties of $P_s$ have been studied in \cite{BDGM14} under the additional assumption that $\mu$ is a probability measure. The analysis is still valid for $\sigma$-finite measures, since we only consider parameters $s \in I$ such that the integral in \eqref{Def-Ps} is finite. Note, however, that the point $0$ might not belong to $I$. 
We will provide all necessary information needed to adopt the arguments of \cite{BDGM14}  in Section \ref{sect:spectralgap}. 
In particular, by Proposition \ref{prop:Ps} below, for any $s \in I$, 
	there exists a unique dominant eigenvalue $\mathfrak{m}(s)>0$ of $P_s$ such that
	the function $s \mapsto \mathfrak{m}(s)$ is differentiable on $I^\circ$ (the interior of $I$). The corresponding eigenfunction $r_s$ satisfying $$P_s r_s = \mathfrak{m}(s) r_s$$ is strictly positive on $\bb S^{d-1}_+$.

We introduce the following condition, saying that the branching product of random matrices is in the boundary case and that the underlying branching process is supercritical.
\begin{conditionA}\label{Condi_ms}
	It holds $\bb E N>1$ and there is  a constant $\alpha \in I^{\circ}$ 
	such that 
	$\mathfrak m (\alpha)  = 1$ and $\mathfrak m '(\alpha) = 0$. 
\end{conditionA}

This condition will fix the constant $\alpha$ throughout the paper. Note that we cannot transform (as it is possible in the one-dimensional case) to a setting where $\alpha=1$, because this transformation would require taking powers of the matrices.

At some point, we will require a condition ensuring that a harmonic function for a process that is killed when leaving a certain set, is strictly positive. The precise condition is formulated in terms of a change of measure to be introduced below (cf.\  \eqref{precise-condi-Qx-alpha}), here we formulate a condition that can be checked directly on the point process $\scr{N}$ and implies \eqref{precise-condi-Qx-alpha}, see Lemma \ref{lem:condition_harmonic}.

\begin{conditionA}\label{Condi_harmonic}
There is $\delta>0$ such that for the constant $\varkappa$ from \ref{Condi-Furstenberg-Kesten}, it holds with $\overline{\varkappa}:=2 \log \varkappa$, 
\begin{equation*}
\bb E \bigg( \sum_{|u| = 1}  \mathds{1}_{(\overline{\varkappa} + \delta, \infty )}(- \log \norm{G_u}) \bigg) > 0.
\end{equation*} 
\end{conditionA}

For any constant $s \in I$, 
using the eigenfunction $r_s$ of $P_s$, we define a sequence of nonnegative random variables $W_n(s)$ as follows:
\begin{align}\label{def-addi-martigale}
W_n(s) =  \frac{1}{\mathfrak m (s)^n}  \sum_{|u| = n} e^{-s S_u} r_{s}(X_u),   \quad  n \geq 1. 
\end{align}
Then, for each fixed $x \in \bb S_+^{d-1}$ and $b \in \bb R$, this defines a $\bb P_{x,b}$-martingale with mean $r_s(x)e^{s b}$ with respect to the natural filtration $\scr{F}_n$ given by \eqref{def-filtration-Fn}. 
Denote $W_{\infty}(s) = \lim_{n \to \infty} W_n(s)$, $\bb P_{x,b}$-a.s. 
This is the analogue of the \textit{additive martingale} in the one-dimensional branching random walk 
(see e.g.\ \cite[Section 3.2]{Shi12}). 
We will study its convergence in Theorem \ref{Thm:Biggins-bb} for arbitrary parameter $s \in I$, 
proving an analogue of the Biggins martingale convergence theorem. 

In most parts of the paper, we will fix $s = \alpha$, where $\alpha$ is from condition \ref{Condi_ms}. 
In this case, we abbreviate $W_n$ for $W_n(\alpha)$, and $W_{\infty}$ for $W_{\infty}(\alpha)$.

The following moment assumption will be used to prove that the limit of the derivative martingale $D_n$ (defined in \eqref{def-derivative-martingale} below) is non-trivial. Recall that $t^+ = \max \{ t, 0 \}$ and $\log^+ t = \max \{ \log t,  0\}$ for $t \in \bb R$. 

\begin{conditionA}\label{condi:momentsW1} 
With $W_1$ defined by \eqref{def-addi-martigale} and $Z_1 := \sum_{|u| = 1}  S_u^+ e^{-\alpha S_u}$, for any $x \in \bb S_+^{d-1}$, it holds
\begin{align*}
\bb E_x \left( W_1  ( \log^+ W_1 )^2 \right) < \infty,  \qquad 
\bb E_x \left( Z_1 \log^+ Z_1 \right) < \infty. 
\end{align*}
\end{conditionA}

\subsection{Martingales and change of measure}\label{Subsection-change-of-measure}
In the main part of the paper, we shall assume \ref{Condi_ms}, 
meaning that $\bb E N>1$, $\mathfrak m (\alpha) = 1$ and $\mathfrak m'(\alpha)=0$. 
Then it holds that $W_\infty$, which is the $\bb P_{x,b}$-a.s. limit of $W_n$, is equal to $0$, $\bf P_{x,b}$-a.s., see Proposition \ref{Thm:Biggins} below.

Using this,  we may introduce a family of probability measures 
 $\bb Q_{x,b}^\alpha$ on $\bb S^{d-1}_+ \times \bb R \times \bb M^{\bb N}$ as follows.
Let $(X_0, S_0, g_i)_{i \ge 1}$ be the coordinate maps on $\bb S^{d-1}_+ \times \bb R \times \bb M^{\bb N}$ and set $\bb Q_{x, b}^\alpha(X_0=x, S_0 = b)=1$ and further let the law of $(g_i)_{i \ge 1}$ be given as the projective limit of $(\bb Q_{x, b, n}^\alpha)_{n \geq 1}$, where $\bb Q_{x, b, n}^\alpha$ is given, for any $n \ge 1$ and bounded measurable function $f$ on $\bb M^n$, by 
\begin{align}\label{def:Qsxn}
\int_{\Gamma^n}  f(g_1, \dots, g_n) \, \bb Q_{x, b, n}^\alpha( dg_1, \dots, dg_n)   
:=  \frac{e^{\alpha b}}{r_\alpha(x) \mathfrak m (\alpha)^n }\bb E_{x,b} \bigg[ \sum_{|u| = n}  e^{- \alpha S_u}  r_\alpha(X_u)
f (g_{u|1}, \dots, g_{u|n} ) \bigg], 
\end{align}
see Section \ref{sect:spectralgap} for details.
Define 
\begin{align}\label{def-Sn-Xn}
S_n := S_0 - \log \| g_n \cdots g_1 X_0 \| \  \text{ and } \ X_n:=(g_n \cdots g_1) \cdot X_0. 
\end{align}

We now describe some properties under assumptions \ref{CondiAP} and \ref{Condi_ms}
which in particular means that $\mathfrak m (\alpha) = 1$ and $\mathfrak m'(\alpha)=0$.
It will be shown in Proposition \ref{Prop-LLN-change-of-mea} that  
$\lim_{n \to \infty} \frac{S_n}{n} =0$, $\bb Q_{x,b}^\alpha$-almost surely.
For brevity, we use the notation $\bb Q_{x, 0}^\alpha = \bb Q_{x}^\alpha$. 
By Proposition \ref{Prop-ell-sigma-alpha}, 
the following limit  
\begin{align}\label{def-ell-alpha-x}
\ell_{\alpha}(x):= \lim_{n \to \infty} \bb E_{\bb Q_x^\alpha} S_n
\end{align}
exists for all $x \in \bb S^{d-1}_+$ and defines a continuous function $\ell_{\alpha} \in \scr C(\bb S_+^{d-1})$. 
Moreover, by Corollary \ref{cor:expect-Su-ell-Xu},  
it holds that, for any $x \in \bb S_+^{d-1}$, 
\begin{align}\label{expect-Su-ell-Xu}
\bb E_x \sum_{u=1} (S_u + \ell_{\alpha}(X_u)) e^{-\alpha S_u} r_\alpha(X_u) = r_\alpha(x) \ell_{\alpha}(x).
\end{align}
Using this, the following analogue 
of the \textit{derivative martingale} (see \cite[Section 3.4]{Shi12})  has been introduced in this context in \cite{KM15}: 
\begin{align}\label{def-derivative-martingale}
D_n = \sum_{|u|=n} \big( S_u + \ell_{\alpha} (X_u) \big) e^{- \alpha S_u}  r_{\alpha}(X_u).
\end{align}
Namely, for each $x \in \bb S_+^{d-1}$, this defines a $\bb P_{x}$-martingale with mean $ r_\alpha(x)\ell_{\alpha}(x)$ with respect to the natural filtration $\scr{F}_n$, see Lemma \ref{Lem-derivative-mart}. 
In addition to $\ell_{\alpha}(x)$ and $D_n$, we consider
\begin{align}\label{def-sigma-alpha}
 \sigma^2_{\alpha} := \lim_{n \to \infty} \frac{1}{n} \bb E_{\bb Q_x^\alpha} S_n^2.
\end{align}
By Proposition \ref{Prop-ell-sigma-alpha}, 
this limit is well defined, independent of $x \in \bb S^{d-1}_+$ and strictly positive due to the non-arithmeticity assumption \ref{CondiNonarith}.

Recall that $\Omega^{(1)} = \Omega \times \bb S^{d-1}_+ \times \bb R$. 
For $x \in \bb S^{d-1}_+$ and $b \in \bb R$, 
define a probability measure $\wh{\bb P}_{x,b}$ on $(\Omega^{(1)},\mathscr{F}^{(1)})$ by setting
\begin{align}\label{def-hat-P-x-b-intro}
\wh{\bb P}_{x,b} (A) 
=  \frac{ e^{\alpha b} }{  r_{\alpha}(x)  }  \bb E_{x,b}  \left( W_n\mathds 1_A \right) 
\end{align}
for any $A \in \mathscr F_n$ and $n \geq 0$. 
For brevity, we use the notation 
$\wh{\bb P}_{x} = \wh{\bb P}_{x, 0}$ and for the corresponding expectation we write $\wh{\bb E}_{x} = \wh{\bb E}_{x, 0}$. 
To construct a spinal decomposition of the branching product of random matrices, we want to redefine $ \wh{\bb P}_{x,b}$ on an extended probability space $\Omega^{(2)}:= \Omega^{(1)} \times \bb N^{\bb N}$ to carry a random sequence $w \in \bb N^{\bb N}$, called the {\em spine}, whose law is given by
\begin{align*}
	\wh{\bb P}_{x, b} \big( w|n = z  \big| \mathscr F_n \big)
	:=  \frac{   e^{-\alpha S_z} r_{\alpha} (X_z) }{ W_n },  
\end{align*}
for any $n \in \bb N$ and $z \in \bb T$. By the martingale property of $W_n$ and the definition of $\wh{\bb P}_{x,b}$ in \eqref{def-hat-P-x-b-intro}, this induces a consistent family of distributions. Hence, by Kolmogorov's extension theorem, we obtain the conditional distribution of $w$ given $\mathscr{F}_\infty$, {\em i.e.}, $\wh{\bb P}_{x,b}(w = \cdot \, | \, \mathscr{F}_\infty)$.

\section{Main results}

\subsection{Spinal decomposition}

Our first result proves the existence of a spinal decomposition (cf.\  \cite[Section 4.4]{Shi12}) for matrix branching random walks. Here, we formulate the result for the measure $\wh {\bb P}_{x,b}$ introduced above. In Section \ref{sect:spinal-decomp} we will formulate a general result that also includes a setup where we condition to stay nonnegative. There, we also show that the process with a spine can be constructed by first generating the spine particles and their brothers according to a size-biased point process, and thereupon, attaching unbiased processes to each of the children outside the spine. See Theorem \ref{Thm-spinal-decom} for details.

\begin{theorem}\label{Thm-spinal-decom-intro}
Assume conditions \ref{CondiAP} and \ref{CondiNonarith}. In addition, assume $\bb E N>1$
and that there is $\alpha \in I^\circ$ such that $\mathfrak{m}(\alpha)=1$. 
Then the process $(X_{w|n}, S_{w|n})_{n \geq 0}$ under $\wh{\bb P}_{x}$ is distributed as the Markov random walk
$(X_n, S_n)_{n \geq 0}$, under the measure $\bb Q_{x}^{\alpha}$. 
\end{theorem}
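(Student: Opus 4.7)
The plan is to verify the equality of all finite-dimensional marginals and conclude by Kolmogorov's extension theorem. Concretely, I want to show that for every $n \ge 0$ and every bounded measurable $\varphi$ on $(\bb S^{d-1}_+ \times \bb R)^{n+1}$,
\[
\wh{\bb E}_{x}\bigl[\varphi(X_{w|0}, S_{w|0}, \dots, X_{w|n}, S_{w|n})\bigr]
= \bb E_{\bb Q_x^\alpha}\bigl[\varphi(X_0, S_0, \dots, X_n, S_n)\bigr].
\]

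The computation proceeds in two moves. First, using the specified conditional law of the spine, I condition on $\scr F_n$ and rewrite the left-hand side as
\[
\wh{\bb E}_x \Biggl[\sum_{|z|=n} \varphi(X_{z|0}, S_{z|0}, \dots, X_z, S_z)\, \frac{e^{-\alpha S_z}\, r_\alpha(X_z)}{W_n}\Biggr].
\]
Next I apply the change of measure \eqref{def-hat-P-x-b-intro}, which reads $\wh{\bb E}_x[Y] = r_\alpha(x)^{-1} \bb E_x[W_n Y]$ for any $\scr F_n$-measurable $Y$. The two factors $W_n$ cancel, and the expression reduces to
\[
\frac{1}{r_\alpha(x)}\, \bb E_x \Biggl[\sum_{|z|=n} \varphi(X_{z|0}, S_{z|0}, \dots, X_z, S_z)\, e^{-\alpha S_z}\, r_\alpha(X_z)\Biggr].
\]

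To identify this with a $\bb Q_x^\alpha$-integral, I use that by \eqref{Def-Xku_Sku} and the cocycle relation \eqref{eq-cocycle-property}, each $(X_{z|k}, S_{z|k})$ is a deterministic function of $(g_{z|1},\dots,g_{z|k})$ and of the starting value $x$. Hence the integrand equals $\tilde\varphi(g_{z|1},\dots,g_{z|n})$ for a suitable bounded measurable $\tilde\varphi$ on $\bb M^n$ depending on $\varphi$ and $x$. Setting $b=0$ and using $\mathfrak m(\alpha)=1$, the displayed expression matches exactly the definition \eqref{def:Qsxn} of $\bb Q^\alpha_{x,n}$, so it equals $\int \tilde\varphi(g_1,\dots,g_n)\, \bb Q^\alpha_{x,n}(dg_1,\dots,dg_n)$. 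By the analogous definition \eqref{def-Sn-Xn} of $(X_k, S_k)$ under $\bb Q_x^\alpha$, this is precisely $\bb E_{\bb Q_x^\alpha}[\varphi(X_0, S_0, \dots, X_n, S_n)]$. Since the identity holds for all $n$ and all bounded measurable $\varphi$, Kolmogorov's extension theorem yields equality of the full laws.

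The main obstacle is not the computation above, which is essentially book-keeping once the change of measure and the spine's conditional law are available. The real work is upstream: verifying that the prescription $\wh{\bb P}_x(w|n=z\mid \scr F_n) = e^{-\alpha S_z} r_\alpha(X_z)/W_n$ gives a consistent projective family. This rests on the particle-level martingale identity
\[
\bb E\Biggl[\sum_{z'\,:\,\overset{\leftarrow}{z'} = z} e^{-\alpha S_{z'}} r_\alpha(X_{z'}) \,\Bigg|\, \scr F_n\Biggr] = e^{-\alpha S_z}\, r_\alpha(X_z),
\]
which in turn follows from the eigen-identity $P_\alpha r_\alpha = \mathfrak m(\alpha) r_\alpha$ combined with $\mathfrak m(\alpha) = 1$. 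Once this consistency is in place, the finite-dimensional identification above completes the proof.
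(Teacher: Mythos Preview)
Your proof is correct and follows essentially the same route as the paper: condition on $\scr F_n$ using the definitional conditional law of the spine, undo the change of measure \eqref{def-hat-P-x-b-intro} so that $W_n$ cancels, and identify the resulting branching-sum with the $\bb Q_x^\alpha$-integral (the paper phrases this last step via the many-to-one formula \eqref{Formula_many_to_one}, which is equivalent to your direct appeal to \eqref{def:Qsxn}). The only difference is organizational: the paper first proves the general version with an arbitrary harmonic function $h$ and killing set $B$ (Theorem \ref{Thm-spinal-decom}) and then specializes to $h\equiv 1$, $B=\bb R$ in Corollary \ref{Cor-spine-001}, whereas you carry out the computation directly in the special case.
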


\subsection{Biggins' martingale convergence theorem}

Using the spinal decomposition theorem (cf.\ Theorem \ref{Thm-spinal-decom}), 
we establish the following analogue of Biggins' martingale convergence theorem
for the additive martingale $W_n(s)$ defined by \eqref{def-addi-martigale}. Let
\begin{align}\label{def-psi-s}
\mathfrak M(s) =  \log \mathfrak m(s),  \quad  s \in  I. 
\end{align}

\begin{theorem}\label{Thm:Biggins-bb}
Assume condition \ref{Condi-Furstenberg-Kesten} and $\bb E N >1$. 
Then, for any $x \in \bb S_+^{d-1}$ and $s \in I$,
\begin{align*}
\bb E_{x} (W_{\infty}(s)) = 1  
&  \Leftrightarrow  W_{\infty}(s) > 0, \quad  \bf P_{x}\mbox{-a.s.}  \notag\\ 
&  \Leftrightarrow 
\mathfrak M(s) > s \mathfrak M'(s) \mbox{ and } 
\bb E_x ( W_1(s) \log^+ W_1(s)) < \infty. 
\end{align*}
\end{theorem}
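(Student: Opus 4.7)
The strategy is the classical spine decomposition of Lyons \cite{Lyo97}. For each $s \in I$, extend the tilted measure \eqref{def-hat-P-x-b-intro} by setting
$$\wh{\bb P}_{x,s}(A) := r_s(x)^{-1}\bb E_x[W_n(s)\mathds 1_A], \qquad A \in \scr F_n;$$
this is well-defined since $W_n(s)$ is a $\bb P_x$-martingale with mean $r_s(x)$. The spine decomposition (Theorem \ref{Thm-spinal-decom-intro}, whose proof extends verbatim to $s\in I$ with $\mathfrak m(s)$ replacing $1$) then gives a spine $w$ along which $(X_{w|n},S_{w|n})_{n\ge 0}$ under $\wh{\bb P}_{x,s}$ has the law of the Markov random walk $(X_n,S_n)_{n\ge 0}$ under $\bb Q_x^s$. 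Lyons' dichotomy, obtained by a Lebesgue decomposition of $\wh{\bb P}_{x,s}$ with respect to $\bb P_x$ on $\scr F_\infty$, reduces the theorem to showing that (iii) is equivalent to $W_\infty(s)<\infty$ $\wh{\bb P}_{x,s}$-a.s., and that, as a dichotomy under $\bb P_x$, either (i) holds or $W_\infty(s)=0$ $\bb P_x$-a.s.

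To prove (i)$\Leftrightarrow$(iii), I would iterate the branching recursion to obtain the spine identity
\begin{align*}
W_n(s) = \frac{e^{-sS_{w|n}}r_s(X_{w|n})}{\mathfrak m(s)^n} + \sum_{k=1}^n \frac{1}{\mathfrak m(s)^k}\sum_{v\in \mathrm{brot}(w|k)} e^{-sS_v}r_s(X_v)\,\wh W^{(v)}_{n-k},
\end{align*}
where, conditionally on the spine $\sigma$-field, the $\wh W^{(v)}_{n-k}:=[W_{n-k}(s)]_v/(e^{-sS_v}r_s(X_v))$ are independent mean-one copies of $W_{n-k}(s)/r_s(X_v)$ under $\bb P_{X_v}$. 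Under $\wh{\bb P}_{x,s}$ the spine walk has mean drift $-\mathfrak M'(s)$ (extending Proposition \ref{Prop-LLN-change-of-mea} to general $s$ by differentiating $P_sr_s=\mathfrak m(s)r_s$), and since $r_s$ is continuous, strictly positive and bounded on $\bb S_+^{d-1}$ (Proposition \ref{prop:Ps}), the spine term behaves as $\exp(-n[\mathfrak M(s)-s\mathfrak M'(s)]+o(n))$. This tends to $0$ iff $\mathfrak M(s)>s\mathfrak M'(s)$; the equality case is excluded because the centered walk $S_{w|n}+n\mathfrak M'(s)$ has non-degenerate Gaussian fluctuation by \ref{CondiNonarith} and the spectral gap of $P_s$, forcing $\limsup e^{-sS_{w|n}}/\mathfrak m(s)^n=+\infty$ $\wh{\bb P}_{x,s}$-a.s. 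The brother sum is then controlled by a Biggins--Kyprianou $x\log^+x$ argument: conditionally on the spine, its size-biased increment at level $k$ has mean comparable to $e^{-sS_{w|k-1}}r_s(X_{w|k-1})\mathfrak m(s)^{-(k-1)}\,\bb E_{X_{w|k-1}}[W_1(s)-\text{spine term}]$, and exponential decay of $e^{-k[\mathfrak M(s)-s\mathfrak M'(s)]}$ combined with a Borel--Cantelli / Chow--Kurtz criterion shows that the double sum is $\wh{\bb P}_{x,s}$-a.s.\ finite iff $\bb E_x(W_1(s)\log^+W_1(s))<\infty$.

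Finally, (i)$\Leftrightarrow$(ii) follows from the above dichotomy combined with the Galton--Watson $0$--$1$ law applied to $W_\infty(s)=\mathfrak m(s)^{-1}\sum_{|u|=1}[W_\infty(s)]_u$: on the survival event $\scr S$ either $W_\infty(s)>0$ $\bf P_x$-a.s.\ or $W_\infty(s)=0$ $\bf P_x$-a.s., and the latter is incompatible with (i). The main obstacle is the exclusion of the borderline case $\mathfrak M(s)=s\mathfrak M'(s)$ in (i)$\Rightarrow$(iii): unlike the classical abelian setting where a Chung--Fuchs oscillation argument suffices, here non-degeneracy of the fluctuations of the centered Markov random walk under $\bb Q_x^s$ (quantified by the positive asymptotic variance $\sigma_s^2>0$) relies on the renewal-theoretic machinery and the non-arithmeticity through the spectral gap of $P_s$ developed later in the paper. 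A secondary technical point, also addressed through Proposition \ref{prop:Ps}, is the uniform bound on $r_s(X_u)$ needed to pass between unweighted and $r_s$-weighted sums along the spine and its brothers.
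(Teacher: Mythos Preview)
Your proposal is correct and follows essentially the same route as the paper: the size-biased spine measure $\wh{\bb P}_{x,s}$, the law of large numbers $S_{w|n}/n\to -\mathfrak M'(s)$ along the spine (Proposition~\ref{Prop-LLN-change-of-mea}), the conditional Borel--Cantelli argument for the brother sums, and the Lyons dichotomy (the paper packages this last step as Lemma~\ref{lem-condi-limsup}, computing $\wh{\bb E}_x(W_n(s)\mid \scr H_\infty)$ explicitly). The spine decomposition formula you write and the $x\log^+ x$ criterion are exactly what the paper uses in parts (1) and (2) of its proof.

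One point to flag: in the boundary case $\mathfrak M(s)=s\mathfrak M'(s)$ you invoke~\ref{CondiNonarith} to obtain $\sigma_s^2>0$ and hence unbounded fluctuations of the centered spine walk. But Theorem~\ref{Thm:Biggins-bb} assumes only~\ref{Condi-Furstenberg-Kesten} and $\bb E N>1$, not~\ref{CondiNonarith}; so as written your argument proves a slightly weaker statement. The paper treats this case separately in Proposition~\ref{Thm:Biggins}, asserting $\limsup_n[-sS_{w|n}-n\mathfrak M(s)]=\infty$ directly from the zero-drift SLLN without appealing to non-arithmeticity (its justification there is admittedly brief). If you want to match the stated hypotheses you should argue oscillation of the centered Markov walk under $\bb Q_x^s$ using only~\ref{Condi-Furstenberg-Kesten}, rather than importing~\ref{CondiNonarith}.
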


\subsection{Derivative martingale and Seneta-Heyde scaling}

Our next result gives the almost sure convergence of the derivative martingale $D_n$ defined by \eqref{def-derivative-martingale}.

\begin{theorem}\label{Thm-conv-deriv-mart}
Under conditions  \ref{Condi-Furstenberg-Kesten}, \ref{CondiNonarith}, \ref{Condi_ms} and \ref{Condi_harmonic}, 
there exists a non-negative random variable $D_{\infty}$ such that
\begin{align*}
\lim_{n \to \infty} D_n = D_{\infty}, \quad \bb P_{x}\mbox{-a.s.,}
\end{align*}
for any $x \in \bb S_+^{d-1}$.
Assuming in addition \ref{condi:momentsW1},  
we have $D_{\infty} > 0$, $\bf P_{x}$-a.s., for any $x \in \bb S_+^{d-1}$. 
\end{theorem}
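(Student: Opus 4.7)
\emph{Strategy.} I adapt the Biggins--Kyprianou truncation method to prove a.s.\ convergence of $D_n$, then use a spinal-decomposition argument in the spirit of \cite{Aid13, BK04, Kyp98} to obtain positivity on survival.

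\emph{Step 1: a.s.\ convergence via truncation.} The identity \eqref{expect-Su-ell-Xu}, combined with the many-to-one formula induced by the change of measure \eqref{def:Qsxn}, expresses that $(x,b) \mapsto \ell_\alpha(x) + b$ is harmonic for one step of the Markov walk $(X_n, S_n)$ under $\bb Q_x^\alpha$. Fix a large constant $\beta > 0$ and define the truncated functional
\[
D_n^{(\beta)} := \sum_{|u|=n} \bigl(S_u + \ell_\alpha(X_u) + \beta\bigr) e^{-\alpha S_u} r_\alpha(X_u)\, \mathds{1}_{\{S_{u|k} + \ell_\alpha(X_{u|k}) + \beta \,\geq\, 0,\ 0 \leq k \leq n\}}.
\]
A routine branching computation using the harmonicity above shows $(D_n^{(\beta)})_{n\geq 0}$ is a nonnegative $\bb P_x$-martingale, hence converges $\bb P_x$-a.s.\ to a limit $D_\infty^{(\beta)}$. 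Introducing $\tau_\beta := \inf\{n \geq 0 : \exists\, |u|=n,\ S_u + \ell_\alpha(X_u) < -\beta\}$, on $\{\tau_\beta = \infty\}$ the indicator in the sum is always $1$, and therefore $D_n^{(\beta)} = D_n + \beta W_n$ for every $n$. Since $W_n \to 0$ $\bb P_x$-a.s.\ in the boundary case (Proposition \ref{Thm:Biggins}, consequence of \ref{Condi_ms}), we obtain $D_n \to D_\infty^{(\beta)}$ on $\{\tau_\beta = \infty\}$. It then remains to show that $\bb P_x(\tau_\beta = \infty) \to 1$ as $\beta \to \infty$. Via the many-to-one formula this reduces to the statement that the centered Markov walk $(X_n, S_n)$ under $\bb Q_x^\alpha$, shifted by $\ell_\alpha(X_n)$, avoids $(-\infty, -\beta)$ with probability tending to $1$; conditions \ref{CondiNonarith} and \ref{Condi_ms} provide the centering and non-arithmeticity, while \ref{Condi-Furstenberg-Kesten} and \ref{Condi_harmonic} give the boundedness and strict positivity of $\ell_\alpha$ needed to make the truncation effective.

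\emph{Step 2: positivity on survival.} Assume further \ref{condi:momentsW1}. A standard branching-hereditary argument (cf.\ \cite[Section 3.4]{Shi12}) shows that $\bf P_x(D_\infty = 0) \in \{0, 1\}$, so I only need to exclude $D_\infty \equiv 0$ on $\scr S$. By the spinal decomposition (Theorem \ref{Thm-spinal-decom-intro}), under $\wh{\bb P}_x$ the spine process $(X_{w|n}, S_{w|n})$ has the law of the Markov walk under $\bb Q_x^\alpha$, and $D_n$ decomposes into a spinal term $(S_{w|n} + \ell_\alpha(X_{w|n}))\,e^{-\alpha S_{w|n}} r_\alpha(X_{w|n})$ plus a sum over brother bushes grafted along the spine. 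The two parts of \ref{condi:momentsW1} play complementary roles: $\bb E_x(W_1 (\log^+ W_1)^2) < \infty$ controls the spinal contribution through a Borel--Cantelli / Doob-type argument on $W_n$ weighted by the spinal walk, while $\bb E_x(Z_1 \log^+ Z_1) < \infty$ bounds the summed brother-bush contributions in expectation via the intensity formula. Combined with the recurrence and (after dividing by $\sqrt{n}$) the CLT-type fluctuation behaviour of the centered Markov walk under $\bb Q_x^\alpha$, these inputs force $D_n \to \infty$ with positive $\wh{\bb P}_x$-probability. The Biggins--Kyprianou duality between $\bb P_x$ and $\wh{\bb P}_x$ then yields $\bb P_x(D_\infty > 0) = \bb P_x(\scr S)$, equivalently $D_\infty > 0$ $\bf P_x$-a.s.

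\emph{Main obstacle.} The genuinely hard part is the positivity step. In the classical scalar BRW setting the spine is a centered random walk, and sharp fluctuation, overshoot and renewal-mass estimates are available off the shelf. Here the spine is a Markov random walk under $\bb Q_x^\alpha$ whose centering function $\ell_\alpha$ is only implicitly defined through the Cesàro limit \eqref{def-ell-alpha-x}. Executing the Biggins--Kyprianou peeling argument thus requires sharp Markov-renewal inputs---precisely the explicit duality results and uniform bounds on the renewal function of the centered Markov walk killed on entering $(-\infty, 0)$, which the paper develops as its other main technical contribution. Obtaining these estimates uniformly in the starting direction $x \in \bb S_+^{d-1}$, so that they can be fed into the spinal decomposition and combined with \ref{condi:momentsW1}, is the principal technical challenge.
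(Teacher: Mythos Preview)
Your Step~1 has two genuine gaps. First, the truncated functional $D_n^{(\beta)}$ you define is \emph{not} a martingale: the linear function $h(x,b)=b+\ell_\alpha(x)+\beta$ is harmonic for the \emph{unkilled} walk under $\bb Q_x^\alpha$, but once you insert the indicator $\mathds 1_{\{h\ge 0\}}$ you need harmonicity for the \emph{killed} process (cf.\ \eqref{harmonicity-h}), which fails. In fact a one-step computation gives $\bb E_{\bb Q_{x,b}^\alpha}[h(X_1,S_1)\mathds 1_{\{h(X_1,S_1)\ge 0\}}]=h(x,b)+\bb E_{\bb Q_{x,b}^\alpha}[|h(X_1,S_1)|\mathds 1_{\{h<0\}}]\ge h(x,b)$, so your $D_n^{(\beta)}$ is a nonnegative \emph{sub}martingale, for which a.s.\ convergence is not automatic. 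The paper avoids this by truncating with the renewal-type harmonic function $V_\alpha^\beta$ of Lemma \ref{Lem-positi-harmonic-func}, which \emph{is} harmonic for the walk killed below $-\beta$; the resulting $M_n^V(\beta)$ of \eqref{def-martingale-Dn-V} is then a genuine nonnegative martingale. Second, your reduction of $\bb P_x(\tau_\beta=\infty)\to 1$ ``via the many-to-one formula'' to a statement about a single Markov walk is incorrect: under $\bb Q_x^\alpha$ the walk is oscillating, so $\bb Q_x^\alpha(\min_n S_n>-\beta)=0$ for every $\beta$, and many-to-one controls expectations, not the branching minimum. The paper instead proves directly (Lemma \ref{Lem-infinimum-Su}) that $\inf_{u\in\bb T}S_u>-\infty$ $\bf P_x$-a.s.\ by a Biggins-style argument comparing $\exp(-\alpha\inf_{|u|=n}S_u)$ with $W_n$, and then uses $V_\alpha^\beta(y,s)/s\to 1$ to transfer convergence of $M_n^V(\beta)$ to $D_n$.

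For positivity, the paper's route is also different from what you sketch: rather than showing $D_n\to\infty$ under $\wh{\bb P}_x$ and invoking duality, it proves uniform integrability of $M_n^V(0)$ (Lemma \ref{Lem-DnV}) via the spinal decomposition and condition \ref{condi:momentsW1}, so that $\bb E_x M_\infty^V(0)=1$, and then uses the pointwise bound $M_\infty^V(0)\le c\,D_\infty$ to deduce $\bf P_x(D_\infty>0)>0$; a zero-one law coming from \eqref{eq:as-identity-D} then upgrades this to $\bf P_x$-a.s.\ positivity. Your outline of the roles of the two moments in \ref{condi:momentsW1} is morally correct, but the argument you describe (peeling along the spine to force $D_n\to\infty$) would still require the Markov-renewal estimates you flag as the obstacle, whereas the paper's $L^1$-convergence route packages those estimates into Lemma \ref{Lem-DnV}.
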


Theorem \ref{Thm-conv-deriv-mart} is proved in Section \ref{Sec-conver-deriva-mart}. 
In \cite[Theorem 2.6]{KM15}, this result was proved under the additional assumptions that $\bb E N <\infty$ and particularly, that the law of $g_1$ has a density with respect to Lebesgue measure on $\bb R^{d^2}$.
Having established the existence of $D_\infty$, we then prove that it appears as the limit of the rescaled additive martingale in the boundary case. Namely, we prove a version of the Seneta-Heyde scaling for matrix branching random walks. 

\begin{theorem}\label{Thm-Seneta-Heyde}
Assume conditions \ref{Condi-Furstenberg-Kesten}, \ref{CondiNonarith}, \ref{Condi_ms}, \ref{Condi_harmonic} and \ref{condi:momentsW1}. 
Then, we have  
\begin{align*}
\lim_{n \to \infty} n^{1/2} W_n = \left( \frac{2}{\pi \sigma_\alpha^2} \right)^{1/2} D_{\infty}  
\end{align*}
in probability with respect to $\bf P_{x}$, for any $x \in \bb S_+^{d-1}$. 
\end{theorem}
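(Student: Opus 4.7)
The plan is to adapt the strategy of Boutaud--Maillard \cite{BM19} to our Markov setting, with the spinal decomposition of Theorem \ref{Thm-spinal-decom-intro} bridging first moments to the Markov random walk $(X_n, S_n)$ under $\bb Q_x^\alpha$ and the Markov renewal theory for centered walks developed in the paper (advertised in the abstract) supplying the sharp quantitative input. Under \ref{Condi_ms}, this walk is centered with asymptotic variance $\sigma_\alpha^2$, and by \eqref{expect-Su-ell-Xu} the shifted process $T_n := S_n + \ell_\alpha(X_n) - \ell_\alpha(x)$ admits $b \mapsto b + \ell_\alpha(x)$ as its natural harmonic function for killing at the origin, which explains the appearance of $\ell_\alpha$ in the definition of $D_n$. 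For $\beta > 0$, introduce the truncated additive and derivative martingales
\[ W_n^{(\beta)} := \sum_{|u|=n} e^{-\alpha S_u} r_\alpha(X_u)\, \ds{1}\{\underline T_u > -\beta\}, \qquad D_n^{(\beta)} := \sum_{|u|=n} (T_u + \beta)\, e^{-\alpha S_u} r_\alpha(X_u)\, \ds{1}\{\underline T_u > -\beta\}, \]
with $\underline T_u := \min_{0 \leq k \leq |u|} T_{u|k}$. The many-to-one formula implied by Theorem \ref{Thm-spinal-decom-intro} gives $\bb E_x[W_n^{(\beta)}] = r_\alpha(x)\, \bb Q_x^\alpha(\underline T_n > -\beta)$, and the Markov renewal / duality asymptotic for centered walks yields
\[ \sqrt{n}\, \bb Q_x^\alpha\bigl(\underline T_n > -\beta\bigr) \xrightarrow[n \to \infty]{} \sqrt{\tfrac{2}{\pi \sigma_\alpha^2}}\, V(x, \beta), \]
where $V(x, \beta)$ is the ascending-ladder harmonic function of $(X, T)$ killed upon $T$ entering $(-\infty, 0)$, satisfying $V(x, \beta)/\beta \to 1$ uniformly in $x \in \bb S_+^{d-1}$ as $\beta \to \infty$.

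To upgrade this first-moment convergence to convergence in probability, I would use a peeling argument. For an intermediate generation $k$, the branching property together with the shift notation \eqref{eq:shift-operator} gives the exact decomposition
\[ \sqrt{n}\, W_n^{(\beta)} = \sum_{|v|=k} e^{-\alpha S_v}\, \ds{1}\{\underline T_v > -\beta\}\, \sqrt{n}\, \bigl[ W_{n-k}^{(\beta + T_v)} \bigr]_v. \]
Conditioning on $\scr F_k$ and applying the first-moment asymptotic in each subtree, the right-hand side is well-approximated in $L^1(\bb P_x)$ by
\[ \sqrt{\tfrac{2}{\pi \sigma_\alpha^2}} \sum_{|v|=k} V(X_v, T_v + \beta)\, r_\alpha(X_v)\, e^{-\alpha S_v}\, \ds{1}\{\underline T_v > -\beta\}, \]
which, by $V(x, b) = b(1+o(1))$ as $b \to \infty$ uniformly in $x$, equals $\sqrt{2/(\pi \sigma_\alpha^2)}\, D_k^{(\beta)}(1+o(1))$. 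Letting first $k \to \infty$ yields $D_k^{(\beta)} \to D_\infty^{(\beta)}$ almost surely, and then $\beta \to \infty$ yields $D_\infty^{(\beta)} \to D_\infty$ by (the argument behind) Theorem \ref{Thm-conv-deriv-mart}, giving the announced limit.

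The main obstacle is twofold. First, the quantitative Markov renewal asymptotic above with its explicit constant, together with the uniform bound $V(x, \beta) \asymp 1 + \beta$, must be extracted from the duality between the renewal measure of the killed Markov walk and the ascending ladder renewal measure; as emphasized in the introduction, non-commutativity obstructs the classical one-dimensional cyclic-shift duality and forces the more intricate identities that form the second main contribution of the paper. Second, one must still control $\sqrt{n}(W_n - W_n^{(\beta)}) \to 0$ as $n \to \infty$ and then $\beta \to \infty$, and justify the replacement of $V$ by its asymptote inside the sum in the peeling step; both are handled in the BM19 scheme via a truncated second-moment estimate computed along the size-biased spinal tree produced by Theorem \ref{Thm-spinal-decom-intro}, and the $W_1 (\log^+ W_1)^2$ and $Z_1 \log^+ Z_1$ integrability in \ref{condi:momentsW1} is exactly what is needed to make this estimate summable uniformly in $n$.
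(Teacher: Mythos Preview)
Your outline captures the correct skeleton: decompose at an intermediate generation $k$, use the first-moment asymptotic $\sqrt{n}\,\bb Q_{x,b}^\alpha(\min_{i\leq n} S_i \geq 0) \sim (2/\pi\sigma_\alpha^2)^{1/2} V_\alpha(x,b)$ (Lemma \ref{Lem-exit-time-alpha}), and relate the limit to $D_\infty$ via $V_\alpha(x,b)/b \to 1$. The paper follows exactly this route, but the mechanism that upgrades first-moment convergence to convergence in probability is not the one you describe, and your account contains a substantive misconception.

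You propose to show that $\sqrt{n}\,[W_{n-k}^{(\beta+T_v)}]_v$ is well-approximated in $L^1$ by its mean, closing this via a ``truncated second-moment estimate computed along the size-biased spinal tree'', which you attribute to \cite{BM19}. This confuses \cite{BM19} with the older A\"id\'ekon--Shi method: the point of \cite{BM19}, explicitly highlighted in the paper just after the statement of Theorem \ref{Thm-Seneta-Heyde}, is precisely to \emph{avoid} second-moment estimates and peeling lemmas. The paper instead works with conditional Laplace transforms $\bb E_x\big(e^{-\lambda\sqrt{n}\,\widetilde{\widetilde W}_{n,k}} \,\big|\, \scr F_k\big)$. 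The lower bound is immediate from Jensen's inequality; the upper bound uses the elementary inequality $e^{-\lambda t}\leq 1-\lambda' t$ on $[0,\ee]$ to reduce matters to the truncated \emph{first}-moment estimate of Proposition \ref{Prop-Wn-ee}:
\[
\limsup_{n\to\infty}\bb E_{x,b}\Big(\sqrt n\,\widetilde W_n\,\mathds 1_{\{\sqrt n\,\widetilde W_n\geq\ee\}}\Big)\leq U(x,b)\,e^{-\alpha b}, \qquad \frac{U(x,b)}{V_\alpha(x,b)}\xrightarrow[b\to\infty]{}0.
\]
The moment hypotheses \ref{condi:momentsW1} and the renewal estimate of Theorem \ref{Thm-Green-function} enter in the proof of \emph{this} proposition (through a spinal computation under the measure $\bb P^+_{x,b}$ conditioned to stay nonnegative, see Lemmas \ref{Lem-minimum-condi}--\ref{Lem-bound-T2}), not in any second-moment bound. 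You correctly anticipate that the duality-based renewal theory is the hard analytic input, but you locate it in the wrong step of the argument. A minor further difference: the paper truncates on $\min_{v\leq u} S_v \geq 0$ directly rather than via the $\ell_\alpha$-shifted process, and replaces your parameter $\beta$ by the device $\widetilde{\widetilde W}_{n,k}=W_n$ with high probability once $\min_{|u|=k} S_u$ is large (Lemma \ref{Lem-infinimum-Su} and \eqref{proba-Wn-12}).
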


The proof of Theorem \ref{Thm-Seneta-Heyde} is given in Section \ref{Sec-Seneta-Heyde}. 
For the one-dimensional branching random walk, this result was first proved by A\"id\'ekon and Shi \cite[Theorem 1.1]{AS14}. Our method of proof will follow the arguments given by Boutaud and Maillard \cite{BM19} that allow us to avoid second-moment estimates and peeling lemmas. Instead, we need to prove truncated first moment estimates (cf.\  Proposition \ref{Prop-Wn-ee})
and estimates for the renewal function (Green function) of products of random matrices, conditioned to stay positive 
(cf.\ Theorem \ref{Thm-Green-function}).

\subsection{Duality and renewal theory for products of random matrices}

The duality lemma for a (one-dimensional) random walk $(R_n)_{n \geq 1}$ \cite[XII.2]{Feller71} states that the renewal measure for the (ascending) ladder height process of an interval $I \subset \bb{R}$ equals the expected number of visits of $R_n$ to $I$ before $R_n$ becomes negative. Note that this property heavily relies on the independence of the increments, a property that is lost when considering (logarithms of norms of) products of random matrices; which exhibit a Markovian dependence. 

In our paper, we prove {\em approximative} duality results for {\em oscillating} Markov random walks generated by products of random matrices. To be more precise, we can bound the renewal measure of $I$ of the ladder height process by the expected number of visits to $I$ before becoming negative, and vice versa, see Lemma \ref{lem:dualityrenewal}.

Since duality results are useful tools in many areas of applied probability, we formulate these results in a general setup, that includes non-branching products of random matrices.
To be precise, we generalize  \ref{Condi_ms} to
\begin{conditionAstar}{Condi_ms}\label{Condi_ms2}
	It either holds $N \equiv 1$ a.s. or $\bb E N>1$.  In addition, there is  a constant $\alpha \in I^{\circ}$ 
	such that 
	$\mathfrak m (\alpha)  = 1$ and $\mathfrak m '(\alpha) = 0$. 
\end{conditionAstar}

The case $N \equiv 1$ corresponds to the non-branching case. Under \ref{Condi_ms2}, the process $S_n$ is oscillating under the measure $\bb Q_x^\alpha$. For $y \in \bb R$, let 
$$\tau_y^- = \inf \{ k \geq 1: y + S_k < 0 \}$$
be the first time that $(y+S_k)_{k \geq 1}$ becomes negative. Moreover, introduce the weakly ascending ladder times 
by setting $\mathscr T_0=S_0=0$ and
\begin{align}\label{def-T-n-001}
\scr{T}_n = \inf\{ j > \scr{T}_{n-1} \, :\, S_j \geq S_{\scr{T}_{n-1}} \}.
\end{align}

We now state bounds for the renewal function of the ladder process in the oscillating case.

\begin{theorem}\label{Thm-ladder-height}
Assume conditions \ref{Condi-Furstenberg-Kesten}, \ref{CondiNonarith} and \ref{Condi_ms2}. 
Then, there exists $c>0$ and for all $y \geq 0$, there exists $C = C(y) >0$ such that for all $x \in \bb S_+^{d-1}$, $t \in \bb R$ and $a >0$,
\begin{align*}
\bb E_{\bb Q_{x}^\alpha} \left[ \sum_{n=0}^{\tau_{y}^- -1} \mathds{1}_{[t,t+a]} (S_{n})    \right] \leq C \max\{a, 1\},
\qquad  
\bb E_{\bb Q_{x}^{\alpha}} \left[ \sum_{j=0}^\infty \mathds{1}_{[t,t+a]} (S_{\scr{T}_j})    \right]  \leq c \max\{a, 1\}. 
\end{align*}
\end{theorem}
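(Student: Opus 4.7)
The plan is to establish the ladder-height bound first by direct Markov renewal theory, and then to deduce the killed-process bound from it via the approximative duality that the authors have highlighted as one of their main technical contributions. Under \ref{Condi_ms2}, the relations $\mathfrak{m}(\alpha)=1$ and $\mathfrak{m}'(\alpha)=0$ make $(X_n,S_n)_{n\ge 0}$ a centered Markov random walk under $\bb Q_x^\alpha$, so $S_n/n\to 0$ $\bb Q_x^\alpha$-a.s.\ by the announced Proposition~\ref{Prop-LLN-change-of-mea}, with strictly positive asymptotic variance $\sigma_\alpha^2$ by Proposition~\ref{Prop-ell-sigma-alpha} and \ref{CondiNonarith}. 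Consequently $S_n$ oscillates and every $\scr{T}_j$ is $\bb Q_x^\alpha$-a.s.\ finite.

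For the ladder bound, I would observe that $(X_{\scr{T}_j},S_{\scr{T}_j})_{j\ge 0}$ is itself a Markov random walk on $\bb S_+^{d-1}\times\bb R$, now with strictly positive drift $\mu$; this positivity is a standard consequence of $L^2$ fluctuation theory for centered oscillating random walks combined with $\sigma_\alpha^2>0$. The driving chain $(X_{\scr{T}_j})_{j\ge 0}$ on the sphere inherits spectral-gap and aperiodicity properties of $P_\alpha$ via the construction of \cite{BDGM14}, and \ref{CondiNonarith} carries over to non-arithmeticity of the increments $S_{\scr{T}_{j+1}}-S_{\scr{T}_j}$. A Fourier/Stone-type argument (Markov renewal theorem with uniform-in-$t$ estimates) then yields
\[
\bb E_{\bb Q_x^\alpha}\Big[\sum_{j\ge 0}\mathds{1}_{[t,t+a]}(S_{\scr{T}_j})\Big]\le c\max\{a,1\},
\]
for a constant $c$ independent of $x$ and $t$.

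For the killed-process bound, I would apply the duality Lemma~\ref{lem:dualityrenewal} to transfer the ladder bound. Heuristically, Feller's path-reversal argument \cite[XII.2]{Feller71} identifies the visits of $y+S_n$ to $[t,t+a]$ before $\tau_y^-$ with ascending ladder points of the reversed walk, giving an exact identity in the i.i.d.\ case; in the Markov setting, reversal changes the driving chain to its adjoint, so the identity becomes a two-sided inequality. Under \ref{Condi-Furstenberg-Kesten} the Radon-Nikodym derivative between the forward and reversed stationary measures on $\bb S_+^{d-1}$ is uniformly bounded by a constant depending only on $\varkappa$, so the inequality loses only a universal multiplicative factor. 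The dependence $C=C(y)$ reflects the initial offset: starting from $y>0$ one must first descend through $[0,y]$ before the ladder structure applies directly, contributing an extra factor that grows with $y$ but not with $x$ or $t$ (bounded via a harmonic-function estimate for the walk killed on $\{\,\cdot\,<0\}$).

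The main obstacle is the duality step itself. For one-dimensional i.i.d.\ walks Feller's path reversal yields an exact equality, but for Markov random walks time reversal produces a walk driven by the adjoint chain rather than the original one, so the classical reversal identity survives only as a two-sided inequality. Controlling the resulting loss uniformly in $(x,t)$ --- in particular ensuring that the reference-measure change on $\bb S_+^{d-1}$ does not blow up near the faces of the positive cone --- is precisely what condition \ref{Condi-Furstenberg-Kesten} is designed to guarantee, and it is the technically most delicate ingredient advertised in the introduction.
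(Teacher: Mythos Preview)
Your plan reverses the paper's order: you want the ladder bound first via a Markov renewal theorem applied to the ladder chain $(X_{\scr{T}_j},S_{\scr{T}_j})$, then deduce the killed-process bound by duality. The paper does essentially the opposite, and for a reason that points to the gap in your argument.

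The unjustified step is the assertion that $(X_{\scr{T}_j})_{j\ge0}$ ``inherits spectral-gap and aperiodicity properties of $P_\alpha$ via the construction of \cite{BDGM14}''. That reference analyses only the transfer operator $P_s$ for the single-step kernel; the ladder chain is driven by a kernel defined through a random stopping time, and nothing in \cite{BDGM14} or in this paper transfers spectral gap to it. Without that, your Fourier/Stone argument has no foundation: neither the finiteness of the mean ladder height, nor the non-arithmeticity of the ladder increments, nor the uniformity in $x$ of the renewal bound is secured. This is precisely the obstacle the introduction flags (``lack of a simple duality lemma \dots\ no explicit results for centered processes exist''), and your proposal assumes it away.

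The paper sidesteps any direct analysis of the ladder chain. Its only quantitative input is the conditioned local limit theorem for the \emph{original} walk (Lemma~\ref{Lem-CLLT-bound}), which does rest on the spectral theory of $P_\alpha$. In Proposition~\ref{thm:boundrenewalfct} the argument bounces through the approximate duality Lemma~\ref{lem:dualityrenewal} \emph{twice}: first from the killed process to the ascending-ladder process of the dual walk; then, after translating the interval to the origin via Lemma~\ref{lem:uniformboundRenewalMeasure}, back to a killed process of the original walk on the fixed interval $[-c_1,1+c_1]$, which is bounded termwise by $Ck^{-3/2}$ via Lemma~\ref{Lem-CLLT-bound} and summed. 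This yields \eqref{eq:bound renewal ascending}. The first inequality of the theorem then follows by combining \eqref{eq:bound renewal ascending} with Lemma~\ref{Lem-second-sum-Renewal} (a martingale-approximation and overshoot estimate) to pass from $\tau_{-c_1}^-$ to $\tau_y^-$; the second follows from \eqref{eq:bound renewal ascending} by swapping the roles of original and dual via Remark~\ref{rem:DualofDual}. So the two bounds are not obtained sequentially but through a single bootstrap in which duality reduces both to one estimate that the conditioned LLT handles.
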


As stated before, a main ingredient in the proof of the Seneta-Heyde scaling, Theorem \ref{Thm-Seneta-Heyde}, will be the following estimate. 

\begin{theorem}\label{Thm-Green-function}
Assume conditions \ref{Condi-Furstenberg-Kesten}, \ref{CondiNonarith} and \ref{Condi_ms2}. 
Let $f: \bb R_+ \to \bb R_+$ be a bounded and non-increasing function satisfying $\int_0^{\infty} y f(y) dy < \infty$. Then, for any $x \in \bb S^{d-1}_+$, we have
\begin{align*}
\lim_{b \to \infty} \frac1b \sum_{n=0}^\infty \bb E_{\bb Q_{x}^\alpha}  \big[ (b+S_n) f(b+S_n) \mathds{1}_{\{\tau_b^- >n\}}  \big] =0.
\end{align*}
\end{theorem}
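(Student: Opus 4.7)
The plan is to adapt the Boutaud--Maillard \cite{BM19} strategy to our Markov random walk setting driven by products of random matrices. The essential inputs are the duality result of Lemma \ref{lem:dualityrenewal} together with the uniform ladder renewal bound of Theorem \ref{Thm-ladder-height}.

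Set $g(y) := y f(y)$ for $y \geq 0$. Since $f$ is non-increasing and $\int_0^\infty y f(y)\, dy < \infty$, a comparison with $\int_{y/2}^y s f(s)\, ds$ forces $y^2 f(y) \to 0$, so $g$ is bounded and integrable. The target rewrites as $\frac{1}{b}\int_0^\infty g(y)\, U_b(dy,x) \to 0$, where
$$U_b(A, x) \;:=\; \bb E_{\bb Q_x^\alpha}\!\left[\sum_{n=0}^{\tau_b^- - 1} \mathds{1}_A(b + S_n)\right].$$
The heart of the argument is to establish a two-scale bound: for some constant $C$ uniform in $x \in \bb S_+^{d-1}$,
\begin{equation*}
U_b([t, t+1], x) \;\leq\; C\bigl(1 + \min(b, t)\bigr), \qquad b,\, t \geq 0.
\end{equation*}
I would derive this by applying the duality of Lemma \ref{lem:dualityrenewal} to decompose $U_b$ as an approximate convolution of the descending and ascending ladder renewal measures, each controlled uniformly in the driving direction $x$ by Theorem \ref{Thm-ladder-height}: the descending factor contributes $\lesssim 1 + b$ (time before ruin started from $b$), the ascending factor restricted to $[t, t+1]$ contributes $\lesssim 1 + t$ (local time at level $t$), and the correct $\min(b, t)$ dependence arises because for $(b+S_n)$ to visit $[t, t+1]$ before becoming negative, one of these two budgets must saturate.

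Granted this bound, since $\sup_{y \in [k, k+1]} y f(y) \leq (k+1) f(k)$ by monotonicity of $f$,
\begin{align*}
\int_0^\infty g(y)\, U_b(dy, x)
&\;\lesssim\; 1 + \sum_{k=1}^{\lfloor b \rfloor} g(k)(1 + k) + (1+b) \sum_{k > b} g(k) \\
&\;\lesssim\; \int_0^b y^2 f(y)\, dy + \int_0^\infty y f(y)\, dy + b \int_b^\infty y f(y)\, dy.
\end{align*}
Dividing by $b$ and letting $b \to \infty$: the middle term is $O(1/b)$, the last term vanishes as the tail of the integrable function $y f(y)$, and $\frac{1}{b}\int_0^b y^2 f(y)\, dy \to 0$ by Cesaro's theorem applied to $y^2 f(y) \to 0$.

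The main obstacle is establishing the two-scale bound $U_b([t, t+1], x) \leq C(1 + \min(b, t))$ with its sharp joint $(b,t)$-dependence. Theorem \ref{Thm-ladder-height} by itself only provides a $b$-dependent constant for the killed walk's renewal measure, so the finer $\min(b, t)$ growth cannot be read off from a single ladder estimate but must be extracted by combining the ascending and descending ladder contributions through the full duality lemma. Maintaining uniformity in the internal direction $x \in \bb S_+^{d-1}$ relies on the spectral gap of the transfer operator $P_\alpha$ propagated through the change-of-measure $\bb Q_x^\alpha$, which is precisely what the paper's machinery is set up to deliver.
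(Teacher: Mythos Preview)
Your overall architecture is sound and the analytic endgame is correct: once the two-scale bound $U_b([t,t+1],x)\le C(1+\min(b,t))$ is in hand, the split at $b$ together with $y^2 f(y)\to 0$ and the Ces\`aro argument closes the proof cleanly. This is a legitimate alternative to the paper's endgame, which instead uses an $\epsilon$-splitting of the integrand at a level $s^*$ where $\int_{s^*}^\infty (r+c)f(r)\,dr<\epsilon$.

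The gap is in how you propose to obtain the two-scale bound. Lemma~\ref{lem:dualityrenewal} alone does not deliver the convolution structure you describe: it relates the walk killed near zero to the \emph{ascending} ladder process of the dual, and (combined with Theorem~\ref{Thm-ladder-height}) yields only $U_{b}([t,t+1],x)\le C(b)$ with a $b$-dependent constant, or via Lemma~\ref{Lem-Renewal-thm-bb} the bound $C(1+\max(b,t))$. Neither suffices --- with the $\max$ bound your sum $\sum_{k\le b}(k+1)f(k)(1+b)$ contributes $b\int_0^\infty yf(y)\,dy$, which does not vanish after dividing by $b$. To extract the sharp $\min(b,t)$ dependence you must decompose at the running minimum $L_n=\min_{0\le k\le n}S_k$, writing $b+S_n=(b+L_n)+(S_n-L_n)$, and then treat the two pieces separately: the post-minimum piece by the Markov property (giving an ascending-ladder factor), and the pre-minimum piece by the approximate duality of Lemma~\ref{Lem-appro-duality} (giving a descending-ladder factor for the dual). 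This is precisely the content of Proposition~\ref{thm:SparreAndersen}, the paper's approximate Spitzer identity, and it is the tool you should invoke instead of Lemma~\ref{lem:dualityrenewal}. With Proposition~\ref{thm:SparreAndersen} in place, your two-scale bound follows by taking $\phi_j(y)=\mathds 1_{[j,j+1)}(b+y)$, $h_j(z)=\mathds 1_{[t-j-1,t-j+1]}(z)$, bounding each factor by a constant via Theorem~\ref{Thm-ladder-height}, and noting that only $O(\min(b,t))$ values of $j$ contribute.

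For comparison, the paper bypasses the standalone two-scale bound entirely: it applies Proposition~\ref{thm:SparreAndersen} directly to the full integrand $(b+S_n)f(b+S_n)\mathds 1_{\{\tau_b^->n\}}$, after slicing on the value of $S_n-L_n$. One factor is bounded uniformly by Theorem~\ref{Thm-ladder-height}; the other, summed over the slices, produces a function $g(s)\lesssim\int_{s}^\infty (r+c)f(r)\,dr\to 0$, and a final renewal estimate against $g(b+S_n^*)$ finishes. Your route is more modular (isolate a reusable Green-function estimate, then integrate); the paper's is more direct but less portable. Both rest on the same Spitzer-type decomposition.
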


Theorems \ref{Thm-ladder-height} and \ref{Thm-Green-function} are proved in Section \ref{Sec-Duality}.

\subsection{Applications}  \ 
\medskip

\textit{Fixed point equation of the multivariate smoothing transform - critical case.}
In \cite{KM15,Men16} the following problem (see \cite{ABM12} for an account of the original one-dimensional equation) was considered (adopted to the present notation). 
Given $(G_v)_{|v|=1}$, determine all possible distributions for a nonnegative random vector $Y$, 
such that the following equality holds:
\begin{align}\label{equation-fixed-point}
Y \stackrel{\mathcal{L}}{=} \sum_{|v|=1}^N G_v^* Y_v, 
\end{align}
where $\stackrel{\mathcal{L}}{=}$ denotes the equality in law, 
and $Y_v$ are i.i.d.\ copies of $Y$ that are also independent of $(G_v)_{|v|=1}$ 
and $G_v^*$ is the transpose. 
An equivalent formulation in terms of the Laplace transform $\phi(ry)=\bb E \exp{(- r \langle y, Y\rangle)}$, 
$y \in \bb S_+^{d-1}$, $r \ge 0$, reads
\begin{equation}\label{eq:FPMST-Laplace}
	\phi(ry) = \bb E \Big[ \prod_{|v|=1} \phi(r G_v y) \Big] = \bb E_y \Big[ \prod_{|v|=1} \phi(r e^{-S_v} Y_v) \Big].
\end{equation}
Note that the appearance of the transpose $G_v^*$ in equation \eqref{equation-fixed-point} allows to consider
the original matrices in equation \eqref{eq:FPMST-Laplace}. 	
Using the results of this paper, we obtain the following set of solutions to equation \eqref{eq:FPMST-Laplace}.

\begin{proposition}
	Assume conditions \ref{Condi-Furstenberg-Kesten}, \ref{CondiNonarith}, \ref{Condi_ms} and \ref{Condi_harmonic}. 
	Then, for any $K>0$, the function 
	\begin{equation}\label{eq:solutionSFPE}
		\phi(ry):= \bb E_y \exp(-r^\alpha KD_\infty), \qquad r\ge 0,\ y \in \bb S_+^{d-1},
	\end{equation}
	is the Laplace transform of a random vector on $\bb R_+^d$ and satisfies \eqref{eq:FPMST-Laplace}.
\end{proposition}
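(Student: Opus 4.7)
The proposition has two parts to establish: that $\phi$ satisfies the functional equation \eqref{eq:FPMST-Laplace}, and that $\phi$ is a genuine Laplace transform of a random vector on $\bb R_+^d$. My plan is to handle the functional equation first via a branching decomposition of $D_\infty$ and then argue the Laplace transform property by identifying $\phi$ with a known family of solutions from the theory of multivariate smoothing transforms.

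The cornerstone of the first part is the identity
\begin{equation*}
D_\infty = \sum_{|v|=1} e^{-\alpha S_v} D_\infty^{(v)}, \qquad \bb P_y\text{-a.s.},
\end{equation*}
where, conditionally on $\mathscr F_1$, the $D_\infty^{(v)}$ are independent copies of $D_\infty$ distributed according to $\bb P_{X_v}$. To derive this, I will start from the pathwise identity $D_n=\sum_{|v|=1}[D_{n-1}]_v$ that follows from the branching structure \eqref{eq:shift-operator}. Expanding $[D_{n-1}]_v$ by decomposing each summand $S_{vu}=S_v+\widetilde S_u^{(v)}$ (where $\widetilde S_u^{(v)}$ denotes the corresponding quantity in the subtree rooted at $v$ reset to initial position $0$), I obtain
\begin{equation*}
[D_{n-1}]_v = e^{-\alpha S_v}\bigl(S_v\,W_{n-1}^{(v)}+D_{n-1}^{(v)}\bigr),
\end{equation*}
with $W_{n-1}^{(v)}$ and $D_{n-1}^{(v)}$ denoting the additive and derivative martingales on the subtree at $v$ issued from $(X_v,0)$. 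Passing to the limit and using Proposition \ref{Thm:Biggins} (so that $W_{n-1}^{(v)}\to 0$, $\bb P_{X_v}$-a.s.) together with Theorem \ref{Thm-conv-deriv-mart} (convergence of $D_{n-1}^{(v)}$ to $D_\infty^{(v)}$) yields the desired decomposition.

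Given this decomposition, \eqref{eq:FPMST-Laplace} becomes a direct calculation. Using independence of the $D_\infty^{(v)}$ conditional on $\scr F_1$, we have
\begin{align*}
\phi(ry) &= \bb E_y\exp\Bigl(-r^\alpha K\!\!\sum_{|v|=1} e^{-\alpha S_v}D_\infty^{(v)}\Bigr)\\
&= \bb E_y\prod_{|v|=1}\bb E\bigl[\exp(-r^\alpha e^{-\alpha S_v}KD_\infty^{(v)})\,\big|\,\scr F_1\bigr]\\
&= \bb E_y\prod_{|v|=1}\phi\bigl(r\,e^{-S_v}X_v\bigr),
\end{align*}
and since under $\bb P_y$ one has $G_v y=\|G_v y\|\,(G_v\!\cdot\!y)=e^{-S_v}X_v$, this coincides with the right-hand side of \eqref{eq:FPMST-Laplace}.

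For the second assertion, the point is that for each fixed $y\in\bb S_+^{d-1}$ the map $\lambda\mapsto\bb E_y\exp(-\lambda KD_\infty)$ is a scalar Laplace transform of the nonnegative random variable $KD_\infty$ (whose positivity on survival follows from Theorem \ref{Thm-conv-deriv-mart} together with assumption \ref{condi:momentsW1}), so that $\phi(ry)=\bb E_y\exp(-r^\alpha KD_\infty)$ is bounded between $0$ and $1$, continuous in $(r,y)$ by dominated convergence, and takes the value $1$ at $r=0$. The verified functional equation then places $\phi$ in the class of solutions to the multivariate smoothing transform in the critical/boundary case, for which the characterization results of \cite{KM15,Men16} assert that every such bounded, continuous, $[0,1]$-valued solution is the Laplace transform of a random vector on $\bb R_+^d$; applying this classification gives the claim. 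The main obstacle here is precisely this last step: complete monotonicity of $\phi$ on $\bb R_+^d$ cannot be read off directly from the scalar Laplace transform in the variable $r$ once $\alpha>1$, and one genuinely needs the structural input from the fixed-point equation, combined with the existing classification theorems, to conclude that a single random vector represents $\phi$ in all directions simultaneously.
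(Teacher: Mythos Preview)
Your treatment of the functional equation \eqref{eq:FPMST-Laplace} is correct and matches the paper's argument: decompose $D_{n}$ at the first generation, use $W_n\to 0$ (Proposition \ref{Thm:Biggins}) and $D_n\to D_\infty$ (Theorem \ref{Thm-conv-deriv-mart}) to obtain the a.s.\ identity $D_\infty=\sum_{|v|=1}e^{-\alpha S_v}[D_\infty]_v$, and then condition on $\mathscr F_1$.

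The gap is in the second part. Your appeal to ``characterization results of \cite{KM15,Men16}'' does not deliver what you need. Those references characterize the \emph{Laplace transforms} that solve the fixed-point equation; they do not prove that every bounded continuous $[0,1]$-valued solution of \eqref{eq:FPMST-Laplace} is a Laplace transform. In other words, the implication goes the wrong way for your purposes. You yourself identify the obstacle---for $\alpha>1$ the map $r\mapsto \bb E_y\exp(-r^\alpha KD_\infty)$ need not be completely monotone in $r$, and there is no direct way to read off joint complete monotonicity on $\bb R_+^d$---but invoking a classification theorem that presupposes the Laplace-transform property does not resolve it. (Note also that \cite{KM15} works under strictly stronger hypotheses, e.g.\ a density for $\mu$, so even its applicability here is not clear.)

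The paper's route is genuinely different and avoids this circularity. It sets
\[
\phi_n(ry)\ :=\ \bb E_y\exp\Bigl(-r^\alpha K\sqrt{\tfrac{\pi n\sigma_\alpha^2}{2}}\,W_n\Bigr),
\]
observes that for each fixed $n$ this \emph{is} the Laplace transform of a random vector on $\bb R_+^d$ by the construction in \cite[Theorem 5.1]{Men16} (which uses the additive martingale $W_n$, not $D_\infty$), and then invokes the Seneta--Heyde scaling, Theorem \ref{Thm-Seneta-Heyde}, to get $\phi_n(ry)\to\phi(ry)$ pointwise. A pointwise limit of Laplace transforms that is continuous at the origin is again a Laplace transform, which closes the argument. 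So the missing idea is precisely to approximate $D_\infty$ by $\sqrt{n}\,W_n$ and exploit that the analogous statement for $W_n$ is already available.
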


\begin{proof}
Recalling the shift notation $[\, \cdot \, ]_v$ from \eqref{eq:shift-operator}, we can rewrite the formula \eqref{def-derivative-martingale} for $D_n$ as follows, by decomposing at the first generation:
	\begin{align*}
		D_{n+1}& = \sum_{|v|=1} \sum_{|u|=n} \big( S_{vu} + \ell_{\alpha} (X_{vu}) \big) e^{- \alpha S_{vu}}  r_{\alpha}(X_{vu})   \notag\\
		& =  \sum_{|v|=1} S_{v}  e^{- \alpha S_{v}}   \Big(  \sum_{|u| = n}   
		e^{- \alpha (S_{vu} - S_{v}) } r_{\alpha}(X_{vu})  \Big) \notag\\
		& \quad +  \sum_{|v|=1} e^{- \alpha S_{v}}  \Big(  \sum_{|u| = n}  \big( (S_{vu} - S_{v})  + \ell_{\alpha} (X_{vu}) \big) 
		e^{- \alpha (S_{vu} - S_{v}) } r_{\alpha}(X_{vu}) \Big)  \\
		& = \sum_{|v|=1} S_v e^{-\alpha S_v} \big[W_n]_v + \sum_{|v|=1} e^{-\alpha S_v} [D_n]_v.
	\end{align*}
	By Theorem \ref{Thm-conv-deriv-mart}, for any $x \in \bb S_+^{d-1}$, the derivative martingale $D_n$ converges to a nonnegative limit $D_\infty$, $\bb P_x$-a.s., while by Theorem \ref{Thm:Biggins-bb}, it holds that $\lim_{n \to \infty} W_n=0$, $\bb P_x$-a.s. under condition \ref{Condi_ms}. As a consequence, the limit $D_\infty$ satisfies the following equation: $\bb P_x$-a.s., 
	\begin{equation}\label{eq:as-identity-D}
		D_\infty = \sum_{|v|=1} e^{-\alpha S_v} [D_\infty]_v.
	\end{equation}
	Hence, for the function $\phi$ given by \eqref{eq:solutionSFPE}, it follows that
	\begin{align*}
		\phi(ry) &= \bb E_y \Big[ \exp (-r^\alpha K \sum_{|v|=1)} e^{-\alpha S_v} [D_\infty]_v ) \Big] =  \bb E_y \Big[ \bb E \big[ \exp (-r^\alpha K \sum_{|v|=1)} e^{-\alpha S_v} [D_\infty]_v ) \big| \mathscr{F}_1 \big] \Big] \\
		& = \bb E_y \Big[ \prod_{|v|=1} \bb E \big[ \exp (-(r e^{- S_v})^\alpha K [D_\infty]_v ) \big| \mathscr{F}_1 \big] \Big] 
		= \bb E_y \Big[ \prod_{|v|=1} \phi(re^{-S_v} X_v) \Big].
	\end{align*}
Thus we have found a (one-parameter) family of solutions to \eqref{eq:FPMST-Laplace}.  By Theorem \ref{Thm-conv-deriv-mart}, the function $\phi$  is well defined and nontrivial; it remains to show  that $\phi$ is indeed the Laplace transform of a probability measure. This is proved as follows. By the arguments given in the proof of \cite[Theorem 5.1]{Men16}, $\phi_n(ry):= \bb E_y \exp(-r^\alpha K \sqrt{\frac12 \pi n \sigma_\alpha^2} W_n)$ is a Laplace transform for any $n \ge 1$. By Theorem \ref{Thm-Seneta-Heyde}, $\lim_{n \to \infty} \phi_n(ry) = \phi(ry)$ for all $r \ge 0$, $y \in \bb S_+^{d-1}$, hence $\phi$ is a Laplace transform as well.
\end{proof}

The fact that $\phi$ as given by \eqref{eq:solutionSFPE} is a solution to \eqref{eq:FPMST-Laplace} was proved in \cite{KM15} subject to strong assumptions, namely that $\mu$ is a finite measure and has a density, or finite support which is moreover concentrated on rank-one matrices. These assumptions were particularly used in the proof for the convergence of $D_n$.

\medskip

\textit{Invariant measure for stochastic recursions on $\bb R_+^d$ in the critical case.}
Let $A$ be a random nonnegative matrix and $B$ a random nonnegative vector. Let $(A_n, B_n)_{n \ge 1}$ be a sequence of i.i.d.~copies of $(A,B)$. The affine stochastic recursion  
$$ X_{n+1}= A_n X_n + B_n$$ has attracted much attention since the seminal paper by Kesten \cite{Kes73}, see e.g. the book \cite{BDM16} for a recent survey. 
In the critical case, which corresponds to our assumption \ref{Condi_ms2} (with $A$  taking the role of $G_1$), the Markov chain $(X_n)_{n \geq 1}$ admits no stationary probability distribution but possesses a unique invariant Radon measure. 
This was proved only very recently in \cite{BPP21}.
As the authors of that paper state (cf. \cite[Remarks after Theorem 1.1]{BPP21}) a crucial ingredient to study further the tails of the invariant measure, by extending the approach taken in \cite[Section 4]{BB15}, would be the development of renewal theory for centered Markov random walks, driven by nonnegative random matrices; a fundamental step towards that direction is provided by our Theorem \ref{Thm-ladder-height}.

\section{Auxiliary statements}


\subsection{Transfer operators}\label{sect:spectralgap}
	
In this section, we provide spectral gap properties of the transfer operator $P_s$ defined in \eqref{Def-Ps}. In particular, we show how the analysis of \cite{BDGM14,BM16,GL16} (for $\mu$ being a probability measure) carries over to our setting (where $\mu$ is $\sigma$-finite).

\begin{lemma}\label{lem:exchangeability}
	Assume that $I$ is nonempty (hence $\mu$ is $\sigma$-finite). Then, it holds that for every $n \in \bb N$ and nonnegative measurable function $f$,
	\begin{align}
		 \bb E \Big[ \sum_{|u|=n} f(g_{u|1}, g_{u|2}, \dots, g_{u|n-1},g_u) \Big] 
		 & = \int f(g_1, \dots, g_n) \mu^{\otimes n}(dg_1, \dots, dg_n)  \label{eq:exchangeability} \\
		 & =  \bb E \Big[ \sum_{|u|=n} f(g_{u}, g_{u|n-1}, \dots, g_{u|2},g_{u|1}) \Big]  \label{eq:exchangeability-002}
	\end{align}
\end{lemma}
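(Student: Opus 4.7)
The plan is to prove the first equality in \eqref{eq:exchangeability} by induction on $n$, and then deduce \eqref{eq:exchangeability-002} from it by a change of variables in the product measure $\mu^{\otimes n}$, which is symmetric under permutation of coordinates. Throughout, we invoke Tonelli's theorem for nonnegative integrands; here the $\sigma$-finiteness of $\mu$ (which is noted in the paragraph following \eqref{def-measure-mu} under the assumption $I^\circ \neq \emptyset$, a consequence of $I$ being nonempty plus standard convexity) makes all manipulations of iterated integrals legitimate.

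For the base case $n=1$, the identity $\bb E[\sum_{|u|=1} f(g_u)] = \int f(g)\mu(dg)$ is exactly the definition of the intensity measure $\mu$ in \eqref{def-measure-mu}, extended from indicators to nonnegative measurable $f$ by the monotone class theorem. For the induction step, assuming the identity holds at generation $n$, write any $u$ with $|u|=n+1$ uniquely as $u = vj$ with $|v|=n$ and $1 \leq j \leq N_v$. Then $g_{u|k} = g_{v|k}$ for $1 \leq k \leq n$, and $g_u = g_{vj}$, so
\begin{align*}
\bb E \Big[ \sum_{|u|=n+1} f(g_{u|1}, \dots, g_{u|n}, g_u) \Big]
 = \bb E \Big[ \sum_{|v|=n} \sum_{j=1}^{N_v} f(g_{v|1}, \dots, g_{v|n}, g_{vj}) \Big].
\end{align*}
Since the family $(\mathscr{N}_u)_{u \in \bb U}$ is i.i.d.\ with common law $\eta$, and since $\mathscr{F}_n$ as defined in \eqref{def-filtration-Fn} is generated by $(\mathscr{N}_w)_{|w| \leq n-1}$ (whose offspring yield the $g_v$ for $|v| \leq n$), the point process $\mathscr{N}_v$ is independent of $\mathscr{F}_n$ for each $v$ with $|v|=n$. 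Hence conditionally on $\mathscr{F}_n$, the inner sum is just $\int f(g_{v|1}, \dots, g_{v|n}, g)\, \mu(dg)$. Taking expectations and applying Tonelli then the induction hypothesis to the function $(g_1,\dots,g_n) \mapsto f(g_1,\dots,g_n, g)$ for each fixed $g$ yields
\begin{align*}
\bb E \Big[ \sum_{|u|=n+1} f(g_{u|1}, \dots, g_u) \Big]
= \int \!\!\int f(g_1, \dots, g_n, g)\, \mu^{\otimes n}(dg_1, \dots, dg_n)\, \mu(dg),
\end{align*}
which equals $\int f\, d\mu^{\otimes(n+1)}$ by Tonelli, completing the induction.

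For \eqref{eq:exchangeability-002}, define the reversed integrand $\tilde f(g_1, \dots, g_n) := f(g_n, g_{n-1}, \dots, g_1)$ and apply \eqref{eq:exchangeability} to $\tilde f$. The left-hand side becomes $\bb E[\sum_{|u|=n} f(g_u, g_{u|n-1}, \dots, g_{u|1})]$, while on the right, invariance of $\mu^{\otimes n}$ under the permutation $(g_1,\dots,g_n) \mapsto (g_n, \dots, g_1)$ (a consequence of Tonelli and the fact that $\mu^{\otimes n}$ is a product) gives $\int \tilde f\, d\mu^{\otimes n} = \int f\, d\mu^{\otimes n}$. This yields \eqref{eq:exchangeability-002} and closes the proof. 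The only point that requires any care is the $\sigma$-finiteness of $\mu$ needed to justify Tonelli (the remark after \eqref{def-measure-mu} shows this follows from $I$ being nonempty); apart from that the argument is completely standard for a Galton--Watson tree with i.i.d.\ offspring point processes.
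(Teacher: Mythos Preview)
Your proof is correct and follows essentially the same idea as the paper: both arguments peel off one generation at a time by conditioning on $\mathscr{F}_n$ and using that $\mathscr{N}_v$ is independent of $\mathscr{F}_n$ for $|v|=n$. The paper first reduces to indicator functions of rectangles via the monotone class theorem and then iterates the conditioning, whereas you induct directly on $n$ for general nonnegative $f$; these are equivalent standard packagings of the same computation. One minor remark: your parenthetical that $I^\circ\neq\emptyset$ follows from $I\neq\emptyset$ ``plus standard convexity'' is not right (a single point can be all of $I$), but this is immaterial since $\sigma$-finiteness of $\mu$ already follows from $I\neq\emptyset$ alone (as $\mu$ integrates the everywhere positive function $g\mapsto\|g\|^s$), and in any case the lemma statement grants it to you.
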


\begin{proof}
	Denote by $\scr{B}(\bb M)$ the Borel $\sigma$-field on $\bb M$. Then, for every $n \in \bb N$, the product $\sigma$-field $\scr{B}(\bb M^n)$  is generated by the $\pi$-system  $\{A_1 \times \cdots \times A_n \, : \, A_i \in \scr{B}(\bb M),\ 1\le i \le n\}$ of rectangles.
	Using the monotone class theorem \cite[Theorem 5.2.2]{Dur19}, it suffices to show \eqref{eq:exchangeability} for indicator functions of rectangles.
Recall that, by \eqref{def-filtration-Fn}, $\mathscr{F}_n$ denotes the natural filtration with respect to generations in the matrix branching random walk. 
Recall also that, by \eqref{def-measure-mu}, $\mu(A)=\bb E \big[ \sum_{|u|=1} \mathds{1}_A(g_u) \big]$ for any $A \in \scr{B}(\bb M)$.
Then, by taking conditional expectation, we get 
	\begin{align*}
		&\bb E \Big[ \sum_{|u|=n} \mathds{1}_{A_1}(g_{u|1}) \mathds{1}_{A_2}(g_{u|2})\cdots \mathds{1}_{A_{n-1}}(g_{u|n-1}) \mathds{1}_{A_n}(g_u) \Big] \\
		& = \bb E \bigg[ \bb E \Big[ \sum_{|u|=n-1}  \mathds{1}_{A_1}(g_{u|1}) \mathds{1}_{A_2}(g_{u|2})\cdots \mathds{1}_{A_{n-1}}(g_{u}) \sum_{|w|=1} \mathds{1}_{A_n}(g_{uw}) \Big| \mathscr{F}_{n-1} \Big]\bigg] \\
		& = \bb E \bigg[   \sum_{|u|=n-1}  \mathds{1}_{A_1}(g_{u|1}) \mathds{1}_{A_2}(g_{u|2})\cdots \mathds{1}_{A_{n-1}}(g_{u}) 
		 \bb E \Big[\sum_{|w|=1} \mathds{1}_{A_n}(g_{uw}) \Big| \mathscr{F}_{n-1} \Big] \bigg] \\
		& = \bb E \bigg[   \sum_{|u|=n-1}  \mathds{1}_{A_1}(g_{u|1}) \mathds{1}_{A_2}(g_{u|2})\cdots \mathds{1}_{A_{n-1}}(g_{u}) \mu(A_n) \bigg] \\
		& = \dots = \mu(A_1)\mu(A_2)\cdots \mu(A_n). 
	\end{align*}
This shows \eqref{eq:exchangeability}. 
	If we start with the reversed order of the sets at the beginning, 
	we also arrive at $\prod_{i=1}^n \mu(A_i)$, hence the second identity \eqref{eq:exchangeability-002} holds as well.
\end{proof}
	
The following corollay gives the $n$-th iterate of the operator $P_s$ defined by \eqref{Def-Ps}. 
Write $G_n:=g_n \cdots g_1$. 

\begin{corollary}\label{lem:Psn} 
For any $s \in I$, 
$n \geq 1$ and bounded measurable function $\varphi$ on $\bb S_+^{d-1}$, it holds
\begin{align}\label{eq:Psn}
P_s^n \varphi(x) = \int_{\Gamma^n} e^{s \sigma (G_n,x)} \varphi(G_n \cdot x) \mu^{\otimes n}(dg_1, \dots, dg_n) 
=  \bb E \bigg( \sum_{|u| = n}  e^{s \sigma (G_u, x)} \varphi(G_u \cdot x) \bigg).
\end{align}
\end{corollary}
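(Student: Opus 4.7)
The plan is to prove the two equalities separately and then chain them.

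For the first equality, I would proceed by induction on $n$. The base case $n=1$ is exactly the definition \eqref{Def-Ps} of $P_s$ (since $\mu$ is $\sigma$-finite, writing $P_s\varphi(x)$ as an integral against $\mu$ is legitimate). For the inductive step, assuming the identity at level $n$ for every bounded measurable test function, I would apply it with $P_s\varphi$ in place of $\varphi$, so that
\begin{align*}
P_s^{n+1}\varphi(x) &= P_s^n(P_s\varphi)(x) \\
&= \int_{\Gamma^n} e^{s\sigma(G_n,x)} \bigl( P_s\varphi \bigr)(G_n\!\cdot\! x)\,\mu^{\otimes n}(dg_1,\dots,dg_n) \\
&= \int_{\Gamma^n}\!\int_{\Gamma} e^{s\sigma(G_n,x)} e^{s\sigma(g_{n+1},G_n\cdot x)} \varphi\bigl(g_{n+1}\!\cdot\!(G_n\!\cdot\!x)\bigr)\,\mu(dg_{n+1})\,\mu^{\otimes n}(dg_1,\dots,dg_n).
\end{align*}
The cocycle property \eqref{eq-cocycle-property} collapses the two exponentials to $e^{s\sigma(g_{n+1}G_n,x)} = e^{s\sigma(G_{n+1},x)}$, and the corresponding cocycle identity for the projective action yields $g_{n+1}\!\cdot\!(G_n\!\cdot\!x) = G_{n+1}\!\cdot\!x$, which closes the induction.

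For the second equality, I would appeal directly to Lemma \ref{lem:exchangeability}. Recall that by the definition \eqref{def-product-Gu} we have $G_u = g_u\,g_{u|n-1}\cdots g_{u|1}$, so setting
\[
f(h_1,\dots,h_n) := e^{s\sigma(h_n\cdots h_1,\,x)}\,\varphi\bigl((h_n\cdots h_1)\!\cdot\! x\bigr)
\]
(with $x$ frozen) and evaluating $f(g_{u|1},g_{u|2},\dots,g_u)$ reproduces exactly $e^{s\sigma(G_u,x)}\varphi(G_u\!\cdot\! x)$. Applying identity \eqref{eq:exchangeability} of Lemma \ref{lem:exchangeability} to this $f$ then gives
\[
\bb E \bigg[ \sum_{|u|=n} e^{s\sigma(G_u,x)} \varphi(G_u\!\cdot\! x) \bigg] = \int_{\Gamma^n} e^{s\sigma(G_n,x)} \varphi(G_n\!\cdot\! x)\,\mu^{\otimes n}(dg_1,\dots,dg_n),
\]
which matches the middle expression of \eqref{eq:Psn}. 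The two proven identities combined finish the proof.

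Two small points to verify along the way: first, that $f$ as defined above is measurable and that $|f|$ is $\mu^{\otimes n}$-integrable, both of which follow from boundedness of $\varphi$ and the hypothesis $s \in I$ (so the exchanged expectation $\bb E \sum_{|u|=n}\|G_u\|^s$ is finite, giving an integrable majorant and justifying Fubini in the inductive computation above). Second, that Lemma \ref{lem:exchangeability} is phrased for nonnegative measurable $f$, so strictly speaking one first applies it to $|f|$ to obtain integrability and then to $f$ itself via the usual positive/negative-part decomposition; since $f\ge 0$ here this is in fact automatic. No step looks truly delicate --- the only thing to watch is matching the ordering of factors in $G_u$ with the ordering used in Lemma \ref{lem:exchangeability}, which is why I select \eqref{eq:exchangeability} rather than \eqref{eq:exchangeability-002}.
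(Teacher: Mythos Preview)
Your proof is correct and follows essentially the same approach as the paper: induction on $n$ for the first identity using the cocycle property, and a direct appeal to Lemma~\ref{lem:exchangeability} for the second. The only cosmetic difference is that the paper unrolls the induction as $P_s^{n+1}=P_s(P_s^n\varphi)$ (applying the one-step definition on the outside) whereas you write $P_s^{n+1}=P_s^n(P_s\varphi)$; both are fine. One tiny slip: in your final paragraph you assert ``$f\ge 0$ here'', but $\varphi$ is only bounded measurable, not nonnegative, so $f$ need not be nonnegative---however, you had already observed that the positive/negative-part decomposition handles this, so the argument stands.
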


\begin{proof} We prove the first identity by induction, the second one being a direct consequence of Lemma \ref{lem:exchangeability}.
 The case $n=1$ is the definition of $P_s$ (see \eqref{Def-Ps}). 
Assume \eqref{eq:Psn} holds for some $n \geq 1$, 
then, using the cocycle property \eqref{eq-cocycle-property}, we have
\begin{align*}
P_s^{n+1} \varphi(x) & = \int_{\Gamma}  e^{s \sigma(g_0,x)} P_s^n \varphi(g_0 \cdot x) \mu(d g_0) \\ 
& =  \int_{\Gamma}  e^{s \sigma(g_{0},x)} \left[ \int_{\Gamma^n} e^{s \sigma (G_n,g_0 \cdot x)} \varphi(G_n \cdot (g_0 \cdot x)) \mu^{\otimes n}(dg_1, \dots, dg_n) \right] \mu(d g_0) \\
& =  \int_{\Gamma^{n+1}} e^{s \sigma (G_n g_0,x)} \varphi(G_n g_0 \cdot x) \mu^{\otimes n+1}(dg_0, dg_1, \dots, dg_n). 
\end{align*}
This completes the proof of the corollary. 
\end{proof}

To state the main properties of $P_s$, we will also study the conjugate transfer operator $P_s^*$ defined as follows: 
for any bounded measurable function $\varphi$ on $\bb S_+^{d-1}$ and $x \in \bb S_+^{d-1}$, 
\begin{equation}\label{eq:defnPsstar}
P_s^* \varphi(x) := \bb E \Big( \sum_{|u| = 1}  e^{s \sigma (G_u^*, x)} \varphi(G_u^* \cdot x) \Big) 
 = \int_{\Gamma} e^{s \sigma(g^*,x)} \varphi(g^* \cdot x) \mu(dg),
\end{equation}
where $G_u^*$ denotes the transpose of $G_u$.

Recall from \eqref{def-VGamma-pos} the definition of $\Lambda (\Gamma)$, 
and note that $\Lambda (\Gamma)$ is the unique minimal $\Gamma$-invariant subset of $\bb S_+^{d-1}$
(see \cite[Lemma 4.2]{BDGM14}). In the same way, define $\Gamma^*$ to be the smallest closed subsemigroup of nonnegative $d \times d$ matrices that covers $\{G_u^* \, : \, |u|=1\}$ with $\bb P$-probability 1. Then $\Lambda(\Gamma^*)$ is the unique minimal $\Gamma^*$-invariant subset of $\bb S_+^{d-1}$.

\begin{lemma}\label{lem:ExistenceEigenfunctionsPs}
	Assume condition \ref{CondiAP} and let $s \in I$.
	Then the spectral radius of $P_s$ equals 
	$$m(s):= \lim_{n \to \infty} \bigg( \bb E \Big( \sum_{|u| = n}  \norm{G_u}^s \Big) \bigg)^{1/n}$$
	 and there is an eigenfunction $r_s \in \scr C(\bb S_+^{d-1})$ and a probability eigenmeasure $\nu_s$ on $\bb S^{d-1}_+$ with
	$$ P_s r_s = \mathfrak{m}(s) r_s, \quad P_s \nu_s = \mathfrak{m}(s)\nu_s.$$
	The function $r_s$ is strictly positive and $\min\{s,1\}$-H\"older with respect to the Euclidean distance $|\cdot|$. 
	The support of $\nu_s$ contains $\Lambda(\Gamma)$.
	Similarly, the spectral radius of $P_s^*$ equals $\mathfrak{m}(s)$ and there is a strictly positive $\min\{s,1\}$-H\"older continuous eigenfunction $r_s^*$ on $\bb S^{d-1}_+$ and a probability eigenmeasure $\nu_s^*$ on $\bb S^{d-1}_+$ such that 
	$$ P_s^* r_s^* = \mathfrak{m}(s) r_s^*, \quad P_s^* \nu_s^* = \mathfrak{m}(s)\nu_s^*.$$
\end{lemma}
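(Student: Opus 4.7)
The plan is to reduce the claim to the existing spectral theory of transfer operators associated with products of random matrices developed in \cite{BDGM14, BM16, GL16} in the setting where $\mu$ is a probability measure, and then verify that all estimates used in those works only rely on $s \in I$, so that the extension to the present $\sigma$-finite case is straightforward.

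First, I would establish the spectral radius formula. Applying Corollary~\ref{lem:Psn} to $\varphi \equiv 1$ gives $P_s^n \mathbf{1}(x) = \int_{\Gamma^n} \|G_n x\|^s \, \mu^{\otimes n}(dg_1, \dots, dg_n) = \bb E \sum_{|u|=n} \|G_u x\|^s$. Under \ref{CondiAP} there exists $g_0 \in \Gamma \cap \bb M_+$, and by a classical argument (contraction of the projective action of $g_0$) one has uniform comparability $c \|G_u\| \le \|G_u x\| \le \|G_u\|$ on orbits that first pass through $g_0$; combining with subadditivity (via the cocycle property) yields $\mathfrak m(s) = \lim_n (P_s^n \mathbf{1}(x))^{1/n} = \lim_n (\bb E \sum_{|u|=n} \|G_u\|^s)^{1/n}$ independent of $x$, which is Gelfand's formula for the spectral radius on $\scr C(\bb S_+^{d-1})$.

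Next, for the existence and regularity of the eigenfunction and eigenmeasure, I would work in the Banach space $\scr C^\gamma(\bb S_+^{d-1})$ of $\gamma$-H\"older continuous functions, with $\gamma = \min\{s, 1\}$. The core analytic input, established in \cite[Section 4]{BDGM14}, is a Doeblin--Fortet (Lasota--Yorke) inequality of the form $\|P_s^n \varphi\|_\gamma \le A\, \rho^n \|\varphi\|_\gamma + B\, \mathfrak m(s)^n \|\varphi\|_\infty$ with $\rho < \mathfrak m(s)$, whose proof rests only on the cocycle property \eqref{eq-cocycle-property}, the contraction of $g \mapsto g\cdot x$ on the positive cone coming from \ref{CondiAP}(ii), and finiteness of $\int \|g\|^s \mu(dg)$ (i.e.\ $s \in I$); the normalization of $\mu$ is nowhere used. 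Quasi-compactness then follows from the Ionescu-Tulcea--Marinescu theorem, and the irreducibility coming from \ref{CondiAP} ensures that $\mathfrak m(s)$ is a simple and dominant eigenvalue. This yields a H\"older eigenfunction $r_s$ and an eigenmeasure $\nu_s$ satisfying $P_s r_s = \mathfrak m(s) r_s$ and $P_s \nu_s = \mathfrak m(s) \nu_s$; strict positivity of $r_s$ on all of $\bb S_+^{d-1}$ then follows because every $x$ can be mapped into a neighborhood of $\Lambda(\Gamma)$ by some $g \in \Gamma \cap \bb M_+$ where $r_s$ is bounded below, while the support of $\nu_s$ is $\Gamma$-invariant under $g \cdot x$ and hence must contain the unique minimal invariant set $\Lambda(\Gamma)$.

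For the conjugate operator $P_s^*$, I would observe that transposition preserves both the allowability and the existence of a strictly positive element (the transpose of a positive matrix is positive), so $\Gamma^*$ also satisfies \ref{CondiAP} with associated minimal set $\Lambda(\Gamma^*)$. Applying the same Lasota--Yorke and quasi-compactness machinery to $P_s^*$ yields $r_s^*$ and $\nu_s^*$ with the stated properties; the identity of spectral radii of $P_s$ and $P_s^*$ follows from Gelfand's formula together with the observation that $\|G_u^*\|$ and $\|G_u\|$ are comparable (both equal, up to dimension-dependent constants, to $\max_{i,j} g^{i,j}$ of the corresponding product). The main obstacle I anticipate is checking carefully that every place where \cite{BDGM14} invokes $\mu(\Gamma) = 1$ can be replaced by the weaker integrability $\int \|g\|^s\,\mu(dg) < \infty$; this is mostly a bookkeeping exercise, but it requires being careful with the normalization when passing between the operator $P_s$ and the probabilistic picture, perhaps by temporarily working with the finite measure $\widetilde\mu(dg) := \|g\|^s \mu(dg) / \int \|g'\|^s \mu(dg')$ and the associated conjugated operator.
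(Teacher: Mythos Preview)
Your approach is valid but differs substantively from the paper's. You propose to obtain $r_s$ and $\nu_s$ via the full quasi-compactness machinery (Lasota--Yorke inequality in a H\"older space, then Ionescu-Tulcea--Marinescu), which would in fact deliver the spectral gap at the same time. The paper instead follows the more elementary and constructive route of \cite[Proposition 4.4]{BDGM14}: it first obtains the eigenmeasure $\nu_s^*$ of $P_s^*$ by Schauder--Tychonoff applied to $\nu \mapsto P_s^*\nu / (P_s^*\nu)(1)$, and then \emph{defines} $r_s(x) := \int \langle x,y\rangle^s \, \nu_s^*(dy)$. The duality identity $\langle G_u x, y\rangle = \langle x, G_u^* y\rangle$ then shows directly that $P_s r_s = k(s) r_s$ for the eigenvalue $k(s)$ of $\nu_s^*$, and the explicit integral formula gives strict positivity (because $\mathrm{supp}(\nu_s^*) \supset \Lambda(\Gamma^*)$ contains a strictly positive vector) and $\min\{s,1\}$-H\"older regularity for free. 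The identification $k(s)=\mathfrak m(s)=m(s)$ is then done via Gelfand plus \cite[Lemma 4.5]{BDGM14}. Your approach buys the spectral gap immediately; the paper's buys an explicit formula for $r_s$ and avoids the ITM theorem at this stage, postponing quasi-compactness to Proposition~\ref{prop:Ps}.

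One caution on your spectral radius argument: the uniform lower bound $c\|G_u\| \le \|G_u x\|$ does \emph{not} hold for all $G_u \in \Gamma$ under \ref{CondiAP} alone (it would under \ref{Condi-Furstenberg-Kesten}). Your phrase ``on orbits that first pass through $g_0$'' acknowledges this, but the step from there to the limit identity needs a genuine argument; the paper handles it via \cite[Lemma~4.5]{BDGM14}, which gives $\int \|g^* y\|^s \nu_s^*(dy) \ge d_s \|g\|^s$ using the structure of $\nu_s^*$ rather than a pointwise bound.
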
	

\begin{proof}
This can be proved along the same lines as \cite[Proposition 4.4]{BDGM14}, so we only give an outline and sketch the differences. It is crucial that for $s \in I$, the operators $P_s$ and $P_s^*$ are bounded operators. 
	
	By an appeal to the Schauder-Tychonoff theorem for the nonlinear operator $\wt{P}_s^* \nu^*:= P_s^* \nu^* / (P_s^*\nu^*)(1)$, there is an invariant probability measure $\nu_s^*$ for $\wt{P}_s^*$, which in turn is an eigenmeasure of $P_s^*$. We denote the corresponding eigenvalue by $k(s)>0$ and we will show later that this is the dominant one.
	Upon defining
	$$ r_s(x) := \int_{ \bb S^{d-1}_+ } \langle x, y \rangle^s \nu_s^*(dy), $$
	we first observe that $r_s(x)>0$ for all $x \in \bb S^{d-1}_+$. To wit, note that the support of $\nu_s^*$ is $\Gamma^*$-invariant, hence contains $\Lambda(\Gamma^*)$. Note that $\Gamma^*$ satisfies \ref{CondiAP} as well, hence there is (by the existence of a positive matrix in $\Gamma^*$) a strictly positive vector in $\Lambda(\Gamma^*)$, which implies $r_s(x)>0$ for all $x \in \bb S^{d-1}_+$. The expression for $r_s$ also shows the asserted H\" older continuity.
	
The following calculation shows that $r_s$ is an eigenfunction of $P_s$ with eigenvalue $\mathfrak{m}(s)$: 
\begin{align*}
P_s r_s(x) 
& = \bb E \Big[ \sum_{|u|=1} \norm{G_u x}^s  \int_{ \bb S^{d-1}_+ }  \langle G_u \cdot x,y \rangle^s \nu_s^*(dy) \Big] 
= \bb E \Big[ \sum_{|u|=1}  \int_{ \bb S^{d-1}_+ }  \langle G_u x,y \rangle^s \nu_s^*(dy) \Big]  \notag\\
& = \bb E \Big[ \sum_{|u|=1}  \int_{ \bb S^{d-1}_+ }  \langle x, G_u^* y \rangle^s \nu_s^*(dy) \Big] 
 = \bb E \Big[ \sum_{|u|=1} \norm{G_u^* y}^s \int_{ \bb S^{d-1}_+ }  \langle x, G_u^* \cdot y \rangle^s \nu_s^*(dy) \Big]  \notag\\
& = \int_{ \bb S^{d-1}_+ }  P_s^* \big( \langle x, \cdot \rangle^s\big)(y) \nu_s^*(dy)  
 =  \int_{ \bb S^{d-1}_+ }   \langle x, y \rangle^s (P_s^*\nu_s^*)(dy) = k(s) r_s(x). 
\end{align*}

Turning to the identification $\mathfrak{m}(s)=k(s)=m(s)$, we first show that $m(s)$ is well defined. Upon defining $Y_n:=\sum_{|u|=n} \norm{G_u}^s$, we note that by submultiplicativity of the matrix norm, 
\begin{align*}
	Y_{n+m} = \sum_{|u|=n} \sum_{|v|=m} \norm{G_{uv}}^s \leq \sum_{|u|=n} \sum_{|v|=m} \norm{G_u}^s \norm{[G_v]_u}^s = \sum_{|u|=n}  \norm{G_u}^s \sum_{|v|=m}\norm{[G_v]_u}^s.
\end{align*}
Hence, $ \bb E Y_{n+m} \leq \bb E Y_n \cdot \bb E Y_m$ and thus the sequence $y_n:=\log \bb E Y_n$ is subadditive 
and converges to $\inf_{n \geq 1} \frac{y_n}{n}$ by Fekete's subadditivity lemma.

Next, by Lemma \ref{lem:Psn}, we can bound the norm of $P_s^n$ as follows:
\begin{align*}
\abs{P_s^n \varphi(x)} 
\leq  \norm{\varphi}_\infty \bb E \Big[  \sum_{|u| = n} \norm{G_u x}^s\Big] 
\leq \norm{\varphi}_\infty \bb E \Big[  \sum_{|u| = n} \norm{G_u}^s\Big].
\end{align*}
Hence, by the Gelfand formula, the spectral radius $\mathfrak{m}(s)$ of $P_s$ satisfies
$$ k(s) \leq \mathfrak{m}(s) = \lim_{n \to \infty} \norm{P_s^n}^{1/n} \leq \lim_{n \to \infty} \bigg(\bb E \Big[  \sum_{|u| = n} \norm{G_u}^s\Big]\bigg)^{1/n}=m(s).$$
This shows in particular that $m(s)>0$. 

To show the equality, it now suffices to prove that $k(s) \geq m(s)$. We use \cite[Lemma 4.5]{BDGM14} which shows that there is a constant $d_s>0$ such that for all $g \in \bb M$, 
$$ \int_{ \bb S^{d-1}_+ } \norm{g^*y}^s \nu_s^*(dy) \geq d_s \norm{g}^s.$$
Consequently,
\begin{align*}
	k(s)^n =& (P_s^* \nu_s^*)(1) = 
	\bb E \Big[ \sum_{|u|=n} \int_{\bb S^{d-1}_+} \norm{G_u^*y}^s \nu_s^*(dy) \Big] 
	\geq d_s \bb E \Big[ \sum_{|u|=n}  \norm{G_u}^s  \Big], 
\end{align*}
thus $k(s)\ge m(s)$.

In the same way, the existence of $\nu_s$ and $r_s^*$ is proved. Finally, note that as well $m(s) = \lim_{n \to \infty} \big( \bb E \big[ \sum_{|u| = n}  \norm{G_u^*}^s \big] \big)^{1/n}$, hence the spectral radii of $P_s$ and $P_s^*$ coincide by the arguments given above.
\end{proof}

In order to show that $\mathfrak{m}(s)$ is an algebraically simple eigenvalue, we consider the following
Markov operator on $\scr C(\bb S_+^{d-1})$, defined by
$$ 
Q_s \varphi(x) := \frac{1}{\mathfrak{m}(s) r_s(x)} P_s \big(\varphi r_s \big)(x). 
$$
By Theorem 4.11 in \cite{BDGM14},  the probability measure 
\begin{align} \label{invar mes for Q_s-001}
\pi_s(dx) = \frac{r_s(x)}{ \mathfrak m (s)} \nu_s (dx) 
\end{align}
is the unique invariant measure for the Markov operator  $Q_s$.  
Upon setting
\begin{equation}\label{eq:qnsx}
	 q_n^s(x,g):= \frac{|gx|^s r_s(g \cdot x)}{\mathfrak{m}(s)^n r_s(x)},
\end{equation}
we can show as in the proof of Corollary \ref{lem:Psn} that
$Q_s^n \varphi(x) = \int q_n^s(x,G_n) \mu^{\otimes n}(dg_1, \dots, dg_n).$ It follows that $q_n^s(x,\cdot) \mu^{\otimes n}$ is a projective system, which allows to apply Kolmogorov's extension theorem to obtain a  probability measure $\bb Q_x^s$  on $\bb M^{\bb N}$. This generalizes the definition of $\bb Q_x^\alpha$ in \eqref{def:Qsxn} to the case of general $s \in I$. Recalling the definition of $(X_n,S_n)$ from \eqref{def-Sn-Xn}, we fix initial values by using the convention $\bb Q_{x,b}^s(X_0=x, S_0=b)=1$, with the identification $\bb Q_x^s=\bb Q_{x,0}^s$.

As a consequence of Lemma \ref{lem:exchangeability}, we have the following identity.

\begin{corollary}[Many-to-one formula]\label{cor:many-to-one}
For any  $s \in I$ and $x \in \bb S_+^{d-1}$, $b \in \bb R$,   
$n \geq 1$ and any bounded measurable function $f: (\bb S_+^{d-1} \times \bb R)^{n+1} \to \bb R$, 
\begin{align}\label{Formula_many_to_one}
	&  \bb E_{x,b} \bigg[ \sum_{|u| = n}  
	f \left( X_{\o},  S_{\o},  X_{u|1}, S_{u|1}, \ldots, X_u, S_u \right) \bigg]  \notag\\
	& = r_{s}(x)  \mathfrak m(s)^n  
	\bb E_{\bb Q_{x, b}^s} \bigg[ \frac{1}{r_{s} (X_n)} e^{ s (S_n - S_0) } f \big( X_0, S_0, X_1, S_1, \ldots, X_n, S_n \big) \bigg]  
\end{align}
or, equivalently,
\begin{align}\label{Formula_many_to_oneII}
	&  \bb E_{x,b} \bigg[ \sum_{|u| = n}  \frac{e^{-s (S_u-S_{\o})}r_s(X_u)}{\mathfrak{m}(s)^n r_{s} (X_{\o})}
	f \left( X_{\o},  S_{\o},  X_{u|1}, S_{u|1}, \ldots, X_u, S_u \right) \bigg]  \notag\\
	& =  
	\bb E_{\bb Q_{x, b}^s} \bigg[ f \big( X_0, S_0, X_1, S_1, \ldots, X_n, S_n \big) \bigg].  
\end{align}
\end{corollary}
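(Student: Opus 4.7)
The plan is to prove the many-to-one formula \eqref{Formula_many_to_one} by the following three-step reduction; the equivalent reformulation \eqref{Formula_many_to_oneII} will then follow by rearranging the scalar factors from one side to the other.

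\textbf{Step 1 (Unpacking the path into factors).} Using the defining relations $X_{u|k}=G_{u|k}\cdot x$ and $S_{u|k}=b-\sigma(G_{u|k},x)=b-\log\|G_{u|k}x\|$ together with the cocycle property \eqref{eq-cocycle-property}, the summand
\[
f\bigl(X_\o,S_\o,X_{u|1},S_{u|1},\ldots,X_u,S_u\bigr)
\]
depends on $u$ only through the string of matrices $(g_{u|1},g_{u|2},\ldots,g_u)$ along the branch from $\o$ to $u$. Hence it equals $h(g_{u|1},\ldots,g_u)$ for the bounded measurable function $h:\bb M^n\to\bb R$ defined by
\[
h(g_1,\ldots,g_n):=f\bigl(x,b,\,g_1\!\cdot\! x,\,b-\log\|g_1 x\|,\,\ldots,\,g_n\cdots g_1\!\cdot\! x,\,b-\log\|g_n\cdots g_1 x\|\bigr).
\]

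\textbf{Step 2 (Rewriting the branching sum via $\mu^{\otimes n}$).} Apply Lemma \ref{lem:exchangeability}, formula \eqref{eq:exchangeability}, with this $h$ to obtain
\[
\bb E_{x,b}\bigg[\sum_{|u|=n}f\bigl(X_\o,S_\o,X_{u|1},S_{u|1},\ldots,X_u,S_u\bigr)\bigg]=\int_{\Gamma^n}h(g_1,\ldots,g_n)\,\mu^{\otimes n}(dg_1,\ldots,dg_n).
\]

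\textbf{Step 3 (Change of measure to $\bb Q_{x,b}^s$).} Recall from \eqref{eq:qnsx} that the kernel driving $\bb Q_x^s$ is
\[
q_n^s(x,G_n)=\frac{\|G_n x\|^s\,r_s(G_n\cdot x)}{\mathfrak m(s)^n\,r_s(x)},\qquad G_n:=g_n\cdots g_1,
\]
and that under $\bb Q_{x,b}^s$ we have $X_n=G_n\cdot x$ and $S_n-S_0=-\log\|G_n x\|$, so in particular $\|G_n x\|^s=e^{-s(S_n-S_0)}$ and $r_s(G_n\cdot x)=r_s(X_n)$. Therefore
\[
q_n^s(x,G_n)=\frac{e^{-s(S_n-S_0)}\,r_s(X_n)}{\mathfrak m(s)^n\,r_s(x)}.
\]
Multiplying and dividing by this positive density inside the integral of Step 2 and recognizing $q_n^s(x,G_n)\,\mu^{\otimes n}$ as the law of $(g_1,\ldots,g_n)$ under $\bb Q_{x,b}^s$ yields
\[
\int_{\Gamma^n}h(g_1,\ldots,g_n)\,\mu^{\otimes n}(dg_1,\ldots,dg_n)=r_s(x)\,\mathfrak m(s)^n\,\bb E_{\bb Q_{x,b}^s}\!\bigg[\frac{e^{s(S_n-S_0)}}{r_s(X_n)}\,h(g_1,\ldots,g_n)\bigg].
\]
Finally, under $\bb Q_{x,b}^s$ the path variables $(X_k,S_k)_{0\le k\le n}$ are built from $(g_1,\ldots,g_n)$ by exactly the same recursion that defined $h$, so $h(g_1,\ldots,g_n)=f(X_0,S_0,X_1,S_1,\ldots,X_n,S_n)$, and \eqref{Formula_many_to_one} follows. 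The equivalent form \eqref{Formula_many_to_oneII} is obtained by dividing both sides of \eqref{Formula_many_to_one} by $r_s(x)\,\mathfrak m(s)^n$ and absorbing the factor $\frac{e^{-s(S_n-S_0)}r_s(X_n)}{\mathfrak m(s)^n r_s(x)}$ into the summand on the left.

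The proof is essentially a bookkeeping exercise; there is no serious obstacle. The only point that requires a little care is ensuring that the ordering conventions match: the branching product \eqref{def-product-Gu} lists factors from the most recent back to the first generation, while \eqref{eq:exchangeability} integrates the factors in the natural generational order $g_1,\ldots,g_n$. This is exactly the ordering used in \eqref{def-Sn-Xn} for $\bb Q_{x,b}^s$, so no reversal is needed; had one instead started from \eqref{eq:exchangeability-002}, one would have to invoke the identity between laws of $(g_1,\ldots,g_n)$ and $(g_n,\ldots,g_1)$ under $\mu^{\otimes n}$ before identifying the kernel $q_n^s$.
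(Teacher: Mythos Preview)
Your proof is correct and follows exactly the approach the paper indicates: the paper states the corollary simply ``as a consequence of Lemma \ref{lem:exchangeability}'' without spelling out the details, and your three steps---expressing the summand as a function of the matrix string, applying \eqref{eq:exchangeability}, then recognizing the density $q_n^s(x,G_n)\,\mu^{\otimes n}$ as the law under $\bb Q_{x,b}^s$---are precisely what is needed to fill in that gap. Your remark on the ordering conventions is accurate and a useful sanity check.
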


\begin{proposition}\label{prop:Ps} 
Assume condition \ref{CondiAP}. Let $s \in I$.
The spectral radius $\mathfrak{m}(s)$ is the  unique dominant eigenvalue of $P_s$; 
any other eigenvalue $\lambda$ of $P_s$ is strictly smaller in absolute value. 
The eigenspace corresponding to $\mathfrak{m}(s)$ is one-dimensional, 
hence $r_s$ is unique up to scaling. The probability measure $\nu_s$ is unique and its support equals  $\Lambda(\Gamma)$.	
\end{proposition}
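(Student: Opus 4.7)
The plan is to reduce the spectral analysis of $P_s$ to that of the normalized Markov operator $Q_s$ defined by $Q_s \varphi(x) = \frac{1}{\mathfrak m(s) r_s(x)} P_s(\varphi r_s)(x)$, and then exploit the fact that this Markov operator is uniquely ergodic with a spectral gap. Since $r_s$ is continuous and strictly positive on the compact set $\bb S^{d-1}_+$ by Lemma \ref{lem:ExistenceEigenfunctionsPs}, multiplication by $r_s$ is a topological isomorphism on $\scr C(\bb S^{d-1}_+)$, so $P_s$ and $\mathfrak m(s) Q_s$ are conjugate and their spectra coincide up to the factor $\mathfrak m(s)$. In particular, $\mathfrak m(s)$ is a simple dominant eigenvalue of $P_s$ if and only if $1$ is a simple dominant eigenvalue of $Q_s$, and the eigenfunction $r_s$ is unique up to scaling iff the only $Q_s$-invariant continuous functions are constants.

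Next, I would record that $Q_s$ is the transition operator of a Markov chain on $\bb S^{d-1}_+$ with invariant probability measure $\pi_s(dx) = r_s(x) \nu_s(dx)/\mathfrak m(s)$ from \eqref{invar mes for Q_s-001}, which is the unique invariant measure by \cite[Theorem 4.11]{BDGM14}. The key ingredient is the contraction property of the action of $\Gamma$ on $\bb S^{d-1}_+$ with respect to the Hilbert projective metric: by \ref{CondiAP}(ii), $\Gamma$ contains a strictly positive matrix, and any such matrix strictly contracts the Hilbert metric on the positive cone. Iterating, the random dynamical system underlying $Q_s$ satisfies a Doeblin-type coupling condition for large $n$, which combined with the Ionescu-Tulcea–Marinescu / Hennion–Hervé framework gives a spectral gap for $Q_s$ on the space of $\min\{s,1\}$-H\"older functions; see \cite[Theorem 4.9]{BDGM14} for the analogous statement. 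This spectral gap yields simultaneously the simplicity of $1$ as eigenvalue of $Q_s$, hence of $\mathfrak m(s)$ for $P_s$, and the strict inequality $|\lambda| < \mathfrak m(s)$ for every other eigenvalue $\lambda$ of $P_s$.

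For the eigenmeasure $\nu_s$ I would run the same argument on the dual side. The operator $P_s^*$ admits the strictly positive eigenfunction $r_s^*$ by Lemma \ref{lem:ExistenceEigenfunctionsPs}, and the associated Markov operator $Q_s^*\varphi(x) = \frac{1}{\mathfrak m(s) r_s^*(x)} P_s^*(\varphi r_s^*)(x)$ is again uniquely ergodic by the contraction argument applied to $\Gamma^*$. Any probability eigenmeasure of $P_s$ with eigenvalue $\mathfrak m(s)$ corresponds, via $\tilde \nu_s(dx) := r_s^*(x) \nu_s(dx) / (\int r_s^* d\nu_s)$, to a $Q_s^*$-invariant probability measure, so uniqueness of $\tilde \nu_s$ forces uniqueness of $\nu_s$. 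The identity $P_s \nu_s = \mathfrak m(s) \nu_s$ (combined with the definition \eqref{Def-Ps}) shows that $\supp(\nu_s)$ is closed and $\Gamma$-invariant in the sense that $g \cdot \supp(\nu_s) \subset \supp(\nu_s)$ for every $g \in \Gamma$. By \cite[Lemma 4.2]{BDGM14}, $\Lambda(\Gamma)$ is the unique minimal such set, so $\supp(\nu_s) \supset \Lambda(\Gamma)$. For the reverse inclusion, the restriction of the relevant constructions to $\Lambda(\Gamma)$ produces an eigenmeasure supported there, and uniqueness forces $\supp(\nu_s) = \Lambda(\Gamma)$.

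The main obstacle is checking that the spectral gap machinery of \cite{BDGM14}, developed under the assumption that $\mu$ is a probability measure, transfers verbatim to the present $\sigma$-finite setting. This is where the hypothesis $s \in I$ is essential: it ensures that $P_s$ and $P_s^*$ are bounded operators on $\scr C(\bb S^{d-1}_+)$, and more generally that all integrals of the form $\int e^{s \sigma(g,x)} \varphi(g \cdot x) \mu(dg)$ are finite with bounds that are uniform in $x$. Once this uniform boundedness is in place, the Hilbert-metric contraction estimates, which only involve ratios $\norm{gx}/\norm{gy}$ for allowable matrices, are insensitive to whether $\mu$ is finite, and the proofs of \cite[Proposition 4.4, Theorem 4.9, Theorem 4.11]{BDGM14} go through without substantive change.
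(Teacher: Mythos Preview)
Your overall strategy is exactly the one the paper uses: the paper's proof is simply a pointer to Section~4.4 of \cite{BDGM14}, noting that those arguments go through once one knows $\bb Q_x^s$ is a probability measure and $\int \norm{g}^s \mu(dg)<\infty$, both of which follow from $s\in I$. Your proposal spells out that route (conjugation to the Markov operator $Q_s$, contraction in the Hilbert metric under \ref{CondiAP}(ii), spectral gap via the Ionescu--Tulcea--Marinescu machinery), so on the main points you are aligned with the paper.

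There is, however, a misstep in your treatment of the uniqueness of $\nu_s$. You pass to the operator $Q_s^*$ built from $P_s^*$ and claim that $\tilde\nu_s(dx)=r_s^*(x)\nu_s(dx)/\int r_s^*\,d\nu_s$ is $Q_s^*$-invariant. But $P_s^*$ as defined in \eqref{eq:defnPsstar} is the transfer operator for the \emph{transposed} matrices, not the functional-analytic adjoint of $P_s$, so there is no direct relationship between eigenmeasures of $P_s$ and invariant measures of $Q_s^*$. The correct (and shorter) route is the one you already set up: any probability eigenmeasure $\nu$ of $P_s$ with eigenvalue $\mathfrak m(s)$ yields a $Q_s$-invariant probability via $\pi(dx)=r_s(x)\nu(dx)/\mathfrak m(s)$, and you have already argued that $Q_s$ is uniquely ergodic (this is precisely \eqref{invar mes for Q_s-001} and \cite[Theorem 4.11]{BDGM14}). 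That immediately forces $\nu_s$ to be unique, without any appeal to the dual side.
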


\begin{proof}[Source]
	This is proved in Section 4.4 of \cite{BDGM14} in the case where $\mu$ is a probability measure, but this property is in fact not necessary for the proofs to work. What is needed is that $\bb Q_x^s$ is a probability measure, and that that $\int \norm{g}^s \mu(dg)< \infty$, which is true for any $s \in I$.
\end{proof}

Recalling the definition of the Markov random walk $(X_n, S_n)$ from \eqref{def-Sn-Xn}, the following result shows that $\bb Q_{x}^s$ changes the drift of $S_n$. In particular, under \ref{Condi_ms}, $S_n$ is centered.

\begin{proposition}\label{Prop-LLN-change-of-mea}
Assume condition \ref{CondiAP}. 
Then the function $s \mapsto \mathfrak{m}(s)$ is differentiable on $I^\circ$. 
Moreover, for any $s \in I^\circ$ satisfying $\int \|g\|^s (\log \|g\| + \log \iota(g)) \mu(dg) < \infty$,
 it holds $\lim_{n \to \infty} \frac{S_n}{n} =\frac{\mathfrak{m}'(s)}{\mathfrak{m}(s)}$, $\bb Q_{x}^s$-almost surely.
\end{proposition}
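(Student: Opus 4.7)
The plan is: establish analytic dependence of $\mathfrak{m}$ on $s$ via perturbation theory for the transfer operator $P_s$; derive an almost sure law of large numbers for the additive functional $S_n$ under the Markov chain $(X_n)$ governed by $\bb Q_x^s$; and identify the ergodic drift with $\mathfrak{m}'(s)/\mathfrak{m}(s)$ by differentiating the eigenvalue equation and integrating against $\nu_s$.

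\textbf{Differentiability of $\mathfrak{m}$.} Fix $s_0 \in I^\circ$. By Proposition \ref{prop:Ps}, $\mathfrak{m}(s_0)$ is a simple isolated eigenvalue of $P_{s_0}$ acting on a Banach space of H\"older continuous functions on $\bb S_+^{d-1}$. Pick $\varepsilon>0$ so that $[s_0-\varepsilon,s_0+\varepsilon]\subset I$; then for $s$ in a complex neighbourhood of $s_0$ of radius $\varepsilon$, the integrand $e^{s\sigma(g,x)}=\|gx\|^s$ of \eqref{Def-Ps} is holomorphic in $s$ and uniformly dominated by $\|g\|^{s_0+\varepsilon}+\|g\|^{s_0-\varepsilon}$, which is $\mu$-integrable on $\Gamma$. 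The same bound multiplied by $|\sigma(g,x)|^k$ dominates the $k$-th derivative in $s$, justifying differentiation under the $\mu$-integral by dominated convergence. Hence $s\mapsto P_s$ is a holomorphic operator-valued family on a complex neighbourhood of $s_0$, and standard analytic perturbation theory for simple isolated eigenvalues (Kato) yields the real analyticity of $s\mapsto \mathfrak{m}(s)$ on $I^\circ$, together with that of $r_s$ and $\nu_s$.

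\textbf{Ergodic theorem for $S_n/n$.} Under $\bb Q_x^s$ the process $(X_n)_{n\geq 0}$ is a Markov chain on $\bb S_+^{d-1}$ with transition operator $Q_s$; by \eqref{invar mes for Q_s-001} its unique invariant probability measure is $\pi_s=r_s\nu_s/\mathfrak{m}(s)$. The spectral gap of $P_s$ transfers via the conjugation $Q_sf=P_s(fr_s)/(\mathfrak{m}(s) r_s)$ to a spectral gap for $Q_s$ on $\scr C(\bb S_+^{d-1})$, so $(X_n)$ is uniformly ergodic. Writing
\[
S_n-S_0=-\sum_{k=1}^{n}\sigma(g_k,X_{k-1}),
\]
the sequence $(X_{k-1},g_k)_{k\geq 1}$ is a uniformly ergodic Markov process on $\bb S_+^{d-1}\times\Gamma$ whose stationary law is $\pi_s(dx)\bigl[e^{s\sigma(g,x)}r_s(g\cdot x)/(\mathfrak{m}(s) r_s(x))\bigr]\mu(dg)$. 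The moment hypothesis $\int \|g\|^s(\log\|g\|+\log\iota(g))\,\mu(dg)<\infty$, together with the uniform bound $|\sigma(g,x)|\leq \log\|g\|+|\log\iota(g)|$ valid for $x\in \bb S_+^{d-1}$ and the two-sided bounds $0<\inf_{\bb S_+^{d-1}} r_s\leq \sup_{\bb S_+^{d-1}}r_s<\infty$ (from continuity and strict positivity on the compact sphere), guarantee that $\sigma(g,x)$ is integrable against the stationary joint law. Birkhoff's ergodic theorem applied from the deterministic start $X_0=x$ (legitimate by uniform ergodicity) then yields, $\bb Q_x^s$-a.s.,
\[
\frac{S_n}{n}\longrightarrow \gamma(s):=-\int_{\bb S_+^{d-1}}\int_{\Gamma}\sigma(g,x)\,\frac{e^{s\sigma(g,x)}r_s(g\cdot x)}{\mathfrak{m}(s)r_s(x)}\mu(dg)\,\pi_s(dx).
\]

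\textbf{Identification $\gamma(s)=\mathfrak{m}'(s)/\mathfrak{m}(s)$.} Differentiate the eigenvalue equation $P_s r_s(x)=\mathfrak{m}(s) r_s(x)$ in $s$, using Step~1 to legitimize the operation, to obtain
\[
\int_{\Gamma}\sigma(g,x)e^{s\sigma(g,x)}r_s(g\cdot x)\mu(dg)+P_s(\partial_s r_s)(x)=\mathfrak{m}'(s) r_s(x)+\mathfrak{m}(s)\partial_s r_s(x).
\]
Integrate against the eigenmeasure $\nu_s$; the adjoint identity $\int P_s f\,d\nu_s=\mathfrak{m}(s)\int f\,d\nu_s$ forces the two $\partial_s r_s$ terms to cancel, leaving
\[
\int_{\bb S_+^{d-1}}\!\int_{\Gamma}\sigma(g,x)e^{s\sigma(g,x)}r_s(g\cdot x)\,\mu(dg)\,\nu_s(dx)=\mathfrak{m}'(s)\int r_s\,d\nu_s.
\]
Substituting $\pi_s(dx)=r_s(x)\nu_s(dx)/\mathfrak{m}(s)$ into the definition of $\gamma(s)$ and invoking the normalization $\int r_s\,d\nu_s=\mathfrak{m}(s)$ (which is exactly the condition that $\pi_s$ be a probability) produces the asserted identification $\gamma(s)=\mathfrak{m}'(s)/\mathfrak{m}(s)$.

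\textbf{Main obstacle.} The chief technical issues are (i) the holomorphic dependence of the family $P_s$ on $s$, which requires using the openness of $I$ to produce $\mu$-integrable majorants for derivatives in $s$ against a not-necessarily-finite measure $\mu$; and (ii) upgrading the mean-convergence of $S_n/n$ (which one could extract directly by differentiating $\log\mathfrak{m}(s)^n=\log(P_s^n r_s(x)/r_s(x))$ and using the many-to-one formula) to almost sure convergence, which is handled by Birkhoff's theorem once the moment hypothesis gives integrability of the increments under the stationary law. After these two steps the identification in Step~3 is purely computational.
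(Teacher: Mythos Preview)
Your three-step strategy—holomorphic perturbation of $s\mapsto P_s$ for differentiability, an ergodic LLN for the additive functional under the Markov operator $Q_s$, and identification of the drift by differentiating $P_sr_s=\mathfrak{m}(s)r_s$ and integrating against $\nu_s$—is the standard route and is precisely what the paper relies on: the paper does not give a self-contained proof but cites \cite[Theorem~6.1]{BDGM14}, remarking that the argument goes through once $\bb Q_x^s$ is a probability measure.

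There is, however, a sign slip in your Step~3. Carrying out your own substitution $\pi_s(dx)=r_s(x)\nu_s(dx)/\int r_s\,d\nu_s$ in your formula for $\gamma(s)$ and using your displayed identity gives
\[
\gamma(s)=-\frac{1}{\mathfrak{m}(s)\int r_s\,d\nu_s}\int_{\bb S_+^{d-1}}\!\int_{\Gamma}\sigma(g,x)e^{s\sigma(g,x)}r_s(g\cdot x)\,\mu(dg)\,\nu_s(dx)=-\frac{\mathfrak{m}'(s)}{\mathfrak{m}(s)},
\]
the minus sign coming from $S_n-S_0=-\sum_{k=1}^n\sigma(g_k,X_{k-1})$. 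This is not your mistake alone: the sign in the statement of Proposition~\ref{Prop-LLN-change-of-mea} itself is a typo, as the paper confirms when it uses the result in \eqref{SLLN-spine-aa}, writing $\bb E_{\bb Q_x^s}S_1=-\mathfrak{M}'(s)=-\mathfrak{m}'(s)/\mathfrak{m}(s)$. Since the paper only ever applies the proposition at $s=\alpha$ with $\mathfrak{m}'(\alpha)=0$, the discrepancy is harmless downstream; but your computation should conclude $\gamma(s)=-\mathfrak{m}'(s)/\mathfrak{m}(s)$.

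A minor technical point in Step~2: the spectral gap for $Q_s$ holds on a H\"older space, which does not immediately yield ``uniform ergodicity'' in the measure-theoretic sense needed to justify Birkhoff from a deterministic starting point with an unbounded observable. The cleaner justification (and the one underlying the cited references) is to solve the Poisson equation for $x\mapsto\bb E_{\bb Q_x^s}S_1$ using the spectral gap and then apply the martingale SLLN; alternatively, truncate $\sigma$ and control the remainder via the moment hypothesis. This is a routine fill-in and your sketch is otherwise sound.
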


\begin{proof}[Source]
This is proved in \cite[Theorem 6.1]{BDGM14}. Again, the arguments carry over given that $\bb Q_x^s$ is a probability measure.
\end{proof}

The following lemma shows that the matrix norm $\|g\|$ and the vector norm $|gx|$ are comparable under condition \ref{Condi-Furstenberg-Kesten}. 

\begin{lemma}[\cite{GX23}] \label{lemma kappa 1}
Assume \ref{Condi-Furstenberg-Kesten}. 
Then we have, for any $g\in \Gamma$ and $x \in \bb S_+^{d-1}$, 
\begin{align*}
\frac{1}{\varkappa^2} \|g\| \leq  \| gx \| \leq  \|g\|, 
\end{align*}
where $\varkappa > 1$ is given by condition \ref{Condi-Furstenberg-Kesten}. 
\end{lemma}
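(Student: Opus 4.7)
The plan is to exploit the explicit form of the $\ell^{1}$-type norm on $\bb R^{d}_+$, together with the uniform entry-ratio bound provided by condition \ref{Condi-Furstenberg-Kesten}, and thereby reduce the proof to a direct calculation involving column sums of $g$.

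First I would observe that, for any $v \in \bb R^{d}_+$, writing $v_{j} = \langle v, e_{j}\rangle \ge 0$, one has $\|v\| = \sum_{j} v_{j}$ and
\begin{align*}
\|gv\| \;=\; \sum_{i=1}^{d} (gv)_{i} \;=\; \sum_{j=1}^{d} v_{j}\, C_{j}(g), \qquad C_{j}(g) := \sum_{i=1}^{d} g^{i,j},
\end{align*}
so that $\|gv\|/\|v\|$ is a convex combination of the column sums $C_{1}(g),\ldots,C_{d}(g)$. Restricting the supremum defining $\|g\|$ to the simplex of probability vectors then yields $\|g\| = \max_{1\le j\le d} C_{j}(g)$. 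The upper bound $\|gx\| \le \|g\|$ follows immediately from $\|x\|=1$ and the very definition of $\|g\|$.

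For the lower bound, I would use that condition \ref{Condi-Furstenberg-Kesten} guarantees $g^{i,j} \in [m,M]$ for some $0 < m \le M$ with $M/m \le \varkappa$, hence every column sum satisfies $dm \le C_{j}(g) \le dM$. For $x \in \bb S_+^{d-1}$ with $\sum_{j} x_{j} = 1$ and $x_{j}\ge 0$, this gives
\begin{align*}
\|gx\| \;=\; \sum_{j=1}^{d} x_{j}\, C_{j}(g) \;\ge\; \min_{1 \le j \le d} C_{j}(g) \;\ge\; dm \;\ge\; \frac{dM}{\varkappa} \;\ge\; \frac{\max_{j} C_{j}(g)}{\varkappa} \;=\; \frac{\|g\|}{\varkappa} \;\ge\; \frac{\|g\|}{\varkappa^{2}},
\end{align*}
the last step using $\varkappa > 1$.

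There is no substantive obstacle here; the lemma is a short computation, the only ingredient being the observation that, on the positive cone equipped with the given $\ell^{1}$-norm, the operator norm $\|g\|$ coincides with the maximum column sum of $g$. The slack between the exponent $1$ that my argument naturally produces and the exponent $2$ appearing in the statement presumably reflects that the version quoted from \cite{GX23} is formulated so as to remain valid under weaker positivity assumptions than \ref{Condi-Furstenberg-Kesten}.
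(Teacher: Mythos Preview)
Your proof is correct; in fact you establish the sharper bound $\|gx\| \ge \|g\|/\varkappa$, which you then relax to $\|g\|/\varkappa^{2}$ using $\varkappa>1$. The paper does not give its own proof of this lemma---it is simply quoted from \cite{GX23}---so there is nothing to compare against here. Your argument via the explicit identification $\|g\| = \max_{j} C_{j}(g)$ for the $\ell^{1}$-type norm on the positive cone is the natural elementary route, and the observation that condition \ref{Condi-Furstenberg-Kesten} forces all column sums to lie within a factor $\varkappa$ of one another is exactly what is needed.
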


\begin{remark}\label{Rem-moment-condi}
If condition \ref{Condi-Furstenberg-Kesten} holds, then $\varkappa^{-2} \|g\| \leq \iota(g) \leq \|g\|$ by Lemma \ref{lemma kappa 1}. 
Therefore, under condition \ref{Condi-Furstenberg-Kesten}, $\int \|g\|^s (\log \|g\| + \log \iota(g)) \mu(dg) < \infty$ is always satisfied for $s \in I^{\circ}$. 
\end{remark}

\begin{proposition}\label{Prop-ell-sigma-alpha}
		Assume condition \ref{CondiAP}, $s \in I^\circ$ and that $\int \|g\|^s \log \iota(g)  \mu(dg) < \infty$. 
		Writing $q_s:=\frac{\mathfrak{m}'(s)}{\mathfrak{m}(s)}$, the following quantities are well defined for all $x \in  \bb S_+^{d-1}$:
	 $$ \ell_{s}(x):= \lim_{n \to \infty} \bb E_{\bb Q_x^s} (S_n - nq_s), 
	 \qquad \sigma^2_{s} := \lim_{n \to \infty} \frac{1}{n} \bb E_{\bb Q_x^s} (S_n-nq_s)^2,$$
	 where $\sigma_s^2$ does not depend on $x$. 
	 In addition, it holds that $\ell_s \in \scr C(\bb S_+^{d-1})$ and 
	 $$ \ell_s(x) = \bb E_{\bb Q_x^s} \big( (S_1 - q_s) + \ell_s(X_1) \big).$$
	 Moreover, if \ref{CondiNonarith} holds, then $\sigma_s^2$ is strictly positive.
\end{proposition}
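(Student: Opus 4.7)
The strategy is to obtain both $\ell_s$ and $\sigma_s^2$ from the spectral gap of the Markov operator $Q_s$ on a space of Hölder continuous functions on $\bb S_+^{d-1}$, which is established by the analysis of \cite{BDGM14, BM16, GL16} carried over to the present $\sigma$-finite setting. The function $\ell_s$ will arise as a solution to a Poisson equation for $Q_s$, while $\sigma_s^2$ will be read off from a Gordin-type martingale decomposition.

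First, set $\rho_s(x) := \bb E_{\bb Q_x^s}(S_1 - S_0) = -\int \sigma(g, x) q_1^s(x, g)\,\mu(dg)$; under the imposed moment assumption together with the Hölder regularity of $x \mapsto \sigma(g, x)$, this defines a Hölder continuous function on $\bb S_+^{d-1}$. Proposition \ref{Prop-LLN-change-of-mea} combined with Birkhoff's theorem applied to the ergodic chain $(X_n)$ under the invariant measure $\pi_s$ from \eqref{invar mes for Q_s-001} yields $\pi_s(\rho_s) = q_s$. Writing $\bar\rho_s := \rho_s - q_s$, the spectral gap of $Q_s$ gives $\|Q_s^k \bar\rho_s\|_\infty \leq C \lambda^k$ for some $\lambda < 1$, so
\[ \ell_s(x) := \sum_{k=0}^\infty (Q_s^k \bar\rho_s)(x) \]
is well defined, continuous on $\bb S_+^{d-1}$, and solves the Poisson equation $\ell_s - Q_s \ell_s = \bar\rho_s$. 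By the Markov property one has $\bb E_{\bb Q_x^s}(S_n - nq_s) = \sum_{k=0}^{n-1}(Q_s^k \bar\rho_s)(x)$, which converges to $\ell_s(x)$ as $n \to \infty$; rewriting the Poisson equation in terms of $\bb Q_x^s$-expectations yields the claimed cohomology identity $\ell_s(x) = \bb E_{\bb Q_x^s}\big((S_1 - q_s) + \ell_s(X_1)\big)$.

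Next, this cohomology identity turns $M_n := S_n - n q_s + \ell_s(X_n) - \ell_s(X_0)$ into the partial sum of martingale differences $\xi_k := (S_k - S_{k-1} - q_s) + \ell_s(X_k) - \ell_s(X_{k-1})$ with respect to the natural filtration under $\bb Q_x^s$. Under the stationary measure $\bb Q_{\pi_s}^s := \int \bb Q_x^s\,\pi_s(dx)$, the pair-sequence $(X_{k-1}, g_k)$ is strictly stationary and ergodic (simplicity of $\mathfrak{m}(s)$, Proposition \ref{prop:Ps}), so the $\xi_k$ are stationary ergodic martingale differences. The second moment $\bb E \xi_1^2$ is finite because $s \in I^\circ$ implies finiteness of all logarithmic moments $\int \|g\|^s (\log\|g\|)^2 \mu(dg)$ by analyticity of $s \mapsto \int \|g\|^s \mu(dg)$, while $\ell_s$ is bounded on the compact sphere. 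The $L^2$-ergodic theorem for stationary martingale differences yields $n^{-1}\bb E_{\bb Q_{\pi_s}^s}M_n^2 \to \sigma_s^2 := \int \bb E_{\bb Q_x^s}(\xi_1^2)\,\pi_s(dx)$; boundedness of $\ell_s$ then transfers this limit to $n^{-1}\bb E_{\bb Q_x^s}(S_n - nq_s)^2$, and the spectral gap (which makes $Q_s^k (\bb E_{\bb Q_\cdot^s}\xi_1^2)$ converge exponentially to its $\pi_s$-mean) ensures that the limit is independent of $x$.

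Finally, to establish $\sigma_s^2 > 0$ under \ref{CondiNonarith}, suppose for contradiction that $\sigma_s^2 = 0$. Then $\xi_1 = 0$ $\bb Q_x^s$-a.s.\ for $\pi_s$-a.e.\ $x$, i.e.\ $-\sigma(g, x) = q_s + \ell_s(x) - \ell_s(g\cdot x)$ for $\mu$-a.e.\ $g$ (using $q_1^s(x, \cdot) > 0$) and $\pi_s$-a.e.\ $x$. Joint continuity of both sides, together with $\mathrm{supp}(\mu) \subset \Gamma$ and the fact that the support of $\pi_s$ contains $\Lambda(\Gamma)$ (Proposition \ref{prop:Ps}), extends the identity to every $g \in \Gamma$ and $x \in \Lambda(\Gamma)$. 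Taking $t = 1$, $\vartheta(x) = -2\pi \ell_s(x)$, and $\beta = 2\pi q_s$, this forces $\exp[2\pi i \sigma(g, x) - i\beta + i\vartheta(g\cdot x) - i\vartheta(x)] = 1$ for all such $(g, x)$, contradicting the non-arithmeticity of $\Gamma$. The main obstacle I anticipate is the careful verification that $\bar\rho_s$ lies in a Hölder space that is invariant under $Q_s$ with a true spectral gap, and the subsequent propagation of the almost-sure cohomology identity from a $(\pi_s \otimes \mu)$-null set to the genuine product $\Gamma \times \Lambda(\Gamma)$; both steps are delicate but standard within the framework of \cite{BDGM14}.
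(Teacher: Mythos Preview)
Your approach is correct and genuinely different from the paper's. The paper does not give a self-contained argument but defers to \cite[Lemmas 7.1 and 7.2]{BM16}, where $\ell_s$ and $\sigma_s^2$ are obtained via \emph{holomorphic perturbation theory}: one considers the family of perturbed operators $Q_s^{(it)}\varphi(x)=\bb E_{\bb Q_x^s}[e^{it(S_1-q_s)}\varphi(X_1)]$, uses the spectral gap to get an analytic dominant eigenvalue $\lambda_s(t)$ and eigenprojection near $t=0$, and reads off $\ell_s$ from the derivative of the eigenprojection and $\sigma_s^2=-\lambda_s''(0)$; positivity under \ref{CondiNonarith} then comes from the characterisation of when $|\lambda_s(t)|=1$. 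Your route instead solves the Poisson equation $\ell_s-Q_s\ell_s=\bar\rho_s$ directly from the geometric decay of $Q_s^k\bar\rho_s$, then uses the Gordin martingale $M_n=S_n-nq_s+\ell_s(X_n)-\ell_s(X_0)$ to identify $\sigma_s^2$. Both approaches rest on the same spectral gap for $Q_s$; yours is more probabilistic and arguably more transparent for the existence statements, while the perturbation route packages everything uniformly and makes the positivity step slicker.

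Two small points. First, your explicit choice in the positivity argument has a sign slip: with $\vartheta=-2\pi\ell_s$ and $\beta=2\pi q_s$ one computes $2\pi\sigma(g,x)-\beta+\vartheta(g\cdot x)-\vartheta(x)=-4\pi q_s$, not $0$; the correct choice is $\beta\equiv -2\pi q_s\pmod{2\pi}$ (and indeed any $t>0$ works, since the cohomological relation $\sigma(g,x)=-q_s+\ell_s(g\cdot x)-\ell_s(x)$ is an exact real identity, not merely modulo a lattice). Second, the extension of that identity from $\mu$-a.e.\ $g$ to all of $\Gamma$ requires not only continuity over $\operatorname{supp}\mu$ but also the cocycle property \eqref{eq-cocycle-property} to pass to the generated semigroup; you flag this correctly as the delicate step.
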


\begin{proof}[Source]
	This proposition is proved in \cite[Lemmas 7.1 and 7.2]{BM16} in the case when $\mu$ is a probability measure. 
	Similar results can be obtained in our setting since the proofs are carried out using holomorphic perturbation theory for operators $Q_s$. The perturbation theory is based on \cite[Lemma 6.1]{BM16}, which does not require finiteness of $\mu$, only that $s \in I^\circ$. 
	In addition, one can also establish an analogue of \cite[Proposition 4.2]{BM16}, which gives the spectral gap properties
	of the Markov operator $Q_s$. 
\end{proof}

\begin{corollary}\label{cor:expect-Su-ell-Xu}
Assume \ref{CondiAP} and \ref{Condi_ms}. Then, for any $x \in \bb S_+^{d-1}$,
\begin{align*}
\bb E_x \sum_{u=1} (S_u + \ell_{\alpha}(X_u)) e^{-\alpha S_u} r_\alpha(X_u) = r_\alpha(x) \ell_{\alpha}(x).
\end{align*}
\end{corollary}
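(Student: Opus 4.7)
The plan is to combine the functional equation for $\ell_\alpha$ coming from Proposition \ref{Prop-ell-sigma-alpha} with the many-to-one formula from Corollary \ref{cor:many-to-one}, specialized to $n=1$ and $s=\alpha$.

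First I would record what the assumption \ref{Condi_ms} gives us for the drift constant $q_\alpha$ appearing in Proposition \ref{Prop-ell-sigma-alpha}: since $\mathfrak{m}(\alpha)=1$ and $\mathfrak{m}'(\alpha)=0$, we have $q_\alpha = \mathfrak{m}'(\alpha)/\mathfrak{m}(\alpha) = 0$. Consequently, the functional equation stated at the end of Proposition \ref{Prop-ell-sigma-alpha} reduces, in our boundary-case setting and for $s=\alpha$, to
\begin{equation*}
\ell_\alpha(x) \;=\; \bb E_{\bb Q_x^\alpha}\bigl( S_1 + \ell_\alpha(X_1) \bigr), \qquad x \in \bb S_+^{d-1}.
\end{equation*}
Note that the right-hand side is well defined because $\ell_\alpha$ is continuous (hence bounded) on the compact set $\bb S_+^{d-1}$, and the first absolute moment of $S_1$ under $\bb Q_x^\alpha$ is controlled by the moment bound that is implicit in the assumption $\alpha \in I^\circ$ combined with Remark \ref{Rem-moment-condi}.

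Next, I would apply the many-to-one formula in its second form \eqref{Formula_many_to_oneII} with $n=1$, $s=\alpha$, $b=0$, and the test function $f(x_0,s_0,x_1,s_1) := s_1 + \ell_\alpha(x_1)$. Using $\mathfrak{m}(\alpha)=1$, the left-hand side of \eqref{Formula_many_to_oneII} becomes
\begin{equation*}
\bb E_x \Biggl[ \sum_{|u|=1} \frac{e^{-\alpha S_u}\, r_\alpha(X_u)}{r_\alpha(x)}\,\bigl(S_u + \ell_\alpha(X_u)\bigr)\Biggr],
\end{equation*}
while the right-hand side equals $\bb E_{\bb Q_x^\alpha}(S_1 + \ell_\alpha(X_1))$, which by the displayed equation above equals $\ell_\alpha(x)$. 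Multiplying through by $r_\alpha(x)$ yields the claimed identity.

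There is essentially no obstacle here — this is a mechanical chaining of two results stated just above — but care should be taken to justify the exchange of sum and expectation and the use of \eqref{Formula_many_to_oneII} for an unbounded $f$. This is handled by splitting $S_u = S_u^+ - S_u^-$ and $\ell_\alpha$ into positive and negative parts, applying \eqref{Formula_many_to_oneII} to the bounded truncations $f_K := (s_1 \wedge K) \vee (-K) + \ell_\alpha(x_1)$, and then passing to the limit by monotone/dominated convergence, using that $\bb E_{\bb Q_x^\alpha}|S_1|<\infty$ to legitimize the latter on the right-hand side.
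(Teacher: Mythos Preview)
Your proposal is correct and follows the same approach as the paper: the paper's proof simply states that the identity is obtained by combining the previous propositions (i.e., the functional equation for $\ell_\alpha$ from Proposition \ref{Prop-ell-sigma-alpha}) with the many-to-one formula \eqref{Formula_many_to_oneII}. Your additional remark about justifying \eqref{Formula_many_to_oneII} for the unbounded test function $f(x_0,s_0,x_1,s_1)=s_1+\ell_\alpha(x_1)$ via truncation and dominated convergence is a reasonable point of rigor that the paper leaves implicit.
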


\begin{proof}
The identity is obatined by combining the previous propositions with the many-to-one formula \eqref{Formula_many_to_oneII}.
\end{proof}

\subsection{Harmonic function and change of measure}\label{sec-change-proba}

Let $B$ be a measurable set of $\bb R$ such that for any $x \in \bb S_+^{d-1}$ and $b \in B$,
\begin{align}\label{condition-on-B}
\bb P_{x, b} (S_1 \in B) > 0. 
\end{align}
Let $h$ be a positive harmonic function for $(X_n, S_n)$ killed outside the set $\bb S_+^{d-1} \times B$, i.e. 
$h > 0$ and for any $x \in \bb S_+^{d-1}$ and $b \in B$, 
\begin{align}\label{harmonicity-h}
\bb E_{\bb Q_{x, b}^{\alpha}} \left[ h (X_1, S_1) \mathds 1_{ \{ S_1 \in B \} } \right] = h (x, b).
\end{align}
We denote, for any $y \in \bb S_+^{d-1}$ and $s \in \bb R$, 
\begin{align}\label{def-H-alpha-ys}
H_{\alpha}(y, s) = r_{\alpha}(y)  h(y,s)  e^{- \alpha s}. 
\end{align}
Define 
\begin{align}\label{def-martingale-Dn}
M_n^h:= \sum_{ |u| = n }    H_{\alpha} \left( X_u, S_u \right)  \mathds{1}_{ \left\{ S_{u|k} \in B,  \, \forall  \,   0 \leq k \leq n  \right\} },  
\quad  n \geq 0. 
\end{align}
By definition, for any $x \in \bb S_+^{d-1}$ and $b \in B$, under the measure $\bb P_{x, b}$, 
we have $M_0^h = H_{\alpha}(x, b)$. 
The following result shows that $(M_n^h,  \mathscr F_n)_{n \geq 0}$ is a martingale 
with $\mathscr F_n$ introduced in \eqref{def-filtration-Fn}, 
which will be used to perform a change of probability measure. 

\begin{lemma}\label{Lem-martiangle-Dn}
Assume that there exists a constant $\alpha \in I^{\circ}$ such that $\mathfrak m (\alpha)  = 1$. 
Then, for any $x \in \bb S_+^{d-1}$ and $b \in B$, 
we have that $(M_n^h,  \mathscr F_n)_{n \geq 0}$ is a martingale with respect to the probability measure $\bb P_{x, b}$. 
\end{lemma}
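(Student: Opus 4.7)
The plan is to verify adaptedness, integrability, and the martingale equality $\bb E_{x,b}[M_{n+1}^h \mid \mathscr{F}_n] = M_n^h$ by reducing the one-step expectation over the children of a fixed particle to the harmonicity property \eqref{harmonicity-h} via the many-to-one formula in Corollary \ref{cor:many-to-one}.

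I would first observe that $M_n^h$ is manifestly $\mathscr{F}_n$-measurable and nonnegative, so integrability will follow once the identity $\bb E_{x,b} M_n^h = H_\alpha(x,b)$ is established by induction from the martingale step. For the martingale step, decompose each $u$ of generation $n+1$ as $u = vj$ with $|v|=n$. Since the indicator $\mathds{1}_{\{S_{u|k} \in B,\, 0 \leq k \leq n\}}$ is $\mathscr{F}_n$-measurable and the subtrees rooted at generation-$n$ particles are conditionally i.i.d.\ copies of $\scr N$ given $\mathscr{F}_n$ (the branching property, formalised through the shift operator $[\,\cdot\,]_v$ of \eqref{eq:shift-operator}), I can write
\begin{equation*}
\bb E_{x,b}[M_{n+1}^h \mid \mathscr{F}_n] = \sum_{|v|=n} \mathds{1}_{\{S_{v|k} \in B,\, 0 \leq k \leq n\}} \, \Phi(X_v,S_v),
\end{equation*}
where $\Phi(y,s) := \bb E_{y,s}\bigl[\sum_{|u|=1} H_\alpha(X_u,S_u) \mathds{1}_{\{S_u \in B\}}\bigr]$.

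The heart of the proof is therefore the identity $\Phi(y,s) = H_\alpha(y,s)$ for $(y,s) \in \bb S_+^{d-1} \times B$. To obtain it, I apply the many-to-one formula \eqref{Formula_many_to_oneII} at $n=1$ with parameter $s$ replaced by $\alpha$, $\mathfrak m(\alpha) = 1$ (from \ref{Condi_ms}-type hypothesis), and with test function $f(X_0,S_0,X_1,S_1) := h(X_1,S_1)\mathds{1}_{\{S_1 \in B\}}$. This yields
\begin{equation*}
\bb E_{y,s}\bigg[\sum_{|u|=1} \frac{e^{-\alpha(S_u - s)} r_\alpha(X_u)}{r_\alpha(y)} h(X_u,S_u) \mathds{1}_{\{S_u \in B\}}\bigg] = \bb E_{\bb Q_{y,s}^\alpha}\bigl[h(X_1,S_1) \mathds{1}_{\{S_1 \in B\}}\bigr] = h(y,s),
\end{equation*}
where the last equality is exactly the harmonicity \eqref{harmonicity-h}. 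Multiplying through by $r_\alpha(y) e^{-\alpha s}$ gives $\Phi(y,s) = r_\alpha(y) h(y,s) e^{-\alpha s} = H_\alpha(y,s)$, whence
\begin{equation*}
\bb E_{x,b}[M_{n+1}^h \mid \mathscr{F}_n] = \sum_{|v|=n} H_\alpha(X_v,S_v) \mathds{1}_{\{S_{v|k} \in B,\, 0 \leq k \leq n\}} = M_n^h.
\end{equation*}

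I do not anticipate a serious obstacle: the only subtle point is to use $B \ni s$ together with $\mathds{1}_{\{S_u \in B\}}$ properly so that the conditioning event on the left aligns with the harmonicity equation on the right, and to notice that the factor $e^{-\alpha s}$ from $H_\alpha(X_u,S_u) = r_\alpha(X_u) h(X_u,S_u) e^{-\alpha S_u}$ combines with the weight $e^{-\alpha(S_u - s)}$ in the many-to-one formula to exactly reproduce the $\bb Q^\alpha$-expectation. The condition \eqref{condition-on-B} guarantees that the events $\{S_1 \in B\}$ have positive probability under $\bb Q_{x,b}^\alpha$ for $b \in B$, so the harmonicity statement is nontrivial and well-posed. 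Integrability of $M_n^h$ then follows by a trivial induction since the conditional expectation identity propagates $\bb E_{x,b} M_n^h = H_\alpha(x,b) < \infty$.
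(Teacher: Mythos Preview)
Your proof is correct and follows essentially the same approach as the paper: decompose $M_{n+1}^h$ along generation-$n$ particles, use the branching property to reduce to a one-step expectation $\Phi(y,s)$, and identify $\Phi(y,s)=H_\alpha(y,s)$ via the many-to-one formula together with the harmonicity \eqref{harmonicity-h}. The only cosmetic difference is that you invoke the equivalent form \eqref{Formula_many_to_oneII} rather than \eqref{Formula_many_to_one}, which makes the bookkeeping with the $e^{-\alpha S_u}$ factors slightly cleaner.
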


\begin{proof}
Since for any $n \geq 0$, 
\begin{align*}
M_{n+1}^h 
& = \sum_{ |v| = n }  \,  \sum_{u: |u| = n + 1,  \overset{\leftarrow}{u} = v }  H_{\alpha} \left( X_u, S_u \right)  
   \mathds{1}_{ \left\{ S_{u|k} \in B,  \, \forall  \,   0 \leq k \leq n + 1 \right\} }  \notag\\
& = \sum_{ |v| = n }  \mathds{1}_{ \left\{ S_{v|k} \in B,  \, \forall  \,   0 \leq k \leq n  \right\} }
   \sum_{u: |u| = n + 1,  \overset{\leftarrow}{u} = v }  H_{\alpha} \left( X_u, S_u \right)  
     \mathds{1}_{ \left\{ S_u \in B  \right\} },
\end{align*}
we get that for any $x \in \bb S_+^{d-1}$ and $b \in B$, under the measure $\bb P_{x, b}$, 
\begin{align}\label{martingale-pf-ab}
\bb E_{x,b} \left( \left. M_{n+1}^h  \right|  \mathscr F_n \right)   
 =  \sum_{ |v| = n }  \mathds{1}_{ \left\{ S_{v|k} \in B,  \, \forall  \,   0 \leq k \leq n  \right\} }
\bb E_{ X_v, S_v }  \bigg(  \sum_{|w| = 1}  H_{\alpha} \left( X_w, S_w \right)  
    \mathds{1}_{ \left\{ S_w \in B  \right\} }   \bigg).  
\end{align}
By the many-to-one formula \eqref{Formula_many_to_one}, the assumption $\mathfrak m (\alpha)  = 1$, 
 \eqref{def-H-alpha-ys} 
 and the harmonicity of the function $h$ (cf.\ \eqref{harmonicity-h}), 
we get that for any $x \in \bb S_+^{d-1}$ and $b \in B$,
\begin{align*}
 \bb E_{ x, b }  \bigg(  \sum_{|w| = 1}  H_{\alpha} \left( X_w, S_w \right)  
   \mathds{1}_{ \left\{ S_w \in B  \right\} }   \bigg)   
& = r_{\alpha}(x)   \bb E_{ \bb Q^{\alpha}_{x,b} } 
      \left[ \frac{1}{ r_{\alpha}(X_1) }  e^{- \alpha (S_1 - S_0) }   H_{\alpha}(X_1, S_1) \mathds 1_{ \{ S_1 \in B \} } \right]   \notag\\
& = r_{\alpha}(x)   \bb E_{ \bb Q^{\alpha}_{x,b} } \left[ h(X_1, S_1) \mathds 1_{ \{ S_1 \in B \} } \right]  e^{-\alpha b}  \notag\\
& = r_{\alpha}(x)  h(x, b) e^{-\alpha b}.  
\end{align*}
Applying this to \eqref{martingale-pf-ab} completes the proof of the lemma. 
\end{proof}

From Lemma \ref{Lem-martiangle-Dn},  under the measure $\bb P_{x, b}$, 
we have that $(M_n^h,  \mathscr F_n)_{n \geq 0}$ is a martingale with the mean 
\begin{align}\label{martingale-property-Dn}
\bb E_{x, b} (M_n^h) =  H_{\alpha}(x, b).  
\end{align}
By Kolmogorov's extension theorem, 
there exists a unique probability measure $\wh{\bb P}^h_{x, b}$ on $\mathscr F_{\infty}$ such that, 
for any $A \in \mathscr F_n$ and $n \geq 0$, 
\begin{align}\label{def-hat-P-x-b}
\wh{\bb P}^h_{x, b} (A) 
=  \frac{1}{  H_{\alpha}(x, b) }  \bb E_{x,b}  \left( M_n^h\mathds 1_A \right). 
\end{align}
Denote by $\wh{\bb E}^h_{x, b}$ the corresponding expectation. 
This is consistent with the definition of $\wh{\bb P}_{x, b}$ (cf.\ \eqref{def-hat-P-x-b-intro}) which corresponds to $\wh{\bb P}^h_{x, b}$ with $h \equiv1$ and $B=\bb R$. 
Note that $M_n^h> 0$, $\wh{\bb P}^h_{x, b}$-a.s.

Since $(M_n^h,  \mathscr F_n)_{n \geq 0}$ is a non-negative martingale under $\bb P_{x, b}$, 
there exists a random variable $M_{\infty}^h \geq 0$ such that $\lim_{n \to \infty} M_n^h = M_{\infty}^h$, $\bb P_{x, b}$-a.s.

\begin{lemma}\label{lem-condi-limsup}
If there exists a $\sigma$-field $\mathscr G \subset \mathscr F$ such that 
\begin{align}\label{property-Dn}
\liminf_{n \to \infty} \wh{\bb E}^h_{x, b} \left( \left. M_n^h  \right|  \mathscr G  \right)  < \infty,  \quad   \wh{\bb P}_{x, b}\mbox{-a.s.}
\end{align}
then the $\bb P_{x, b}$-martingale $(M_n^h, \mathscr F_n)_{n \geq 0}$ is uniformly integrable.
In particular, we have 
\begin{align}\label{expect-Dn-uniform}
\bb E_{x,b} M_{\infty}^h = H_{\alpha}(x, b) = r_{\alpha}(x) h(x, b) e^{-\alpha b}. 
\end{align}
\end{lemma}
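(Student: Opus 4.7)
The plan follows the standard change-of-measure recipe for non-negative martingales (in the spirit of Lyons and Durrett): under the tilted measure $\wh{\bb P}^h_{x,b}$, the reciprocal $1/M_n^h$ is a supermartingale, which will let me upgrade the $\liminf$ hypothesis into a statement about $\lim$, after which uniform integrability under $\bb P_{x,b}$ follows from a classical criterion.

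First I would establish that $(1/M_n^h,\mathscr F_n)_{n\ge 0}$ is a non-negative $\wh{\bb P}^h_{x,b}$-supermartingale. Since $M_n^h$ is a non-negative $\bb P_{x,b}$-martingale, the event $\{M_n^h=0\}$ is $\bb P_{x,b}$-absorbing, so $\{M_{n+1}^h>0\}\subseteq\{M_n^h>0\}$. Applying the defining relation \eqref{def-hat-P-x-b} for any $A\in\mathscr F_n$, and using that $M_n^h>0$ holds $\wh{\bb P}^h_{x,b}$-a.s., gives
\begin{align*}
\wh{\bb E}^h_{x,b}\big[(1/M_{n+1}^h)\mathds 1_A\big] = \frac{\bb P_{x,b}(A\cap\{M_{n+1}^h>0\})}{H_{\alpha}(x,b)} \le \frac{\bb P_{x,b}(A\cap\{M_n^h>0\})}{H_{\alpha}(x,b)} = \wh{\bb E}^h_{x,b}\big[(1/M_n^h)\mathds 1_A\big].
\end{align*}
Doob's convergence theorem then implies that $1/M_n^h$, and hence $M_n^h$, converges $\wh{\bb P}^h_{x,b}$-a.s.\ in $[0,\infty]$ to some limit $\overline{M}_\infty^h$.

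Next I would apply conditional Fatou to the hypothesis \eqref{property-Dn}, obtaining
\begin{align*}
\wh{\bb E}^h_{x,b}\big[\,\overline{M}_\infty^h \,\big|\, \mathscr G\,\big] \le \liminf_{n\to\infty}\wh{\bb E}^h_{x,b}\big[M_n^h \,\big|\, \mathscr G\big] < \infty \quad \wh{\bb P}^h_{x,b}\text{-a.s.},
\end{align*}
which forces $\overline{M}_\infty^h<\infty$ $\wh{\bb P}^h_{x,b}$-almost surely. I would then invoke the classical criterion, based on the Lebesgue decomposition $\wh{\bb P}^h_{x,b}\big|_{\mathscr F_\infty} = H_{\alpha}(x,b)^{-1} M_\infty^h\,\bb P_{x,b}\big|_{\mathscr F_\infty} + \nu_{\mathrm{sing}}$, that the singular part $\nu_{\mathrm{sing}}$ is concentrated on $\{\overline{M}_\infty^h=\infty\}$; in our situation this set is $\wh{\bb P}^h_{x,b}$-null, so $\nu_{\mathrm{sing}}=0$, which is equivalent to uniform integrability of $(M_n^h)$ under $\bb P_{x,b}$. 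Passing to the limit in \eqref{martingale-property-Dn} then yields \eqref{expect-Dn-uniform}.

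The main obstacle, though technically routine, is the supermartingale observation in the first step: it is precisely what converts the $\mathscr G$-conditional $\liminf$ hypothesis into the $\wh{\bb P}^h_{x,b}$-almost-sure convergence of $M_n^h$ to a \emph{finite} limit. Without it, conditional Fatou alone would yield only $\liminf_n M_n^h<\infty$ $\wh{\bb P}^h_{x,b}$-a.s., which is strictly weaker than what the classical uniform integrability criterion requires.
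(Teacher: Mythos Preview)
Your proof is correct and follows essentially the same route as the paper: the supermartingale property of $1/M_n^h$ under $\wh{\bb P}^h_{x,b}$, then conditional Fatou combined with \eqref{property-Dn} to force the $\wh{\bb P}^h_{x,b}$-a.s.\ limit of $M_n^h$ to be finite. The only cosmetic difference is in the last step: where you invoke the Lebesgue-decomposition criterion, the paper finishes more directly with the tail bound
\[
\bb E_{x,b}\big[M_n^h\,\mathds 1_{\{M_n^h>t\}}\big]
= H_\alpha(x,b)\,\wh{\bb P}^h_{x,b}\big(M_n^h>t\big)
\le H_\alpha(x,b)\,\wh{\bb P}^h_{x,b}\Big(\sup_{k\ge 0}M_k^h>t\Big)\xrightarrow[t\to\infty]{}0,
\]
which yields uniform integrability with no appeal to the decomposition.
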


\begin{proof}
We first show that $(\frac{1}{M_n^h}, \mathscr F_n)_{n \geq 0}$ is a $\wh{\bb P}^h_{x, b}$-supermartingale. 
Indeed, since $M_n^h> 0$, $\wh{\bb P}^h_{x, b}$-a.s., using \eqref{def-hat-P-x-b} and Lemma \ref{Lem-martiangle-Dn}, 
we get that for any $n \geq j \geq 0$ and $A \in \mathscr F_j$, 
\begin{align*}
\wh{\bb E}^h_{x, b} \left( \frac{1}{M_n^h} \mathds 1_A \right)
& = \wh{\bb E}^h_{x, b} \left( \frac{1}{M_n^h} \mathds 1_A; M_n^h>0 \right)
 =  \frac{1}{  H_{\alpha}(x, b) }  \bb P_{x,b} \left( M_n^h> 0; A \right)   \notag\\
&  \leq  \frac{1}{  H_{\alpha}(x, b) }  \bb P_{x,b} \left( M_j^h > 0; A \right)  
 = \wh{\bb E}^h_{x, b} \left( \frac{1}{M_j^h} \mathds 1_A \right). 
\end{align*}
Therefore, there exists a finite random variable $M^h_{\infty} \geq 0$ such that 
$\lim_{n \to \infty} \frac{1}{M_n^h} = M^h_{\infty}$, $\wh{\bb P}^h_{x, b}$-a.s.
Since $M_n^h> 0$, $\wh{\bb P}^h_{x, b}$-a.s., we get $\lim_{n \to \infty} M_n^h= \frac{1}{M^h_{\infty}}$, $\wh{\bb P}^h_{x, b}$-a.s.

By the conditional Fatou's lemma and \eqref{property-Dn}, we get that, $\wh{\bb P}^h_{x, b}$-a.s. 
\begin{align*}
\wh{\bb E}^h_{x, b}  \left( \frac{1}{M_{\infty}} \Big|   \mathscr G  \right)
=  \wh{\bb E}^h_{x, b} \left( \lim_{n \to \infty} M_n^h \Big|  \mathscr G  \right)
\leq \liminf_{n \to \infty} \wh{\bb E}^h_{x, b} \left( M_n^h \Big|  \mathscr G  \right)
 < \infty,
\end{align*}
so that $\wh{\bb E}^h_{x, b} ( \frac{1}{M_{\infty}} ) < \infty$. 
Since $M^h_{\infty} \geq 0$, this implies that $\frac{1}{M^h_{\infty}} < \infty$, $\wh{\bb P}^h_{x, b}$-a.s., 
and hence $\sup_{n \geq 0} M_n^h< \infty$, $\wh{\bb P}^h_{x, b}$-a.s.

Using \eqref{def-hat-P-x-b}, the fact that $H_{\alpha}(x, b)$ is bounded by a constant $c = c_{\alpha, x, b}$
and $\sup_{n \geq 0} M_n^h< \infty$, $\wh{\bb P}^h_{x, b}$-a.s., 
we get that for any $t > 0$, 
\begin{align*}
\bb E_{x,b} \Big( M_n^h\mathds 1_{ \{ M_n^h> t \} } \Big) 
=   H_{\alpha}(x, b)  \wh{\bb P}^h_{x, b} \Big( M_n^h> t \Big) 
\leq  c  \wh{\bb P}^h_{x, b} \Big( \sup_{n \geq 0} M_n^h> t \Big), 
\end{align*}
which tends to $0$ as $t \to \infty$, uniformly in $n$. 
This shows that the $\bb P_{x, b}$-martingale $(M_n^h, \mathscr F_n)_{n \geq 0}$ is uniformly integrable. 
Hence we have $\bb E_{x,b} M_{\infty}^h = M_0^h = H_{\alpha}(x, b)$. 
\end{proof}

\subsection{Construction of the spinal decomposition} \label{sect:spinal-decomp}

The law $\wh{\bb P}^h_{x,b}$ can be extended on a larger probability space to carry a random variable 
$w \in \bb N^{\bb N}$ called the {\it spine},  
the law of which  is  given by 
	\begin{align}\label{law-of-spine-new}
		\wh{\bb P}^h_{x, b} \big( w|n = z  \big| \mathscr F_n \big)
		: =  \frac{ H_{\alpha} \left( X_z, S_z \right)   \mathds{1}_{ \left\{ S_{z|k} \in B,  \, \forall  \,   0 \leq k \leq n  \right\} } }{ M_n^h}. 
	\end{align}

In this section we introduce a spinal decomposition which gives an alternative construction of the law $\wh{\bb P}^h_{x, b}$ 
(this is the content of Theorem \ref{Thm-Spinal-decom} below). 
The subsequent construction shows that the branching product of random matrices with a spine can also be interpreted in the following way.  
Initially, we start with the root particle which becomes a member of the spine and reproduces according to a size-biased point process $\wt{\Theta}$.
Among its children, the next member of the spine is chosen with the probability given by the right hand side of \eqref{law-of-spine-new} with $n=1$. 
Spine particles reproduce according to the point process $\wt{\Theta}$, and all other particles reproduce according to the original point process $\Theta$. 
Each particle produces offsprings in the same manner independent of all other particles. 
 In each generation, there is one member of the spine that is the descendant of the spine particle in the previous generation.

Now we introduce a law $\wt{\bb P}_{x, b}$ on $\Omega^{(2)}$, where $\Omega^{(2)}$
 is defined at the end of Section \ref{Subsection-change-of-measure}. 
To do so, we first need an additional point process which corresponds to a change of measure, 
using the harmonic function $H_{\alpha}$ defined by  \eqref{def-H-alpha-ys}. 
Let us fix $x \in \bb S_+^{d-1}$ and $b \in \bb R $. The branching product of random matrices gives rise to a point process $\Theta_{x,b}$ on $\bb S_+^{d-1} \times \bb R$: 
\begin{align} \label{def-point-process-root}
	\Theta_{x,b}: = \sum_{j=1}^N \delta_{ \left( g_j \cdot x,  \,  b + \sigma (g_j, x) \right) }, 
\end{align}
where $(g_j)_{1 \leq j \leq N}$ is obtained from the point process $\mathscr N$ (see the text before \eqref{def-basic-point-process}). 
Thereupon, 
we define the new point process by 
\begin{align}\label{def-wt-Theta-xb}
\wt{\Theta}_{x, b} (dy, ds)
: = \frac{ H_{\alpha}(y, s)  }{ H_{\alpha}(x, b) } \mathds 1_{ \{ s \in B \} }
    \Theta_{x, b} (dy, ds).  
\end{align}
In other words, for any nonnegative measurable function $f$ on $\bb S_+^{d-1} \times \bb R$, 
\begin{align*}
& \int_{ \bb S_+^{d-1} \times \bb R }  f(y,s) \wt{\Theta}_{x, b} (dy, ds)    \notag\\
& =  \frac{ 1 }{ H_{\alpha}(x, b) }   
   \sum_{j=1}^N  f\left( g_j \!\cdot\! x,  b + \sigma (g_j, x) \right)       
    H_{\alpha} \left( g_j \!\cdot\! x,  b + \sigma (g_j, x) \right)    \mathds 1_{ \{ b + \sigma (g_j, x) \in B \} }. 
\end{align*}
If $\mathfrak m(\alpha) = 1$, then, by the many-to-one formula \eqref{Formula_many_to_one} and \eqref{def-H-alpha-ys},
we get
\begin{align}\label{new-point-pro-aa}
& \bb E \int_{ \bb S_+^{d-1} \times \bb R }  f(y,s) \wt{\Theta}_{x, b} (dy, ds)  
 =   \frac{ 1 }{ H_{\alpha}(x, b) }   
\bb E_{x,b} \bigg[ \sum_{|u| = 1}  f \left( X_{u}, S_{u} \right)  H_{\alpha} \left( X_{u}, S_{u} \right)  \mathds 1_{ \{ S_{u} \in B \} } \bigg]     \notag\\
& = \frac{r_\alpha(x)}{ H_{\alpha}(x, b) }  \bb E_{ \bb Q_{x, b}^{\alpha} }  
  \bigg[  \frac{1}{ r_\alpha(X_1) }  e^{- \alpha (S_1 - S_0) }  f(X_1, S_1)  H_{\alpha}(X_1, S_1)  \mathds 1_{ \{ S_1 \in B \} } \bigg]   \notag\\
& = \frac{1}{ h(x, b) }  \bb E_{ \bb Q_{x, b}^{\alpha} }  
  \left[      f(X_1, S_1)  h(X_1, S_1)  \mathds 1_{ \{ S_1 \in B \} } \right]. 
\end{align}

In order to define a branching process with a spine, we introduce a Markov chain 
\begin{align*}
\left( \wt{N}(k),  (\wt{X}_j(k),  \wt{S}_j(k) )_{ \{ 1 \leq j \leq \wt{N}(k) \} }, w(k)  \right)
 =: \left( \wt Z_k,  w(k)   \right),  \quad  k \geq 1, 
\end{align*}
 that will generate the spine. Its law is given as follows. 
 Under $\bb P_{x,b}$, generate $\wt{Z}_1=\big(\wt{N}(1),(\wt{X}_j(1),\wt{S}_j(1))_{1 \le j \le \wt{N}(1)}\big)$ according to $\widetilde{\Theta}_{x,b}$. Thereupon, choose one particle that becomes the spine particle: generate $w(1)$ according to the
 following probability distribution 
 \begin{align*}
 	\bb P \left(  w(1) = h \big|  \wt Z_1 \right)  
 	=  \mathds 1_{ \{ h \leq \wt{N}(1) \} }  
 	\frac{  H_{\alpha} \left( \wt{X}_{h}(1), \wt{S}_{h}(1) \right)   \mathds 1_{ \left\{  \wt{S}_{h}(1) \in B  \right\} } }
 	{  \sum_{j=1}^{ \wt{N}(1) }  H_{\alpha} \left( \wt{X}_{j}(1), \wt{S}_{j}(1) \right)   \mathds 1_{ \left\{  \wt{S}_{j}(1) \in B  \right\} }   }. 
 \end{align*}
 Given $\wt{Z}_{k-1}$ 
  together with $w(k-1)$, generate $\wt{Z}_k=\big(\wt{N}(k),(\wt{X}_j(k),\wt{S}_j(k))_{1 \le j \le \wt{N}(k)}\big)$ according to $\widetilde{\Theta}_{\wt{X}_{w(k-1)}(k-1),\wt{S}_{w(k-1)}(k-1)}$. Thereupon, generate $w(k)$ according to the following formula
 \begin{align}\label{def-spine-wk}
 \bb P \left(  w(k) = h \big|  \wt Z_k \right)  
 =  \mathds 1_{ \{ h \leq \wt{N}(k) \} }  
 \frac{  H_{\alpha} \left( \wt{X}_{h}(k), \wt{S}_{h}(k) \right)   \mathds 1_{ \left\{  \wt{S}_{h}(k) \in B  \right\} } }
 {  \sum_{j=1}^{ \wt{N}(k) }  H_{\alpha} \left( \wt{X}_{j}(k), \wt{S}_{j}(k) \right)   
\mathds 1_{ \left\{  \wt{S}_{j}(k) \in B  \right\} }   }, \quad k \ge 1. 
 \end{align}
As a result, this gives a Markovian dependence of $(\wt{Z}_k, w(k))$ on $(\wt{Z}_{k-1},w(k-1))$.

 The sequence $w = (w(k))_{k \geq 1}$ 
 defines the spine, and in each generation $k\ge1$, the Markov chain provides one  family, namely the family consisting of
 the spine particle at position $( \wt{X}_{w(k)}(k), \wt{S}_{w(k)}(k) )$ and its brothers at positions $(\wt{X}_j(k),  \wt{S}_j(k) )_{ \{ 1 \leq j \leq \wt{N}(k) , j \neq w(k)\} }$. 
 Each such brother at node $v$, say, is now the initial particle of an independent branching process based on the point process $\mathcal{N}$ with initial positions $(y,s)$; here $(y,s)$ are the direction and position of that brother. The genealogical tree of the branching process rooted at $v$ is denoted by $\bb T_v$ and the corresponding family of random variables is denoted by $( X_{vu},  S_{vu}, N_{vu} )_{u \in \bb T_v}$. 
 
 Thus the full genealogical tree consists of the spine and the brothers in each generation, and to each brother (at node $v$) of the spine particle, there is a corresponding subtree $\bb T_v$ attached.
  
    The whole object is called branching process with spine and is thus represented by random variables 
  $(\left( X_u, S_u, N_u \right)_{u \in \bb T}, w )$ upon the identification
  \begin{align*}
  (X_u, S_u, N_u)  = (\wt X_{w(k)}(k), \wt S_{w(k)}(k), \wt N(k)) \qquad \text{if } u=w|k \text{ for some } k.
  \end{align*}
We write $\wt{\bb P}_{x,b}$ for the law of the process which we have just constructed. 
Let $\wt{\bb E}_{x,b}$ be the corresponding expectation with respect to $\wt{\bb P}_{x,b}$. 
Then, from \eqref{new-point-pro-aa} we have, for any measurable function $f$,  
\begin{align}\label{change-theta-theta-wt}
 \wt {\bb E}_{x,b} \Big[ f \big( (X_j, S_j)_{1 \le j \le N} \big) \Big] 
= \bb E_{x,b} \bigg[ f \big( (X_j, S_j)_{1 \le j \le N} \big) 
       \sum_{j = 1}^N \frac{H_\alpha(X_j,S_j)  }{ H_\alpha(x,b) } \mathds 1_{ \{ S_j \in B \} } \bigg]. 
\end{align}

In other words, under the law $\wt{\bb P}_{x,b}$, members of the spine are the particles at position $(X_{w|n}, S_{w|n})$, $n \in \bb N$,
and they have offspring according to $\wt{\Theta}_{X_{w|n}, S_{w|n}}$. 
All other particles at positions $(X_u, S_u)_{u \nless w}$ reproduce particles according to the original point process ${\Theta}_{X_u, S_u}$.

\begin{theorem}\label{Thm-Spinal-decom}
Assume conditions \ref{CondiAP} and \ref{Condi_ms}.  
Then, for any $x \in \bb S_+^{d-1}$ and $b \in \bb R$,
\begin{align*}
\wt{\bb P}_{x,b}  \left( \left( X_u, S_u, N_u \right)_{u \in \bb T} \in \cdot \right) 
= \wh{\bb P}_{x, b}  \left( \left( X_u, S_u, N_u \right)_{u \in \bb T} \in \cdot \right). 
\end{align*}
\end{theorem}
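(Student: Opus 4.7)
The plan is to prove the identity of laws by establishing equality of the two finite-dimensional distributions of $\bigl((X_u,S_u,N_u)_{|u|\le n}, w|n\bigr)$ for every $n$, after which Kolmogorov's extension theorem yields agreement on $\mathscr{F}_\infty$. It suffices to check that for every fixed $z \in \bb U$ with $|z|=n$ and every bounded measurable functional $F$ of the first $n$ generations,
\begin{equation*}
\wt{\bb E}_{x,b}\bigl[F\,\mathds 1_{\{w|n=z\}}\bigr]
= \wh{\bb E}^h_{x,b}\bigl[F\,\mathds 1_{\{w|n=z\}}\bigr],
\end{equation*}
with $h\equiv 1$, $B=\bb R$ as in the statement; summing over $z$ with $|z|=n$ then yields the unconditional equality. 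The right-hand side is, by the definitions \eqref{def-hat-P-x-b} and \eqref{law-of-spine-new},
\begin{equation*}
\wh{\bb E}^h_{x,b}\bigl[F\,\mathds 1_{\{w|n=z\}}\bigr]
= \frac{1}{H_\alpha(x,b)}\,\bb E_{x,b}\bigl[F\cdot H_\alpha(X_z,S_z)\,\mathds 1_{\{S_{z|k}\in B,\ 0\le k\le n\}}\bigr],
\end{equation*}
so the target is to rewrite the left-hand side as exactly this quantity.

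First I would compute the left-hand side by unpacking the Markovian construction generation by generation. Conditional on the history up to time $k{-}1$, the $k$-th family is generated from $\wt\Theta_{X_{w|k-1},S_{w|k-1}}$; by \eqref{change-theta-theta-wt} this amounts to sampling the family under the original point process $\Theta$ and reweighting by
\begin{equation*}
\frac{1}{H_\alpha(X_{w|k-1},S_{w|k-1})}\sum_{j=1}^{\wt N(k)} H_\alpha(\wt X_j(k),\wt S_j(k))\mathds 1_{\{\wt S_j(k)\in B\}}.
\end{equation*}
The subsequent choice of the spine index $w(k)$ then multiplies by the factor in \eqref{def-spine-wk}, and the denominator of that factor cancels exactly the sum above, leaving the clean single-term weight
\begin{equation*}
\frac{H_\alpha(X_{w|k},S_{w|k})\,\mathds 1_{\{S_{w|k}\in B\}}}{H_\alpha(X_{w|k-1},S_{w|k-1})}
\end{equation*}
for each generation $k$, together with the fact that the $k$-th family is now effectively sampled from the unbiased $\Theta$. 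This is the cancellation at the heart of Lyons' many-to-one trick; I would verify it by induction on $n$.

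Next I would telescope the product of these single-step weights over $k=1,\dots,n$ along the spine $w|n=z$. Since $H_\alpha(X_{w|0},S_{w|0})=H_\alpha(x,b)$, the product collapses to
\begin{equation*}
\frac{H_\alpha(X_z,S_z)}{H_\alpha(x,b)}\,\prod_{k=1}^n\mathds 1_{\{S_{z|k}\in B\}},
\end{equation*}
exactly matching the bias term in the expression for $\wh{\bb E}^h_{x,b}$. For all particles off the spine (the brothers at each generation and their descendants) the construction by design attaches independent copies of the original branching process starting from their positions, so the joint law of $(X_u,S_u,N_u)_{|u|\le n}$ conditional on the spine $w|n=z$ coincides with the law under $\bb P_{x,b}$ conditioned on the positions along the branch $z$; this is the consistency point where one uses the branching property and the shift notation \eqref{eq:shift-operator}.

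The main obstacle will be the careful bookkeeping of the two reweightings at each generation, namely the size-bias of $\wt\Theta$ and the proportional choice of the spine index, and the verification that the resulting law off the spine reproduces the original $\bb P_{x,b}$-distribution of the non-spine particles and their descendants. Concretely, I would carry out the computation inductively: assume the identity of laws holds up to generation $n{-}1$, restrict to the event $\{w|n{-}1=z|n{-}1\}$, and use the branching property (conditional on $\mathscr F_{n-1}$) together with \eqref{change-theta-theta-wt} and \eqref{def-spine-wk} to extend the identity to generation $n$. Specialising to $h\equiv 1$, $B=\bb R$ under the assumption $\mathfrak m(\alpha)=1$ of \ref{Condi_ms} then gives the theorem.
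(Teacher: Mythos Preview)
Your proposal is correct and follows essentially the same route as the paper's proof: reduce to the identity for $\wt{\bb E}_{x,b}[F\mathds 1_{\{w|n=z\}}]=\bb E_{x,b}\bigl[F\cdot H_\alpha(X_z,S_z)H_\alpha(x,b)^{-1}\mathds 1_{\{S_{z|k}\in B,\,0\le k\le n\}}\bigr]$ for each fixed $z$, prove this by induction on $n$ using the cancellation between the size-bias \eqref{change-theta-theta-wt} and the spine selection \eqref{def-spine-wk}, and handle the off-spine subtrees via the branching property. One small remark: the theorem and its proof in the paper are stated for general $h$ and $B$ (the notation $\wh{\bb P}_{x,b}$ there should be read as $\wh{\bb P}^h_{x,b}$), so your opening parenthetical ``with $h\equiv 1$, $B=\bb R$'' is unnecessary---you in fact carry out the argument for general $h$, and the specialization you mention at the end is only needed for the subsequent Corollary \ref{Cor-spine-001}.
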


\begin{proof}
Let $( \phi_u,  u \in \bb T )$ be a family of nonnegative measurable functions: 
$\phi_u: \bb S_+^{d-1} \times \bb R \times \bb N \to \bb R_+$. 
By \eqref{def-hat-P-x-b}, we have that for any $x \in \bb S_+^{d-1}$, $b \in \bb R$ and $n \in \bb N$,
\begin{align*}
\wh{\bb E}_{x,b} \bigg[  \prod_{|u| \leq n} \phi_u \left( X_u, S_u, N_u \right) \bigg]
=  \bb E_{x,b} \bigg[  \frac{M_n^h}{ H_{\alpha}(x,b)  }  \prod_{|u| \leq n} \phi_u \left( X_u, S_u, N_u \right) \bigg],
\end{align*}
where $H_{\alpha}$ and $M_n^h$ are defined by \eqref{def-H-alpha-ys} and \eqref{def-martingale-Dn}, respectively.
Therefore, to prove the theorem, it is sufficient to show that for any $x \in \bb S_+^{d-1}$, $b \in \bb R$ and $n \in \bb N$,
\begin{align}\label{Spinal-decom-goal-2}
\wt{\bb E}_{x,b} \bigg[  \prod_{|u| \leq n} \phi_u \left( X_u, S_u, N_u \right) \bigg]
=  \bb E_{x,b} \bigg[  \frac{M_n^h}{ H_{\alpha}(x,b) }  \prod_{|u| \leq n} \phi_u \left( X_u, S_u, N_u \right) \bigg].
\end{align}
Decomposing by the addition of the spine particle, 
it is enough to prove that for any $x \in \bb S_+^{d-1}$, $b \in \bb R$, $n \in \bb N$ and $z \in \bb T$ with $|z| = n$,
\begin{align}\label{spinal-goal-2-decom}
I_1(n) : & =  \wt{\bb E}_{x,b} \bigg[  \mathds 1_{ \{ w|n = z \} } \prod_{|u| \leq n} \phi_u \left( X_u, S_u, N_u \right) \bigg]   \notag\\
& = \bb E_{x,b} 
\bigg[  
\frac{ H_{\alpha} \left( X_z, S_z \right)   \mathds{1}_{ \left\{ S_{z|k} \in B,  \, \forall  \,   0 \leq k \leq n  \right\} } }
       { H_{\alpha}(x,b) }  
\prod_{|u| \leq n} \phi_u \left( X_u, S_u, N_u \right) \bigg] =: I_2(n). 
\end{align}
(Indeed, by summing over $|z| = n$ in \eqref{spinal-goal-2-decom}, we get \eqref{Spinal-decom-goal-2}.) 
By decomposing the product $\prod_{|u| \leq n} \phi_u \left( X_u, S_u, N_u \right)$ along the spine, we get
\begin{align*}
I_1(n) = \wt{\bb E}_{x,b} \bigg[  \mathds 1_{ \{ w|n = z \} } 
            \prod_{k = 0}^n \phi_{z|k} \left( X_{z|k}, S_{z|k}, N_{z|k} \right)  
            \prod_{ u \in {\rm brot}(z|k) }  \Phi_u (X_u, S_u)  \bigg],
\end{align*}
where ${\rm brot}(z|k)$ is the set of brothers of $z|k$, and for $x \in \bb S_+^{d-1}$ and $s \in \bb R$, 
\begin{align*}
\Phi_u (x, s) = \bb E_{x, s}  \bigg[  \prod_{|v| \leq n - |u|} \phi_{uv} (X_v, S_v, N_v) \bigg]. 
\end{align*}
Recall that each brother of the spine particle starts an independent branching process with the point process $\Theta_{x, b}$. 
Similarly, we have 
\begin{align*}
I_2(n)  & =  \bb E_{x,b} 
\bigg[  \frac{ H_{\alpha} \left( X_z, S_z \right)  \mathds{1}_{ \left\{ S_{z|k} \in B,  \, \forall  \,   0 \leq k \leq n  \right\} } }
       { H_{\alpha}(x,b) }    \notag\\
& \qquad\quad  \times  \prod_{k = 0}^n \phi_{z|k} \left( X_{z|k}, S_{z|k}, N_{z|k} \right)  
            \prod_{ u \in {\rm brot}(z|k) }  \Phi_u (X_u, S_u)  \bigg]. 
\end{align*}
We shall prove $I_1(n) = I_2(n)$ by induction in $n$. 
For $n = 0$, it holds that $I_1(0) = I_2(0) = \bb E_{x, b} \phi_{\o} \left( X_{\o}, S_{\o}, N_{\o} \right)$. 
Now we assume that $I_1(n - 1) = I_2(n - 1)$ and we  shall show that $I_1(n) = I_2(n)$. 
For brevity, denote for $z \in \bb T$,  
\begin{align*}
\Psi(z) = \phi_{z} \left( X_{z}, S_{z}, N_{z} \right)  \prod_{ u \in {\rm brot}(z) }  \Phi_u (X_u, S_u). 
\end{align*}
Then we have
\begin{align}\label{identity-I-1n}
I_1(n) = \wt{\bb E}_{x,b} \bigg[  \mathds 1_{ \{ w|n = z \} }  \prod_{k = 0}^n \Psi (z|k) \bigg]
   = \wt{\bb E}_{x,b} \bigg[  \mathds 1_{ \{ w|n = z \} }  \Psi(z) \prod_{k = 0}^{n-1} \Psi (w|k) \bigg] 
\end{align}
and 
\begin{align}\label{identity-I-2n}
I_2(n) = \bb E_{x,b}  \bigg[  
\frac{ H_{\alpha} \left( X_z, S_z \right)  \mathds{1}_{ \left\{ S_{z|k} \in B,  \, \forall  \,   0 \leq k \leq n  \right\} } }
       { H_{\alpha}(x,b) }   \prod_{k = 0}^n \Psi (z|k)   \bigg]. 
\end{align}
Let $ \mathscr{G}_n$ be the natural filtration of the spine particles and their brothers, i.e.
\begin{align}\label{eq:filtrationGn}
\mathscr{G}_n = \sigma \left\{ w|k, X_{w|k}, S_{w|k}, (X_u, S_u)_{ u \in {\rm brot}(w|k) } ; 0 \le k \le n \right\}. 
\end{align}
Using \eqref{def-spine-wk}, we get that for any $z \in \bb T$ with $|z| = n$, 
\begin{align}\label{condi-exp-Psi-z}
& \wt{\bb E}_{x,b}  \Big(  \mathds 1_{ \{ w|n = z \} }  \Psi(z)  \Big|  \mathscr{G}_{n-1} \Big)   \notag\\
& =  \mathds 1_{ \{ w|n-1 = z|n-1 \} }   \wt{\bb E}_{x,b}  
   \Big(   \wt{H}_{\alpha} (X_z, S_z)  \Psi(z)  \Big|  \mathscr{G}_{n-1} \Big)   \notag\\
& =  \mathds 1_{ \{ w|n-1 = z|n-1 \} }   \wt{\bb E}_{x,b}  
   \Big(   \wt{H}_{\alpha} (X_z, S_z)  \Psi(z)  \Big|  w|n-1, X_{w|n-1}, S_{w|n-1} \Big),
\end{align}
where, for brevity we denote 
\begin{align*}
\wt{H}_{\alpha} (X_z, S_z) =  \frac{ H_\alpha(X_z,S_z) \mathds{1}_{\{S_z \in B \}}}{ H_\alpha(X_z,S_z)\mathds{1}_{\{S_z \in B\}} + \sum_{v \in {\rm brot}(z) }  H_\alpha(X_v,S_v)\mathds{1}_{\{S_v \in B \}} }. 
\end{align*}
In the expectation only children of the spine particle $w|n-1$ appear, so their distribution is given by $\wt \Theta_{X_{w|n-1}, S_{w|n-1}}$. 
Using \eqref{change-theta-theta-wt}, we obtain that on the event $\{ w|n-1 = z|n-1 \}$, 
\begin{align}\label{def-F-XS-wn-1}
&\wt {\bb E}_{x,b}  
\Big(   \wt{H}_{\alpha} (X_z, S_z)  \Psi(z)  \Big|  w|n-1, X_{w|n-1}, S_{w|n-1} \Big)   \notag\\
& = \wt {\bb E}_{X_{w|n-1}, S_{w|n-1}}  
\bigg(   \frac{ H_\alpha(X_{z_n},S_{z_n}) \mathds{1}_{\{S_{z_n} \in B \}}}{ \sum_{1 \le j \le N} H_\alpha(X_j,S_j)\mathds{1}_{\{S_j \in B \}}  }  \Psi(z_n)    \bigg)   \notag\\
& =  \bb E_{X_{w|n-1}, S_{w|n-1}}  
\bigg(   \frac{ H_\alpha(X_{z_n},S_{z_n}) \mathds{1}_{\{ S_{z_n} \in B \}}}{ \sum_{1 \le j \le N} H_\alpha(X_j,S_j)\mathds{1}_{\{S_j \in B \}}  } 
 \sum_{1 \le j \le N} \frac{H_\alpha (X_j,S_j)  }{ H_\alpha(X_{\o}, S_{\o}) } \mathds 1_{ \{ S_j \in B \} } \Psi(z_n)    \bigg)   \notag\\
& =  \bb E_{X_{w|n-1}, S_{w|n-1}}  
\bigg(   \frac{ H_\alpha(X_{z_n},S_{z_n}) \mathds{1}_{\{S_{z_n} \in B \}}}{ H_\alpha(X_{\o},S_{\o}) }   \Psi(z_n)    \bigg)  \notag\\
& = : F \left( X_{w|n-1}, S_{w|n-1} \right). 
\end{align}
Combining this with \eqref{condi-exp-Psi-z} and \eqref{identity-I-1n}, 
taking conditional expectation with respect to $\mathscr{G}_{n-1}$ 
and using the fact that $\prod_{k = 0}^{n-1} \Psi (w|k)$ is measurable with respect to $\mathscr{G}_{n-1}$,
we get
\begin{align*}
I_1(n)  & =  \wt{\bb E}_{x,b}  \bigg[
    \wt{\bb E}_{x,b} \Big(  \mathds 1_{ \{ w|n = z \} }  \Psi(z)  \prod_{k = 0}^{n-1} \Psi (w|k)  \Big|  \mathscr{G}_{n-1} \Big)
    \bigg]   \notag\\
 & = \wt{\bb E}_{x,b}  \bigg[  \mathds 1_{ \{ w|n-1 = z|n-1 \} }  F \left( X_{w|n-1}, S_{w|n-1} \right)  \prod_{k = 0}^{n-1} \Psi (w|k)   \bigg]. 
\end{align*}
By the assumption $I_1(n-1) = I_2(n-1)$, it follows that 
\begin{align*}
I_1(n) =  \bb E_{x,b}  \bigg[  
\frac{ H_{\alpha} \left( X_{z|n-1}, S_{z|n-1} \right)  \mathds{1}_{ \left\{ S_{z|k} \in B,  \, \forall  \,   0 \leq k \leq n-1  \right\} } }
       { H_{\alpha}(x,b) }   F \left( X_{w|n-1}, S_{w|n-1} \right)  \prod_{k = 0}^{n-1} \Psi (w|k) \bigg]. 
\end{align*}
By \eqref{def-F-XS-wn-1},  
we get $I_1(n) = I_2(n)$ with $I_2(n)$ defined by \eqref{identity-I-2n}. 
\end{proof}

\begin{theorem}\label{Thm-spinal-decom}
Assume conditions \ref{CondiAP} and \ref{Condi_ms}.  
Let $x \in \bb S_+^{d-1}$ and $b \in \bb R$. 
Then, for any $n \geq 0$ and any vertex $z \in \bb T$ with $|z| = n$,
\begin{align}\label{thm2-proba-spine}
\wh{\bb P}_{x, b} \big( w|n = z  \big| \mathscr F_n \big)
=  \frac{ H_{\alpha} \left( X_z, S_z \right)   \mathds{1}_{ \left\{ S_{z|k} \in B,  \, \forall  \,   0 \leq k \leq n  \right\} } }{ M_n^h},
\end{align}
where $H_{\alpha}$ and $M_n^h$ are defined by \eqref{def-H-alpha-ys} and \eqref{def-martingale-Dn}, respectively. 

Moreover, the process $(X_{w|n}, S_{w|n})_{n \geq 0}$ under $\wh{\bb P}_{x, b}$ is distributed as the Markov random walk
$(X_n, S_n)_{n \geq 0}$, under $\bb Q_{x,b}^{\alpha}$, 
conditioned to stay in the set $B$ (meaning that $S_k \in B$ for any $1 \leq k \leq n$). 
Precisely, for any nonnegative measurable function $f: (\bb S_+^{d-1} \times \bb R)^{n+1} \to \bb R_+$, we have
\begin{align}\label{thm2-spinal-condi}
& \wh{\bb E}_{x, b} \Big[  f \Big(  (X_{w|k}, S_{w|k}),  0 \leq k \leq n  \Big)  \Big]   \notag\\
& =  \bb{E}_{ \bb Q_{x,b}^{\alpha} } 
        \Big[  f \Big( (X_k, S_k), 0 \leq k \leq n  \Big) 
           \frac{ e^{\alpha b} h \left( X_n, S_n \right)  }
                  { h(x, b)   }   
  \mathds{1}_{ \left\{ S_{k} \in B,  \, \forall  \,   0 \leq k \leq n  \right\} }  \Big].   
\end{align}
\end{theorem}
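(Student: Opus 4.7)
The plan is to first establish the conditional spine distribution \eqref{thm2-proba-spine} and then derive the Markov-chain representation \eqref{thm2-spinal-condi} via the Radon--Nikodym formula \eqref{def-hat-P-x-b} combined with the many-to-one identity of Corollary~\ref{cor:many-to-one}.

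For \eqref{thm2-proba-spine}, the key observation is that the proof of Theorem~\ref{Thm-Spinal-decom} actually establishes more than a marginal-on-the-tree identity: equation \eqref{spinal-goal-2-decom} gives, for every $z\in\bb T$ with $|z|=n$ and every $\scr F_n$-measurable nonnegative $F_n$,
\begin{align*}
\wt{\bb E}_{x,b}\!\left[\mathds 1_{\{w|n=z\}} F_n\right]
&= \bb E_{x,b}\!\left[\frac{H_\alpha(X_z,S_z)\,\mathds 1_{\{S_{z|k}\in B,\,0\le k\le n\}}}{H_\alpha(x,b)}\,F_n\right] \\
&= \wh{\bb E}^{h}_{x,b}\!\left[\frac{H_\alpha(X_z,S_z)\,\mathds 1_{\{S_{z|k}\in B,\,0\le k\le n\}}}{M_n^h}\,F_n\right],
\end{align*}
where the second line is the Radon--Nikodym identity coming from \eqref{def-hat-P-x-b}. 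Since $\wh{\bb P}^{h}_{x,b}$ is extended to carry the spine precisely through this joint tree/spine law, letting $F_n$ range over $\scr F_n$-measurable functionals yields \eqref{thm2-proba-spine}. Consistency of the resulting family of conditional distributions in $n$ is automatic from the $(\scr F_n)$-martingale property of $M_n^h$ proved in Lemma~\ref{Lem-martiangle-Dn}.

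For \eqref{thm2-spinal-condi}, I would condition on $\scr F_n$, apply \eqref{thm2-proba-spine}, and then use \eqref{def-hat-P-x-b} to cancel the $M_n^h$ factor, obtaining
\[
\wh{\bb E}^{h}_{x,b}\!\left[ f\big((X_{w|k},S_{w|k})_{0\le k\le n}\big)\right]
= \frac{1}{H_\alpha(x,b)}\,\bb E_{x,b}\!\left[\sum_{|z|=n} H_\alpha(X_z,S_z)\,\mathds 1_{\{S_{z|k}\in B\}}\,f\big((X_{z|k},S_{z|k})_{0\le k\le n}\big)\right].
\]
The many-to-one formula \eqref{Formula_many_to_one} with $s=\alpha$ (using $\mathfrak m(\alpha)=1$) converts the branching sum into an expectation under $\bb Q_{x,b}^{\alpha}$. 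Substituting $H_\alpha(y,s)=r_\alpha(y)\,h(y,s)\,e^{-\alpha s}$ produces the cancellation $\tfrac{1}{r_\alpha(X_n)}e^{\alpha(S_n-b)}H_\alpha(X_n,S_n)=e^{-\alpha b}h(X_n,S_n)$ inside the integrand, while $H_\alpha(x,b)=r_\alpha(x)h(x,b)e^{-\alpha b}$ in the prefactor contributes a matching $e^{-\alpha b}$, so the final weight reduces to $h(X_n,S_n)/h(x,b)$ times the confinement indicator $\mathds 1_{\{S_k\in B,\,0\le k\le n\}}$, as claimed.

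The whole argument is essentially a bookkeeping exercise; no fundamentally new probabilistic input is needed beyond Theorem~\ref{Thm-Spinal-decom}, the change-of-measure formula for $\wh{\bb P}^{h}_{x,b}$, and the many-to-one identity. The only delicate point is the consistency check that allows $\wh{\bb P}^{h}_{x,b}$ to be extended to the enlarged space carrying the spine, which relies crucially on the martingale property of $M_n^h$ (and hence on $\mathfrak m(\alpha)=1$ together with the harmonicity of $h$ for the $(X_n,S_n)$-chain killed outside $\bb S^{d-1}_+\times B$).
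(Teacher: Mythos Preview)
Your proof is correct and follows essentially the same line as the paper's: invoke the joint tree/spine identity \eqref{spinal-goal-2-decom} established in the proof of Theorem~\ref{Thm-Spinal-decom}, rewrite it via the change of measure \eqref{def-hat-P-x-b} to obtain \eqref{thm2-proba-spine}, and then combine \eqref{thm2-proba-spine} with \eqref{def-hat-P-x-b} and the many-to-one formula \eqref{Formula_many_to_one} (using $\mathfrak m(\alpha)=1$) to deduce \eqref{thm2-spinal-condi}.

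One small remark: your final weight $h(X_n,S_n)/h(x,b)$ is in fact the correct one and differs from the displayed \eqref{thm2-spinal-condi} by a factor $e^{\alpha b}$; this appears to be a typo in the paper's statement, since specializing to $h\equiv 1$, $B=\bb R$ must recover \eqref{sized-BRW-spinal} of Corollary~\ref{Cor-spine-001}, which it does only without the extra $e^{\alpha b}$.
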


\begin{proof}
As in the proof of Theorem \ref{Thm-Spinal-decom}, 
let $( \phi_u,  u \in \bb T )$ be a family of nonnegative measurable functions: 
$\phi_u: \bb S_+^{d-1} \times \bb R \times \bb N \to \bb R_+$. 
Using \eqref{spinal-goal-2-decom} and Theorem \ref{Thm-Spinal-decom} (identifying $\wt{\bb E}_{x,b}$ by $\wh{\bb E}_{x,b}$),
we get that for any $x \in \bb S_+^{d-1}$, $b \in \bb R$, $n \in \bb N$ and $z \in \bb T$ with $|z| = n$, 
\begin{align*}
&   \wh{\bb E}_{x,b} \Big[  \mathds 1_{ \{ w|n = z \} } \prod_{|u| \leq n} \phi_u \left( X_u, S_u, N_u \right) \Big]   \notag\\
& = \bb E_{x,b}  \bigg[  
\frac{ H_{\alpha} \left( X_z, S_z \right)   \mathds{1}_{ \left\{ S_{z|k} \in B,  \, \forall  \,   0 \leq k \leq n  \right\} } }
       { H_{\alpha}(x,b) }  
\prod_{|u| \leq n} \phi_u \left( X_u, S_u, N_u \right) \bigg] =: I. 
\end{align*}
By \eqref{def-hat-P-x-b}, it holds that 
\begin{align*}
I = \wh{\bb E}_{x,b}  \bigg[  
\frac{ H_{\alpha} \left( X_z, S_z \right)   \mathds{1}_{ \left\{ S_{z|k} \in B,  \, \forall  \,   0 \leq k \leq n  \right\} } }
       { M_n^h}  
\prod_{|u| \leq n} \phi_u \left( X_u, S_u, N_u \right) \bigg],
\end{align*}
which shows \eqref{thm2-proba-spine}. 

Now we prove \eqref{thm2-spinal-condi}. 
Using \eqref{thm2-proba-spine}, we have
\begin{align*}
J:& = \wh{\bb E}_{x, b} \Big[  f \Big(  (X_{w|k}, S_{w|k}),  0 \leq k \leq n  \Big)  \Big]   \notag\\
& = \wh{\bb E}_{x, b} \Big[  \sum_{|z| = n} f \Big(  (X_{z|k}, S_{z|k}),  0 \leq k \leq n  \Big)  \ds 1_{ \{ w |n = z \} } \Big]   \notag\\
& = \wh{\bb E}_{x, b} \bigg[  \sum_{|z| = n} f \Big(  (X_{z|k}, S_{z|k}),  0 \leq k \leq n  \Big)  
         \frac{ H_{\alpha} \left( X_z, S_z \right)   \mathds{1}_{ \left\{ S_{z|k} \in B,  \, \forall  \,   0 \leq k \leq n  \right\} } }{ M_n^h}  \bigg]. 
\end{align*}
By \eqref{def-hat-P-x-b}, it follows that
\begin{align*}
J =  \bb E_{x, b} \bigg[  \sum_{|z| = n}  f \Big(  (X_{z|k}, S_{z|k}),  0 \leq k \leq n  \Big)  
         \frac{ H_{\alpha} \left( X_z, S_z \right)   \mathds{1}_{ \left\{ S_{z|k} \in B,  \, \forall  \,   0 \leq k \leq n  \right\} } }
                { H_{\alpha}(x,b) }  \bigg]. 
\end{align*}
Using the many-to-one formula \eqref{Formula_many_to_one},
the fact that $\mathfrak m (\alpha)  = 1$ (cf.\ condition \ref{Condi_ms})
and the definition of $H_{\alpha}$ (cf.\ \eqref{def-H-alpha-ys}), we obtain 
\begin{align*}
J  & =  r_{\alpha}(x)  \bb E_{\bb Q_{x,b}^{\alpha}} 
 \bigg[ \frac{1}{ r_{\alpha} (X_n)} e^{-\alpha S_n}  f \Big(  (X_{k}, S_{k}),  0 \leq k \leq n  \Big)  
         \frac{ H_{\alpha} \left( X_n, S_n \right)   \mathds{1}_{ \left\{ S_{k} \in B,  \, \forall  \,   0 \leq k \leq n  \right\} } }
                { H_{\alpha}(x,b) } 
                \bigg]   \notag\\
  & =  \bb{E}_{ \bb Q_{x,b}^{\alpha} }  
        \Big[  f \Big( (X_k, S_k), 0 \leq k \leq n  \Big) 
           \frac{ e^{\alpha b} h \left( X_n, S_n \right)  }
                  { h(x, b)   }   
            \mathds{1}_{ \left\{ S_{k} \in B,  \, \forall  \,   0 \leq k \leq n  \right\} }
           \Big],
\end{align*}
which proves \eqref{thm2-spinal-condi}. 
\end{proof}

Take $B = \bb R$ and $h = 1$ in Section \ref{sec-change-proba}, 
so that \eqref{condition-on-B} and \eqref{harmonicity-h} are satisfied. 
Then $M_n^1$ defined by \eqref{def-martingale-Dn} becomes the additive martingale 
$W_n$ defined by \eqref{def-addi-martigale},
and \eqref{def-hat-P-x-b} coincides with \eqref{def-hat-P-x-b-intro}. 
As a consequence of Theorem \ref{Thm-spinal-decom}, we have the following result 
about size-biased branching random walks on $\mathcal M_+$.

\begin{corollary}\label{Cor-spine-001}
Assume conditions \ref{CondiAP} and \ref{Condi_ms}. 
Then, for any $n \geq 0$ and any vertex $z \in \bb T$ with $|z| = n$,
\begin{align*}
\wh{\bb P}_{x, b} \big( w|n = z  \big| \mathscr F_n \big)
=  \frac{ r_{\alpha} (X_z)  e^{- \alpha S_z} }{ W_n }. 
\end{align*}
Moreover, the process $(X_{w_n}, S_{w_n})_{n \geq 0}$ under $\wh{\bb P}_{x, b}$ is distributed as the Markov random walk
$(X_n, S_n)_{n \geq 0}$ under the measure $\bb Q_{x,b}^{\alpha}$. 
Precisely, for any nonegative measurable function $f: (\bb S_+^{d-1} \times \bb R)^{n+1} \to \bb R_+$, we have
\begin{align}\label{sized-BRW-spinal}
 \wh{\bb E}_{x, b} \Big[  f \Big(  (X_{w|k}, S_{w|k}),  0 \leq k \leq n  \Big)  \Big]  
 =  \bb{E}_{ \bb Q_{x,b}^{\alpha} } 
        \Big[  f \Big( (X_k, S_k), 0 \leq k \leq n  \Big)   \Big].   
\end{align}
\end{corollary}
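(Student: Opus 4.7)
The plan is to derive this corollary as a direct specialization of Theorem \ref{Thm-spinal-decom}, taking the harmonic setting $B = \bb R$ and $h \equiv 1$. First I would verify that this choice satisfies the prerequisites of that theorem: condition \eqref{condition-on-B} holds trivially because $\bb P_{x,b}(S_1 \in \bb R) = 1 > 0$, and the harmonicity requirement \eqref{harmonicity-h} reduces to the identity $\bb E_{\bb Q_{x,b}^\alpha}[1] = 1 = h(x,b)$, which is immediate.

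Next I would unwind the definitions. From \eqref{def-H-alpha-ys}, the function $H_\alpha$ collapses to $H_\alpha(y,s) = r_\alpha(y) e^{-\alpha s}$, and consequently the martingale \eqref{def-martingale-Dn} becomes
\begin{align*}
M_n^h = \sum_{|u|=n} r_\alpha(X_u) e^{-\alpha S_u},
\end{align*}
which coincides with $W_n$ defined in \eqref{def-addi-martigale}, since $\mathfrak{m}(\alpha) = 1$ by condition \ref{Condi_ms}. Observe also that $\wh{\bb P}^{h}_{x,b}$ as given by \eqref{def-hat-P-x-b} then agrees with $\wh{\bb P}_{x,b}$ from \eqref{def-hat-P-x-b-intro}, using $H_\alpha(x,b) = r_\alpha(x)e^{-\alpha b}$.

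With these identifications, I would substitute directly into \eqref{thm2-proba-spine} of Theorem \ref{Thm-spinal-decom}: since the indicator $\mathds{1}_{\{S_{z|k} \in B,\ \forall\ 0 \leq k \leq n\}}$ becomes identically $1$ when $B = \bb R$, the right-hand side simplifies to $r_\alpha(X_z) e^{-\alpha S_z}/W_n$, yielding the first claim. Likewise, applying \eqref{thm2-spinal-condi} with $h \equiv 1$, the weight $h(X_n,S_n)/h(x,b)$ and the indicator disappear, giving the Markov random walk identification in \eqref{sized-BRW-spinal}.

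There is no real obstacle here: the corollary is a direct instantiation of the earlier general result. The only point that deserves mild care is bookkeeping — ensuring that the normalizing constants match and that the change of measure $\wh{\bb P}^h_{x,b}$ from the general framework truly reduces to the simpler $\wh{\bb P}_{x,b}$ defined in the introduction. Once that is checked, the corollary follows line by line.
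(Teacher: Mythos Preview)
Your proposal is correct and matches the paper's approach exactly: the paper explicitly states, just before the corollary, that one takes $B=\bb R$ and $h\equiv 1$ in the general framework of Section~\ref{sec-change-proba}, so that $M_n^1=W_n$ and $\wh{\bb P}^h_{x,b}=\wh{\bb P}_{x,b}$, and then reads off the result from Theorem~\ref{Thm-spinal-decom}. The bookkeeping you flag (identifying $H_\alpha(x,b)=r_\alpha(x)e^{-\alpha b}$ and dropping the indicator) is precisely the verification the paper performs in the preamble to the corollary.
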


\subsection{Conditioned local limit theorems for products of random matrices}
In this section, we collect results concerning the behavior of $S_n$ conditioned to stay above a threshold.

Consider the delayed first entrance times into $(-\infty,0)$ and $(0,\infty)$, respectively:
\begin{align}\label{def-tau-y-plus-minus}
\tau_y^- =\inf\{ j \ge 1: S_n+y<0\}, \qquad \tau_y^+ =\inf\{ j \ge 1: S_n-y>0\}. 
\end{align}
\begin{lemma}\label{Lem-positi-harmonic-func}
Assume \ref{CondiAP} and \ref{Condi_ms}. 
Then the following limit exists and is finite: for any $x \in \bb S_+^{d-1}$ and $y \geq 0$,  
\begin{align}\label{def-V-alpha-xy}
V_{\alpha}(x, y) = \lim_{n \to \infty} \bb E_{\bb Q_{x}^{\alpha}} \left( y + S_n; \tau_y^- > n \right). 
\end{align}
The function $V_{\alpha}$ is harmonic  for $(X_n, S_n)$ killed outside the set $\bb S_+^{d-1} \times [0, \infty)$, i.e. 
for any $x \in \bb S_+^{d-1}$ and $b \geq 0$, 
\begin{align*}
\bb E_{\bb Q_{x}^{\alpha}} \left[ V_{\alpha} (X_1, b + S_1) \mathds 1_{ \{ b + S_1 \geq 0 \} } \right] = V_{\alpha} (x, b).
\end{align*}
Moreover, assuming in addition that for some $\delta>0$, 
\begin{align}\label{precise-condi-Qx-alpha}
\inf_{x \in \bb S_+^{d-1}} \bb Q_x^\alpha \left( \left\{ g \, : \, - \sigma(g,x)>\delta \right\} \right)>0,
\end{align}
then there exist constants $c, c' >0$ such that for any $x \in \bb S_+^{d-1}$ and $b \geq 0$, 
\begin{align}\label{inequa-V-harmonic}
c' (1 + b)  \leq  V_{\alpha} (x, b)  \leq  c (1 + b), 
\end{align}
and  $\lim_{b \to \infty} \frac{ V_{\alpha} (x, b) }{b} = 1$ uniformly in $x \in \bb S_+^{d-1}$. 
\end{lemma}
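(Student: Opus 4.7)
The plan is to use a Tanaka-type martingale decomposition to obtain an explicit formula for $V_\alpha$, from which harmonicity and the quantitative bounds will follow. By Proposition \ref{Prop-ell-sigma-alpha}, $\ell_\alpha$ satisfies the Poisson-type identity $\ell_\alpha(x) = \bb E_{\bb Q_x^\alpha}[S_1 + \ell_\alpha(X_1)]$, from which a direct computation using the Markov property shows that $M_n := S_n + \ell_\alpha(X_n) - \ell_\alpha(x)$ is a $\bb Q_x^\alpha$-martingale with $M_0 = 0$. Since $\sigma_\alpha^2 > 0$, the spectral-gap results of Section \ref{sect:spectralgap} yield an invariance principle for $S_n$ under $\bb Q_x^\alpha$, forcing $S_n$ to oscillate, so that $\tau_y^- < \infty$ a.s.\ for every $y \ge 0$. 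Applying optional stopping of $M$ at $\tau_y^- \wedge n$, splitting on $\{\tau_y^- > n\}$ versus $\{\tau_y^- \le n\}$ and letting $n \to \infty$ yields
\begin{equation}\label{eq:formulaV}
V_\alpha(x,y) = y + \ell_\alpha(x) - \bb E_{\bb Q_x^\alpha}\!\left[(y + S_{\tau_y^-}) + \ell_\alpha(X_{\tau_y^-})\right].
\end{equation}
The residual term $\bb E_{\bb Q_x^\alpha}[\ell_\alpha(X_n); \tau_y^- > n]$ vanishes because $\ell_\alpha$ is bounded on the compact sphere and $\bb Q_x^\alpha(\tau_y^- > n) \to 0$, while the overshoot term converges by dominated convergence once $\bb E_{\bb Q_x^\alpha} |y + S_{\tau_y^-}| < \infty$; this integrability follows from the exponential moments of $S_1$ under $\bb Q_x^\alpha$, themselves coming from $\alpha \in I^\circ$ together with the comparability $|S_1 - \log \|g_1\|| \le 2 \log \varkappa$ of Lemma \ref{lemma kappa 1}. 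Harmonicity is then obtained by passing to the limit in the one-step Markov identity $V_n(x,y) = \bb E_{\bb Q_x^\alpha}[V_{n-1}(X_1, y+S_1); y+S_1 \ge 0]$, with domination provided by the bound $V_n \le C(1 + y)$ visible in \eqref{eq:formulaV}.

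The quantitative bounds \eqref{inequa-V-harmonic} follow from \eqref{eq:formulaV}. Since $y + S_{\tau_y^-} \le 0$ and $\ell_\alpha$ is bounded by some constant $L$, rewrite \eqref{eq:formulaV} as $V_\alpha(x,y) = y + \ell_\alpha(x) + \bb E_{\bb Q_x^\alpha}|y + S_{\tau_y^-}| - \bb E_{\bb Q_x^\alpha}[\ell_\alpha(X_{\tau_y^-})]$. A standard Wald-type control on the first downcrossing of $-y$ by a centered Markov random walk with exponential moments yields $\sup_{x, y} \bb E_{\bb Q_x^\alpha}|y + S_{\tau_y^-}| < \infty$, hence $V_\alpha(x,y) \le y + C$ uniformly. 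For the lower bound, the same expression gives $V_\alpha(x, y) \ge y - 2L$, handling $y \ge 4L$ directly. In the regime $y \in [0, 4L]$, I iterate the harmonic identity: condition \eqref{precise-condi-Qx-alpha} ensures $\bb Q_z^\alpha(S_1 > \delta) \ge p > 0$ uniformly in $z \in \bb S_+^{d-1}$, so compounding $k = \lceil 5L/\delta \rceil$ such steps produces a trajectory staying nonnegative and reaching height $\ge 5L$ with probability at least $p^k$, and plugging into the harmonic identity $k$ times gives $V_\alpha(x,y) \ge \tfrac{5L}{2} p^k$ uniformly in $x$ and $y \in [0, 4L]$. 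Combined with the large-$y$ estimate, this yields $V_\alpha(x,y) \ge c'(1 + y)$ for all $y \ge 0$.

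Finally, the uniform asymptotic $V_\alpha(x, y)/y \to 1$ reduces via \eqref{eq:formulaV} to showing that the quantity $\bb E_{\bb Q_x^\alpha}[(y + S_{\tau_y^-}) + \ell_\alpha(X_{\tau_y^-})]$ has a limit as $y \to \infty$, uniformly in $x \in \bb S_+^{d-1}$. This is a Markov renewal-type statement for the descending ladder process $(X_{\tau_y^-}, S_{\tau_y^-})$, and it is the principal obstacle in the proof: in the classical i.i.d.\ setting it follows at once from duality between the walk and its ladder process, whereas the non-commutative structure here (the lack of such a simple duality is emphasized in the introduction) rules out any direct reduction. Instead, the renewal convergence must be extracted from the spectral-theoretic properties of the Markov operator $Q_\alpha$ recorded in Proposition \ref{prop:Ps}; uniformity in $x$ is then obtained by combining the pointwise renewal statement with the continuity of $V_\alpha(\cdot, y)$ (inherited from that of $\ell_\alpha$) and compactness of $\bb S_+^{d-1}$.
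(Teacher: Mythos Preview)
Your approach---the martingale approximation $M_n = S_n + \ell_\alpha(X_n) - \ell_\alpha(x)$ coming from the Poisson equation, followed by optional stopping at $\tau_y^-$---is exactly what the paper has in mind when it says ``the key point in the proof is that a martingale approximation holds under the change of measure $\bb Q_x^\alpha$'' and refers to \cite{GLP17,Pha18}. The formula \eqref{eq:formulaV} and the treatment of the lower bound via iteration of the harmonic identity under condition \eqref{precise-condi-Qx-alpha} are likewise in line with the cited sources (this is the condition called P5 in \cite{GLP17}).

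There is, however, a confusion in your last paragraph. You claim that $V_\alpha(x,y)/y \to 1$ uniformly ``reduces to showing that the quantity $\bb E_{\bb Q_x^\alpha}[(y + S_{\tau_y^-}) + \ell_\alpha(X_{\tau_y^-})]$ has a limit as $y \to \infty$'' and call this ``the principal obstacle''. In fact no such limit is needed: once you have established (as you do in the previous paragraph) that $\sup_{x,y} \bb E_{\bb Q_x^\alpha}|y+S_{\tau_y^-}| < \infty$ and that $\ell_\alpha$ is bounded, your formula \eqref{eq:formulaV} gives $|V_\alpha(x,y)-y| \le C$ uniformly, and dividing by $y$ finishes the job immediately. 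The renewal-theoretic discussion you sketch is therefore unnecessary, and the ``obstacle'' you identify is illusory. The genuine work lies earlier, in the uniform overshoot bound $\sup_{x,y} \bb E_{\bb Q_x^\alpha}|y+S_{\tau_y^-}| < \infty$; calling this a ``standard Wald-type control'' is optimistic for a Markov random walk, and in the references this is the content of a separate proposition (e.g.\ \cite[Proposition 3.6]{Pha18}, invoked later in the paper in the proof of Lemma~\ref{Lem-second-sum-Renewal}).

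A minor technical point: you invoke Lemma~\ref{lemma kappa 1} to obtain exponential moments of $S_1$, but that lemma requires \ref{Condi-Furstenberg-Kesten}, whereas the present lemma is stated under \ref{CondiAP} only. In the paper's applications \ref{Condi-Furstenberg-Kesten} is always in force, so this is harmless in practice, but your argument as written uses more than the stated hypotheses.
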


The existence of the function $V_{\alpha}$ in Lemma \ref{Lem-positi-harmonic-func} follows by adapting the same arguments as in \cite[Propositions 5.11 and 5.12]{GLP17}, see also \cite{Pha18}. 
The key point in the proof of Lemma \ref{Lem-positi-harmonic-func} is that a martingale approximation holds under a change of measure $\bb Q_{x}^{\alpha}$ due to the spectral gap properties of the transfer operator $P_s$
shown in Section \ref{sect:spectralgap}. 
The details are left to the reader. 

Turning to the positivity of $V_{\alpha}$, 
we can follow the proof of \cite[Proposition 5.12]{GLP17}. 
It relies on the Markov property and a condition called P5 there, which corresponds to \eqref{precise-condi-Qx-alpha}. 
The following lemma shows that this condition is satisfied subject to \ref{Condi-Furstenberg-Kesten} and \ref{Condi_harmonic}.

\begin{lemma}\label{lem:condition_harmonic}
Conditions \ref{Condi-Furstenberg-Kesten} and \ref{Condi_harmonic} imply \eqref{precise-condi-Qx-alpha} 
with $\delta>0$ given by \ref{Condi_harmonic}. 
\end{lemma}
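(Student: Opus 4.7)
The plan is to rewrite the probability in \eqref{precise-condi-Qx-alpha} as an explicit integral against the intensity measure $\mu$, and then to bound this integral from below by a quantity that (i) is independent of $x$ and (ii) is strictly positive thanks to \ref{Condi_harmonic}.

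First, I would apply the many-to-one identity \eqref{Formula_many_to_oneII} from Corollary \ref{cor:many-to-one} with $n=1$, $b=0$, $s=\alpha$, and the test function $f(x_0, s_0, x_1, s_1) = \mathds{1}_{\{s_1 > \delta\}}$. Using $\mathfrak{m}(\alpha)=1$ from \ref{Condi_ms} together with $S_u = -\sigma(g_u, x)$ and $X_u = g_u \cdot x$ for $|u|=1$, and rewriting expectations of sums over the first generation as integrals against $\mu$, this yields
\begin{align*}
\bb Q_x^\alpha \big( \{ -\sigma(g_1, x) > \delta \} \big)
= \frac{1}{r_\alpha(x)} \int_\Gamma |gx|^\alpha \, r_\alpha(g \cdot x) \, \mathds{1}_{\{ -\log|gx| > \delta \}} \, \mu(dg).
\end{align*}

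Next, I would replace the $x$-dependent integrand by an $x$-free lower bound. By Proposition \ref{prop:Ps}, $r_\alpha$ is continuous and strictly positive on the compact sphere $\bb S_+^{d-1}$, hence there exist $0 < c_1 \leq c_2 < \infty$ with $c_1 \leq r_\alpha \leq c_2$. From Lemma \ref{lemma kappa 1} under \ref{Condi-Furstenberg-Kesten}, one has $\varkappa^{-2} \|g\| \leq |gx| \leq \|g\|$ for every $g \in \Gamma$ and $x \in \bb S_+^{d-1}$. In particular $|gx|^\alpha \geq \varkappa^{-2\alpha} \|g\|^\alpha$, and since $\overline{\varkappa} = 2\log\varkappa > 0$, the upper bound $|gx| \leq \|g\|$ gives the uniform set inclusion $\{-\log\|g\| > \overline{\varkappa} + \delta\} \subseteq \{-\log|gx| > \delta\}$. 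Substituting both bounds yields
\begin{align*}
\bb Q_x^\alpha \big( \{ -\sigma(g_1, x) > \delta \} \big)
\;\geq\; \frac{c_1}{c_2 \, \varkappa^{2\alpha}} \int_\Gamma \|g\|^\alpha \, \mathds{1}_{\{ -\log\|g\| > \overline{\varkappa} + \delta \}} \, \mu(dg),
\end{align*}
and the right-hand side no longer depends on $x$.

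It then remains to show that this integral is strictly positive. Condition \ref{Condi_harmonic} reads exactly $\mu(E) > 0$ for $E := \{g : -\log\|g\| > \overline{\varkappa} + \delta\}$. On $E$, allowability (which is implied by \ref{Condi-Furstenberg-Kesten}) forces $\|g\| > 0$, hence $\|g\|^\alpha > 0$. Partitioning $E = \bigcup_n \{g \in E : \|g\|^\alpha \geq 1/n\}$ and applying continuity of $\mu$ from below, some level set must carry strictly positive $\mu$-measure; the integral is then bounded below by $1/n$ times that measure. The main conceptual point of the proof — and the only place where the somewhat mysterious constant $\overline{\varkappa}$ in \ref{Condi_harmonic} is used — is that $\overline{\varkappa}$ exactly absorbs the $\varkappa^{-2}$ loss between $|gx|$ and $\|g\|$ given by Lemma \ref{lemma kappa 1}, which is what allows us to pass from the $x$-dependent event $\{-\log|gx| > \delta\}$ to the $x$-free event appearing in \ref{Condi_harmonic}; no other part of the argument is expected to present difficulties.
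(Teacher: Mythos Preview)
Your proof is correct and follows the same approach as the paper: express the $\bb Q_x^\alpha$-probability via the many-to-one formula, bound $r_\alpha$ and $\|gx\|$ uniformly in $x$ using Lemma~\ref{lemma kappa 1}, and invoke \ref{Condi_harmonic} for strict positivity. One small inaccuracy in your closing commentary (not in the argument itself): the inclusion $\{-\log\|g\|>\overline{\varkappa}+\delta\}\subset\{-\log\|gx\|>\delta\}$ follows already from $\|gx\|\le\|g\|$ with no $\varkappa$-loss, so $\overline{\varkappa}$ is not in fact ``absorbing'' anything at that particular step.
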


\begin{proof} 
By \eqref{Formula_many_to_one} with $b=0$, and Lemma \ref{lemma kappa 1}, it holds with $\overline{\varkappa}:=2 \log \varkappa$, 
\begin{align*}
& \bb Q_x^\alpha \left( \left\{ g \, : \,  - \sigma(g,x)>\delta \right\} \right)  \notag\\
& = \frac{1}{r_\alpha(x) \mathfrak{m}(\alpha)} \bb E \bigg(  \sum_{|u|=1} e^{-\alpha \log \| G_u x \| } r_\alpha(G_u x) 
  \mathds{1}_{(\delta,\infty)}( -\log \| G_u x \|)  \bigg)  \notag\\
&\geq  c \bb E \bigg(  \sum_{|u|=1} e^{-\alpha \log \norm{G_u}} \mathds{1}_{(\overline{\varkappa} +\delta,\infty)}(-\log \norm{G_u})  \bigg) \geq  c e^{\alpha(\overline{\varkappa} +\delta)} \bb E \bigg(  \sum_{|u|=1}  \mathds{1}_{(\overline{\varkappa} +\delta,\infty)}(-\log \norm{G_u})  \bigg),
\end{align*}
where the last term is positive by our assumption.
\end{proof}

The following lemma allows us to replace $\log \| G_n x \|$ by $\log \| G_n x' \|$ due to condition \ref{Condi-Furstenberg-Kesten}. 

\begin{lemma}[\cite{GX23}] \label{Lem-contractivity}
Assume condition \ref{Condi-Furstenberg-Kesten}.  
Then there exists a constant $c_0 >0$  such that for any $g \in \Gamma$ and $x, x' \in \bb S_+^{d-1}$, 
\begin{align*}
\left|  \log \| g x \| -   \log \| g x' \| \right|  \leq  c_0. 
\end{align*}
\end{lemma}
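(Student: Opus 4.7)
The plan is to reduce the statement to a straightforward application of Lemma \ref{lemma kappa 1}, which under \ref{Condi-Furstenberg-Kesten} provides the two-sided comparison
\[
\frac{1}{\varkappa^{2}}\|g\| \;\le\; \|gx\| \;\le\; \|g\|
\]
uniformly in $x \in \bb S_{+}^{d-1}$ and $g \in \Gamma$. The point is that under the Furstenberg-Kesten condition, the matrix $g$ acts essentially isometrically (up to a bounded multiplicative constant) on the positive sphere, so $\|gx\|$ and $\|gx'\|$ cannot differ by more than a bounded factor.

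Concretely, given any $x, x' \in \bb S_{+}^{d-1}$, I will apply the upper bound of Lemma \ref{lemma kappa 1} to $x$ and the lower bound to $x'$ (and vice versa) to obtain
\[
\frac{\|gx\|}{\|gx'\|} \;\le\; \frac{\|g\|}{\varkappa^{-2}\|g\|} \;=\; \varkappa^{2}, \qquad \frac{\|gx'\|}{\|gx\|} \;\le\; \varkappa^{2}.
\]
Taking logarithms yields $\bigl|\log \|gx\| - \log \|gx'\|\bigr| \le 2\log \varkappa$, so that one may simply take $c_{0} := 2\log \varkappa$, which agrees with the constant $\overline{\varkappa}$ already appearing in \ref{Condi_harmonic} and in Lemma \ref{lem:condition_harmonic}. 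No further work is required, and since Lemma \ref{lemma kappa 1} is quoted from \cite{GX23}, there is no genuine obstacle here; the statement is essentially a reformulation in logarithmic form.

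If instead one wanted a self-contained proof without quoting Lemma \ref{lemma kappa 1}, the argument would proceed directly from the entrywise bounds in \ref{Condi-Furstenberg-Kesten}: writing $m_{g} := \min_{i,j} g^{i,j}$, one has $g^{i,j} \in [m_{g}, \varkappa m_{g}]$, hence for any $x \in \bb S_{+}^{d-1}$ (recalling $\|x\| = \sum_{j} x_{j} = 1$),
\[
d\,m_{g} \;\le\; \|gx\| \;=\; \sum_{j} x_{j} \sum_{i} g^{i,j} \;\le\; d\varkappa\, m_{g},
\]
and the conclusion follows immediately with $c_{0} = \log \varkappa$. The mild discrepancy in the constant (between $\log \varkappa$ from this direct computation and $2\log \varkappa$ via Lemma \ref{lemma kappa 1}) is immaterial for later use.
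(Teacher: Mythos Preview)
Your proof is correct. The paper does not actually prove this lemma; it is quoted directly from \cite{GX23} without argument. Your derivation via Lemma~\ref{lemma kappa 1} is exactly the natural one, and indeed the paper uses the same inequality $\log\|gx\| \ge \log\|g\| - \overline{\varkappa} \ge \log\|gx'\| - \overline{\varkappa}$ explicitly in the proof of Lemma~\ref{Lem-Sn-Snstar} (see \eqref{inequa-log-norm-vec}), so your identification $c_0 = \overline{\varkappa} = 2\log\varkappa$ is consistent with how the constant is effectively used elsewhere in the paper.
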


A core ingredient of the proofs will be 
a conditioned local limit theorem for products of positive random matrices.

\begin{lemma}[\cite{GX23}] \label{Lem-CLLT-bound}
Assume conditions \ref{Condi-Furstenberg-Kesten}, \ref{CondiNonarith} and \ref{Condi_ms}.  
Then there exists a constant $c>0$ such that for any $x \in \bb S_+^{d-1}$, $y \geq 0$, $z \geq 0$, $t \in \bb R$ and $n \geq 1$, 
\begin{align*}
 \bb Q_{x}^{\alpha} \left( y + S_n \in  t +  [0, z], \tau_y^- > n \right)
\leq  \frac{c}{n^{3/2}} (1 + y)  (1 + z) (1 + z + \max\{t, 0\})
\end{align*}
and 
\begin{align*}
 \bb Q_{x}^{\alpha} \left( y - S_n \in t + [0, z], \tau_y^+ > n \right)
\leq  \frac{c}{n^{3/2}} (1 + y)  (1 + z) (1 + z + \max\{t, 0\}). 
\end{align*}
\end{lemma}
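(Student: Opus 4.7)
The plan is to derive this conditioned local limit theorem by combining a uniform unconditioned LLT for $S_n$ under $\bb Q_x^\alpha$ with the harmonic function $V_\alpha$ of Lemma \ref{Lem-positi-harmonic-func} and a dual harmonic function for the time-reversed chain. The $n^{-3/2}$ rate is the Caravenna scaling: $n^{-1/2}$ for the persistence probability $\bb Q_x^\alpha(\tau_y^- > n)$ times $n^{-1/2}$ for the local limit theorem at the endpoint. The three polynomial factors arise as $V_\alpha(x,y) \le c(1+y)$ at the starting point, the target interval length $(1+z)$, and an upper bound $(1+z+\max\{t,0\})$ for a dual harmonic function evaluated above the right end of the target interval.

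Under \ref{Condi-Furstenberg-Kesten} and \ref{CondiNonarith}, the Markov operator $Q_\alpha$ acts with a spectral gap on Hölder continuous functions on $\bb S_+^{d-1}$ (as in \cite{BDGM14}). Holomorphic perturbation theory applied to the Fourier operators $Q_{\alpha,it}\varphi(x) := \bb E_{\bb Q_x^\alpha}[e^{itS_1}\varphi(X_1)]$ gives a dominant eigenvalue $\lambda(t) = 1 - \sigma_\alpha^2 t^2/2 + o(t^2)$ near $t=0$ and strict spectral contraction away from it, whence a Stone-type LLT with uniform remainder in $x$ follows by Fourier inversion. In parallel, Lemma \ref{lem:ExistenceEigenfunctionsPs} supplies the spectral data $r_\alpha^*$ and $\nu_\alpha^*$ for the transposed transfer operator $P_\alpha^*$, and repeating the construction behind Lemma \ref{Lem-positi-harmonic-func} for the associated reversed Markov chain yields a dual harmonic function $V_\alpha^*(x,y) \le c(1+y)$. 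Splitting the trajectory at $\lfloor n/2 \rfloor$, the strong Markov property factors the probability into the entrance law of $y+S_{\lfloor n/2\rfloor}$ on the event $\{\tau_y^->\lfloor n/2\rfloor\}$ and a residual second-half contribution; applying the Stone LLT to the middle and controlling the residual by the dual harmonic function then assembles the stated bound.

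The main obstacle is the construction and control of $V_\alpha^*$ and the quantitative time-reversal identity linking the killed chain to its transpose: unlike in the i.i.d.\ case there is no elementary duality lemma, so harmonicity and linear growth of $V_\alpha^*$ must be proved via a separate martingale approximation, and Lemma \ref{Lem-contractivity} is used throughout to remove dependence on the starting direction $x \in \bb S_+^{d-1}$. The second inequality of the lemma, for $y - S_n$ and $\tau_y^+$, is exactly the first inequality applied to the transposed chain, so it follows by swapping the roles of $V_\alpha$ and $V_\alpha^*$.
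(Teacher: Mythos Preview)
The paper does not prove this lemma: it is stated with the citation \cite{GX23} and the surrounding text merely records that the case $\alpha=0$ is \cite[Theorem 1.3]{GX23}, that the case $\alpha>0$ follows analogously under the change of measure $\bb Q_x^\alpha$, and that the only ingredient needed is the spectral gap for $P_s$ established in Section~\ref{sect:spectralgap}. So there is no proof in the paper to compare against; your proposal is a sketch of what the cited reference actually does.

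Your outline is broadly the standard Caravenna/Denisov--Wachtel scheme (split at $\lfloor n/2\rfloor$, harmonic function at the start, dual harmonic function at the endpoint, Stone-type LLT in the middle via spectral-gap Fourier analysis), and this is indeed the architecture of \cite{GX23}. Two remarks are in order. First, be careful about logical order: you invoke the bound $V_\alpha(x,y)\le c(1+y)$ from Lemma~\ref{Lem-positi-harmonic-func}, but in the actual development of \cite{GLP17,GX23} the harmonic function is constructed via martingale approximation \emph{before} any conditioned LLT, so there is no circularity---just make sure your write-up does not silently use the present lemma inside that construction. Second, your last sentence is not quite right: the second inequality is \emph{not} the first one ``applied to the transposed chain''. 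The event $\{\tau_y^+>n\}=\{S_k\le y,\ 1\le k\le n\}$ means that the process $-S_n$ (not $S_n^*$) stays above $-y$, so the second bound is the first argument rerun for the cocycle $+\sigma(G_n,x)$ in place of $-\sigma(G_n,x)$; transposition enters only in the time-reversal step that produces the endpoint harmonic function, and that step is already present in the proof of the first inequality. Swapping $V_\alpha$ and $V_\alpha^*$ does not by itself convert one inequality into the other.
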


Lemma \ref{Lem-CLLT-bound} for $\alpha = 0$ was established in \cite[Theorem 1.3]{GX23},
see also \cite[Theorem 2]{Peigne Pham 2023}. The case $\alpha > 0$ follows analogously under the change of measure $\bb Q_{x}^{\alpha}$.
Examining the proof of \cite[Theorem 1.3]{GX23}, we observe that the key ingredient for the result is the spectral gap properties of the transfer operator $P_s$ defined in \eqref{Def-Ps}. These properties have already been established in Section \ref{sect:spectralgap}.

Note that the moment condition A2 in \cite{GX23} is satisfied 
in our setting as a consequence of \ref{Condi-Furstenberg-Kesten} and \ref{Condi_ms},
see Remark \ref{Rem-moment-condi}.

\begin{lemma}\label{Lem-Renewal-thm}
Assume conditions \ref{Condi-Furstenberg-Kesten}, \ref{CondiNonarith} and \ref{Condi_ms}.  
Then there exists a constant $c>0$ such that for any $x \in \bb S_+^{d-1}$, $y \geq 0$ and $z \geq 0$, 
\begin{align*}
\sum_{n = 0}^{\infty} \bb Q_{x}^{\alpha} \left( y + S_n \in [0, z], \tau_y^- > n \right)
\leq  c  (1 + z) \left( 1 + \min\{y, z\} \right)
\end{align*}
and 
\begin{align*}
\sum_{n = 0}^{\infty} \bb Q_{x}^{\alpha} \left( y - S_n \in [0, z], \tau_y^+ > n \right)
\leq  c  (1 + z) \left( 1 + \min\{y, z\} \right). 
\end{align*}
\end{lemma}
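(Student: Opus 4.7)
The strategy is a two-stage Markov reduction. For the first inequality in the case $y \le z$, I decompose $[0,z]$ into unit subintervals and handle each by the strong Markov property at the first entry time, bounding the entrance probability via an upcrossing argument based on the harmonic function $V_\alpha$ from Lemma \ref{Lem-positi-harmonic-func}. For $y > z$, I reduce to the case $y \le z$ by the strong Markov property at the first entry time into $[0,z]$. The second inequality follows by the symmetric argument applied to $-S_n$, using the second bound in Lemma \ref{Lem-CLLT-bound} and the corresponding harmonic function for the walk killed upon exceeding $y$.

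For $y \le z$, I set $T_k := \inf\{n \ge 0 : y + S_n \in [k, k+1]\}$ for $k = 0, 1, \dots, \lceil z \rceil$. The strong Markov property gives
\begin{equation*}
\sum_{n \ge 0} \bb Q_x^\alpha(y + S_n \in [k, k+1], \tau_y^- > n) = \bb E_{\bb Q_x^\alpha}\bigl[\mathds{1}_{T_k < \tau_y^-}\, G_k(X_{T_k}, y + S_{T_k})\bigr],
\end{equation*}
where $G_k(x', y') := \sum_m \bb Q_{x'}^\alpha(y' + S_m \in [k, k+1], \tau_{y'}^- > m)$. For $y' \in [k, k+1]$ one has $k - y' \le 0$, so Lemma \ref{Lem-CLLT-bound} applied with $t = k - y'$ and interval length $1$, summed over $m$, gives $G_k(x', y') \le C(1 + y') \le C(2 + k)$. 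To control the entrance probability, I extend $V_\alpha$ by $0$ on $\{b < 0\}$ and apply optional stopping to the non-negative $\bb Q_x^\alpha$-martingale $V_\alpha(X_n, y+S_n)\mathds{1}_{\tau_y^- > n}$ at $T_k \wedge \tau_y^- \wedge N$; letting $N \to \infty$, with the tail term controlled by the upper bound $V_\alpha(x,b) \le c(1+b)$ of Lemma \ref{Lem-positi-harmonic-func}, yields
\begin{equation*}
\bb E_{\bb Q_x^\alpha}\bigl[V_\alpha(X_{T_k}, y + S_{T_k})\mathds{1}_{T_k < \tau_y^-}\bigr] \le V_\alpha(x, y) \le c(1+y).
\end{equation*}
Combining this with the lower bound $V_\alpha(X_{T_k}, y+S_{T_k}) \ge c'(1+k)$ on $\{T_k < \tau_y^-\}$ gives $\bb Q_x^\alpha(T_k < \tau_y^-) \le C(1+y)/(1+k)$. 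Hence each subinterval contributes at most $C'(1+y)$, and summing over $k = 0, \ldots, \lceil z \rceil$ yields the claimed $C''(1+y)(1+z)$. For $y > z$, setting $\sigma := \inf\{n \ge 0 : y + S_n \le z\}$, the constraints $\tau_y^- > \sigma$ and $y + S_\sigma \le z$ force $y + S_\sigma \in [0, z]$ on $\{\sigma < \tau_y^-\}$; the strong Markov property combined with the previous case applied to $(X_\sigma, y + S_\sigma)$ then gives a bound by $\sup_{y' \in [0,z]} C''(1+y')(1+z) \le C''(1+z)^2$, matching $(1+z)(1+\min\{y,z\})$ in this range.

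The delicate point is justifying the lower bound $V_\alpha(x, b) \ge c'(1+b)$ under the hypotheses of Lemma \ref{Lem-Renewal-thm}, since Lemma \ref{Lem-positi-harmonic-func} requires the additional condition \eqref{precise-condi-Qx-alpha}, whose standard sufficient condition \ref{Condi_harmonic} is not assumed here. I expect \eqref{precise-condi-Qx-alpha} nonetheless to hold: by \ref{Condi_ms} the step distribution of $S$ under $\bb Q_x^\alpha$ has zero mean, by \ref{CondiNonarith} it cannot be concentrated at $0$, so it must place positive mass on some half-line $(-\infty, -\delta)$; continuity in $x$ of the transfer operator (Section \ref{sect:spectralgap}) together with compactness of $\bb S_+^{d-1}$ then promote this to the required uniform lower bound. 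A secondary technical point, subsumed in the optional-stopping step above, is justifying the vanishing of the boundary term $\bb E_{\bb Q_x^\alpha}[V_\alpha(X_N, y+S_N)\mathds{1}_{T_k \wedge \tau_y^- > N}]$ as $N \to \infty$, which follows from the defining property of $V_\alpha$ in Lemma \ref{Lem-positi-harmonic-func} together with the fact that $T_k \wedge \tau_y^- < \infty$ a.s.
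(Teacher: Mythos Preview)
Your approach is genuinely different from the paper's in the case $y\le z$ and introduces an avoidable dependence on the lower bound $V_\alpha(x,b)\ge c'(1+b)$. The paper instead splits \emph{in time} at $n=\lfloor z^2\rfloor$: for $n\le \lfloor z^2\rfloor$ it uses only the persistence bound $\bb Q_x^\alpha(\tau_y^->n)\le c(1+y)/\sqrt{n+1}$ from Lemma~\ref{Lem-exit-time-alpha} (which needs only the \emph{upper} bound on $V_\alpha$), and for $n>\lfloor z^2\rfloor$ it applies Lemma~\ref{Lem-CLLT-bound} directly; summing yields $c(1+y)(1+z)$. For $z<y$ the paper does essentially what you do: apply the strong Markov property at the first entry into $[0,z]$ and reduce to the previous case. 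Your spatial decomposition into unit intervals is correct, but it costs you the positivity of $V_\alpha$, which Lemma~\ref{Lem-positi-harmonic-func} provides only under the extra condition~\eqref{precise-condi-Qx-alpha}, not assumed in Lemma~\ref{Lem-Renewal-thm}.

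Your heuristic that \eqref{precise-condi-Qx-alpha} follows already from \ref{Condi-Furstenberg-Kesten}, \ref{CondiNonarith}, \ref{Condi_ms} is in fact correct, but the sketch you give is not quite a proof: the one-step mean $\bb E_{\bb Q_x^\alpha}S_1$ is not zero for each $x$ (only the asymptotic drift vanishes), so ``mean zero $\Rightarrow$ mass on both sides'' does not apply pointwise. A clean argument runs as follows. Under \ref{Condi-Furstenberg-Kesten}, Lemma~\ref{Lem-contractivity} and boundedness of $r_\alpha$ give $q_1^\alpha(x,g)\asymp q_1^\alpha(x',g)$ and $\{-\sigma(g,x)>\delta\}\supset\{-\sigma(g,x_0)>\delta+c_0\}$ uniformly in $x$, so it suffices to find \emph{one} $x_0$ with $\bb Q_{x_0}^\alpha(S_1>0)>0$. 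If no such $x_0$ existed, then $\sigma(g,x)\ge 0$ for $\mu$-a.e.\ $g$ and all $x$; since the stationary increments have mean zero (Proposition~\ref{Prop-LLN-change-of-mea}) and are nonpositive, they vanish identically, forcing $\sigma(g,x)=0$ for all $g\in\Gamma$, $x\in\Lambda(\Gamma)$ via the cocycle property, contradicting \ref{CondiNonarith}. With this filled in, your optional-stopping step (Fatou on the nonnegative martingale $V_\alpha(X_n,y+S_n)\mathds 1_{\{\tau_y^->n\}}$ suffices; no tail-vanishing needed) and the bound $G_k\le C(1+k)$ via Lemma~\ref{Lem-CLLT-bound} both go through, and your proof is complete. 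Still, note that the paper's time-splitting is both shorter and stays strictly within the stated hypotheses.
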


\begin{proof}
We follow the proof of \cite[Lemma A.5]{Shi12}. 
We only prove the first inequality, the second one being similar. 

First case: $z \geq y$. By \eqref{bound-tau-b} and Lemma \ref{Lem-CLLT-bound}, we have 
\begin{align*}
\sum_{n = 0}^{\infty} \bb Q_{x}^{\alpha} \left( y + S_n \in [0, z], \tau_y^- > n \right)
& \leq  \sum_{n = 0}^{ \floor{z^2} } \bb Q_{x}^{\alpha} \left( \tau_y^- > n \right) 
 +   \sum_{n = \floor{z^2} + 1}^{\infty} \bb Q_{x}^{\alpha} \left( y + S_n \in [0, z], \tau_y^- > n \right)  \notag\\
 & \leq   \sum_{n = 0}^{ \floor{z^2} }  \frac{c(1 +y)}{\sqrt{n + 1}} +  \sum_{n = \floor{z^2} + 1}^{\infty}  \frac{c}{n^{3/2}} (1 + y)  (1 + z)^2 \notag\\
 & \leq   c(1 +y) (1 + z). 
\end{align*}

Second case: $z < y$.
Upon defining the stopping time $T_z = \inf \{n \geq 1: S_n \leq z\},$
 we have 
\begin{align*}
& \sum_{n = 0}^{\infty} \bb Q_{x, 0}^{\alpha} \left( y + S_n \in [0, z], \tau_y^- > n \right) \notag\\
& =  \bb E_{\bb Q_{x, y}^{\alpha}}  \left( \sum_{n = 0}^{\infty} \mathds 1_{ \{ S_n \leq z,  \ \min_{1 \leq j \leq n} S_j \geq 0  \} } \right)
\notag\\
& = \bb E_{\bb Q_{x, y}^{\alpha}}  \left( \sum_{n = T_z}^{\infty} \mathds 1_{ \{ S_n \leq z,  \ \min_{1 \leq j \leq n} S_j \geq 0  \} } \right) \notag\\
& \leq \bb E_{\bb Q_{x, y}^{\alpha}}  \left( \sum_{k = 0}^{\infty} 
 \mathds 1_{ \{ S_{T_z} + (S_{T_z + k} - S_{T_z} ) \leq z,  \ \min_{0 \leq j \leq k}  ( S_{T_z} +  S_{T_z + j} - S_{T_z})  \geq 0  \} } \right).
\end{align*}
We can without loss of generality assume that $0 \leq S_{T_z} \leq z$, otherwise the above sum is zero. 
Employing this, we estimate
\begin{align*}
& \bb E_{\bb Q_{x, y}^{\alpha}}  \left( \sum_{k = 0}^{\infty} 
 \mathds 1_{ \{ S_{T_z} + (S_{T_z + k} - S_{T_z} ) \leq z,  \ \min_{0 \leq j \leq k}  ( S_{T_z} +  S_{T_z + j} - S_{T_z})  \geq 0  \} } \right) \notag\\
& \leq    \bb E_{\bb Q_{x, y}^{\alpha}}  \left( \sum_{k = 0}^{\infty} 
 \mathds 1_{ \{ S_{T_z + k} - S_{T_z}  \leq z,  \ \min_{0 \leq j \leq k}   S_{T_z + j} - S_{T_z}  \geq -z  \} } \right)  \notag\\
 & =  \bb E_{\bb Q_{x, y}^{\alpha}}  \left(  \bb E_{\bb Q_{X_{T_z}, 0}^{\alpha}}
  \sum_{k = 0}^{\infty} 
 \mathds 1_{ \{ S_{k}   \leq z,  \ \min_{0 \leq j \leq k}   S_{j}  \geq -z  \} } \right)  \notag\\
& =  \int_{ \bb S_+^{d-1} } \sum_{k = 0}^{\infty}   \bb Q_{x', 0}^{\alpha}
  \left( S_{k}   \leq z,  \ \min_{0 \leq j \leq k}   S_{j}  \geq -z  \right)  \bb Q_{x, y}^{\alpha} (X_{T_z} \in dx')
\end{align*}
For $k >  \floor{z^2}$, applying Lemma \ref{Lem-CLLT-bound} gives for any $x' \in \bb S_+^{d-1}$, 
\begin{align*}
\bb Q_{x', 0}^{\alpha}
  \left( S_{k}   \leq z,  \ \min_{0 \leq j \leq k}   S_{j}  \geq -z  \right)
   = \bb Q_{x', 0}^{\alpha}
  \left( z + S_{k}   \leq 2 z,  \  z + \min_{0 \leq j \leq k}   S_{j}  \geq 0  \right)
  \leq \frac{c}{k^{3/2}} (1 + z)^3. 
\end{align*}
Therefore, we get
\begin{align*}
\sum_{k = 0}^{\infty}   \bb Q_{x', 0}^{\alpha}
  \left( S_{k}   \leq z,  \ \min_{0 \leq j \leq k}   S_{j}  \geq -z  \right)
  \leq  1 + \floor{z^2} +   \sum_{k = \floor{z^2} + 1}^{\infty}   \frac{c}{k^{3/2}} (1 + z)^3 
  \leq c (1 + z)^2,
\end{align*}
as required. 
\end{proof}

\begin{lemma}\label{Lem-Renewal-thm-bb}
Assume conditions \ref{Condi-Furstenberg-Kesten}, \ref{CondiNonarith} and \ref{Condi_ms}.  
Then there exists a constant $c>0$ such that for any $x \in \bb S_+^{d-1}$, $y \geq 0$, $z \geq 0$ and $t \in \bb R$, 
\begin{align*}
\sum_{n = 0}^{\infty} \bb Q_{x}^{\alpha} \left( y + S_n \in t + [0, z], \tau_y^- > n \right)
\leq  c(1 +z) (1 + \max\{z, t + z, y\}) 
\end{align*}
and 
\begin{align*}
\sum_{n = 0}^{\infty} \bb Q_{x}^{\alpha} \left( y - S_n \in t + [0, z], \tau_y^+ > n \right)
\leq  c  (1 +z) (1 + \max\{z, t + z, y\}). 
\end{align*}
\end{lemma}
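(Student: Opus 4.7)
The plan is to prove the first inequality by combining the conditioned local limit theorem in Lemma \ref{Lem-CLLT-bound} with the unconditioned LLT for $(S_n)$ under $\bb Q^\alpha_x$, split at a well-chosen truncation level. The second inequality then follows by applying the same argument to the walk $-S_n$, whose spectral structure is governed by the dual transfer operator $P_\alpha^*$. First I dispose of two trivial sub-cases: if $t+z<0$, the joint event is empty and the sum vanishes; if $t \le 0 \le t+z$, then $[t,t+z]\cap[0,\infty)\subset[0,z]$, and Lemma \ref{Lem-Renewal-thm} immediately yields $\sum \le c(1+z)(1+\min\{y,z\}) \le c(1+z)(1+\max\{z,t+z,y\})$.

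The substantial case is $t>0$. Write $M:=\max\{y,t+z\}=\max\{z,t+z,y\}$. Lemma \ref{Lem-CLLT-bound} supplies, for every $n\ge 1$, the bound $\bb Q_x^\alpha(y+S_n\in[t,t+z],\tau_y^->n)\le c(1+y)(1+z)(1+t+z)/n^{3/2}$. Independently, the unconditioned local limit theorem for $(S_n)$ under $\bb Q_x^\alpha$, which is centered by \ref{Condi_ms}, yields $\bb Q_x^\alpha(y+S_n\in[t,t+z])\le c(1+z)/\sqrt n$ for every $n\ge 1$; this follows from the spectral gap of $P_\alpha$ (Proposition \ref{prop:Ps}) together with the non-arithmeticity condition \ref{CondiNonarith} via the standard Fourier argument as in \cite{GX23}, and holds uniformly in $x\in\bb S_+^{d-1}$ by compactness.

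I then split the sum at the truncation level $N:=\lceil (1+y)(1+t+z)\rceil$. Applying the unconditioned estimate for $n\le N$ gives $\sum_{n=0}^{N}\le 1+c(1+z)\sum_{n=1}^N 1/\sqrt n \le c'(1+z)\sqrt{N+1}$, while the conditioned LLT applied to $n>N$ gives $\sum_{n>N}\le c(1+y)(1+z)(1+t+z)/\sqrt{N+1}$. With this choice of $N$, both halves are of order $c(1+z)\sqrt{(1+y)(1+t+z)}$. Finally, $(1+y)(1+t+z)\le (1+M)^2$ in either regime: if $y\le t+z$ each factor is at most $(1+t+z)=(1+M)$, while if $y>t+z$ each is at most $(1+y)=(1+M)$. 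Taking square roots gives the claimed bound $c(1+z)(1+M)$.

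The main obstacle is establishing the unconditioned LLT with the correct uniformity in $x,y,t,z$; although this is classical in the spectral-theoretic framework (and the ingredients are essentially those underlying Lemma \ref{Lem-CLLT-bound} in \cite{GX23}), it is not highlighted in the body of the present paper and must be adapted to the $\sigma$-finite transfer-operator setting developed in Section \ref{sect:spectralgap}. Once this is in hand the splitting argument is elementary; the only delicate point is that choosing the truncation $N\approx(1+y)(1+t+z)$ is what improves the naive $(1+M)^2$ that one obtains from using either of the two bounds alone to the sharp factor $(1+M)$ needed for the applications in the proof of the Seneta--Heyde scaling.
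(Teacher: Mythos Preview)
Your proof is correct and follows essentially the same approach as the paper: both split the sum into a short range, bounded by the ordinary (unconditioned) local limit theorem $\bb Q_x^\alpha(S_n\in[t-y,t+z-y])\le c(1+z)/\sqrt{n}$, and a long range, bounded by the conditioned estimate of Lemma~\ref{Lem-CLLT-bound}. The only difference is the cutoff: the paper takes $N=\lfloor (t+z)^2\rfloor$, yielding directly $c(1+z)(1+t+z)+c(1+y)(1+z)\le c(1+z)(1+\max\{t+z,y\})$, whereas you take the balanced choice $N=\lceil(1+y)(1+t+z)\rceil$ and then invoke $\sqrt{(1+y)(1+t+z)}\le 1+M$; both are equivalent up to constants. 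Your handling of $t\le 0$ via Lemma~\ref{Lem-Renewal-thm} is slightly more explicit than the paper's one-line reduction, and for the second inequality it suffices to use the second part of Lemma~\ref{Lem-CLLT-bound} directly---no detour through the dual operator $P_\alpha^*$ is needed.
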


\begin{proof}
It is enough to give a proof for $t \geq 0$. 
By the ordinary local limit theorem (cf.\ \cite[Theorem 2.6]{GX23}) and Lemma \ref{Lem-CLLT-bound}, 
we have that, for any $x \in \bb S_+^{d-1}$, $y \geq 0$, $z \geq 0$ and $t \geq 0$, 
\begin{align*}
& \sum_{n = 0}^{\infty} \bb Q_{x}^{\alpha} \left( y + S_n \in t + [0, z], \tau_y^- > n \right)  \notag\\
& \leq  \sum_{n = 0}^{ \floor{(t + z)^2} } \bb Q_{x}^{\alpha} \left(  y + S_n \in t + [0, z] \right) 
 +   \sum_{n = \floor{(t+z)^2} + 1}^{\infty} \bb Q_{x}^{\alpha} \left( y + S_n \in t + [0, z], \tau_y^- > n \right)  \notag\\
 & \leq   \sum_{n = 0}^{ \floor{(t+z)^2} }  \frac{c(1 +z)}{\sqrt{n + 1}} 
  +  \sum_{n = \floor{(t+z)^2} + 1}^{\infty}  \frac{c}{n^{3/2}} (1 + y)  (1 + z) (1 + z +t) \notag\\
 & \leq   c(1 +z) (1 + t + z) +  c (1 + y)  (1 + z)  \notag\\
 & \leq   c(1 +z) (1 + \max\{t + z, y\}), 
\end{align*}
which shows the first inequality. The proof of the second one is similar. 
\end{proof}

\section{Convergence of martingales}\label{Sec-conver-deriva-mart}

\subsection{Convergence of the additive martingale}
The goal of this section is to establish Theorem \ref{Thm:Biggins-bb} on the convergence of Biggins' martingale. 
Recall that $\mathfrak M$ is defined by \eqref{def-psi-s}.

\begin{proposition}\label{Thm:Biggins}
Assume condition \ref{CondiAP} and $\bb E N >1$. 
Assume also that there exists a constant $s \in I$ such that $\mathfrak M(s) \leq s \mathfrak M'(s)$
and 
$$\bb E \Big( \sum_{|u| = 1} \|g_u\|^{s} (\log\|g_u\| + \log \iota(g_u)) \Big) < \infty.$$ 
Then, for any $x \in \bb S_+^{d-1}$, we have $W_{\infty}(s) = 0$, $\bf P_{x}$-a.s. 
\end{proposition}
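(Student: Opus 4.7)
The plan is to extend the spinal change-of-measure construction developed in Section \ref{Subsection-change-of-measure} for the parameter $\alpha$ to the general $s \in I$ of the statement, and then apply Lyons' argument. Define on $\mathscr F_\infty$ the size-biased probability $\wh{\bb P}_x^s$ by
\[
\wh{\bb P}_x^s(A) := \frac{1}{r_s(x)}\,\bb E_x\!\left[W_n(s)\,\mathds 1_A\right] \qquad (A \in \mathscr F_n),
\]
which is consistent since $(W_n(s))_{n \geq 0}$ is a $\bb P_x$-martingale. On an enlarged probability space, carry a spine $w \in \bb N^{\bb N}$ with conditional distribution
\[
\wh{\bb P}_x^s(w|n = z \mid \mathscr F_n) = \frac{e^{-sS_z}\, r_s(X_z)}{\mathfrak m(s)^n W_n(s)}.
\]
Since the $\mathfrak m(s)^{-n}$ factor is already built into the martingale, the arguments of Theorem \ref{Thm-Spinal-decom} and Corollary \ref{Cor-spine-001} apply verbatim with $\alpha$ replaced by $s$, and show that under $\wh{\bb P}_x^s$ the spine $(X_{w|n}, S_{w|n})_{n \geq 0}$ is distributed as the Markov random walk $(X_n, S_n)_{n \geq 0}$ under $\bb Q_x^s$.

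I then invoke Lyons' dichotomy: $1/W_n(s)$ is a non-negative $\wh{\bb P}_x^s$-supermartingale (by the same reasoning as in the proof of Lemma \ref{lem-condi-limsup}), so $W_n(s)$ converges $\wh{\bb P}_x^s$-a.s.\ in $[0,\infty]$; moreover $\wh{\bb E}_x^s[1/W_n(s)] = \bb P_x(W_n(s) > 0) \searrow \bb P_x(W_\infty(s) > 0)$, so it is enough to prove $W_n(s) \to \infty$ $\wh{\bb P}_x^s$-a.s.\ in order to conclude $W_\infty(s) = 0$ $\bb P_x$-a.s.\ (and hence $\bf P_x$-a.s.). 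To produce such divergence, I retain only the spinal term,
\[
W_n(s) \geq \frac{e^{-sS_{w|n}}\, r_s(X_{w|n})}{\mathfrak m(s)^n},
\]
and apply Proposition \ref{Prop-LLN-change-of-mea} (applicable thanks to the moment hypothesis of the statement). Together with the uniform boundedness of $r_s$ on the compact sphere $\bb S_+^{d-1}$, this gives
\[
\liminf_{n\to\infty}\frac{1}{n}\log W_n(s) \;\geq\; s\mathfrak M'(s) - \mathfrak M(s) \;\geq\; 0,
\]
where the final inequality is the hypothesis and the first term arises (with its sign) from $S_u = -\sigma(G_u, X_0)$ and the LLN for $S_{w|n}$ under $\bb Q_x^s$.

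In the strict case $\mathfrak M(s) < s\mathfrak M'(s)$ the spine contribution diverges at exponential rate, so $W_n(s) \to \infty$ $\wh{\bb P}_x^s$-a.s.\ and Lyons' dichotomy closes the proof. The main obstacle is the boundary case $\mathfrak M(s) = s\mathfrak M'(s)$ (which does occur, e.g.\ at $s = \alpha$ under \ref{Condi_ms}): here the spine contribution is only subexponential and one must extract $\limsup = +\infty$ from the $\sqrt n$-scale fluctuations of the centered process $-sS_{w|n} - n\mathfrak M(s)$ under $\bb Q_x^s$. An analogue of Proposition \ref{Prop-ell-sigma-alpha} for general $s$, based on the spectral gap of $Q_s$ developed in Section \ref{sect:spectralgap}, should yield a positive asymptotic variance, after which a central limit theorem or a law of the iterated logarithm for Markov random walks gives $\limsup(-sS_{w|n} - n\mathfrak M(s)) = +\infty$ $\bb Q_x^s$-a.s., whence $\limsup W_n(s) = +\infty$ $\wh{\bb P}_x^s$-a.s.; combined with the convergence of $W_n(s)$ in $[0,\infty]$, this forces $W_n(s) \to \infty$ $\wh{\bb P}_x^s$-a.s., concluding the proof. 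Handling this boundary case without explicitly invoking \ref{CondiNonarith} (which is not among the assumptions) is the delicate technical point that requires care.
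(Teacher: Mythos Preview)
Your approach is essentially identical to the paper's: set up the spinal decomposition for parameter $s$ (the paper tacitly uses Corollary \ref{Cor-spine-001} with $\alpha$ replaced by $s$), apply the SLLN along the spine to get $S_{w|n}/n \to -\mathfrak M'(s)$ under $\bb Q_x^s$, lower-bound $W_n(s)$ by the spine term $c\,e^{-sS_{w|n}-n\mathfrak M(s)}$, and conclude via Lyons' dichotomy that $\limsup W_n(s)=\infty$ under $\wh{\bb P}_x^s$, hence $\bb E_x W_\infty(s)=0$.

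Where you are more careful than the paper is exactly the point you flag. The paper passes directly from
\[
\lim_{n\to\infty}\frac{-sS_{w|n}-n\mathfrak M(s)}{n}=s\mathfrak M'(s)-\mathfrak M(s)\ge 0
\]
to $\limsup_{n}\big[-sS_{w|n}-n\mathfrak M(s)\big]=\infty$ without comment; this is immediate when the limit is strictly positive, but in the boundary case $\mathfrak M(s)=s\mathfrak M'(s)$ it is an extra (if standard) step. Your proposed remedy is the right one: the centered Markov random walk $(X_n,S_n)$ under $\bb Q_x^s$ satisfies a CLT with asymptotic variance $\sigma_s^2$ (Proposition \ref{Prop-ell-sigma-alpha}), and once $\sigma_s^2>0$ one has $\liminf S_n=-\infty$ a.s., hence $\limsup(-sS_{w|n})=+\infty$. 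Note that full non-arithmeticity \ref{CondiNonarith} is not needed for $\limsup=+\infty$; what is needed is non-degeneracy $\sigma_s^2>0$ (which \ref{CondiNonarith} guarantees, but which is a strictly weaker requirement and follows from \ref{CondiAP} together with $\sigma$ not being a coboundary). So your worry about missing \ref{CondiNonarith} is real but points to the same soft spot that the paper itself glosses over.

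One small correction: your identity $\wh{\bb E}_x^s[1/W_n(s)]=\bb P_x(W_n(s)>0)$ is off by the normalizing factor $r_s(x)$; the correct version is $\wh{\bb E}_x^s[1/W_n(s)]=r_s(x)^{-1}\bb P_x(W_n(s)>0)$, which is harmless for the argument.
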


\begin{proof}
By \eqref{sized-BRW-spinal} and the fact that the pair $(X_n, S_n)$ is a Markov chain under  $\bb Q_{x,b}^{s}$, 
we have that $(X_{w|n}, S_{w|n})$ is also a Markov chain under $\wh{\bb P}_{x, b}$. 
Hence, by the strong law of large numbers (cf.\ \cite[Theorem 6.1]{BDGM14}), we get that, for any $x \in \bb S_+^{d-1}$, 
\begin{align}\label{SLLN-spine-aa}
\lim_{n \to \infty} \frac{ S_{w|n} }{n} 
=  \wh{\bb E}_{x} (S_{w|1}) 
= \bb{E}_{ \bb Q_{x}^{s} } S_1
= - \mathfrak M'(s),  \quad  \wh{\bb P}_{x}\mbox{-a.s.}
\end{align}
where in the second equality we used again \eqref{sized-BRW-spinal}. 
It follows that 
\begin{align*}
\lim_{n \to \infty} \frac{ s S_{w|n} - n \mathfrak M(s) }{n} = s \mathfrak M'(s) - \mathfrak M(s) \geq 0,  \quad  \wh{\bb P}_{x}\mbox{-a.s.}
\end{align*}
and hence $\limsup_{n \to \infty} [ s S_{w|n} - n \mathfrak M(s) ] =  \infty$, $\wh{\bb P}_{x}$-a.s.
Using the definition of $W_n$ (cf.\ \eqref{def-addi-martigale}) and the fact that $r_{s}$ is bounded from above and below by 
a strictly positive constant, we obtain that there exists a constant $c \in (0, \infty)$ such that
\begin{align*}
W_n(s) \geq  c   \sum_{|u| = n} \exp { [ - s S_u - n \mathfrak M(s) ] }  
\geq  c  \exp{ [ - s S_{w|n} - n \mathfrak M(s) ] }. 
\end{align*}
This yields $\limsup_{n \to \infty}  W_n(s) =  \infty$, $\wh{\bb P}_{x}$-a.s.,
so that $\bb E_{x} (W_{\infty}(s)) = 0$ for any $x \in \bb S_+^{d-1}$. 
Since $W_{\infty}(s) \geq 0$, this implies that, for any $x \in \bb S_+^{d-1}$, we have $W_{\infty}(s) = 0$, $\bf P_{x}$-a.s.
\end{proof}

\begin{proof}[Proof of Theorem \ref{Thm:Biggins-bb}]
(1) We prove that, if $\bb E_x ( W_1(s) \log^+ W_1(s)) = \infty$, then $W_{\infty}(s) = 0$, $\bf P_{x}$-a.s. 
Since $r_{s} > c$ for some constant $c>0$, by \eqref{def-addi-martigale}, we have that for any $n \geq 0$, 
\begin{align}\label{Lower-W-n-aaa}
W_{n+1}(s) 
& \geq  c  \sum_{|u| = n+1} e^{-s S_u -  (n+1) \mathfrak M(s) }   \notag\\
& =  c \sum_{|u| = n} e^{-s S_u -  (n+1) \mathfrak M(s) }  \sum_{|v| = n+1, \overset{\leftarrow}{v} = u} e^{-s (S_v - S_u)}   \notag\\
& \geq  c e^{-s S_{w|n} -  (n+1) \mathfrak M(s) }   U_n, 
\end{align}
where we recall that $S_{w|0} = S_{\o} = 0$ under $\bb P_x$, and 
\begin{align}\label{def-Un-thm}
U_n = \sum_{|v| = n+1, \overset{\leftarrow}{v} = w|n} e^{-s (S_v - S_{w|n})}. 
\end{align}
We shall apply the following extended Borel-Cantelli lemma (\cite[Theorem 4.3.4]{Dur19}): 
let $(Y_n)_{n \geq 1}$ be any process, and $A_n \in \mathscr{Y}_n: = \sigma(Y_1, \ldots, Y_n)$, then almost surely, 
\begin{align}\label{extended-Borel-Cantellli}
\{ \omega: \omega \in A_n  \  \mbox{i.o.} \} = \left\{ \omega: \sum_{n=1}^{\infty} \bb P (A_{n+1} | \mathscr{Y}_n) = \infty  \right\}. 
\end{align}
We fix a constant $a >0$ whose value will be taken sufficiently large, and for $n \geq 0$, denote
\begin{align*}
A_n = \{ \log U_n > a n \}. 
\end{align*}
Let $\mathscr{H}_n$ be the natural filtration of the spine particles and their children, i.e.
\begin{align}\label{filtration-spine-child}
\mathscr{H}_n = \sigma \left\{ w|k, X_{w|k}, S_{w|k}, (X_v, S_v)_{ |v| = k+1, \overset{\leftarrow}{v} = w|k };  0 \le k \le n \right\}. 
\end{align}
Using the branching property and the change of measure \eqref{def-hat-P-x-b-intro} with $b=0$, we get 
\begin{align}\label{identity-P-An-01}
\wh{\bb P}_{x} (A_{n+1} | \mathscr{H}_n)
& =  \wh{\bb P}_{x} \left( \log U_{n+1} > a (n+1) | \mathscr{H}_n \right)  \notag\\ 
& =   \wh{\bb P}_{X_{w|n}} \left( \log U_{0} > a (n+1) \right)  \notag\\
& = \frac{ 1 }{  r_{s}(X_{w|n})  }   \bb E_{X_{w|n}} \left( W_1(s) \mathds 1_{\{ \log U_{0} > a (n+1) \}}  \right). 
\end{align}
By Lemma \ref{lemma kappa 1} and the fact that $S_{\o} = 0$ under $\bb P_x$, under condition \ref{Condi-Furstenberg-Kesten}, 
there exist constants $c, c' >0$ such that for any $x, x' \in \bb S_+^{d-1}$,
\begin{align}\label{identity-P-An-02}
\bb E_{x'} \left( W_1(s) \mathds 1_{\{ \log U_{0} > a (n+1) \}}  \right)
& = \bb E_{x'} \bigg( \frac{1}{\mathfrak m (s)} 
  \sum_{|u| = 1} e^{-s S_u} r_{s}(X_u) \mathds 1_{\{ \sum_{|v| = 1} e^{-s S_v} > e^{a (n+1)} \}}  \bigg) \notag\\
& \geq  c' \bb E_{x} \bigg( \frac{1}{\mathfrak m (s)} \sum_{|u| = 1} e^{-s S_u} r_{s}(X_u) 
 \mathds 1_{\{ \frac{1}{\mathfrak m (s)}  \sum_{|v| = 1} e^{-s S_v} r_{s}(X_v) > c e^{a (n+1)} \}}  \bigg) \notag\\
& = c' \bb E_{x} \left( W_1(s) \mathds 1_{\{ W_1(s) > c e^{a (n+1)} \}}  \right)  \notag\\
& = c' \wh{\bb P}_{x} \left(  W_1(s) > c e^{a (n+1)}   \right).
\end{align}
Since $\bb E_x ( W_1(s) \log^+ W_1(s)) = \wh{\bb E}_{x} ( \log^+ W_1(s)) = \infty$, it follows that for any $a > 0$, 
\begin{align}\label{identity-P-An-03}
\sum_{n=1}^{\infty} \wh{\bb P}_{x} (A_{n+1} | \mathscr{H}_n)
\geq  c'  \sum_{n=1}^{\infty} \wh{\bb P}_{x} \left(  W_1(s) > c e^{a (n+1)}   \right)
& = c'  \sum_{n=1}^{\infty} \wh{\bb P}_{x} \left( \frac{1}{a} \log \frac{W_1(s)}{c} >  n+1   \right) \notag\\
& \geq c'' \wh{\bb E}_{x}  \left( \frac{1}{a} \log \frac{W_1(s)}{c}   \right) = \infty. 
\end{align}
Using \eqref{extended-Borel-Cantellli}, we get that $\limsup_{n \to \infty} \frac{\log U_n}{n} > a$ for any $a > 0$, $\wh{\bb P}_{x}$-a.s.
By \eqref{SLLN-spine-aa}, 
we have $\lim_{n \to \infty} \frac{S_{w|n}}{n} = -\mathfrak M'(s)$, $\wh{\bb P}_{x}$-a.s.
Therefore, in view of \eqref{Lower-W-n-aaa}, we get that 
$\limsup_{n \to \infty} W_n(s) = \infty$, $\wh{\bb P}_{x}$-a.s. 
By (2.3) of Section (2.3) in \cite{Shi12}, we conclude that $W_{\infty} = 0$,  $\bf P_{x}\mbox{-a.s.}$

(2) We prove that if $\bb E_x ( W_1(s) \log^+ W_1(s)) < \infty$ 
and there exists $s \in I$ such that $\mathfrak M(s) > s \mathfrak M'(s)$,
then $W_{\infty}(s) > 0$,  $\bf P_{x}\mbox{-a.s.}$
By rearranging summation over all particles in generation $n$ according to their last ancestor in the spine, we have
\begin{align*}
W_n(s) & = r_{s} (X_{w|n}) e^{-s S_{w|n} - n \mathfrak M(s)}   \\
	 & \quad + \sum_{k=0}^{n-1} r_s(X_{w|k}) e^{-s S_{w|k}-k \mathfrak{M}(s)} \sum_{|u|=1, (w|k)u\neq (w|k+1)} \frac{r_s(X_{(w|k) u})}{r_s(X_{w|k})} e^{-s (S_{(w|k)u}-S_{w|k})-\mathfrak{M}(s)}  \times \\ 
	 &\qquad\qquad \sum_{|v|=n-k-1} \frac{r_s(X_{(w|k) uv})}{r_s(X_{(w|k)u})} e^{-s (S_{(w|k) uv} - S_{(w|k)u}) - (n-k-1) \mathfrak{M}(s)}. 
\end{align*}
Let $\mathscr{H}_{\infty} = \sigma \{\mathscr{H}_n, n \geq 1\}$, where $\mathscr{H}_n$ is defined by \eqref{filtration-spine-child}, {\em i.e.}, spine particles and their direct offspring are measurable w.r.t. $\mathscr{H}_\infty$. Note that by the properties of the spinal decomposition (cf. Section \ref{sect:spinal-decomp}), non-spine particles produce their offspring according to the original measure. It follows that
\begin{align*}
& \wh{\bb E}_{x} (W_n(s) | \mathscr{H}_{\infty}) \\
&= r_{s} (X_{w|n}) e^{-s S_{w|n} - n \mathfrak M(s)}   \\
&~+ \sum_{k=0}^{n-1} r_s(X_{w|k}) e^{-s S_{w|k}-k \mathfrak{M}(s)} \sum_{|u|=1, (w|k)u\neq (w|k+1)} \frac{r_s(X_{(w|k) u})}{r_s(X_{w|k})} e^{-s (S_{(w|k)u}-S_{w|k})-\mathfrak{M}(s)}  \times \\ 
&~~\wh{\bb E}_{x} \left( \left. \sum_{|v|=n-k-1} \frac{r_s(X_{(w|k) uv})}{r_s(X_{(w|k)u})} e^{-s (S_{(w|k) uv} - S_{(w|k)u}) - (n-k-1) \mathfrak{M}(s)} \, \right| \, \mathcal{H}_\infty\right)\\
&= r_{s} (X_{w|n}) e^{-s S_{w|n} - n \mathfrak M(s)}   \\
&~+ \sum_{k=0}^{n-1} r_s(X_{w|k}) e^{-s S_{w|k}-k \mathfrak{M}(s)} \sum_{|u|=1, (w|k)u\neq (w|k+1)} \frac{r_s(X_{(w|k) u})}{r_s(X_{w|k})} e^{-s (S_{(w|k)u}-S_{w|k})-\mathfrak{M}(s)}  \cdot 1\\
&= r_{s} (X_{w|n}) e^{-s S_{w|n} - n \mathfrak M(s)}   \\
&~+ \sum_{k=0}^{n-1}  e^{-s S_{w|k}-k \mathfrak{M}(s)} \sum_{|u|=1, (w|k)u\neq (w|k+1)} r_s(X_{(w|k) u}) e^{-s (S_{(w|k)u}-S_{w|k})-\mathfrak{M}(s)}  \cdot 1\\
&= r_{s} (X_{w|n}) e^{-s S_{w|n} - n \mathfrak M(s)}  + \sum_{k=0}^{n-1} \ \sum_{|u|=1, (w|k)u\neq (w|k+1)} r_s(X_{(w|k) u}) e^{-s S_{(w|k)u}-(k+1)\mathfrak{M}(s)}. 
\end{align*}
Using the notation \eqref{def-Un-thm}, we get
\begin{align}\label{expression-E-Wn}
\wh{\bb E}_{x} (W_n(s) | \mathscr{H}_{\infty}) 
= \sum_{k=0}^{n-1}  r_{s} (X_{w|k}) e^{-s S_{w|k} - (k+1) \mathfrak M(s)} U_k
- \sum_{k=1}^{n-1}  r_{s} (X_{w|k}) e^{-s S_{w|k} - k \mathfrak M(s) }. 
\end{align}
Similarly to the proof of the first step, we fix a constant $\ee >0$ whose value will be taken sufficiently small, 
and for $n \geq 0$, denote
\begin{align*}
B_n = \{ \log U_n > \ee n \}. 
\end{align*}
Following the proof of \eqref{identity-P-An-01} and \eqref{identity-P-An-02}, using the fact that $r_{s}$ is bounded from above and below, 
we have that there exist constants $c, c'>0$ such that for any $x \in \bb S_+^{d-1}$, 
\begin{align*}
\wh{\bb P}_{x} (B_{n+1} | \mathscr{H}_n)
= \frac{ 1 }{  r_{s}(X_{w|n})  }   \bb E_{X_{w|n}} \left( W_1(s) \mathds 1_{\{ \log U_{0} > \ee (n+1) \}}  \right)
\leq c' \wh{\bb P}_{x} \left(  W_1(s) > c e^{\ee (n+1)}   \right).
\end{align*}
As in \eqref{identity-P-An-03}, since $\bb E_x ( W_1(s) \log^+ W_1(s)) = \wh{\bb E}_{x} ( \log^+ W_1(s)) < \infty$, it follows that for any $\ee > 0$, 
\begin{align*}
\sum_{n=1}^{\infty} \wh{\bb P}_{x} (B_{n+1} | \mathscr{H}_n)
\leq  c'  \sum_{n=1}^{\infty} \wh{\bb P}_{x} \left(  W_1(s) > c e^{\ee (n+1)}   \right)
& = c'  \sum_{n=1}^{\infty} \wh{\bb P}_{x} \left( \frac{1}{\ee} \log \frac{W_1(s)}{c} >  n+1   \right) \notag\\
& \geq c'' \wh{\bb E}_{x}  \left( \frac{1}{\ee} \log \frac{W_1(s)}{c}   \right) < \infty. 
\end{align*}
Using \eqref{extended-Borel-Cantellli},  for any $\ee > 0$, it holds that $\limsup_{n \to \infty} \frac{\log U_n}{n} < \ee$, $\wh{\bb P}_{x}$-a.s.
Since $\ee >0$ can be arbitrarily small, we get 
\begin{align*}
\lim_{n \to \infty} \frac{\log^+ U_n}{n} = 0, \quad \wh{\bb P}_{x}\mbox{-a.s.}
\end{align*}
Using again the strong law of large numbers \eqref{SLLN-spine-aa}, 
we have $\lim_{n \to \infty} \frac{S_{w|n}}{n} = -\mathfrak M'(s)$, $\wh{\bb P}_{x}$-a.s.
Since $\mathfrak M(s) > s \mathfrak M'(s)$, 
both $e^{-s S_{w|k} - (k+1) \mathfrak M(s)}$ and $e^{-s S_{w|k} - k \mathfrak M(s) }$ in \eqref{expression-E-Wn} 
decay exponentially fast as $k \to \infty$.
Using the fact that $r_{s} (X_{w|k})$ is positive and bounded, 
by the monotone convergence theorem, 
both sums in \eqref{expression-E-Wn} converge to a positive and finite limit, so that 
$\lim_{n \to \infty} \wh{\bb E}_{x} (W_n(s) | \mathcal{H}_{\infty})$ exists $\wh{\bb P}_{x}$-a.s. 
By Lemma \ref{lem-condi-limsup}, it follows that $\bb E_x (W_{\infty}) = 1$ for any $x \in \bb S_+^{d-1}$. 
Since $\bb E N >1$, we have $\bb P(\scr S) >0$. Therefore, 
 $W_{\infty}(s) > 0$, $\bf P_{x}\mbox{-a.s.}$ is equivalent to saying that $\bb P_x (W_{\infty}(s) > 0) >0$, 
and the assertion follows. 
\end{proof}

\subsection{Convergence of the derivative martingale}
The goal of this section is to establish Theorem \ref{Thm-conv-deriv-mart} on the convergence of the derivative martingale. 
We follow the proof strategy in \cite{Shi12}.

Let us fix $\beta \geq 0$. For any $x \in \bb S_+^{d-1}$ and $y \geq -\beta$, define
\begin{align}\label{def-V-alpha-beta-xy}
V_{\alpha}^{\beta} (x, y) : = V_{\alpha} (x, y + \beta),
\end{align}
where $V_{\alpha}$ is defined by \eqref{def-V-alpha-xy}. 
By Lemma \ref{Lem-positi-harmonic-func}, 
there exist constants $c, c' >0$ such that for any $x \in \bb S_+^{d-1}$ and $y \geq -\beta$, 
\begin{align}\label{inequa-V-alpha}
c' (1 + y + \beta)  \leq  V_{\alpha}^{\beta} (x, y)  \leq  c (1 + y + \beta),
\end{align}
and for any $x \in \bb S_+^{d-1}$, we have $\lim_{y \to \infty} \frac{ V_{\alpha}^{\beta} (x, y) }{y} = 1$. 
 Similarly to \eqref{def-martingale-Dn}, we define 
\begin{align}\label{def-martingale-Dn-V}
M_n^V(\beta) := M_n^{V_{\alpha}^{\beta}} = \sum_{ |u| = n }  r_{\alpha}(X_u)  V_{\alpha}^{\beta}(X_u, S_u)  e^{-\alpha S_u}
 \mathds{1}_{ \left\{  S_{u|k} \geq -\beta,  \, \forall  \,   0 \leq k \leq n  \right\} },  
\quad  n \geq 0. 
\end{align}
Note that this corresponds to \eqref{def-martingale-Dn} by choosing $h = V_{\alpha}^{\beta}$ and $B = [-\beta, \infty)$. 
Since $\alpha$ is fixed, we use the shorter notation $M_n^V(\beta)$. 
By Lemma \ref{Lem-martiangle-Dn}, 
$(M_n^V(\beta),  \mathscr F_n)_{n \geq 0}$ is a non-negative martingale, 
so there exists a random variable $M_{\infty}^V(\beta) \geq 0$ such that, as $n \to \infty$,
 $M_n^V(\beta) \to M_{\infty}^V(\beta)$, $\bb P_{x}$-a.s. 

Let $b \in (- \infty, \beta]$. 
As in \eqref{def-hat-P-x-b}, 
there exists a unique probability measure $\wh{\bb P}^{(\beta)}_{x, b}$ on $\mathscr F_{\infty}$ such that,
for any $A \in \mathscr F_n$ and $n \geq 0$, 
\begin{align}\label{def-P-beta}
\wh{\bb P}^{(\beta)}_{x, b} (A) 
=  \frac{ e^{\alpha b} }{  r_{\alpha}(x) V_{\alpha}^{\beta} (x, b)   }  \bb E_{x,b}  \left( M_n^V(\beta) \mathds 1_A \right). 
\end{align}
Denote by $\wh{\bb E}^{(\beta)}_{x, b}$ the corresponding expectation.

\begin{lemma}
Assume conditions \ref{Condi-Furstenberg-Kesten}, \ref{CondiNonarith}, \ref{Condi_ms} and \ref{Condi_harmonic}.  
Then, for any $\ee \in (0, 1/6)$, 
we have $S_{w|n} \geq  n^{\ee}$ for all sufficiently large $n$,  $\wh{\bb P}^{(b)}_{x}$-a.s.
\end{lemma}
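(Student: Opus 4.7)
The plan is to translate the statement into a question about the centered Markov random walk $(X_n,S_n)$ under $\bb Q_x^\alpha$ conditioned to stay above $-\beta$, and then apply the conditioned local limit theorem (Lemma \ref{Lem-CLLT-bound}) followed by Borel--Cantelli. I read $\wh{\bb P}^{(b)}_x$ as the measure $\wh{\bb P}^{(\beta)}_{x,0}$ introduced in \eqref{def-P-beta}, which is the probabilistic framework carrying the spine and the harmonic function $V_\alpha^\beta$.

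First I would apply Theorem \ref{Thm-spinal-decom} with $h=V_\alpha^\beta$ and $B=[-\beta,\infty)$ to express, for any measurable $A\subset\bb R$,
\begin{equation*}
\wh{\bb P}^{(\beta)}_x(S_{w|n}\in A)\;=\;\frac{1}{V_\alpha^\beta(x,0)}\,\bb E_{\bb Q_x^\alpha}\Big[V_\alpha^\beta(X_n,S_n)\,\mathds 1_{\{S_n\in A\}}\,\mathds 1_{\{\tau_\beta^->n\}}\Big].
\end{equation*}
Taking $A=[-\beta,n^\ee]$ and using the two-sided bound $c'(1+s+\beta)\le V_\alpha^\beta(y,s)\le c(1+s+\beta)$ from \eqref{inequa-V-alpha}, I would obtain
\begin{equation*}
\wh{\bb P}^{(\beta)}_x\big(S_{w|n}\le n^\ee\big)\;\le\;C_\beta\,(1+n^\ee)\,\bb Q_x^\alpha\big(\beta+S_n\in[0,\beta+n^\ee],\ \tau_\beta^->n\big),
\end{equation*}
with $C_\beta$ depending only on $x,\beta$.

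Next I would apply Lemma \ref{Lem-CLLT-bound} with $y=\beta$, $z=\beta+n^\ee$, $t=0$ to obtain
\begin{equation*}
\bb Q_x^\alpha\big(\beta+S_n\in[0,\beta+n^\ee],\ \tau_\beta^->n\big)\;\le\;\frac{c\,(1+\beta)\,(1+\beta+n^\ee)^2}{n^{3/2}}.
\end{equation*}
Combining the two previous displays yields $\wh{\bb P}^{(\beta)}_x(S_{w|n}\le n^\ee)\le C'_\beta\,n^{3\ee-3/2}$ for all $n\ge 1$. Since $\ee\in(0,1/6)$ gives $3\ee-3/2<-1$, the sequence is summable and the Borel--Cantelli lemma implies that $\wh{\bb P}^{(\beta)}_x$-almost surely $S_{w|n}>n^\ee$ for all sufficiently large $n$.

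The main technical point is simply to make sure the three powers of $(1+n^\ee)$ that appear (one from the size of the interval, one from the harmonic-function bound, and one from the $(1+z+\max\{t,0\})$-factor in Lemma \ref{Lem-CLLT-bound}) are balanced against the $n^{-3/2}$ decay from the CLLT; this is exactly what forces the threshold $\ee<1/6$ and no improvement of the argument seems needed. No substantial obstacle remains, the bounds on $V_\alpha$ from Lemma \ref{Lem-positi-harmonic-func} and the CLLT of Lemma \ref{Lem-CLLT-bound} being precisely tailored to this kind of estimate.
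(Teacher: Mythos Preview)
Your proof is correct and follows essentially the same route as the paper: use the spinal decomposition (Theorem \ref{Thm-spinal-decom} with $h=V_\alpha^\beta$, $B=[-\beta,\infty)$) to rewrite $\wh{\bb P}^{(\beta)}_x(S_{w|n}<n^\ee)$ as a $\bb Q_x^\alpha$-expectation, bound $V_\alpha^\beta$ linearly via \eqref{inequa-V-alpha}, apply the conditioned local limit bound of Lemma \ref{Lem-CLLT-bound}, and conclude by Borel--Cantelli since $3\ee<1/2$. The paper's proof is identical up to notational choices (it writes the expectation under $\bb Q_{x,b}^\alpha$ with the constraint $S_k\ge 0$ rather than under $\bb Q_x^\alpha$ with $\tau_\beta^->n$, which is just a shift).
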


\begin{proof}
Fix $\ee \in (0, 1/4)$. 
By \eqref{thm2-spinal-condi} of Theorem \ref{Thm-spinal-decom},  we have  
\begin{align*}
\wh{\bb P}^{(b)}_{x}  (S_{w|n} <  n^{\ee})
=  \bb{E}_{ \bb Q_{x,b}^{\alpha} } 
        \Bigg[ \frac{ V_{\alpha} \left( X_n, S_n \right)  }
                  { V_{\alpha}(x, b)  e^{\alpha b} }  
                   \mathds 1_{ \{ S_n <  n^{\ee} \} } 
            \mathds{1}_{ \left\{ S_{k} \geq 0,  \, \forall  \,   0 \leq k \leq n  \right\} }
           \Bigg]. 
\end{align*}
Using \eqref{inequa-V-alpha}, we have $V_{\alpha} \left( X_n, S_n \right) \leq c (1 + S_n)$. 
Therefore,
\begin{align*}
\wh{\bb P}^{(b)}_{x}  (S_{w|n} < n^{\ee})
& \leq  c  \bb{E}_{ \bb Q_{x,b}^{\alpha} } 
        \Big[ (1 + S_n)  \mathds 1_{ \{ S_n < n^{\ee} \} } 
            \mathds{1}_{ \left\{ S_{k} \geq 0,  \, \forall  \,   0 \leq k \leq n  \right\} }
           \Big]   \notag\\
& \leq  c  n^{\ee}   \bb Q_{x,b}^{\alpha}  \left( S_n < n^{\ee},  S_{k} \geq 0,  \, \forall  \,   0 \leq k \leq n   \right)  \notag\\
& = c  n^{ \ee}  \bb Q_{x,b}^{\alpha}  \left( S_n \in [0,  n^{ \ee}),   \tau_0 > n   \right). 
\end{align*}
By Lemma \ref{Lem-CLLT-bound}, it holds
\begin{align*}
\bb Q_{x,b}^{\alpha}  \left( S_n \in [0,  n^{ \ee}),   \tau_0 > n   \right)
\leq   \frac{c}{ n^{3/2 - 2\ee} }. 
\end{align*}
Thus, 
\begin{align*}
\wh{\bb P}^{(b)}_{x}  (S_{w|n} < n^{ \ee})
\leq   \frac{c}{ n^{3/2 - 3\ee} }. 
\end{align*}
Since $\ee \in (0, 1/6)$, 
by Borel-Cantelli lemma, we get that $S_{w|n} \geq  n^{\ee}$ for all sufficiently large $n$, $\wh{\bb P}^{(b)}_{x}$-a.s., 
completing the proof of the lemma. 
\end{proof}

Now we prove the $L^1$ convergence of $M_n^V(\beta)$. 

\begin{lemma}\label{Lem-DnV}
Assume conditions \ref{Condi-Furstenberg-Kesten}, \ref{CondiNonarith}, \ref{Condi_ms}, \ref{Condi_harmonic} and \ref{condi:momentsW1}.  
Then, for any $x \in \bb S_+^{d-1}$ and $\beta \geq 0$,  
we have 
\begin{align*}
\lim_{n \to \infty}  \bb E_{x}  M_n^V(\beta) = \bb E_{x} M_{\infty}^V(\beta). 
\end{align*}
\end{lemma}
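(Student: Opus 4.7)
The plan is to prove the $L^1$-convergence by establishing uniform integrability of the non-negative $\bb P_x$-martingale $(M_n^V(\beta))_{n \geq 0}$. Since its mean is constantly equal to $r_\alpha(x) V_\alpha^\beta(x,0)$, the asserted limit $\bb E_x M_n^V(\beta) \to \bb E_x M_\infty^V(\beta)$ amounts to the identity $\bb E_x M_\infty^V(\beta) = r_\alpha(x) V_\alpha^\beta(x,0)$. To verify uniform integrability I will appeal to Lemma~\ref{lem-condi-limsup}, applied with $h = V_\alpha^\beta$ and $B = [-\beta, \infty)$: it suffices to find a sub-$\sigma$-field $\mathscr{G}$ such that
\begin{align*}
\liminf_{n \to \infty} \wh{\bb E}^{(\beta)}_x\bigl(M_n^V(\beta) \bigm| \mathscr{G}\bigr) < \infty, \qquad \wh{\bb P}^{(\beta)}_x\text{-a.s.}
\end{align*}
I will take $\mathscr{G} = \mathscr{H}_\infty$, the $\sigma$-field generated by the spine $w$ together with the positions of all its siblings at every generation, as introduced in \eqref{filtration-spine-child}.

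The second step is a spine decomposition of $M_n^V(\beta)$ in the spirit of Section~\ref{sect:spinal-decomp}. Under $\wh{\bb P}^{(\beta)}_x$, conditional on $\mathscr{H}_\infty$, every subtree rooted at a non-spine particle $u$ is an independent copy of the original branching random walk started at $(X_u,S_u)$; the martingale property applied to each such subtree yields
\begin{align*}
\wh{\bb E}^{(\beta)}_x\bigl(M_n^V(\beta) \bigm| \mathscr{H}_\infty\bigr)
= H_\alpha(X_{w|n}, S_{w|n}) + \sum_{k=0}^{n-1} \sum_{u \in {\rm brot}(w|k+1)} H_\alpha(X_u, S_u),
\end{align*}
where $H_\alpha(y,s) := r_\alpha(y) V_\alpha^\beta(y,s) e^{-\alpha s}$. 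The spine term is controlled using $V_\alpha^\beta(y,s) \leq c(1+s+\beta)$ from \eqref{inequa-V-alpha} together with the preceding lemma, which ensures $S_{w|n} \geq n^\varepsilon$ eventually; hence $H_\alpha(X_{w|n}, S_{w|n}) \leq c(1+S_{w|n}+\beta)\,e^{-\alpha S_{w|n}} \to 0$ a.s.

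The bulk of the work is to show that the sibling series converges, i.e.\ $\sum_{k \geq 0} A_k < \infty$ $\wh{\bb P}^{(\beta)}_x$-a.s., where $A_k := \sum_{u \in {\rm brot}(w|k+1)} H_\alpha(X_u, S_u)$. Writing $\sigma_u := S_u - S_{w|k}$ and using $(1+S_u+\beta) \leq (1+S_{w|k}+\beta) + \sigma_u^+$, one obtains
\begin{align*}
A_k \leq c\, e^{-\alpha S_{w|k}} \bigl[(1 + S_{w|k} + \beta)\, W_k^{*} + Z_k^{*}\bigr],
\end{align*}
with $W_k^{*} := \sum_{u} e^{-\alpha \sigma_u}$ and $Z_k^{*} := \sum_{u} \sigma_u^+ e^{-\alpha \sigma_u}$ depending only on the children of the spine particle $w|k$.

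The hard part will be to show that $W_k^{*}$ and $Z_k^{*}$ do not grow too rapidly. Under $\wh{\bb P}^{(\beta)}_x$, the children of the spine at generation $k$ are generated by the size-biased point process $\wt{\Theta}_{X_{w|k},S_{w|k}}$, and the change of measure~\eqref{change-theta-theta-wt} relates their law to the original offspring law. The moment hypotheses~\ref{condi:momentsW1} should thereby translate into log-moment bounds for $W_k^{*}$ and $Z_k^{*}$ that, via Borel-Cantelli, give $\log^+ W_k^{*} = o(k^{1/3})$ and $\log^+ Z_k^{*} = o(k^{1/3})$ a.s.\ uniformly in the position of the spine. Combined with $S_{w|k} \geq k^\varepsilon$ (for some $\varepsilon \in (0,1/6)$) from the preceding lemma, the factor $e^{-\alpha S_{w|k}}$ then beats every polynomial in $k$, so that $\sum_k A_k$ converges $\wh{\bb P}^{(\beta)}_x$-a.s. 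Lemma~\ref{lem-condi-limsup} finally closes the argument.
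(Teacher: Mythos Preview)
Your overall architecture matches the paper's: invoke Lemma~\ref{lem-condi-limsup} with $\mathscr G=\mathscr G_\infty$ (which coincides with your $\mathscr H_\infty$), decompose $M_n^V(\beta)$ along the spine and its siblings, kill the spine term via $S_{w|n}\to\infty$, and reduce to the almost sure convergence of $\sum_k A_k$. The gap is in your ``hard part'': the proposed deterministic Borel--Cantelli bounds $\log^+ W_k^*=o(k^{1/3})$ and $\log^+ Z_k^*=o(k^{1/3})$ cannot be extracted from condition~\ref{condi:momentsW1}, and even if they could, they would not combine with the lower bound $S_{w|k}\ge k^\varepsilon$ for $\varepsilon\in(0,1/6)$ from the preceding lemma. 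Indeed, under the size-biased law the assumption $\bb E_x(W_1(\log^+W_1)^2)<\infty$ yields at best $\log^+W_k^*=o(k^{1/2})$ via Borel--Cantelli (and the $Z_1$-moment is even weaker); but then $e^{-\alpha S_{w|k}}W_k^*\le e^{o(k^{1/2})-\alpha k^\varepsilon}$ is not summable for any $\varepsilon<1/2$. Your claim that ``$e^{-\alpha S_{w|k}}$ beats every polynomial'' is correct but irrelevant, since $e^{o(k^{1/3})}$ is super-polynomial.

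The paper resolves this by replacing the deterministic threshold with the spine-dependent one $e^{\lambda S_{w|k-1}}$ for $\lambda\in(0,\alpha)$: on $\{W_k^*\le e^{\lambda S_{w|k-1}}\}$ the summand is at most $c(1+\beta+S_{w|k-1})e^{-(\alpha-\lambda)S_{w|k-1}}$, which is summable since $S_{w|k}\ge k^\varepsilon$ eventually. The complementary events are controlled by bounding $\wh{\bb P}^{(\beta)}_{x,b}(\sum_{|u|=1}e^{-\alpha(S_u-b)}>z)$ as in \eqref{rewrite-Z-tilde-Z}--\eqref{Z-tilde-Z-second}, summing over $k$, and then using the renewal bound of Lemma~\ref{Lem-Renewal-thm}: the number of $k$ with $S_{k-1}<\tfrac1\lambda\log Z^*$ and $\min_i S_i\ge-\beta$ is at most $c(1+\beta)(1+\beta+\tfrac1\lambda\log^+Z^*)$. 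Integrating against $Z^*,\tilde Z^*$ then consumes \emph{exactly} the moments in \ref{condi:momentsW1}; this is where the second log-moment of $W_1$ and the first log-moment of $Z_1$ are sharp. In short, the renewal theorem converts the sum over $k$ into a factor $(\log^+Z^*)^2$ (resp.\ $\log^+Z^*$), which is the key idea your proposal is missing.
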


\begin{proof}
Let $ \mathscr{G}_{\infty}$ be the natural filtration of the spine particles and their brothers, i.e.
\begin{align}\label{def-scr-G-infty}
\mathscr{G}_{\infty} = \sigma \left\{ w|k, X_{w|k}, S_{w|k}, (X_u, S_u)_{ u \in {\rm brot}(w|k) } ; k \geq 0 \right\}. 
\end{align}
By Lemma \ref{lem-condi-limsup}, it suffices to prove that for any $x \in \bb S_+^{d-1}$ and $\beta \geq 0$, 
\begin{align*}
\liminf_{n \to \infty} \wh{\bb P}^{(\beta)}_{x} \left( M_n^V(\beta) |  \mathscr{G}_{\infty}  \right)  < \infty,  \quad   \wh{\bb P}^{(\beta)}_{x}\mbox{-a.s.}
\end{align*}
Using the martingale property of $M_n^V(\beta)$ for the subtrees rooted at the brothers of the spine, 
we get that under $\wh{\bb P}^{(\beta)}_{x}$, 
\begin{align}\label{equa-martin-spine}
\wh{\bb E}^{(\beta)}_{x} \left( M_n^V(\beta) |  \mathscr{G}_{\infty}  \right)
& =  r_{\alpha}(X_{w|n}) V_{\alpha}^{\beta} (X_{w|n}, S_{w|n})  e^{-\alpha S_{w|n}}     \notag\\
& \quad   +  \sum_{k=1}^n  \sum_{ u \in {\rm brot}(w|k) }  r_{\alpha}(X_u) V_{\alpha}^{\beta} (X_u, S_u)  e^{-\alpha S_u} 
       \mathds 1_{ \{ S_{u|j} \geq -\beta,  \forall  0 \leq  j \leq k \} }.  
\end{align}
By Theorem \ref{Thm-spinal-decom}, 
the process $(X_{w|n}, S_{w|n})_{n \geq 0}$ under $\wh{\bb P}^{(\beta)}_{x}$ is distributed as the Markov random walk
$(X_n, S_n)_{n \geq 0}$, under $\bb Q_{x}^{\alpha}$, 
conditioned to stay non-negative (meaning that $\min_{1 \leq k \leq n} S_k \geq 0$),
so that $S_{w|n} \to \infty$ as $n \to \infty$, under $\wh{\bb P}^{(\beta)}_{x}$.  
Hence, for the first term on the right hand side of \eqref{equa-martin-spine},
 using \eqref{inequa-V-alpha} and the fact that $r_{\alpha}$ is bounded on $\bb S_+^{d-1}$,
we get that $r_{\alpha}(X_{w|n}) V_{\alpha}^{\beta} (X_{w|n}, S_{w|n})  e^{-\alpha S_{w|n}} \to 0$ 
as $n \to \infty$, under the measure $\wh{\bb P}^{(\beta)}_{x}$. 
For the second term on the right hand side of \eqref{equa-martin-spine},
we use \eqref{inequa-V-alpha} to get
\begin{align}\label{ine-V-alpha-beta-3}
V_{\alpha}^{\beta} (X_u, S_u)  \mathds 1_{ \{ S_{u|j} \geq -\beta,  \forall  0 \leq  j \leq k \} }
\leq  c \left( 1 + (\beta + S_u)^+ \right), 
\end{align}
where we write $t^+ = \max \{ t, 0 \}$ for $t \in \bb R$. 
It follows that 
\begin{align*}
r_{\alpha}(X_u) V_{\alpha}^{\beta} (X_u, S_u)  e^{-\alpha S_u} 
       \mathds 1_{ \{ S_{u|j} \geq -\beta,  \forall  0 \leq  j \leq k \} }
   \leq  c \left( 1 + (\beta + S_u)^+ \right)  e^{- \alpha S_u }.  
\end{align*}
Hence, 
\begin{align}\label{inequa-DnV-aa}
\liminf_{n \to \infty} \wh{\bb E}^{(\beta)}_{x} \left( M_n^V(\beta) |  \mathscr{G}_{\infty}  \right)
\leq  c  \sum_{k=1}^{\infty}  \sum_{ u \in {\rm brot}(w|k) }  \left( 1 + (\beta + S_u)^+ \right)  e^{ -\alpha S_u }. 
\end{align}
Since $(\beta + S_u)^+ \leq   \beta + S_{w|k-1} +  ( S_u - S_{w|k-1} )^+$, 
in order to prove that the right hand side of \eqref{inequa-DnV-aa} is $\wh{\bb P}^{(\beta)}_{x}$-a.s. finite,
it suffices to show that $\wh{\bb P}^{(\beta)}_{x}$-a.s.,
\begin{align}
& \sum_{k=1}^{\infty}  \left( 1 + \beta + S_{w|k-1} \right)  e^{ -\alpha S_{w|k-1} }  
    \sum_{ u \in {\rm brot}(w|k) }    e^{ - \alpha (S_u - S_{w|k-1} ) }  < \infty,   \label{ine-suffice-01} \\
& \sum_{k=1}^{\infty}  e^{ - \alpha S_{w|k-1} } 
  \sum_{ u \in {\rm brot}(w|k) }  ( S_u - S_{w|k-1} )^+   e^{ - \alpha (S_u - S_{w|k-1} ) }  < \infty.   \label{ine-suffice-02}
\end{align}
We first prove \eqref{ine-suffice-01}. 
Let $b \geq -\beta$. 
By \eqref{def-martingale-Dn-V} and \eqref{ine-V-alpha-beta-3}, we have 
\begin{align}\label{bound-D1-V-beta}
M_1^V(\beta)  
 & \leq  c   \sum_{ |u| = 1 }   e^{- \alpha S_u}  \left( 1 +  (\beta + S_u)_+  \right)    \notag\\
 & \leq  c   \sum_{ |u| = 1 }   e^{ - \alpha S_u}  \left[ 1 +  \beta + b +  (S_u - b)^+  \right], 
\end{align}
where in the last inequality we used $(\beta + S_u)^+ \leq   \beta + b +  (S_u - b)^+$.  
By \eqref{inequa-V-alpha} and the fact that $\inf_{x \in \bb S_+^{d-1} } r_{\alpha}(x) >0$, 
we have $\frac{1}{ r_{\alpha}(x)  V_{\alpha}^{\beta} (x, b) } \leq  \frac{c}{ 1 +  \beta + b }$. 
Therefore, from \eqref{def-P-beta} and \eqref{bound-D1-V-beta}, it follows that for any $z \in \bb R$ and $b \geq -\beta$, 
\begin{align}\label{Pxb-exp-Su}
& \wh{\bb P}^{(\beta)}_{x,b}  \bigg( \sum_{ |u| = 1 }  e^{- \alpha (S_u - b)}  > z  \bigg)   \notag\\
& =  \frac{ e^{\alpha b} }{  r_{\alpha}(x) V_{\alpha}^{\beta} (x, b) }  
  \bb E_{x,b}  \bigg( M_1^V(\beta) \mathds 1_{ \big\{ \sum_{ |u| = 1 }  e^{-\alpha (S_u - b)}  > z \big\} } \bigg)  \notag\\
& \leq  \frac{ c e^{\alpha b} }{ 1 +  \beta + b }
    \bb E_{x,b}  \bigg( \sum_{ |u| = 1 }   e^{-\alpha S_u}  \left[ 1 +  \beta + b +  (S_u - b)^+  \right]
       \mathds 1_{ \big\{  \sum_{ |u| = 1 }  e^{-\alpha (S_u - b)}  > z  \big\} } \bigg)  \notag\\
& =  c   \bb E_{x,b}  \bigg( \sum_{ |u| = 1 }   e^{-\alpha (S_u - b) }  \left[ 1 + \frac{ (S_u - b)^+ }{ 1 +  \beta + b  } \right]
       \mathds 1_{ \big\{ \sum_{ |u| = 1 }  e^{-\alpha (S_u - b)}  > z \big\} } \bigg)   \notag\\
& = c  \bb E_{x}  \bigg( \sum_{ |u| = 1 }   e^{-\alpha S_u }  \left[ 1 + \frac{  S_u^+ }{ 1 +  \beta + b  } \right]
       \mathds 1_{ \big\{ \sum_{ |u| = 1 }  e^{-\alpha  S_u }  > z \big\} } \bigg). 
\end{align}
Denote $Z = \sum_{ |u| = 1 }   e^{-\alpha S_u }$ and $\widetilde{Z} = \sum_{ |u| = 1 } S_u^+  e^{- \alpha S_u } $.
Then \eqref{Pxb-exp-Su} can be rewritten as follows: for any $z \in \bb R$ and $b \geq -\beta$, 
\begin{align}\label{rewrite-Z-tilde-Z}
\wh{\bb P}^{(\beta)}_{x,b}  \bigg( \sum_{ |u| = 1 }  e^{- \alpha (S_u - b)}  > z  \bigg)
\leq  c \bb E_{x} \left( Z  \mathds 1_{ \left\{ Z  > z \right\} }  \right)
   +  \frac{c}{ 1 +  \beta + b }  \bb E_{x} \left( \widetilde{Z}  \mathds 1_{ \left\{ Z  > z \right\} }  \right).  
\end{align}
In the same way as in the proof of \eqref{Pxb-exp-Su} and \eqref{rewrite-Z-tilde-Z}, one has 
\begin{align}\label{Z-tilde-Z-second}
\wh{\bb P}^{(\beta)}_{x,b}  \bigg( \sum_{ |u| = 1 }  (S_u - b)^+  e^{\alpha (S_u - b)}  > z  \bigg)
\leq  c \bb E_{x} \Big( Z  \mathds 1_{ \{ \widetilde{Z}  > z \} }  \Big)
   +  \frac{c}{ 1 +  \beta + b }  \bb E_{x} \Big( \widetilde{Z}  \mathds 1_{ \{ \widetilde{Z}  > z \} }  \Big).  
\end{align}
Now let us fix $x' \in \bb S_+^{d-1}$. 
Let $c_0 > 0$ be as in Lemma \ref{Lem-contractivity}. 
Let $Z^*$ and $\widetilde{Z}^*$ be random variables that are independent of the rest of the world
and that have -- under any probability measures introduced -- the same law as 
$\sum_{ |u| = 1 }   e^{- \alpha (\log |g_u x'| - c_0) }$
and $\sum_{ |u| = 1 } ( \log |g_u x'| - c_0)_+  e^{- \alpha (\log |g_u x'| - c_0) } $ under $\bb P$, respectively.  
In particular, the law of $Z^*$ and $\widetilde{Z}^*$ is not changed if we go to $\bb Q_x^{\alpha}$.
Let $f_{1}(z) = \bb E ( Z^*  \mathds 1_{ \left\{ Z^*  > z \right\} }  )$
and $f_{2}(z) = \bb E ( \widetilde{Z}^*  \mathds 1_{ \left\{ Z^*  > z \right\} } )$, $z \in \bb R$. 
Under condition \ref{Condi-Furstenberg-Kesten},
we have that for any $x \in \bb S_+^{d-1}$, 
\begin{align*}
\bb E_{x} \Big( Z  \mathds 1_{ \left\{ Z  > z \right\} }  \Big)
 & = \bb E_{x} \bigg( \sum_{ |u| = 1 }   e^{- \alpha S_u }  \mathds 1_{ \big\{ \sum_{ |u| = 1 }   e^{- \alpha S_u }  > z \big\} }  \bigg)   \notag\\
& \leq  \bb E_{x'} \bigg( \sum_{ |u| = 1 }   e^{- \alpha (S_u - c_0) }  \mathds 1_{ \big\{ \sum_{ |u| = 1 }   e^{- \alpha (S_u - c_0) }  > z \big\} }  \bigg)
 = f_{1}(z), 
\end{align*}
and similarly, 
\begin{align*}
\bb E_{x} \left( \widetilde{Z}  \mathds 1_{ \left\{ Z  > z \right\} }  \right)
\leq  f_{2}(z). 
\end{align*}
Letting $\lambda \in (0, \alpha)$ be a fixed number, taking $z = e^{ \lambda b}$ in \eqref{rewrite-Z-tilde-Z} 
and using the Markov property, 
we get 
\begin{align*}
&  \wh{\bb P}^{(\beta)}_{x}   \bigg(  \sum_{ u \in {\rm brot}(w|k) }    e^{ -\alpha (S_u - S_{w|k-1} ) }  >  e^{ \lambda S_{w|k-1} }  \bigg)  \notag\\
& \leq  c  \bb E_{ \wh{\bb P}^{(\beta)}_{x} }
   \bigg[  f_{1} \left( e^{ \lambda S_{w|k-1} } \right)  
      +   \frac{ f_{2} \left( e^{ \lambda S_{w|k-1} } \right) }{ 1 + \beta + S_{w|k-1} }   \bigg]
  = : c I_k. 
\end{align*}
By Theorem \ref{Thm-spinal-decom} and \eqref{def-P-beta}, 
the process $(X_{w|n}, S_{w|n})_{n \geq 0}$ under $\wh{\bb P}^{(\beta)}_{x}$ is distributed as the Markov random walk
$(X_n, S_n)_{n \geq 0}$ under the measure $\bb Q_{x}^{\alpha}$, 
conditioned to stay in $[ -\beta, \infty)$ (meaning that $\min_{1 \leq k \leq n} S_k \geq -\beta$).
Hence, in view of \eqref{thm2-spinal-condi}, we get 
\begin{align*}
I_k &  =  \frac{1}{ V_{\alpha}^{\beta} (x, 0) }
  \bb{E}_{ \bb Q_{x}^{\alpha} } 
        \bigg[  \bigg(  f_{1} \big( e^{ \lambda S_{k-1} } \big)  
         +   \frac{ f_{2} \big( e^{ \lambda S_{k-1} } \big) }{ 1 + \beta + S_{k-1} }   \bigg)
            V_{\alpha}^{\beta} \left( X_{k-1}, S_{k-1} \right)   
            \mathds{1}_{ \{ \min_{0 \leq i \leq k-1} S_i \geq -\beta \} }
           \bigg].      \notag\\
& = \frac{1}{ V_{\alpha}^{\beta} (x, 0) }
  \bb{E}_{ \bb Q_{x}^{\alpha} } 
        \bigg[  \bigg(  Z^*  \mathds 1_{ \{ S_{k-1}  <  \frac{1}{\lambda} \log Z^* \} }  
           +   \frac{ \widetilde{Z}^*  \mathds 1_{ \{ S_{k-1}  <  \frac{1}{\lambda} \log Z^* \} }  }{ 1 + \beta + S_{k-1} }   \bigg)  \notag\\
  & \qquad\qquad\qquad\qquad\qquad\qquad  \times        V_{\alpha}^{\beta} \left( X_{k-1}, S_{k-1} \right)   
            \mathds{1}_{ \{  \min_{0 \leq i \leq k-1} S_i \geq -\beta  \} }
           \bigg], 
\end{align*}
where $(S_i)_{i \geq 0}$ is independent of the pair $(Z^*, \widetilde{Z}^*)$.  
By \eqref{inequa-V-alpha}, it follows that 
\begin{align*}
I_k  & \leq  \frac{c}{ V_{\alpha}^{\beta} (x, 0) }
  \bb{E}_{ \bb Q_{x}^{\alpha} } 
        \Bigg[  \left(  \left( 1 + \beta +  \frac{1}{\lambda} \log Z^* \right) Z^*  \mathds 1_{ \left\{ S_{k-1}  <  \frac{1}{\lambda} \log Z^* \right\} }  
           +    \widetilde{Z}^*  \mathds 1_{ \left\{ S_{k-1}  <  \frac{1}{\lambda} \log Z^* \right\} }    \right)    \notag\\
& \qquad\qquad\qquad\qquad\qquad\qquad  \times 
            \mathds{1}_{ \{ \min_{0 \leq i \leq k-1} S_i \geq -\beta \} }
           \Bigg]. 
\end{align*}
By Lemma \ref{Lem-Renewal-thm}, 
there exists $c>0$ such that for any $x \in \bb S_+^{d-1}$, $\beta \geq 0$, $\lambda \in (0, \alpha)$ and $z^* > 0$, 
\begin{align*}
& \sum_{k=1}^{\infty}  \bb Q_{x}^{\alpha} \Big( S_{k-1}  <  \frac{1}{\lambda} \log z^*,  \min_{ 0 \leq i \leq k-1 } S_i \geq -\beta   \Big)
   \notag\\
& =  \sum_{k=1}^{\infty}  \bb Q_{x}^{\alpha} \Big( \beta + S_{k-1}  \in \Big[ 0, \beta + \frac{1}{\lambda} \log z^*  \Big),  
    \min_{ 0 \leq i \leq k-1 } (\beta + S_i) \geq  0   \Big)
  \notag\\
 & \leq  \sum_{k=1}^{\infty}  \bb Q_{x}^{\alpha} \Big( \beta + S_{k-1}  \in \Big[ 0, \beta + \frac{1}{\lambda} \log^+ z^*  \Big),  
    \min_{ 0 \leq i \leq k-1 } (\beta + S_i) \geq  0   \Big)
  \notag\\
& \leq  c (1 + \beta)  \Big( 1 + \beta + \frac{1}{\lambda} \log^+ z^* \Big). 
\end{align*}
Therefore, using the fact that $Z^*$ is independent of $(X_k, S_k)_{k \geq 1}$, 
 we obtain
\begin{align*}
\sum_{k = 1}^{\infty} I_k 
 \leq  \frac{c (1 + \beta)}{ V_{\alpha}^{\beta} (x, 0) }
   \bb{E}_{ \bb Q_{x}^{\alpha} } 
        \Big[  \Big( 1 + \beta +  \frac{1}{\lambda} \log^+ Z^* \Big)^2  Z^*  
            +    \Big( 1 + \beta +  \frac{1}{\lambda} \log^+ Z^* \Big)    \widetilde{Z}^*  
           \Big] < \infty,
\end{align*}
where the last inequality holds due to the moment condition 
\begin{align}\label{condi-moment-mart}
\bb{E}_{ \bb Q_{x}^{\alpha} } 
        \left[   \left( 1 + \log^+ Z^* \right)^2  Z^*  
            +    \left( 1 + \log^+ Z^* \right)    \widetilde{Z}^*  
           \right] < \infty. 
\end{align}
Using the Borel-Cantelli lemma, for any $\lambda \in (0, \alpha)$ and sufficiently large $k \geq 1$, 
$\wh{\bb P}^{(\beta)}_{x}$-a.s., 
\begin{align*}
\sum_{ u \in {\rm brot}(w|k) }    e^{ -\alpha (S_u - S_{w|k-1} ) }  \leq  e^{ \lambda S_{w|k-1} }.   
\end{align*}
Therefore, using the fact that for any $\ee >0$ and sufficiently large $k \geq 1$, 
$S_{w|k-1} \geq (k-1)^{1/2 - \ee}$, $\wh{\bb P}^{(\beta)}_{x}$-a.s., 
we get \eqref{ine-suffice-01}. 

The proof of \eqref{ine-suffice-02} is similar 
by using \eqref{Z-tilde-Z-second} instead of \eqref{rewrite-Z-tilde-Z},
and $\bb{E}_{ \bb Q_{x}^{\alpha} } 
        [   ( 1 + \log^+ \widetilde{Z} )^2  Z  
            +   ( 1 + \log^+ \widetilde{Z} )    \widetilde{Z}  
           ] < \infty$ instead of \eqref{condi-moment-mart}. 
Thus we finish the proof of the lemma. 
\end{proof}

The following lemma shows that $(D_n, \scr F_n)$ is a martingale, where $D_n$ is defined by \eqref{def-derivative-martingale}
 and $\scr F_n$ is given by \eqref{def-filtration-Fn}.

\begin{lemma}\label{Lem-derivative-mart}
Assume condition \ref{Condi_ms}. 
Then, for any $x \in \bb S_+^{d-1}$, 
we have that $(D_n, \scr F_n)_{n\geq 1}$ is a $\bb P_x$-martingale with mean $ r_\alpha(x)\ell_{\alpha}(x)$. 
\end{lemma}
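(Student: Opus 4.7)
The plan is to derive the martingale property via a one-step branching decomposition, reducing the conditional expectation to two elementary one-step quantities that can be evaluated using the many-to-one formula from Corollary~\ref{cor:many-to-one}, the normalization $\mathfrak m(\alpha)=1$, and the recursive characterization of $\ell_\alpha$ from Proposition~\ref{Prop-ell-sigma-alpha}.

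First I would check integrability. Applying \eqref{Formula_many_to_one} with $s=\alpha$ gives
\begin{align*}
\bb E_x \sum_{|u|=n} \abs{S_u + \ell_\alpha(X_u)}\, e^{-\alpha S_u}\, r_\alpha(X_u)
= r_\alpha(x)\, \bb E_{\bb Q_x^\alpha} \abs{S_n + \ell_\alpha(X_n)},
\end{align*}
which is finite since $\ell_\alpha \in \scr C(\bb S_+^{d-1})$ is bounded on the compact sphere and $\bb E_{\bb Q_x^\alpha}(S_n)^2$ is finite by Proposition~\ref{Prop-ell-sigma-alpha}. Thus $D_n \in L^1(\bb P_x)$ for every $n \geq 0$.

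Next I would split the sum defining $D_{n+1}$ according to the parent $v$ at generation $n$ and write $S_u = S_v + (S_u - S_v)$:
\begin{align*}
D_{n+1} & = \sum_{|v|=n} e^{-\alpha S_v} \bigg[ S_v \sum_{|u|=n+1,\,\overset{\leftarrow}{u}=v} e^{-\alpha(S_u-S_v)} r_\alpha(X_u) \\
& \quad + \sum_{|u|=n+1,\,\overset{\leftarrow}{u}=v} \bigl((S_u-S_v) + \ell_\alpha(X_u)\bigr) e^{-\alpha(S_u-S_v)} r_\alpha(X_u) \bigg].
\end{align*}
The prefactors are $\scr F_n$-measurable, and by the branching property the two inner sums are, conditionally on $\scr F_n$, independent one-step branching quantities starting from the direction $X_v$. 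Applying \eqref{Formula_many_to_one} at $n=1$ together with $\mathfrak m(\alpha)=1$ yields
\begin{align*}
\bb E_{X_v}\Big[\sum_{|u|=1} e^{-\alpha S_u} r_\alpha(X_u)\Big]
= r_\alpha(X_v)\, \bb E_{\bb Q_{X_v}^\alpha}[1] = r_\alpha(X_v),
\end{align*}
while the same formula combined with $\mathfrak m'(\alpha)=0$ (so that the drift $q_\alpha$ of Proposition~\ref{Prop-ell-sigma-alpha} vanishes) and the fixed-point identity $\ell_\alpha(x) = \bb E_{\bb Q_x^\alpha}(S_1 + \ell_\alpha(X_1))$ from that proposition gives
\begin{align*}
\bb E_{X_v}\Big[\sum_{|u|=1} (S_u + \ell_\alpha(X_u))\, e^{-\alpha S_u} r_\alpha(X_u)\Big]
= r_\alpha(X_v)\, \bb E_{\bb Q_{X_v}^\alpha}\bigl[S_1 + \ell_\alpha(X_1)\bigr]
= r_\alpha(X_v)\, \ell_\alpha(X_v).
\end{align*}
Substituting both identities back collapses the decomposition to $\bb E_x[D_{n+1} \mid \scr F_n] = \sum_{|v|=n}(S_v + \ell_\alpha(X_v))\, e^{-\alpha S_v} r_\alpha(X_v) = D_n$.

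The mean identity then follows immediately: $\bb E_x D_n = \bb E_x D_0 = r_\alpha(x)\ell_\alpha(x)$ since $S_0=0$ and $X_0=x$ under $\bb P_x = \bb P_{x,0}$. No serious obstacle is anticipated; the only delicate point is the careful interplay of the two one-step many-to-one identities with the fact that $\mathfrak m'(\alpha)=0$ makes the drift of $S_1$ vanish under $\bb Q_{X_v}^\alpha$, which is precisely what causes the $S_v$-coefficient in the decomposition to propagate as an honest martingale increment rather than being dragged by a nonzero mean.
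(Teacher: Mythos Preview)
Your proposal is correct and follows essentially the same approach as the paper: a one-step branching decomposition at generation $n$, splitting $S_u = S_v + (S_u - S_v)$, and evaluating the two resulting one-step expectations via the eigenfunction property $P_\alpha r_\alpha = r_\alpha$ and the identity $\bb E_{X_v}[\sum_{|u|=1}(S_u+\ell_\alpha(X_u))e^{-\alpha S_u}r_\alpha(X_u)] = r_\alpha(X_v)\ell_\alpha(X_v)$. The paper cites the latter as the pre-packaged equation \eqref{expect-Su-ell-Xu} (proved in Corollary~\ref{cor:expect-Su-ell-Xu}), while you unpack it directly from the many-to-one formula and the fixed-point identity for $\ell_\alpha$ in Proposition~\ref{Prop-ell-sigma-alpha}; your added $L^1$ integrability check is a small bonus the paper leaves implicit.
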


\begin{proof}
For any $x \in \bb S_+^{d-1}$ and $n \geq 0$, we have
\begin{align*}
 \bb E_x (D_{n+1} | \scr F_n)  
& = \bb E_x  \Big( \sum_{|v|=n} \sum_{|u| = 1}  \big( S_{vu} + \ell_{\alpha} (X_{vu}) \big) e^{- \alpha S_{vu}}  r_{\alpha}(X_{vu})  \Big| \scr F_n \Big) \notag\\
 & =  \sum_{|v|=n} S_{v}  e^{- \alpha S_{v}}   \bb E_x  \Big(  \sum_{|u| = 1}   
 e^{- \alpha (S_{vu} - S_{v}) } r_{\alpha}(X_{vu})  \Big| \scr F_n \Big) \notag\\
& \quad +  \sum_{|v|=n} e^{- \alpha S_{v}}   \bb E_x  \Big(  \sum_{|u| = 1}  \big( (S_{vu} - S_{v})  + \ell_{\alpha} (X_{vu}) \big) 
 e^{- \alpha (S_{vu} - S_{v}) } r_{\alpha}(X_{vu})  \Big| \scr F_n \Big).
\end{align*}
Since $P_{\alpha} r_{\alpha} = \mathfrak{m}(\alpha) r_{\alpha}$ and $\mathfrak{m}(\alpha) = 1$, we get
\begin{align*}
 \bb E_x  \Big(  \sum_{|u| = 1}   
 e^{- \alpha (S_{vu} - S_{v}) } r_{\alpha}(X_{vu})  \Big| \scr F_n \Big)
 = \bb E_{X_v}  \Big(  \sum_{|u| = 1}   
 e^{- \alpha S_{u}  } r_{\alpha}(X_{u})  \Big)
 = r_{\alpha}(X_v). 
\end{align*}
Using \eqref{expect-Su-ell-Xu}, we see that 
\begin{align*}
& \bb E_x  \Big(  \sum_{|u| = 1}  \big( (S_{vu} - S_{v})  + \ell_{\alpha} (X_{vu}) \big) 
 e^{- \alpha (S_{vu} - S_{v}) } r_{\alpha}(X_{vu})  \Big| \scr F_n \Big) \notag\\
& = \bb E_{X_v}  \Big(  \sum_{|u| = 1}  \big( S_{u}   + \ell_{\alpha} (X_{u}) \big) 
 e^{- \alpha S_{u} } r_{\alpha}(X_{u})  \Big)
 = r_{\alpha}(X_v)  \ell_{\alpha} (X_v). 
\end{align*}
This completes the proof. 
\end{proof}

\begin{lemma}\label{Lem-infinimum-Su}
Assume \ref{Condi-Furstenberg-Kesten} and \ref{Condi_ms} (in fact, $\bb E N>1$ and  $\mathfrak{m}(\alpha)\le1$ suffice). Then for all $x \in \bb S^{d-1}_+$,
	\begin{align*}
		\lim_{n \to \infty} \inf_{|u| = n} S_u =\infty ,  \quad  \inf_{u \in \bb T} S_u > - \infty,  \quad   \bf P_x\mbox{-a.s.}
	\end{align*}
\end{lemma}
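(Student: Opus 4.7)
The plan is based on two facts established earlier: the additive martingale
$W_n = \sum_{|u|=n} e^{-\alpha S_u}\, r_\alpha(X_u)$ is a nonnegative $\bb P_x$-martingale with mean
$r_\alpha(x)$ (since $\mathfrak m(\alpha)=1$), and, by Lemma \ref{lem:ExistenceEigenfunctionsPs}, the
eigenfunction $r_\alpha$ is continuous and strictly positive on the compact set $\bb S_+^{d-1}$, so there exists $c > 0$ with $r_\alpha \geq c$ uniformly. The key elementary observation is: for any $n$ for which generation $n$ is non-empty and any $\delta > 0$, picking $u$ with $|u|=n$ and $S_u \leq \inf_{|v|=n} S_v + \delta$ yields
\begin{equation*}
W_n \;\geq\; r_\alpha(X_u)\, e^{-\alpha S_u} \;\geq\; c\, e^{-\alpha \inf_{|v|=n} S_v}\, e^{-\alpha \delta},
\end{equation*}
so that, sending $\delta \to 0$, one obtains $W_n \geq c\, e^{-\alpha \inf_{|v|=n} S_v}$ on $\scr S_n$.

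For the first assertion, I would apply Proposition \ref{Thm:Biggins} with $s = \alpha$: under \ref{Condi_ms} the condition $\mathfrak M(\alpha) \leq \alpha \mathfrak M'(\alpha)$ holds with equality (both sides vanish), and the moment assumption of that proposition is guaranteed by Remark \ref{Rem-moment-condi}. Thus $W_n \to W_\infty = 0$, $\bf P_x$-a.s. Since generation $n$ is non-empty for every $n$ on $\scr S$, the inequality above forces $\inf_{|u|=n} S_u \to \infty$, $\bf P_x$-a.s.

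For the second assertion, for $a > 0$ set
\begin{equation*}
T_a := \inf\{ n \geq 0 \, : \, \exists\, |u|=n \text{ with } S_u \leq -a \},
\end{equation*}
which is an $(\scr F_n)$-stopping time. The optional stopping theorem applied to $W_n$ at the bounded time $T_a \wedge n$ gives $\bb E_x W_{T_a \wedge n} = r_\alpha(x)$. On $\{T_a \leq n\}$ the same argument as above (applied at generation $T_a$ to any node $u$ with $S_u \leq -a$) yields $W_{T_a} \geq c\, e^{\alpha a}$, whence
\begin{equation*}
r_\alpha(x) \;\geq\; \bb E_x\!\left[ W_{T_a}\, \mathds 1_{\{T_a \leq n\}} \right] \;\geq\; c\, e^{\alpha a}\, \bb P_x(T_a \leq n).
\end{equation*}
Sending $n \to \infty$ and then $a \to \infty$ yields $\bb P_x(\inf_{u \in \bb T} S_u \leq -a) \to 0$, hence $\inf_{u \in \bb T} S_u > -\infty$, $\bb P_x$-a.s., and \emph{a fortiori} $\bf P_x$-a.s.

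I do not anticipate any real technical obstacle: both claims reduce quickly once one has $W_\infty = 0$ (for the first) and the mean identity for the stopped martingale (for the second), both combined with the uniform lower bound on $r_\alpha$. The only mild point of care is that the infimum in each generation need not be attained (if $N$ can be infinite), which is handled by the $\delta$-argument above.
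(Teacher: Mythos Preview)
Your proof is correct under the stated hypotheses \ref{Condi-Furstenberg-Kesten} and \ref{Condi_ms}, but it takes a genuinely different route from the paper. The paper follows Biggins (1998): it sets $Y := r_\alpha(X_{\o})^{-1}\limsup_n \exp(-\alpha\inf_{|u|=n} S_u)$, observes $Y \le c W_\infty$ (hence $Y$ is integrable), and then derives a contradiction from $\bb E_{\pi_\alpha} Y>0$ by decomposing at the first generation and exploiting the strict inequality $\sup_{|u|=1} < \sum_{|u|=1}$ (possible since $\bb E N>1$), together with the invariance of $\pi_\alpha$ for $Q_\alpha$. This argument never uses $W_\infty=0$; it only needs $\bb E_x W_\infty<\infty$, which is why the parenthetical ``in fact, $\bb E N>1$ and $\mathfrak m(\alpha)\le1$ suffice'' is justified by the paper's proof but not by yours. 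Your approach, in contrast, is more direct once Proposition~\ref{Thm:Biggins} is in hand: you simply combine $W_\infty=0$ with the pointwise lower bound $W_n\ge c\,e^{-\alpha\inf_{|u|=n}S_u}$, and then handle $\inf_{u\in\bb T}S_u>-\infty$ by a clean optional-stopping estimate. So your argument is shorter and more elementary given the tools already developed, while the paper's self-similarity argument is more robust to weakening the assumptions (and is what actually substantiates the parenthetical remark in the statement).
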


\begin{proof}
	We follow the proof in \cite{Big98}. Consider the random variable
	$$ Y := r_\alpha(X_{\o})^{-1} \limsup_{n \to \infty} \exp(- \alpha\inf_{|u|=n} S_u).$$
	Let $c:= \sup_{x,y \in \bb S^{d-1}_+} r_\alpha(x)/r_\alpha(y)$. Then it holds
	$$ Y \le r_\alpha(X_{\o})^{-1} \limsup_{n \to \infty} \sum_{|u|=n} e^{-\alpha S_u} \le c \limsup_{n \to \infty}  W_n = c W_\infty.$$
	In particular, $\bb E_x Y \le c \bb E_x W_\infty < \infty$ for all $x \in \bb S^{d-1}_+$.  
	Observe that the assertion of the lemma follows once we have proved that $F(x):=\bb E_x Y =0$ for all $x \in \bb S^{d-1}_+$. 

	By Lemma \ref{Lem-contractivity}, we have the following dichotomy: 
\begin{align} \label{KS-alternative001}
	\mbox{either} \ \bb E_x Y =0  \  \mbox{for all} \  x \in \bb S^{d-1}_+  \quad 
	\mbox{or}\quad \bb E_x Y >0  \  \mbox{for all}  \  x \in \bb S^{d-1}_+.
\end{align}
	Hence we may study the average $\bb E_{\pi_\alpha} Y := \int  F(x) \, \pi_\alpha(dx)$, with $\pi_{\alpha}$ 
	defined in \eqref{invar mes for Q_s-001}.
	If we suppose $\bb E_{\pi_\alpha}(Y)>0$, then, by the above considerations, 
	$\bb P_x(Y>0)>0$ for all $x \in \bb S^{d-1}_+$. 

	Decomposing at the first generation, we have
	$$
	Y =  r_\alpha(X_{\o})^{-1} \sup_{|u|=1} e^{-\alpha S_u} r_\alpha(X_u) [Y]_u,
	$$ 
	where $[Y]_u$ are independent conditioned on $\scr F_1$.
	Using that, due to the assumption $\bb E N>1$, $\sup$ is strictly smaller than the sum, we then have
	\begin{align*}
		\bb E_{\pi_\alpha} \big(Y\big) &= \bb E_{\pi_\alpha} \big(  r_\alpha(X_{\o})^{-1} \sup_{|u|=1} e^{-\alpha S_u} r_\alpha(X_u) [Y]_u \big) \\
		&<  \bb E_{\pi_\alpha} \big(  r_\alpha(X_{\o})^{-1} \sum_{|u|=1} e^{-\alpha S_u} r_\alpha(X_u) [Y]_u\big) \\
		&= \bb E_{\pi_\alpha} \bigg(  r_\alpha(X_{\o})^{-1} \sum_{|u|=1} e^{-\alpha S_u} r_\alpha(X_u) \bb E  [Y]_u \, \big| \scr{F}_1 \big) \bigg) \\
		&=\bb E_{\pi_\alpha} \bigg(  r_\alpha(X_{\o})^{-1} \sum_{|u|=1} e^{-\alpha S_u} r_\alpha(X_u) F(X_u) \bigg)  \\
		&= \int_{\bb S^{d-1}_+} Q_\alpha F(x) \pi_\alpha(dx) 
		 = \int_{\bb S^{d-1}_+} F(x) \pi_\alpha(dx) = \bb E_{\pi_\alpha} (Y).
	\end{align*}
	Here we have used the fact  that $\pi_\alpha$ is an invariant measure for the Markov operator $Q_\alpha$. We have obtained a contradiction. Hence, it must hold $\bb E_{\pi_\alpha}(Y)=0$.  Then, by \eqref{KS-alternative001}, 
	we have $[Y]_u=0$ a.s. for all $u$ with $|u|=1$ and all terms would be equal to 0.
\end{proof}

Now we are equipped to establish Theorem \ref{Thm-conv-deriv-mart}. 

\begin{proof}[Proof of Theorem \ref{Thm-conv-deriv-mart}]
By Lemma \ref{Lem-infinimum-Su}, 
we have that for any $x \in \bb S_+^{d-1}$, 
\begin{align}\label{min-Su-infty}
\min_{|u| = n} S_u \to  \infty,  \quad  \inf_{u \in \bb T} S_u > - \infty,  \quad   \bf P_x\mbox{-a.s.}
\end{align}
It follows that, for any $\ee >0$, there exists $b = b(\ee) >0$ such that 
$\bb P_x ( \inf_{u \in \bb T} S_u > - b ) \geq 1 - \ee$.

Recall that in \eqref{def-martingale-Dn-V}, 
we have defined a non-negative $\bb P_{x}$-martingale $(M_n^V(\beta),  \mathscr F_n)_{n \geq 0}$,
so there exists a random variable $M_{\infty}^V(\beta)  \geq 0$ such that $M_n^V(\beta)  \to M_{\infty}^V(\beta)$, $\bb P_{x}$-a.s. 
On the other hand, under the measure $\bb P_x$, on the set $\{ \inf_{u \in \bb T} S_u > - b \}$, 
 $M_n^V(\beta)$ coincides with 
\begin{align}\label{def-tilde-D-n-V-beta}
\widetilde{D}_n^V(\beta) := \sum_{ |u| = n }   r_{\alpha}(X_u)  V_{\alpha}^{\beta}\left( X_u, S_u \right)  e^{- \alpha S_u}. 
\end{align}
Since $\lim_{y \to \infty} \frac{ V_{\alpha}^{\beta} (x, y) }{y} = 1$ uniformly in $x \in \bb S_+^{d-1}$ (cf.\ Lemma \ref{Lem-positi-harmonic-func}), 
using \eqref{min-Su-infty} and the boundedness of $\ell_{\alpha}$, we get that as $|u| = n \to \infty$, 
\begin{align}\label{ratio-Su-lu}
\frac{ V_{\alpha}^{\beta}\left( X_u, S_u \right) }{ S_u  +\ell_{\alpha}(X_u) } 
=  \frac{ V_{\alpha}^{\beta}\left( X_u, S_u \right) }{ S_u }  \frac{ S_u   }{ S_u +\ell_{\alpha}(X_u) }
\to 1, 
\end{align}
which implies that $\widetilde{D}_n^V(\beta)$ is equivalent to 
\begin{align*}
D_n = \sum_{ |u| = n }  r_{\alpha}(X_u) \left(S_u  +\ell_{\alpha}(X_u)\right)  e^{-\alpha S_u}. 
\end{align*}
Therefore, with probability $\bb P_{x}$ at least $1 - \ee$, 
$D_n$ converges to a finite limit. 
Since $\ee > 0$ is arbitrary, this proves the almost sure convergence of $D_n$. 

Now we show that $\bf P_{x} (D_{\infty} > 0) > 0$. 
Let $\beta = 0$ and consider 
\begin{align*}
M_n^V(0) := \sum_{ |u| = n }  r_{\alpha}(X_u)  V_{\alpha}^0 (X_u, S_u)  e^{- \alpha S_u}
 \mathds{1}_{ \left\{  S_{u|k} \geq 0,  \, \forall  \,   0 \leq k \leq n  \right\} },  
\quad  n \geq 0. 
\end{align*}
By Lemma \ref{Lem-DnV}
and the fact that $\bb E_{x}  M_n^V(0) = 1$, we have $\lim_{n \to \infty}  \bb E_{x}  M_n^V(0) = \bb E_{x} M_{\infty}^V(0) = 1$. 
This implies 
\begin{align}\label{prob-D-infty-V}
\bf P_{x} \left( M_{\infty}^V(0) > 0 \right) > 0. 
\end{align}
By \eqref{inequa-V-alpha}, there exists $c >0$ such that for any $x \in \bb S_+^{d-1}$ and $y \geq 0$, 
$V_{\alpha}^{0} (x, y)  \leq  c (1 + y)$. 
Hence, in view of \eqref{def-addi-martigale}, 
\begin{align*}
M_n^V(0) \leq  c \sum_{ |u| = n }  r_{\alpha}(X_u)   (1 + S_u)  e^{-\alpha S_u}
 \leq  c \bigg( W_n  +  \sum_{ |u| = n } (S_u)_{+}  e^{-\alpha S_u}  \bigg). 
\end{align*}
Since $\lim_{n \to \infty} W_n = 0$, $\bb P_{x}$-a.s., 
letting $n \to \infty$, we get $M_{\infty}^V(0) \leq  c D_{\infty}.$
This, together with \eqref{prob-D-infty-V}, implies that $\bf P_{x} (D_{\infty} > 0) > 0$.  
\end{proof}


\section{Proof of the Seneta-Heyde scaling}\label{Sec-Seneta-Heyde}

The main goal of this section is to establish Theorem \ref{Thm-Seneta-Heyde}. 
Following the approach introduced in \cite{BM19}, 
we rely on a key asymptotic result for the event $\{\max_{1 \leq i \leq n} S_i \leq 0\}$ 
under the tilted measure $\bb Q_{x, b}^{\alpha}$. Recall that $V_{\alpha}$ is defined in \eqref{def-V-alpha-xy}.

\begin{lemma}\label{Lem-exit-time-alpha}
Assume conditions \ref{Condi-Furstenberg-Kesten} and \ref{Condi_ms}. 
For any $x \in \bb S_+^{d-1}$ and $b \in \bb R$, we have 
\begin{align*}
\lim_{n \to \infty} \sqrt{n} \,  \bb Q_{x, b}^{\alpha}  \Big(  \min_{1 \leq i \leq n} S_i \geq 0  \Big) 
= \frac{ 2 V_{\alpha}(x,b)}{ \sigma_{\alpha} \sqrt{2 \pi} }. 
\end{align*}
In addition, there exist $c, c' >0$ such that for any $n \geq 1$, $x \in \bb S_+^{d-1}$ and $b \in \bb R$,
\begin{align}\label{bound-tau-b}
 \sqrt{n} \,  \bb Q_{x, b}^{\alpha}  \Big(  \min_{1 \leq i \leq n} S_i \geq 0  \Big) 
\leq  c V_{\alpha}(x,b) \leq c' (1 + \max \{b, 0\} ). 
\end{align}
\end{lemma}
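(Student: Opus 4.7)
The statement is the Markov random walk analogue of the classical Kolmogorov--Denisov--Wachtel first-passage asymptotic for centered random walks conditioned to stay positive. The plan is to combine two ingredients: (i) the harmonicity of $V_\alpha$ from Lemma~\ref{Lem-positi-harmonic-func}, and (ii) a conditional central limit theorem asserting that, under $\bb Q_{x,b}^\alpha$ and conditionally on $\{\min_{1\le i \le n} S_i \ge 0\}$, the rescaled position $(b+S_n)/\sqrt{n}$ converges in distribution to a Rayleigh random variable with parameter $\sigma_\alpha$, whose mean equals $\sigma_\alpha\sqrt{\pi/2}$. Granting (i) and (ii), iterating harmonicity gives, for every $n$,
\begin{equation*}
V_\alpha(x,b) = \bb E_{\bb Q_x^\alpha}\bigl[V_\alpha(X_n, b+S_n);\,\tau_b^->n\bigr].
\end{equation*}
Since $V_\alpha(x,y)/y\to 1$ as $y\to\infty$ uniformly in $x$, and since on $\{\tau_b^->n\}$ the quantity $b+S_n$ is typically of order $\sqrt{n}$, one may approximate $V_\alpha(X_n, b+S_n) \approx b+S_n$; combined with the conditional CLT this yields $V_\alpha(x,b)\sim \sqrt{n}\,\sigma_\alpha\sqrt{\pi/2}\,\bb Q_x^\alpha(\tau_b^->n)$, which after rearrangement is precisely the claimed asymptotic with constant $2/(\sigma_\alpha\sqrt{2\pi})$.

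The conditional central limit theorem is the step that requires the most work. I would establish it through a martingale approximation for $S_n$ under $\bb Q_x^\alpha$ (made possible by the spectral gap of the Markov operator $Q_\alpha$ from Section~\ref{sect:spectralgap}) combined with the conditioned local limit theorem of Lemma~\ref{Lem-CLLT-bound}, following the standard blueprint for centered random walks adapted to the Markov modulated setting. The key point is that under the conditioning the Markov chain $X_n$ still mixes exponentially fast to its stationary measure on $\bb S^{d-1}_+$, so the Rayleigh limit arises with scale parameter precisely $\sigma_\alpha$.

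The main technical obstacle is the uniform integrability required to pass from distributional convergence to convergence of the first moment $\bb E_{\bb Q_x^\alpha}[(b+S_n)/\sqrt{n};\,\tau_b^->n]$. I would handle this by splitting the expectation at $\{b+S_n\ge A\sqrt{n}\}$ and controlling the tail via dyadic summation in Lemma~\ref{Lem-CLLT-bound}: for any integer $k\ge 0$,
\begin{equation*}
\bb Q_x^\alpha\bigl(b+S_n\in [k,k+1),\ \tau_b^->n\bigr)\le \frac{c(1+b)(1+k)}{n^{3/2}},
\end{equation*}
so that $\bb E_{\bb Q_x^\alpha}[(b+S_n)\mathds{1}_{\{b+S_n\ge A\sqrt{n}\}};\,\tau_b^->n]$ can be shown to be arbitrarily small uniformly in $n$ by choosing $A$ large. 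The same splitting, together with the sandwich bound \eqref{inequa-V-harmonic}, controls the error in replacing $V_\alpha(X_n, b+S_n)$ by $b+S_n$ inside the harmonic identity.

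For the upper bounds: the inequality $V_\alpha(x,b)\le c'(1+\max\{b,0\})$ is already part of Lemma~\ref{Lem-positi-harmonic-func}. For $\sqrt{n}\,\bb Q_{x,b}^\alpha(\min_i S_i\ge 0)\le c\,V_\alpha(x,b)$, I would combine harmonicity with the lower bound $V_\alpha(x,y)\ge c'(1+y)$ to write $V_\alpha(x,b)\ge c'\bb E_{\bb Q_x^\alpha}[b+S_n;\,\tau_b^->n]$, then restrict the expectation to the event $\{b+S_n\ge \lambda \sqrt{n}\}$ on which the integrand is at least $\lambda\sqrt{n}$. The conditional CLT produced in the first part (or a direct lower estimate stemming from Lemma~\ref{Lem-CLLT-bound}) gives $\bb Q_x^\alpha(\tau_b^->n,\ b+S_n\ge\lambda\sqrt{n})\ge c''\bb Q_x^\alpha(\tau_b^->n)$ for all sufficiently large $n$, whence $V_\alpha(x,b)\ge c'''\lambda\sqrt{n}\,\bb Q_x^\alpha(\tau_b^->n)$; for small $n$ the bound is trivial since $V_\alpha$ is bounded below by a positive constant.
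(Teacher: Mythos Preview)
The paper does not actually supply a proof of Lemma~\ref{Lem-exit-time-alpha}; it is stated and then used immediately, the implicit source being the literature on conditioned limit theorems for products of positive random matrices (notably \cite{GLP17}, \cite{Pha18}, \cite{GX23}), where results of this type are established under the same spectral-gap and centering hypotheses. So there is no in-paper argument to compare against.

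Your outline is the standard Denisov--Wachtel style route, transferred to the Markov-modulated setting via the martingale approximation furnished by the spectral gap of $Q_\alpha$: iterate harmonicity of $V_\alpha$, replace $V_\alpha(X_n,b+S_n)$ by $b+S_n$ using $V_\alpha(x,y)/y\to 1$, and identify the limit of $\bb E_{\bb Q_x^\alpha}[(b+S_n)/\sqrt n;\tau_b^->n]$ via a conditional (Rayleigh) CLT. The uniform-integrability control via dyadic summation in Lemma~\ref{Lem-CLLT-bound} is also the right tool. This is exactly the approach taken in \cite{GLP17} and \cite{Pha18}, so your plan matches the literature the paper is implicitly citing.

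One point to tighten: your argument for the uniform bound \eqref{bound-tau-b} is potentially circular. You propose to obtain $\sqrt n\,\bb Q_x^\alpha(\tau_b^->n)\le cV_\alpha(x,b)$ by restricting the harmonic identity to $\{b+S_n\ge\lambda\sqrt n\}$ and invoking $\bb Q_x^\alpha(\tau_b^->n,\ b+S_n\ge\lambda\sqrt n)\ge c''\,\bb Q_x^\alpha(\tau_b^->n)$, justified by ``the conditional CLT produced in the first part (or a direct lower estimate stemming from Lemma~\ref{Lem-CLLT-bound})''. But Lemma~\ref{Lem-CLLT-bound} gives only upper bounds, and the conditional CLT itself is typically proved \emph{after} one has the uniform estimate $\bb Q_x^\alpha(\tau_b^->n)\le c(1+b)/\sqrt n$ (it enters, for instance, when controlling the contribution of the first $\epsilon n$ steps). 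In the references the order is: first prove the uniform bound directly from the martingale approximation (Doob-type inequalities and overshoot control for the approximating martingale, cf.\ \cite[Proposition~4.6]{GLP17} or \cite[Proposition~2.6]{Pha18}), then the conditional CLT, then the exact constant. Rearranging your argument in that order removes the circularity.
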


For any $n, k \geq 1$, define 
\begin{align}\label{def-Wn-tilde}
\widetilde{W}_n =  \sum_{|u| = n}  r_{\alpha}(X_u)  e^{- \alpha S_u}  \mathds 1_{ \{ \min_{v \leq u} S_v \geq 0 \} }
\end{align}
and 
\begin{align}\label{def-Wn-tilde-nk}
\widetilde{\widetilde{W}}_{n, k} =  \sum_{|u| = n}  r_{\alpha}(X_u)  e^{-\alpha S_u}  \mathds 1_{ \{ \min_{v \leq u, |v| \geq k} S_v \geq 0 \} }.  
\end{align}
By Lemma \ref{Lem-infinimum-Su}, 
for any $x \in \bb S_+^{d-1}$ and $b \in \bb R$, 
we have that, as $n \to \infty$,  $\min_{|u| = n} S_u \to  \infty$ $\bb P_{x, b}$-a.s. on the set of survival $\mathscr S$,
therefore, for any $\ee > 0$, there exists $k \in \bb N$ such that 
\begin{align}\label{proba-Wn-12}
\bb P_{x, b} \Big( \widetilde{\widetilde{W}}_{n, k} = W_n  \  \forall n \geq 1 \Big) > 1 - \ee. 
\end{align}

\begin{lemma}\label{Lem-expect-mart-tilde}
Assume conditions \ref{Condi-Furstenberg-Kesten} and \ref{Condi_ms}. 
Then, for any $x \in \bb S_+^{d-1}$ and $b \in \bb R$, 
\begin{align*}
\lim_{n \to \infty} \sqrt{n}  \, \bb E_{x, b} \widetilde{W}_n 
= e^{-\alpha b} r_{s}(x)  \frac{ 2 V_{\alpha}(x,b)}{ \sigma_{\alpha} \sqrt{2 \pi} },
\end{align*}
and there exists $c>0$ such that, for all $n \geq 1$, 
\begin{align*}
\sqrt{n}  \, \bb E_{x, b} \widetilde{W}_n 
\leq  c e^{-\alpha b}  V_{\alpha}(x,b). 
\end{align*}
\end{lemma}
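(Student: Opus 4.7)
The plan is to reduce the branching sum to a single-path exit probability via the many-to-one formula, and then invoke Lemma \ref{Lem-exit-time-alpha}. Concretely, observe that $\widetilde{W}_n = \sum_{|u|=n} F(X_{\o},S_{\o},X_{u|1},S_{u|1},\dots,X_u,S_u)$ with
\[
F\bigl((x_k,s_k)_{0\le k\le n}\bigr) = r_\alpha(x_n)\, e^{-\alpha s_n}\, \mathds{1}_{\{s_k\ge 0,\ 0\le k\le n\}}.
\]
First, I would apply the many-to-one formula \eqref{Formula_many_to_one} with $s=\alpha$. Under \ref{Condi_ms} one has $\mathfrak{m}(\alpha)=1$, and in the change of measure the factor $r_\alpha(X_n)$ cancels with $1/r_\alpha(X_n)$, while $e^{\alpha(S_n-S_0)}$ cancels with $e^{-\alpha S_n}$. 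The outcome is
\[
\bb E_{x,b}\widetilde{W}_n = r_\alpha(x)\, e^{-\alpha b}\, \bb Q_{x,b}^{\alpha}\Bigl(\min_{0\le k\le n} S_k \ge 0\Bigr),
\]
and since $S_0=b$ under $\bb Q_{x,b}^{\alpha}$, the minimum condition reduces to $b\ge 0$ together with $\min_{1\le k\le n} S_k\ge 0$ (the formula being trivially zero when $b<0$, consistent with $V_\alpha(x,b)=0$ there).

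Second, I would apply Lemma \ref{Lem-exit-time-alpha} directly. The asymptotic statement
\[
\sqrt{n}\, \bb Q_{x,b}^\alpha\Bigl(\min_{1\le i\le n} S_i\ge 0\Bigr)\longrightarrow \frac{2 V_\alpha(x,b)}{\sigma_\alpha\sqrt{2\pi}}
\]
multiplied by the prefactor $r_\alpha(x)e^{-\alpha b}$ yields the claimed limit. The uniform upper bound follows in the same step from \eqref{bound-tau-b}: multiplying through by $r_\alpha(x)e^{-\alpha b}$ and using that $r_\alpha$ is bounded on $\bb S_+^{d-1}$ gives
\[
\sqrt{n}\, \bb E_{x,b}\widetilde{W}_n \le c\, e^{-\alpha b}\, V_\alpha(x,b)
\]
for a universal constant $c$ and all $n\ge 1$.

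There is essentially no hard step here: all substantive analytic work (martingale-type identification, harmonicity of $V_\alpha$, and the $\sqrt{n}$-asymptotic of the persistence probability) has been packaged into Corollary \ref{cor:many-to-one} and Lemma \ref{Lem-exit-time-alpha}. The only point that deserves a sentence of care is to note that $r_\alpha$ appears \emph{outside} the many-to-one tilt (because the test function carries its own factor $r_\alpha(X_u)$, which meets the $1/r_\alpha(X_n)$ produced by \eqref{Formula_many_to_one}), so no residual eigenfunction dependence is left inside the probability that one wishes to estimate. I would then note that the statement of the lemma contains $r_s(x)$, which should read $r_\alpha(x)$.
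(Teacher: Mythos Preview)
Your proposal is correct and follows exactly the paper's approach: apply the many-to-one formula \eqref{Formula_many_to_one} to collapse $\bb E_{x,b}\widetilde{W}_n$ to $e^{-\alpha b} r_\alpha(x)\,\bb Q_{x,b}^\alpha(\min_{1\le i\le n} S_i\ge 0)$, then invoke Lemma \ref{Lem-exit-time-alpha} for both the limit and the uniform bound. Your additional remarks on the cancellation of $r_\alpha(X_n)$, the handling of $b<0$, and the typo $r_s(x)\to r_\alpha(x)$ are all accurate and not spelled out in the paper's (very terse) proof.
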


\begin{proof}
By the many-to-one formula (cf.\ \eqref{Formula_many_to_one}), we have 
\begin{align*}
\bb E_{x, b} \widetilde{W}_n
 = \bb E_{x, b}  \Big(  \sum_{|u| = n}  r_{\alpha}(X_u)  e^{-\alpha S_u}  \mathds 1_{ \{ \min_{v \leq u} S_v \geq 0 \} } \Big)
 = e^{-\alpha b} r_{s}(x)   
  \bb Q_{x, b}^{\alpha}  \left(  \min_{1 \leq i \leq n} S_i \geq 0  \right).  
\end{align*}
By Lemma \ref{Lem-exit-time-alpha}, the result follows. 
\end{proof}

Let $\mathscr H_{\infty}$ be the natural filtration of the spine particles and their children, i.e.
\begin{align}\label{def-scr-H-infty}
\mathscr H_{\infty} = \sigma \left\{ w|k, (X_{w|k}, S_{w|k}), (X_v, S_v);  \overset{\leftarrow}{v} = w|k,  k \geq 0 \right\}. 
\end{align}

\begin{lemma}\label{Lem-minimum-condi}
Assume conditions \ref{Condi-Furstenberg-Kesten} and \ref{Condi_ms}. 
Then, for any $x \in \bb S_+^{d-1}$ and $b \geq 0$, we have $\wh{\bb P}_{x, b}$-a.s., 
\begin{align*}
\bb E_{x, b} \left(  \sqrt{n} \widetilde{W}_n  \mathds 1_{ \{ \sqrt{n} \widetilde{W}_n \geq \ee \} }  \right)
 \leq   T_1(x, b, \ee, n) + T_2(x, b, \ee, n),  
\end{align*}
where 
\begin{align*}
T_1(x, b, \ee, n)  & =  
e^{-\alpha b} r_{\alpha}(x)  \sqrt{n}   \,  \wh{\bb P}_{x, b}   
\left(   \sqrt{n}  r_{\alpha} (X_{w|n}) e^{-\alpha S_{w|n}}  \geq \frac{\ee}{2},   \min_{1 \leq k \leq n} S_{w|k} \geq 0 \right),  
\notag\\
T_2(x, b, \ee, n)  & =  c e^{-\alpha b}  V_{\alpha}(x,b)  
\\
&\quad \times \wh{\bb E}_{x, b}   \Bigg.
\Bigg[ 
\Bigg(  \frac{2 \sqrt{n}}{\ee}  \sum_{k=0}^{n-1}  \sum_{\substack{  i \in \bb N \\  (w|k)i \neq w|k+1}} 
 \bb E_{X_{(w|k) i}, S_{(w|k) i} }  \widetilde{W}_{n-k-1}  \Bigg)  \wedge 1  \Bigg| \min_{1 \leq k \leq n} S_{w|k} \geq 0 \Bigg]. 
\end{align*}
\end{lemma}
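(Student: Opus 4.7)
My plan is to use the spinal decomposition on the enlarged space $\Omega^{(2)}$. First I split
\[
\widetilde{W}_n = \underbrace{r_\alpha(X_{w|n})e^{-\alpha S_{w|n}}\mathds{1}_A}_{=:\,\chi_n} + \underbrace{\widetilde{W}_n-\chi_n}_{=:\,\chi_n^{\perp}}, \qquad A:=\{S_{w|k}\ge 0,\ 1\le k\le n\},
\]
where $\chi_n$ is the contribution of the spine particle at generation $n$ and vanishes off $A$. The union bound
$\mathds 1_{\{\sqrt n\widetilde W_n\ge\ee\}}\le \mathds 1_{\{\sqrt n\chi_n\ge\ee/2\}} + \mathds 1_{\{\sqrt n\chi_n^\perp\ge\ee/2\}}$
then separates the analysis into a spine term and an off-spine term, which will produce $T_1$ and $T_2$ respectively.

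For the spine term I use $\widetilde{W}_n\le W_n$ and the change of measure \eqref{def-hat-P-x-b-intro}, which replaces $\bb E_{x,b}[W_n Y]$ by $e^{-\alpha b}r_\alpha(x)\wh{\bb E}_{x,b}[Y]$ for any $\mathscr F_n$-measurable $Y$; since $\chi_n$ vanishes off $A$, the event $\{\sqrt n\chi_n\ge\ee/2\}$ coincides with $\{\sqrt n r_\alpha(X_{w|n})e^{-\alpha S_{w|n}}\ge\ee/2\}\cap A$, and we recover $T_1$. For the off-spine term I would instead use $\widetilde{W}_n\le (c')^{-1}M_n^V$, valid because $V_\alpha(y,s)\ge c'$ for $s\ge 0$ by Lemma \ref{Lem-positi-harmonic-func} and only particles with nonnegative ancestral path contribute to $\widetilde{W}_n$. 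Applying the $M_n^V$-change of measure \eqref{def-P-beta} with $\beta=0$ produces the prefactor $V_\alpha(x,b)$, and under the resulting $\wh{\bb P}^{(0)}_{x,b}$ the spine stays in $[0,\infty)$ almost surely. Conditioning on the $\sigma$-field $\mathscr H_\infty$ of \eqref{def-scr-H-infty} (the spine particles together with their immediate offspring), the subtrees hanging off the spine evolve according to the original branching law by Theorem \ref{Thm-spinal-decom}, hence
\[
\wh{\bb E}^{(0)}_{x,b}[\chi_n^{\perp}\mid\mathscr H_\infty]\le \sum_{k=0}^{n-1}\sum_{i\ne w(k+1)}\bb E_{X_{(w|k)i},S_{(w|k)i}}\widetilde{W}_{n-k-1}
\]
after dropping the indicator $\mathds 1_{\{S_{(w|k)i}\ge 0\}}$. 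A conditional Markov inequality applied to $\mathds 1_{\{\sqrt n\chi_n^\perp\ge\ee/2\}}$ then produces the $\min(\cdot,1)$ truncation appearing in $T_2$.

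The main technical obstacle is the final identification: the outer expectation is naturally computed with respect to the spine law under $\wh{\bb P}^{(0)}_{x,b}$ (the $\bb Q^\alpha_{x,b}$-walk reweighted by $V_\alpha(X_n,S_n)/V_\alpha(x,b)$ and restricted to $A$, with an analogous bias in the offspring distributions of the spine), while the stated $T_2$ carries $\wh{\bb E}_{x,b}[\cdot\mid A]$, i.e.\ the unweighted law conditioned on the finite-time event $A$. Re-expressing one in terms of the other requires absorbing an extra Radon--Nikodym factor essentially of size $V_\alpha(X_{w|n},S_{w|n})/V_\alpha(x,b)$: the truncation at $1$ in the integrand is crucial here, as it controls the linear growth of $V_\alpha(X_{w|n},S_{w|n})$ as $S_{w|n}\to\infty$, while the two-sided estimate $\sqrt n\,\bb Q^\alpha_{x,b}(A)\asymp V_\alpha(x,b)$ from Lemma \ref{Lem-exit-time-alpha} matches the remaining $\sqrt n$ factor against the $1/\wh{\bb P}_{x,b}(A)$ normalization implicit in $\wh{\bb E}_{x,b}[\cdot\mid A]$.
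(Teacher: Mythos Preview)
Your detour through $\wh{\bb P}^{(0)}_{x,b}$ is unnecessary, and the ``technical obstacle'' you identify at the end is self-inflicted and not actually resolved by your final paragraph. The paper stays under the single measure $\wh{\bb P}_{x,b}$ throughout. Its first step is the \emph{exact} identity
\[
\bb E_{x,b}\big[\sqrt{n}\,\widetilde{W}_n\,\mathds 1_{\{\sqrt{n}\widetilde{W}_n\ge\ee\}}\big]
= e^{-\alpha b}r_\alpha(x)\sqrt{n}\,\wh{\bb P}_{x,b}\big(\sqrt{n}\,\widetilde{W}_n\ge\ee,\ A\big),
\]
obtained by combining \eqref{def-hat-P-x-b-intro} with Corollary~\ref{Cor-spine-001}, which gives $\widetilde{W}_n/W_n=\wh{\bb P}_{x,b}(A\mid\mathscr F_n)$. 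Only \emph{after} this conversion, under a measure carrying a spine, does the decomposition $\widetilde W_n=\chi_n+\chi_n^\perp$ and the union bound make sense; your ordering (split first, then apply different changes of measure to each piece) is incoherent because $\chi_n,\chi_n^\perp$ are not defined under $\bb P_{x,b}$. For the off-spine piece the paper then writes $\wh{\bb P}_{x,b}(\{\sqrt n\chi_n^\perp\ge\ee/2\}\cap A)=\wh{\bb P}_{x,b}(A)\cdot J_n$ and bounds $\sqrt n\,\wh{\bb P}_{x,b}(A)=\sqrt n\,\bb Q^\alpha_{x,b}(A)\le c\,V_\alpha(x,b)$ via Lemma~\ref{Lem-exit-time-alpha}; this is where $V_\alpha(x,b)$ comes from, not from an $M_n^V$-tilt. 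Conditional Markov on $\mathscr H_\infty$ for $J_n$ then yields $T_2$ directly.

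Your route via $\widetilde W_n\le (c')^{-1}M_n^V$ instead produces an upper bound of the form $c\,e^{-\alpha b}V_\alpha(x,b)\,\sqrt n\,\wh{\bb E}^{(0)}_{x,b}[Y]$ with $Y\le 1$, carrying an extra $\sqrt n$. To match the stated $T_2$ you would need $\sqrt n\,\wh{\bb E}^{(0)}_{x,b}[Y]\le c\,\wh{\bb E}_{x,b}[Y\mid A]$. Unpacking the Radon--Nikodym derivative $d\wh{\bb P}^{(0)}_{x,b}/d\wh{\bb P}_{x,b}=V_\alpha(X_{w|n},S_{w|n})\mathds 1_A/V_\alpha(x,b)$ and the bound $\wh{\bb P}_{x,b}(A)\le cV_\alpha(x,b)/\sqrt n$, this amounts to requiring $\wh{\bb E}_{x,b}[Y\,V_\alpha(X_{w|n},S_{w|n})\mathds 1_A]\le c\,\wh{\bb E}_{x,b}[Y\mathds 1_A]$, which fails since $V_\alpha(X_{w|n},S_{w|n})\asymp 1+S_{w|n}$ is unbounded on $A$. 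The truncation $Y\le 1$ does not control this factor; the inequality goes the wrong way.
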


\begin{proof}
By \eqref{Formula_many_to_one} and \eqref{sized-BRW-spinal}, 
we get 
\begin{align*}
\bb E_{x, b} \left(  \sqrt{n} \widetilde{W}_n  \mathds 1_{ \{ \sqrt{n} \widetilde{W}_n \geq \ee \} }  \right)
 = e^{-\alpha b} r_{\alpha}(x)  \sqrt{n} 
\,   \wh{\bb P}_{x, b}  
\Big(  \sqrt{n} \widetilde{W}_n \geq \ee,  \min_{1 \leq k \leq n} S_{w|k} \geq 0  \Big). 
\end{align*}
We use the branching property and the Markov property to decompose $\widetilde{W}_{n}$ along the spine and its children
to get the following identity:
\begin{align*}
\widetilde{W}_{n} = r_{\alpha} (X_{w|n}) e^{-\alpha S_{w|n}} \mathds 1_{\{ \min_{1 \leq k \leq n} S_{w|k} \geq 0 \} } 
+ \sum_{k=0}^{n-1}  \sum_{\substack{  i \in \bb N \\  (w|k)i \neq w|k+1}} 
[\widetilde{W}_{n-k-1}]_{(w|k) i} \mathds 1_{\{ S_v \geq 0, \forall v \leq (w|k)i \} },
\end{align*}
where $[\widetilde{W}_{n-k-1}]_{(w|k) i}$ has the same law as $\widetilde{W}_{n-k-1}$ under 
$\bb P_{X_{(w|k) i}, S_{(w|k) i} }$ conditioned on $\mathscr H_{\infty}$. 
Using the inequality $\mathds 1_{\{ Z_1 + Z_2 \geq \ee  \} } \leq \mathds 1_{\{ Z_1  \geq \frac{\ee}{2}  \} } + \mathds 1_{\{ Z_2  \geq \frac{\ee}{2}  \} }$,
we get
\begin{align*}
\wh{\bb P}_{x, b}   
\Big(    \sqrt{n} \widetilde{W}_n \geq \ee,   \min_{1 \leq k \leq n} S_{w|k} \geq 0  \Big)
\leq   \wh{\bb P}_{x, b}   
\Big(   \sqrt{n}  r_{\alpha} (X_{w|n}) e^{-\alpha S_{w|n}}  \geq \frac{\ee}{2},   \min_{1 \leq k \leq n} S_{w|k} \geq 0  \Big) + I_n, 
\end{align*}
where
\begin{align*}
I_n = \wh{\bb P}_{x, b}   
\Bigg(  \sqrt{n}  \sum_{k=0}^{n-1}  \sum_{\substack{  i \in \bb N \\  (w|k)i \neq w|k+1}} 
[\widetilde{W}_{n-k-1}]_{(w|k) i}   \geq \frac{\ee}{2},   \min_{1 \leq k \leq n} S_{w|k} \geq 0 \Bigg).  
\end{align*}
Denote
\begin{align*}
J_n = \wh{\bb P}_{x, b}   
	\Bigg(  \sqrt{n} \Bigg.    \sum_{k=0}^{n-1}  \sum_{\substack{  i \in \bb N \\  (w|k)i \neq w|k+1}} 
[\widetilde{W}_{n-k-1}]_{(w|k) i}   \geq \frac{\ee}{2}   \Bigg| \min_{1 \leq k \leq n} S_{w|k} \geq 0 \Bigg). 
\end{align*}
Taking conditional probability, by \eqref{sized-BRW-spinal} and Lemma \ref{Lem-exit-time-alpha}, it follows that
\begin{align}\label{Inequa-Wn-ee-001}
I_n  =  \wh{\bb P}_{x, b} \left( \min_{1 \leq k \leq n} S_{w|k} \geq 0 \right)
	\,   J_n 
 =  \bb Q_{x, b}^{\alpha} \left( \min_{1 \leq k \leq n} S_{k} \geq 0 \right)
	\,   J_n  
 \leq  \frac{c V_{\alpha}(x,b) }{\sqrt{n}}    
	\,  J_n. 
\end{align}
For $J_n$, taking conditional expectation w.r.t.  $\mathscr H_{\infty}$ and using Markov's inequality, we obtain 
\begin{align*}
J_n  & = \wh{\bb E}_{x, b}   
	\Bigg[  \wh{\bb E}_{x, b}  \Bigg(  \Bigg.  \Bigg.   \mathds 1_{ \{  \sqrt{n} \sum_{k=0}^{n-1}  \sum_{\substack{  i \in \bb N \\  (w|k)i \neq w|k+1}} 
[\widetilde{W}_{n-k-1}]_{(w|k) i}   \geq \frac{\ee}{2}  \} }   \Bigg|  \mathscr H_{\infty}   \Bigg)  \Bigg| \min_{1 \leq k \leq n} S_{w|k} \geq 0 \Bigg]  \notag\\
& \leq  \wh{\bb E}_{x, b}   \Bigg.
\Bigg[ 
\Bigg(  \frac{2 \sqrt{n}}{\ee}  \sum_{k=0}^{n-1}  \sum_{\substack{  i \in \bb N \\  (w|k)i \neq w|k+1}} 
\bb E_{X_{(w|k) i}, S_{(w|k) i} }  \widetilde{W}_{n-k-1}  \Bigg)  \wedge 1
\Bigg| \min_{1 \leq k \leq n} S_{w|k} \geq 0 \Bigg],  
\end{align*}
completing the proof of the lemma. 
\end{proof}

We start by handling the first term $T_1(x, b, \ee, n)$ in Lemma \ref{Lem-minimum-condi}. 

\begin{lemma}\label{Lem-T1-bound}
Assume conditions \ref{Condi-Furstenberg-Kesten} and \ref{Condi_ms}. 
For any fixed $x \in \bb S_+^{d-1}$, $b \geq 0$ and $\ee > 0$, we have $\lim_{n \to \infty} T_1(x, b, \ee, n) = 0$. 
\end{lemma}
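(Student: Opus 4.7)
The idea is to convert $T_1$ into a probability for the Markov random walk $(X_n,S_n)$ under $\bb Q_{x,b}^{\alpha}$ via the spinal decomposition, then apply the conditioned local limit bound of Lemma \ref{Lem-CLLT-bound}. First, since $r_{\alpha}$ is continuous on the compact set $\bb S_+^{d-1}$, there exists $M := \sup_{y \in \bb S_+^{d-1}} r_{\alpha}(y) < \infty$. Consequently, the event $\{ \sqrt{n}\, r_{\alpha}(X_{w|n}) e^{-\alpha S_{w|n}} \geq \ee/2 \}$ is contained in $\{ S_{w|n} \leq A_n \}$, where
\[
A_n := \frac{1}{2\alpha}\log n + \frac{1}{\alpha}\log\!\Big(\frac{2M}{\ee}\Big).
\]

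Next, by Corollary \ref{Cor-spine-001} (with $B=\bb R$, $h \equiv 1$), the spine process $(X_{w|k}, S_{w|k})_{k\geq 0}$ under $\wh{\bb P}_{x,b}$ has the same law as $(X_k, S_k)_{k \geq 0}$ under $\bb Q_{x,b}^{\alpha}$. Hence
\[
T_1(x,b,\ee,n) \leq e^{-\alpha b} r_{\alpha}(x)\, \sqrt{n}\, \bb Q_{x,b}^{\alpha}\!\left( S_n \leq A_n,\ \min_{1 \leq k \leq n} S_k \geq 0 \right).
\]
Since $S_0 = b$ under $\bb Q_{x,b}^{\alpha}$, writing $S_k = b + \widetilde S_k$ with $\widetilde S_0 = 0$, this probability equals $\bb Q_{x}^{\alpha}\big( b+\widetilde S_n \in [0, A_n],\ \tau_{b}^{-} > n \big)$ for all $n$ large enough that $A_n \geq b$, with $\tau_b^-$ as in \eqref{def-tau-y-plus-minus}.

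Now I apply Lemma \ref{Lem-CLLT-bound} with $y = b$, $z = A_n$ and $t = 0$, which yields
\[
\bb Q_{x}^{\alpha}\!\left( b + \widetilde S_n \in [0, A_n],\ \tau_b^- > n \right) \leq \frac{c\,(1+b)(1+A_n)^2}{n^{3/2}}.
\]
Multiplying by $\sqrt{n}$ gives a bound of order $(1+b)(1+A_n)^2/n$; since $A_n = O(\log n)$, the right-hand side is $O((\log n)^2/n)$ and tends to $0$ as $n \to \infty$, which proves the claim.

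The argument is essentially routine once the spinal identification is applied; there is no genuine obstacle here. The only small point to be careful about is to pick the correct shape of the indicator event — namely, to use only the upper bound on $r_{\alpha}$ to get a one-sided constraint $\{S_{w|n} \leq A_n\}$ of the form required by Lemma \ref{Lem-CLLT-bound}, so that $A_n$ enters linearly in $\log n$ and the resulting $(\log n)^2/n$ factor beats the $\sqrt{n}$ normalization.
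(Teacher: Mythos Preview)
Your proof is correct and follows essentially the same approach as the paper: both use the spinal identification (Corollary~\ref{Cor-spine-001}) to rewrite $T_1$ as a probability under $\bb Q_{x,b}^{\alpha}$ and then invoke the conditioned local limit bound of Lemma~\ref{Lem-CLLT-bound}. The paper splits into the cases $S_n \leq n^{\delta}$ (bounded via Lemma~\ref{Lem-CLLT-bound}) and $S_n > n^{\delta}$ (bounded via Markov's inequality), whereas your use of the exact logarithmic threshold $A_n$ handles everything in a single application of Lemma~\ref{Lem-CLLT-bound} and is slightly more streamlined.
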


\begin{proof}
This is a consequence of the conditioned local limit theorem for products of positive random matrices (cf.\ Lemma \ref{Lem-CLLT-bound}).
By \eqref{sized-BRW-spinal}, we have 
\begin{align*}
T_1(x, b, \ee, n) =  e^{-\alpha b} r_{\alpha}(x)  \sqrt{n}   \,  \bb Q_{x,b}^{\alpha} 
\Big(   \sqrt{n}  r_{\alpha} (X_{n}) e^{-\alpha S_{n}}  \geq \frac{\ee}{2},   \min_{1 \leq k \leq n} S_{k} \geq 0  \Big). 
\end{align*}
It follows that 
\begin{align*}
T_1(x, b, \ee, n) 
& \leq  c e^{-\alpha b}  \sqrt{n}   \,  \bb Q_{x,b}^{\alpha} 
\Big(  S_n \leq  n^{\delta},  \min_{1 \leq k \leq n} S_{k} \geq 0  \Big)  \notag\\
& \quad +  c e^{-\alpha b}  \sqrt{n}   \,  \bb Q_{x,b}^{\alpha}  
\Big(  S_n > n^{\delta},  \sqrt{n}  r_{\alpha} (X_{n}) e^{-\alpha S_{n}}  \geq \frac{\ee}{2}  \Big) \notag\\
& =: T_{11}(x, b, \ee, n) + T_{12}(x, b, \ee, n). 
\end{align*}

For the first term $T_{11}(x, b, \ee, n)$, by Lemma \ref{Lem-CLLT-bound}, 
there exists a constant $c>0$ such that for any $x \in \bb S_+^{d-1}$, $b \geq 0$, $\ee >0$ and $n \geq 1$, 
\begin{align*}
\bb Q_{x,b}^{\alpha} 
\Big(  S_n \leq  n^{\delta},  \min_{1 \leq k \leq n} S_{k} \geq 0  \Big)
\leq  \frac{c (1+b)}{n^{3/2 - 2 \delta}}, 
\end{align*}
so that, for any $x \in \bb S_+^{d-1}$, $b \geq 0$, $\ee > 0$ and $\delta \in (0, \frac{1}{2})$, we have $\lim_{n \to \infty} T_{11}(x, b, \ee, n) = 0$. 

For the second term $T_{12}(x, b, \ee, n)$,
using Markov's inequality and the fact that $r_{\alpha}$ is bounded, 
there exists $c>0$ such that for any $x \in \bb S_+^{d-1}$, $b \geq 0$, $\ee >0$ and $n \geq 1$, 
\begin{align*}
\bb Q_{x,b}^{\alpha}  
\Big(  S_n > n^{\delta},  \sqrt{n}  r_{\alpha} (X_{n}) e^{-\alpha S_{n}}  \geq \frac{\ee}{2}  \Big)
\leq  \frac{c}{\ee} \sqrt{n} e^{- \alpha n^{\delta}}, 
\end{align*}
so that $\lim_{n \to \infty} T_{12}(x, b, \ee, n) = 0$,
completing the proof of the lemma. 
\end{proof}

To deal with the second term $T_2(x, b, \ee, n)$ in Lemma \ref{Lem-minimum-condi}, 
we need to introduce a new probability measure as follows:
for any $x \in \bb S_+^{d-1}$, $b \in \bb R$, $n \geq 0$ and $A \in \mathscr F_n$, 
\begin{align}\label{change-measure-PW}
	\bb P^+_{x, b} (A) 
	=  \frac{1}{ V_{\alpha}(x,b) } 
	\wh{\bb E}_{x,b}  \left( V_{\alpha}(X_{w|n}, S_{w|n}) \mathds 1_{\{ \min_{1 \leq k \leq n} S_{w|k} \geq 0 \}}  \mathds 1_A \right). 
\end{align}
Denote by $\bb E^+_{x, b}$ the corresponding expectation. 
In other words, under the measure $\bb P^+_{x, b}$, 
the spine is conditioned to stay nonnegative at all times. 

\begin{lemma}\label{Lem-conver-Yn-Yinfty}
	Let $(Y_n)_{n \geq 0}$ be a uniformly bounded sequence of random variables,
	adapted to $(\mathscr F_n)_{n \geq 0}$. 
	Let $x \in \bb S_+^{d-1}$ and $b \in \bb R$. 
	If $Y_{\infty}$ is a random variable such that $Y_n \to Y_{\infty}$ in probability under $\bb P^+_{x, b}$,
	then
	\begin{align*}
		\lim_{n \to \infty}  \wh{\bb E}_{x, b}   \left(  Y_n  \, \Big| \,  \min_{1 \leq k \leq n} S_{w|k} \geq 0  \right)
		= \bb E^+_{x, b} (Y_{\infty}). 
	\end{align*}
\end{lemma}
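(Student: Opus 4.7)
The plan is to use the Radon--Nikodym derivative of $\bb P^+_{x,b}$ with respect to $\wh{\bb P}_{x,b}(\,\cdot\,|T_n)$ on $\mathscr F_n$, where $T_n := \{\min_{1 \leq k \leq n} S_{w|k} \geq 0\}$, together with a cylindrical approximation argument. By \eqref{change-measure-PW}, this density equals
\[ \gamma_n := \frac{V_\alpha(X_{w|n}, S_{w|n})\, \wh{\bb P}_{x,b}(T_n)}{V_\alpha(x,b)}, \]
and, since $\bb P^+_{x,b}(T_n)=1$ by harmonicity, we have $\wh{\bb E}_{x,b}(Z|T_n) = \bb E^+_{x,b}(Z/\gamma_n)$ for every bounded $\mathscr F_n$-measurable $Z$. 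The argument splits into three steps: first, the limit for $\mathscr F_{n_0}$-measurable test functions at a fixed $n_0$; second, a uniform $L^2(\bb P^+_{x,b})$-bound on $\gamma_n^{-1}$; third, a Cauchy--Schwarz combination to handle the general sequence.

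For the first step, fix $n_0$ and let $f$ be bounded and $\mathscr F_{n_0}$-measurable. By Corollary \ref{Cor-spine-001}, the spine is a Markov chain under $\wh{\bb P}_{x,b}$ with the law of $(X_k,S_k)$ under $\bb Q^\alpha_{x,b}$; moreover, the event $T_{n_0,n}:= \{\min_{n_0 < k \leq n} S_{w|k} \geq 0\}$ depends only on the spine after time $n_0$ (and, by the spinal construction, is conditionally independent of $\mathscr F_{n_0}$ given $(X_{w|n_0}, S_{w|n_0})$), so
\[ \wh{\bb E}_{x,b}(f\,\mathds{1}_{T_n}) = \wh{\bb E}_{x,b}\bigl( f\,\mathds{1}_{T_{n_0}}\, \bb Q^\alpha_{X_{w|n_0}, S_{w|n_0}}(\min_{1 \leq k \leq n-n_0} S_k \geq 0) \bigr). \]
Lemma \ref{Lem-exit-time-alpha} supplies the pointwise limit $\sqrt{n-n_0}\, \bb Q^\alpha_{y,s}(\min_{1\leq k \leq n-n_0} S_k \geq 0) \to 2V_\alpha(y,s)/(\sigma_\alpha\sqrt{2\pi})$ together with the dominating bound $cV_\alpha(y,s)$, which is $\wh{\bb P}_{x,b}$-integrable on $T_{n_0}$ by harmonicity of $V_\alpha$ (Lemma \ref{Lem-positi-harmonic-func}). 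Dividing by $\sqrt{n}\,\wh{\bb P}_{x,b}(T_n) \to 2V_\alpha(x,b)/(\sigma_\alpha\sqrt{2\pi})$ and applying dominated convergence yields $\wh{\bb E}_{x,b}(f|T_n) \to \bb E^+_{x,b}(f)$.

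For the second step, unwinding the change of measure gives
\[ \bb E^+_{x,b}(\gamma_n^{-2}) = \frac{V_\alpha(x,b)}{\wh{\bb P}_{x,b}(T_n)^2}\, \bb E_{\bb Q^\alpha_{x,b}}\bigl[V_\alpha(X_n,S_n)^{-1}\, \mathds{1}_{T_n}\bigr]. \]
Using the lower bound $V_\alpha(x',b')\geq c'(1+b')$ from Lemma \ref{Lem-positi-harmonic-func}, it remains to show that $\bb E_{\bb Q^\alpha_{x,b}}[(1+S_n)^{-1}\mathds{1}_{T_n}] \leq C/n$. This is obtained by splitting the range of $S_n$ at $\sqrt n$: the bucketed estimates $\bb Q^\alpha_{x,b}(S_n \in [t,t+1],T_n) \leq c(1+b)(1+t)/n^{3/2}$ from Lemma \ref{Lem-CLLT-bound} sum to at most $C/n$ on $t\in [0,\sqrt n]$, while the tail $\{S_n > \sqrt n\}$ contributes at most $(1+\sqrt n)^{-1}\wh{\bb P}_{x,b}(T_n) \leq C/n$ by Lemma \ref{Lem-exit-time-alpha}. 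Combined with the lower bound $\wh{\bb P}_{x,b}(T_n) \geq c/\sqrt n$, this gives $\sup_n \bb E^+_{x,b}(\gamma_n^{-2}) < \infty$.

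To conclude, since $|Y_n|\leq M$ and $Y_n \to Y_\infty$ in $\bb P^+_{x,b}$-probability, bounded convergence upgrades this to $L^2(\bb P^+_{x,b})$-convergence (in particular, $|Y_\infty|\leq M$ a.s.). Given $\ee > 0$, choose $n_0$ with $\| Y_{n_0} - Y_\infty \|_{L^2(\bb P^+_{x,b})} < \ee$. For $n > n_0$, decompose
\[ \bigl|\wh{\bb E}_{x,b}(Y_n|T_n) - \bb E^+_{x,b}(Y_\infty)\bigr| \leq \wh{\bb E}_{x,b}(|Y_n - Y_{n_0}||T_n) + \bigl|\wh{\bb E}_{x,b}(Y_{n_0}|T_n) - \bb E^+_{x,b}(Y_{n_0})\bigr| + \bigl|\bb E^+_{x,b}(Y_{n_0}-Y_\infty)\bigr|. \]
By Cauchy--Schwarz and the second step, the first summand is bounded by $\| Y_n - Y_{n_0} \|_{L^2(\bb P^+_{x,b})} \| \gamma_n^{-1} \|_{L^2(\bb P^+_{x,b})} \leq 2C\ee$ for $n$ large enough; the middle summand vanishes by the first step; the last is at most $\ee$. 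Letting $\ee\to 0$ completes the proof. The main technical obstacle is the $O(1/n)$ estimate in the second step, where one must balance the bucketing scale $\sqrt n$ against the tail bound to extract the correct order from the conditioned local limit theorem of Lemma \ref{Lem-CLLT-bound}.
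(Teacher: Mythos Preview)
Your proof is correct and takes a genuinely different route from the paper's. Both arguments share Step~1 (the limit for a fixed $\mathscr F_{n_0}$-measurable test function via Lemma~\ref{Lem-exit-time-alpha} and dominated convergence), but diverge in how they pass from $Y_{n_0}$ to $Y_n$. The paper compares $\wh{\bb E}_{x,b}(\,\cdot\,|T_{[an]})$ with $\bb E^+_{x,b}$ on $\mathscr F_n$ using only the \emph{pointwise} upper bound $m_{[(a-1)n]}(y,s)\le c\,V_\alpha(y,s)/\sqrt{(a-1)n}$ from \eqref{bound-tau-b}; this yields an $L^1(\bb P^+_{x,b})$ control with constant $c\sqrt{a/(a-1)}$, after which one must undo the time shift $T_n\leftrightarrow T_{[an]}$ and send $a\downarrow 1$. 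You instead prove a uniform $L^2(\bb P^+_{x,b})$ bound on $\gamma_n^{-1}$ via a dyadic-scale bucketing of the conditioned local limit theorem (Lemma~\ref{Lem-CLLT-bound}), and close by Cauchy--Schwarz. Your approach avoids the auxiliary parameter $a$ and the two-step limit, at the price of invoking the finer input Lemma~\ref{Lem-CLLT-bound} rather than just Lemma~\ref{Lem-exit-time-alpha}; since Lemma~\ref{Lem-CLLT-bound} is already available in the paper, this is a clean trade. One minor imprecision: in Step~1 the conditional independence of $T_{n_0,n}$ should be stated given $\mathscr F_{n_0}\vee\sigma(w|k:k\le n_0)$ (not just $(X_{w|n_0},S_{w|n_0})$), since $\mathscr F_{n_0}$ does not determine the spine selection; the displayed identity is unaffected.
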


\begin{proof}
We fix $x \in \bb S_+^{d-1}$ and $b \in \bb R$. Denote $m_n(x, b) = \wh{\bb P}_{x, b} (  \min_{1 \leq k \leq n} S_{w|k} \geq 0 )$. 
For any $1 \leq l \leq n$, taking conditional expectation with respect to $\mathscr F_l$, we have
\begin{align*}
\wh{\bb E}_{x, b}   \left(  Y_l  \, \Big| \,  \min_{1 \leq k \leq n} S_{w|k} \geq 0  \right)
& = \frac{1}{m_n(x, b)}  \wh{\bb E}_{x, b}   \left(  Y_l  \mathds 1_{\{ \min_{1 \leq k \leq n} S_{w|k} \geq 0 \}}  \right) \notag\\
& =   \wh{\bb E}_{x, b}   
\bigg(  Y_l    \mathds 1_{\{ \min_{1 \leq k \leq l} S_{w|k} \geq 0 \}}   \frac{m_{n-l}(X_{w|l}, S_{w|l})}{m_n(x, b)}  \bigg). 
\end{align*}
For any fixed $l \in [1, n]$, we use Lemma \ref{Lem-exit-time-alpha} to get that 
$\frac{m_{n-l}(X_{w|l}, S_{w|l})}{m_n(x, b)}$ converges to $\frac{V_{\alpha}(X_{w|l}, S_{w|l})}{V_{\alpha}(x, b)}$
as $n \to \infty$,  and is bounded by a constant multiple of this quantity. 
By the Lebesgue dominated converge theorem and the definition of $\bb E^+_{x, b}$, we obtain 
\begin{align}\label{pf-lim-Yl-01}
\lim_{n \to \infty} \wh{\bb E}_{x, b}   \left(  Y_l  \, \Big| \,  \min_{1 \leq k \leq n} S_{w|k} \geq 0  \right)
= \wh{\bb E}_{x, b}   
\bigg(  Y_l    \mathds 1_{\{ \min_{1 \leq k \leq l} S_{w|k} \geq 0 \}}   \frac{V_{\alpha}(X_{w|l}, S_{w|l})}{V_{\alpha}(x, b)}  \bigg)
= \bb E^+_{x, b} (Y_l). 
\end{align}
	
Now let $a>1$ and we use again Lemma \ref{Lem-exit-time-alpha} to get that there exists a constant $c>0$ such that for any $1 \leq l \leq n$, 
\begin{align}\label{pf-lim-Yl-02}
&\left| \wh{\bb E}_{x, b}   \left( Y_n - Y_l  \, \Big| \,  \min_{1 \leq k \leq [an]} S_{w|k} \geq 0  \right)  \right|  \notag\\
& \leq  \wh{\bb E}_{x, b}   
\left( | Y_n - Y_l |   \mathds 1_{\{ \min_{1 \leq k \leq n} S_{w|k} \geq 0 \}}   \frac{m_{ [(a-1)n] }(X_{w|n}, S_{w|n})}{m_{ [an] }(x, b)}  \right)  \notag\\
& \leq c \sqrt{\frac{a}{a-1}}  \wh{\bb E}_{x, b}   
\left( | Y_n - Y_l |   \mathds 1_{\{ \min_{1 \leq k \leq n} S_{w|k} \geq 0 \}}   \frac{V(X_{w|n}, S_{w|n})}{V(x, b)}  \right)  \notag\\
& =  c \sqrt{\frac{a}{a-1}}  \bb E^+_{x, b} \left( | Y_n - Y_l |  \right). 
\end{align}
Since $(Y_n)_{n \geq 0}$ is uniformly bounded and $Y_n \to Y_{\infty}$ in probability under $\bb P^+_{x, b}$, we have $\lim_{l \to \infty} \bb E^+_{x, b} (Y_l) = \bb E^+_{x, b} (Y_{\infty})$ and $\lim_{l \to \infty} \lim_{n \to \infty} \bb E^+_{x, b} (| Y_n - Y_l |) = 0$. This, together with \eqref{pf-lim-Yl-01} and \eqref{pf-lim-Yl-02}, implies that 
\begin{align}\label{pf-lim-Yl-03}
\lim_{n \to \infty} \wh{\bb E}_{x, b}   \left(  Y_n  \, \Big| \,  \min_{1 \leq k \leq [an]} S_{w|k} \geq 0  \right)
= \bb E^+_{x, b} (Y_{\infty}). 
\end{align}	
Since $(Y_n)_{n \geq 0}$ is uniformly bounded by some constant, say $c_Y >0$, we have 
\begin{align*}
& \left| \wh{\bb E}_{x, b}   \left(  Y_n  \mathds 1_{ \{ \min_{1 \leq k \leq n} S_{w|k} \geq 0 \} }  \right) - \bb E^+_{x, b} (Y_{\infty}) m_n(x, b) \right|  \notag\\
 & \leq \left| \wh{\bb E}_{x, b}   \left(  Y_n  \mathds 1_{ \{ \min_{1 \leq k \leq [an] } S_{w|k} \geq 0 \} }  \right) - \bb E^+_{x, b} (Y_{\infty}) m_{[an]}(x, b) \right| 
         + 2 c_Y \left( m_{n}(x, b) - m_{[an]}(x, b) \right),
 \end{align*}
 so that 
 \begin{align*}
 & \left|  \wh{\bb E}_{x, b}   \left(  Y_n  \, \Big| \,  \min_{1 \leq k \leq n} S_{w|k} \geq 0  \right) - \bb E^+_{x, b} (Y_{\infty}) \right|  \notag\\
 & \leq \frac{ m_{[an]}(x, b) }{ m_{n}(x, b) }  
  \left| \wh{\bb E}_{x, b}   \left(  Y_n  \mathds 1_{ \{ \min_{1 \leq k \leq [an] } S_{w|k} \geq 0 \} }  \right)  -  \bb E^+_{x, b} (Y_{\infty}) \right| + 2 c_Y \left( 1 - \frac{ m_{[an]}(x, b) }{ m_{n}(x, b) } \right). 
  \end{align*}
By Lemma \ref{Lem-exit-time-alpha}, we have $\lim_{n \to \infty} \frac{ m_{[an]}(x, b) }{ m_{n}(x, b) } = \frac{1}{\sqrt{a}}$. Therefore, using \eqref{pf-lim-Yl-03}, we obtain 
 \begin{align*}
  \limsup_{n \to \infty} \left|  \wh{\bb E}_{x, b}   \left(  Y_n  \, \Big| \,  \min_{1 \leq k \leq n} S_{w|k} \geq 0  \right) - \bb E^+_{x, b} (Y_{\infty}) \right|
  \leq 2 c_Y \left( 1 - \frac{1}{\sqrt{a}} \right).  
 \end{align*}
  Taking the limit $a \to 1$ concludes the proof of the lemma. 
\end{proof}

For $k \geq 0$, we define 
\begin{align}\label{def-Lk-001}
L_k = \sum_{\substack{  i \in \bb N \\  (w|k)i \neq w|k+1}}  e^{-\alpha (S_{(w|k) i} - S_{w|k} ) }  
  ( 1 + \max\{ S_{(w|k) i} -  S_{w|k},  0 \} ). 
\end{align}

\begin{lemma}\label{Lem-L0-001}
There exists $c>0$ such that for any $x \in \bb S_+^{d-1}$, $b \in \bb R$ and $\ee >0$, 
\begin{align*}
\bb E^+_{x, b}  \left(  \frac{ e^{- \frac{\alpha}{4} S_{\o}}  L_0 }{\ee}    \wedge 1  \right)
\leq  c \bb E_{x}  \left(  \left( W_1 + \frac{Z_1}{V_{\alpha}(x,b)} \right)  \left(  \frac{ e^{- \frac{\alpha}{4} b}  
 (W_1 + Z_1) }{\ee}    \wedge 1  \right) \right). 
\end{align*}
\end{lemma}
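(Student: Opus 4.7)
I will unfold the left-hand side in two steps using the definition \eqref{change-measure-PW} of $\bb P^+_{x,b}$ together with the spinal decomposition at generation~$1$, and then estimate each factor by the bounds on $V_\alpha$ provided in Lemma \ref{Lem-positi-harmonic-func}. Since $S_{\o}=b$ under $\bb P_{x,b}$ and $L_0$ is $\scr F_1$-measurable, \eqref{change-measure-PW} with $n=1$ gives
\[
\bb E^+_{x,b}\!\left(\frac{e^{-\frac{\alpha}{4} b}L_0}{\ee}\wedge 1\right)
=\frac{1}{V_\alpha(x,b)}\,\wh{\bb E}_{x,b}\!\left[V_\alpha(X_{w|1},S_{w|1})\,\mathds{1}_{\{S_{w|1}\ge 0\}}\!\left(\frac{e^{-\frac{\alpha}{4} b}L_0}{\ee}\wedge 1\right)\right].
\]
Corollary \ref{Cor-spine-001} states $\wh{\bb P}_{x,b}(w|1=h\mid\scr F_1)= r_\alpha(X_h)e^{-\alpha S_h}/W_1$, which combined with \eqref{def-hat-P-x-b-intro} yields, for any nonnegative $\scr F_1$-measurable $f$,
\[
\wh{\bb E}_{x,b}\!\left[f\cdot\mathds{1}_{\{w|1=h\}}\right]
=\bb E_{x,b}\!\left[f\cdot\frac{r_\alpha(X_h)e^{-\alpha S_h}}{r_\alpha(x)e^{-\alpha b}}\right].
\]
Applying this index by index, the LHS equals
\[
\frac{1}{V_\alpha(x,b)}\,\bb E_{x,b}\!\left[\sum_{h=1}^N\frac{r_\alpha(X_h)e^{-\alpha S_h}}{r_\alpha(x)e^{-\alpha b}}\,V_\alpha(X_h,S_h)\,\mathds{1}_{\{S_h\ge 0\}}\!\left(\frac{e^{-\frac{\alpha}{4} b}L_0^{(h)}}{\ee}\wedge 1\right)\right],
\]
where $L_0^{(h)}:=\sum_{j\neq h}e^{-\alpha(S_j-b)}\bigl(1+(S_j-b)^+\bigr)$.

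Next I would estimate $L_0^{(h)}$ by dropping the restriction $j\ne h$ and performing the shift $S_j=b+\widetilde S_j$, so that $(X_j,\widetilde S_j)_{|j|=1}$ under $\bb P_{x,b}$ has the same law as $(X_j,S_j)_{|j|=1}$ under $\bb P_x$. Using that $r_\alpha$ is uniformly bounded below on $\bb S^{d-1}_+$ by Lemma \ref{lem:ExistenceEigenfunctionsPs}, this produces the bound $L_0^{(h)}\le c(W_1+Z_1)$, with $W_1,Z_1$ as in \ref{condi:momentsW1} and evaluated under $\bb P_x$. The elementary inequality $ct\wedge 1\le c(t\wedge 1)$ for $c\ge 1$ then allows the constant to be pulled outside the minimum, so the inner factor is bounded, up to a constant, by $\frac{e^{-\frac{\alpha}{4} b}(W_1+Z_1)}{\ee}\wedge 1$, which already matches the second factor on the RHS.

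It remains to handle the prefactor $\frac{r_\alpha(X_h)e^{-\alpha S_h}}{r_\alpha(x)e^{-\alpha b}}V_\alpha(X_h,S_h)\mathds{1}_{\{S_h\ge 0\}}/V_\alpha(x,b)$. By \eqref{inequa-V-harmonic},
\[
V_\alpha(X_h,S_h)\mathds{1}_{\{S_h\ge 0\}}\le c(1+S_h)\mathds{1}_{\{S_h\ge 0\}}\le c(1+b)+c(S_h-b)^+,
\qquad V_\alpha(x,b)\ge c'(1+b),
\]
so after the shift $S_h=b+\widetilde S_h$ the first piece produces $c\sum_h r_\alpha(X_h)e^{-\alpha \widetilde S_h}=cW_1$ (since $r_\alpha(x)^{-1}\le c$), while the second piece produces (using $r_\alpha\le c$) a contribution bounded by $c Z_1/V_\alpha(x,b)$. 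Combining the two yields the stated estimate. The argument is essentially a tracking of constants, so the main care is in writing the spine-selection identity at generation one and in converting from $\bb P_{x,b}$ back to $\bb P_x$ via the shift; no obstacle of substance is expected.
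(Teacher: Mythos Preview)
Your proposal is correct and follows essentially the same route as the paper: unfold $\bb P^+_{x,b}$ via \eqref{change-measure-PW}, pass to $\bb P_{x,b}$ through the spine-selection identity of Corollary~\ref{Cor-spine-001} combined with \eqref{def-hat-P-x-b-intro}, and then bound the resulting weights using \eqref{inequa-V-harmonic}. The only difference is cosmetic: the paper drops the constraint $j\neq w|1$ in $L_0$ \emph{before} invoking \eqref{change-measure-PW} (so that the integrand becomes genuinely $\scr F_1$-measurable), whereas you apply the change of measure first---note that your claim ``$L_0$ is $\scr F_1$-measurable'' is not literally correct, since $L_0$ depends on the spine label $w|1$; the identity you write down is nonetheless valid on the enlarged filtration $\scr F_1\vee\sigma(w|1)$, and the paper's ordering simply sidesteps this point.
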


\begin{proof}
By \eqref{def-Lk-001}, we have
\begin{align*}
J(x, b):&= \bb E^+_{x, b}  \left(  \frac{ e^{- \frac{\alpha}{4} S_{\o}}  L_0 }{\ee}    \wedge 1  \right) \\
 & \leq \bb E^+_{x, b}  \left(   \frac{ e^{- \frac{\alpha}{4} S_{\o}}  \sum_{|u|=1}  e^{-\alpha (S_u - S_{\o} ) }  
  ( 1 + \max\{ S_u -  S_{\o},  0 \} ) }{\ee}    \wedge 1  \right) \\
  & \leq \bb E^+_{x, b}  \left(  \frac{ e^{- \frac{\alpha}{4} S_{\o}}  
 (W_1 + Z_1) }{\ee}    \wedge 1   \right).
 \end{align*}
By a change of measure \eqref{change-measure-PW}, we get
 \begin{align*}
J(x, b) & \leq \frac{1}{ V_{\alpha}(x,b) }  \wh{\bb E}_{x,b}  \left(
 V_{\alpha}(X_{w|1}, S_{w|1}) \mathds 1_{\{ \min_{1 \leq j \leq k} S_{w|j} \geq 0 \}}
  \left(  \frac{ e^{- \frac{\alpha}{4} S_{\o}}  
 (W_1 + Z_1) }{\ee}    \wedge 1   \right) \right) \\
 & \leq  \frac{1}{ V_{\alpha}(x,b) }  \wh{\bb E}_{x,b}  \left( \sum_{|z| = 1} \mathds 1_{\{w|1 = z\}} 
 V_{\alpha}(X_z, S_z) 
  \left(  \frac{ e^{- \frac{\alpha}{4} S_{\o}}  
 (W_1 + Z_1) }{\ee}    \wedge 1   \right) \right).
  \end{align*}
Taking conditional expectation with respect to $\scr F_1$, using Corollary \ref{Cor-spine-001} and \eqref{def-hat-P-x-b-intro}, we get
 \begin{align*}
J(x, b) & \leq \wh{\bb E}_{x,b}  \left( \sum_{|z| = 1} \frac{ r_{\alpha} (X_z)  e^{- \alpha S_z} }{ W_1 } 
\frac{V_{\alpha}(X_{z}, S_{z})}{V_{\alpha}(x,b)}    \left(  \frac{ e^{- \frac{\alpha}{4} S_{\o}}  
 (W_1 + Z_1) }{\ee}    \wedge 1   \right) \right) \\
& = \bb E_{x,b}  \left( \frac{e^{\alpha b}}{r_{\alpha}(x)} \sum_{|z| = 1}  r_{\alpha} (X_z)  e^{- \alpha S_z} 
\frac{V_{\alpha}(X_{z}, S_{z})}{V_{\alpha}(x,b)}   \left(  \frac{ e^{- \frac{\alpha}{4} b}  
 (W_1 + Z_1) }{\ee}    \wedge 1  \right) \right).
   \end{align*}
Taking into account that 
 \begin{align*}
\frac{e^{\alpha b}}{r_{\alpha}(x)} \sum_{|z| = 1}  r_{\alpha} (X_z)  e^{- \alpha S_z} 
\frac{V_{\alpha}(X_{z}, S_{z})}{V_{\alpha}(x,b)} 
\leq  c \left( W_1 + \frac{Z_1}{V_{\alpha}(x,b)} \right), 
\end{align*}
we obtain
    \begin{align*}
J(x, b) & \leq  c \bb E_{x,b}  \left(  \left( W_1 + \frac{Z_1}{V_{\alpha}(x,b)} \right)  \left(  \frac{ e^{- \frac{\alpha}{4} b}  
 (W_1 + Z_1) }{\ee}    \wedge 1  \right) \right), 
\end{align*}
completing the proof. 
\end{proof}

Lemmas \ref{Lem-conver-Yn-Yinfty} and \ref{Lem-L0-001} are useful to deal with the second term $T_2(x, b, \ee, n)$ in Lemma \ref{Lem-minimum-condi}.  

\begin{lemma}\label{Lem-bound-T2}
Assume conditions \ref{Condi-Furstenberg-Kesten}, \ref{Condi_ms} and \ref{condi:momentsW1}. 
For every $\ee >0$, there exists a positive function $\tilde h$ satisfying $\lim_{b \to \infty} \tilde h(b) = 0$
such that the following holds: 
for every $x \in \bb S_+^{d-1}$ and $b \geq 0$, we have $\limsup_{n \to \infty} T_2(x, b, \ee, n) \leq \tilde h(b)$. 
\end{lemma}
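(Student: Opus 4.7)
The plan is to reduce each expectation $\bb E_{X_u,S_u}\widetilde W_{n-k-1}$ (for non-spine children $u=(w|k)i$) to an expression involving only $S_{w|k}$ and the quantity $L_k$ from \eqref{def-Lk-001}, pass to the limit $n\to\infty$ via Lemma \ref{Lem-conver-Yn-Yinfty}, and bound each term of the limiting series using the $L\log L$-type estimate of Lemma \ref{Lem-L0-001} invoked at generation $k$ through the Markov property of the spine under $\bb P^+_{x,b}$. The moment assumption \ref{condi:momentsW1} will then yield the decay of $\tilde h(b)$ as $b\to\infty$.

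First, by Lemma \ref{Lem-expect-mart-tilde}, $\bb E_{X_u,S_u}\widetilde W_{n-k-1}\le c\, e^{-\alpha S_u}V_\alpha(X_u,S_u)\mathds{1}_{\{S_u\ge 0\}}/\sqrt{n-k-1}$ (the indicator reflecting that $\widetilde W_m$ vanishes at negative starting position). Combining this with $V_\alpha(y,s)\le c(1+s)$ from Lemma \ref{Lem-positi-harmonic-func}, the pointwise estimate $(1+S_u)\le(1+S_{w|k})(1+(S_u-S_{w|k})^+)$ valid on $\{S_u,S_{w|k}\ge 0\}$, and the uniform bound $(1+s)e^{-3\alpha s/4}\le C$ for $s\ge 0$, on the event $\{\min_{0\le j\le n}S_{w|j}\ge 0\}$ one obtains
\begin{equation*}
\frac{2\sqrt n}{\ee}\sum_{k=0}^{n-1}\sum_{(w|k)i\ne w|k+1}\bb E_{X_{(w|k)i},S_{(w|k)i}}\widetilde W_{n-k-1}\;\le\;\frac{c_1\sqrt n}{\ee}\sum_{k=0}^{n-1}\frac{e^{-\alpha S_{w|k}/4}L_k}{\sqrt{n-k-1}}.
\end{equation*}
Writing $a_k^{(n)}:=c_1\sqrt n\, e^{-\alpha S_{w|k}/4}L_k/(\ee\sqrt{n-k-1})$, the elementary subadditivity $(\sum_k a_k^{(n)})\wedge 1\le\sum_k(a_k^{(n)}\wedge 1)$ then handles the truncation by $1$.

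Next, I would split this sum at $k=\lfloor n/2\rfloor$. On the low range, $\sqrt n/\sqrt{n-k-1}\le\sqrt 2$, so Lemma \ref{Lem-conver-Yn-Yinfty} applied termwise together with monotone convergence in the counting measure on $k$ gives, as $n\to\infty$, the upper bound $\sum_{k=0}^\infty\bb E^+_{x,b}[(\sqrt 2 c_1 e^{-\alpha S_{w|k}/4}L_k/\ee)\wedge 1]$. On the high range, the fact that under $\bb P^+_{x,b}$ the conditioned spine satisfies $S_{w|k}\gtrsim\sqrt k$ (Brownian meander scaling) dominates the growth $\sqrt n/\sqrt{n-k-1}\le\sqrt n$, making this contribution vanish in the limit. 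Conditioning each summand of the limiting series on $(X_{w|k},S_{w|k})$ and applying Lemma \ref{Lem-L0-001} at generation $k$ (its proof adapting verbatim upon shifting the origin) bounds it by
\begin{equation*}
c_2\,\bb E^+_{x,b}\!\Big[\bb E_{X_{w|k}}\!\Big(\Big(W_1+\tfrac{Z_1}{V_\alpha(X_{w|k},S_{w|k})}\Big)\Big(\tfrac{e^{-\alpha S_{w|k}/4}(W_1+Z_1)}{\ee}\wedge 1\Big)\Big)\Big].
\end{equation*}
Summing over $k$ via the renewal bound of Lemma \ref{Lem-Renewal-thm-bb}, combined with dominated convergence as $b\to\infty$ (the integrable dominant being provided by \ref{condi:momentsW1}, i.e.\ $\bb E_x(W_1\log^+W_1)+\bb E_x(Z_1\log^+Z_1)<\infty$), yields a function $\tilde h(b)\to 0$. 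Multiplying by the prefactor $ce^{-\alpha b}V_\alpha(x,b)\le c'(1+b)e^{-\alpha b}$ completes the argument.

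The main obstacle will be the handling of the high-$k$ range together with the extraction of the decay in $b$ from the summed Lemma \ref{Lem-L0-001} bounds: combining the $L\log L$-moment conditions on $W_1,Z_1$ with the renewal estimates for the conditioned spine, uniformly in $k$, is delicate, since the polynomial-in-$b$ factor $V_\alpha(x,b)$ in the prefactor must be offset by the exponential $e^{-\alpha S_{w|k}/4}$, whose summed decay must be controlled through the renewal theory of the positive Markov random walk $(X_{w|k},S_{w|k})$ under $\bb P^+_{x,b}$.
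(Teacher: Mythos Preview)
Your overall architecture---reduce the brother contributions to $e^{-\alpha S_{w|k}/4}L_k$, split at $k=\lfloor n/2\rfloor$, pass to the limit via Lemma~\ref{Lem-conver-Yn-Yinfty}, and control each term of the limiting series through Lemma~\ref{Lem-L0-001} and the Markov property---is exactly the route the paper takes. The difficulty, as you correctly flag, lies in the last step, and this is where your proposal has a genuine gap.

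The decisive point is showing that $\sum_{k\ge 0}\bb E^+_{x,b}\big[(e^{-\alpha S_{w|k}/4}L_k/\ee)\wedge 1\big]\to 0$ as $b\to\infty$. After Lemma~\ref{Lem-L0-001} and the change of measure \eqref{change-measure-PW}, this becomes $\frac{c}{1+b}\sum_k\bb E_{\bb Q_x^\alpha}\big[(1+b+S_k)f(b+S_k)\mathds 1_{\{\tau_b^->k\}}\big]$ for an explicit nonincreasing $f$. Lemma~\ref{Lem-Renewal-thm-bb} only gives bounds that \emph{grow} in $b$, and ``dominated convergence'' does not apply: there is no summable-in-$k$ dominant that is uniform in $b$. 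What is actually needed is Theorem~\ref{Thm-Green-function}, which asserts precisely that this expression is $o(1)$ as $b\to\infty$ provided $\int_0^\infty yf(y)\,dy<\infty$. This theorem is one of the paper's main technical contributions (proved via approximate duality for the ladder process in Section~\ref{Sec-Duality}) and cannot be replaced by the softer renewal estimates you cite. Relatedly, you understate the moment hypothesis: condition~\ref{condi:momentsW1} requires $\bb E_x\big(W_1(\log^+W_1)^2\big)<\infty$, not merely $W_1\log^+W_1$; the extra logarithm is exactly what guarantees $\int_0^\infty yf_1(y)\,dy<\infty$ for the $W_1$-part of $f$ (the paper invokes \cite[Lemma~C.1]{BM19} here).

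Two smaller points. First, your handling of the high-$k$ range is too loose: ``$S_{w|k}\gtrsim\sqrt k$ by meander scaling'' is neither proved nor sufficient, since $\sqrt{n/(n-k)}$ can be of order $\sqrt n$. The paper instead shows via Borel--Cantelli and Lemma~\ref{Lem-CLLT-bound} that $S_{w|k}\ge cn^\ee$ for all $k\in[[n/2]+1,n]$ and large $n$, $\bb P^+_{x,b}$-a.s., which kills the factor $\sqrt n$ and reduces $Y_n''$ to a tail of the series already shown to be summable. Second, your closing sentence about the prefactor $ce^{-\alpha b}V_\alpha(x,b)\le c'(1+b)e^{-\alpha b}$ is misleading: this factor tends to zero on its own, but for the downstream application in Proposition~\ref{Prop-Wn-ee} one needs $U(x,b)/V_\alpha(x,b)\to 0$, i.e.\ the conditional expectation itself must vanish as $b\to\infty$---the prefactor buys nothing there.
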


\begin{proof}
In view of \eqref{def-Wn-tilde}, using the many-to-one formula \eqref{Formula_many_to_one}
and the fact that $\mathfrak m(\alpha) =1$, we get that, for any $0 \leq k \leq n-1$ and $i \in \bb N$ with $(w|k)i \neq w|k+1$, 
\begin{align*}
\bb E_{X_{(w|k) i}, S_{(w|k) i} }  \widetilde{W}_{n-k-1} 
& = \bb E_{X_{(w|k) i}, S_{(w|k) i} } 
\bigg[  \sum_{|u| = n-k-1}  r_{\alpha}(X_u)  e^{- \alpha S_u}  \mathds 1_{ \{ \min_{v \leq u} S_v \geq 0 \} }  \bigg]  \notag\\
& = r_{\alpha}(X_{(w|k) i})   e^{-\alpha S_{(w|k) i}}
\bb Q_{ X_{(w|k) i}, S_{(w|k) i} }^{\alpha} 
 \left(   \min_{1 \leq i \leq n-k-1} S_i \geq 0   \right). 
\end{align*}
Since $r_{\alpha}(X_{(w|k) i}) \leq c'$, by Lemma \ref{Lem-exit-time-alpha}, it follows that, on the set $\{ S_{w|k} \geq 0 \}$, 
\begin{align*}
& \bb E_{X_{(w|k) i}, S_{(w|k) i} }  \widetilde{W}_{n-k-1}   \notag\\
& \leq  \frac{c}{\sqrt{n-k}}  e^{-\alpha S_{(w|k) i}}  ( 1 + \max\{ S_{(w|k) i}, 0 \} )  \notag\\ 
& \leq  \frac{c}{\sqrt{n-k}}  e^{-\alpha S_{w|k}}   ( 1 + \max\{ S_{w|k}, 0 \} )   e^{-\alpha (S_{(w|k) i} - S_{w|k} ) }  
  ( 1 + \max\{ S_{(w|k) i} -  S_{w|k},  0 \} ) \notag\\
& \leq  \frac{c'}{\sqrt{n-k}}  e^{- \frac{\alpha}{2} S_{w|k}}    e^{-\alpha (S_{(w|k) i} - S_{w|k} ) }  
  ( 1 + \max\{ S_{(w|k) i} -  S_{w|k},  0 \} ), 
\end{align*}
where in the last inequality we used $(1 + \max \{ x, 0 \}) e^{-\alpha x} \leq c e^{- \frac{\alpha}{2} x}$ for $x \geq 0$. 
Hence, 
\begin{align*}
\sum_{\substack{  i \in \bb N \\  (w|k)i \neq w|k+1}} 
	\bb E_{X_{(w|k) i}, S_{(w|k) i} }  \widetilde{W}_{n-k-1}  
\leq  \frac{c'}{\sqrt{n-k}}  e^{- \frac{\alpha}{2} S_{w|k}}  L_k, 
\end{align*}
where $L_k$ is defined by \eqref{def-Lk-001}. 
Therefore, 
\begin{align}\label{bound-T2-xbn}
T_2(x, b, \ee, n)  
\leq c  \wh{\bb E}_{x, b}  
	\left( 
	Y_n  \Big| \min_{1 \leq k \leq n} S_{w|k} \geq 0 \right), 
\end{align}
where 
\begin{align*}
Y_n = \left(  \sum_{k=0}^{n-1}  \left( \sqrt{\frac{n}{n-k}}  \frac{ e^{- \frac{\alpha}{2} S_{w|k}}  L_k }{\ee}    \wedge 1  \right)  \right)  \wedge 1. 
\end{align*}
It holds that 
\begin{align}\label{inequa-Yn-12}
Y_n \leq  \sqrt{2} Y_n' + Y_n''
\end{align}
where 
\begin{align*}
Y_n' =  \left(  \sum_{k=0}^{[n/2]}  \left(  \frac{ e^{- \frac{\alpha}{4} S_{w|k}}  L_k }{\ee}    \wedge 1  \right)  \right)  \wedge 1,
\quad 
Y_n'' = \left(  \sum_{k=[n/2] + 1}^{n-1}  \left( \sqrt{n}  \frac{ e^{- \frac{\alpha}{2} S_{w|k}}  L_k }{\ee}    \wedge 1  \right)  \right)  \wedge 1. 
\end{align*}
Note that both $Y_n'$ and $Y_n''$ are measurable with respect to the natural filtration $\mathscr F_n$. 
Since the sequence $(Y_n')_{n \geq 1}$ is bounded by $1$, by monotonicity, we get that $\bb P^+_{x, b}$-almost surely, 
\begin{align*}
\lim_{n \to \infty} Y_n' = Y_{\infty}' : = \left(  \sum_{k=0}^{\infty}  \left(  \frac{ e^{- \frac{\alpha}{4} S_{w|k}}  L_k }{\ee}    \wedge 1  \right)  \right)  \wedge 1. 
\end{align*}
We claim that 
\begin{enumerate}
\item
$\lim_{b \to \infty} \bb E^+_{x, b} (Y_{\infty}') = 0$. 

\item
$\lim_{n \to \infty} Y_n'' = 0$ in probability $\bb P^+_{x, b}$. 
\end{enumerate}
Now we show how to obtain Lemma \ref{Lem-bound-T2} by using these two claims.  
Applying Lemma \ref{Lem-conver-Yn-Yinfty} to $(Y_n')_{n \geq 0}$ and $(Y_n'')_{n \geq 0}$ gives 
\begin{align*}
\lim_{n \to \infty}  \wh{\bb E}_{x, b}   \left(  Y_n'  \, \Big| \,  \min_{1 \leq k \leq n} S_{w|k} \geq 0  \right)
= \bb E^+_{x, b} (Y_{\infty}') 
\end{align*}
and 
\begin{align*}
\lim_{n \to \infty}  \wh{\bb E}_{x, b}   \left(  Y_n''  \, \Big| \,  \min_{1 \leq k \leq n} S_{w|k} \geq 0  \right)
= 0. 
\end{align*}
Combining these with \eqref{bound-T2-xbn} and \eqref{inequa-Yn-12} gives 
\begin{align*}
\limsup_{n \to \infty} T_2(x, b, \ee, n) \leq c  \bb E^+_{x, b} (Y_{\infty}'), 
\end{align*}
which shows Lemma \ref{Lem-bound-T2} by using claim (1). 

Now we prove claim (1). 
Note that 
\begin{align}\label{indenti-Y-infty}
\bb E^+_{x, b} (Y_{\infty}')
\leq  \sum_{k=0}^{\infty}  \bb E^+_{x, b}  \left(  \frac{ e^{- \frac{\alpha}{4} S_{w|k}}  L_k }{\ee}    \wedge 1  \right) 
=   \sum_{k=0}^{\infty}  \bb E^+_{x, b}  \bb E^+_{X_{w|k}, S_{w|k}}  \left(  \frac{ e^{- \frac{\alpha}{4} S_{\o}}  L_0 }{\ee}    \wedge 1  \right).  
\end{align}
By Lemma \ref{Lem-L0-001} and \eqref{inequa-V-harmonic}, we get
\begin{align}\label{indenti-Y-infty-b}
\bb E^+_{X_{w|k}, S_{w|k}}  \left(  \frac{ e^{- \frac{\alpha}{4} S_{\o}}  L_0 }{\ee}    \wedge 1  \right)
\leq  c f(X_{w|k}, S_{w|k}),
\end{align}
where $f(x, y) = f_1(x, y) + f_2(x, y)$ for $x \in \bb S_+^{d-1}$ and $y \geq 0$, and 
\begin{align*}
& f_1(x, y) =  \bb E_{x}  \left( W_1  \left(  \frac{ e^{- \frac{\alpha}{4} y}  
 (W_1 + Z_1) }{\ee}    \wedge 1  \right) \right),  \notag\\
& f_2(x, y) = \frac{1}{1 + y} \bb E_{x}  \left( Z_1  \left(  \frac{ e^{- \frac{\alpha}{4} y}  
 (W_1 + Z_1) }{\ee}    \wedge 1  \right) \right). 
\end{align*}
As in the proof of Lemma B.1 (i) of \cite{Aid13}, one can check that 
condition \ref{condi:momentsW1} implies that 
\begin{align*}
\bb E_x \left( W_1 \Big( \log^+ (W_1 + Z_1) \Big)^2 \right) < \infty,  
\quad 
\bb E_x \left( Z_1 \log^+ (W_1 + Z_1) \right) < \infty. 
\end{align*}
Then, applying Lemma C.1 of \cite{BM19} twice to $W_1 + Z_1$, once under the law $\bb E_x ( W_1 \cdot )$ and with $\rho(y) = y$,
and once under the law $\frac{1}{\bb E_x (Z_1)} \bb E_x (Z_1 \cdot )$ and with $\rho =1$, we get
\begin{align*}
\int_{\bb R_+} f(x, y) y dy 
= \int_{\bb R_+} f_1(x, y) y dy +  \int_{\bb R_+} f_2(x, y) y dy < \infty. 
\end{align*}
By \eqref{indenti-Y-infty} and \eqref{indenti-Y-infty-b}, we have 
\begin{align*}
\bb E^+_{x, b} (Y_{\infty}')
\leq  c \sum_{k=0}^{\infty}  \bb E^+_{x, b}  f(X_{w|k}, S_{w|k}).  
\end{align*}
Using \eqref{change-measure-PW} and \eqref{sized-BRW-spinal}, we get that, for any $x \in \bb S_+^{d-1}$, $b \geq 0$ and $k \geq 1$, 
\begin{align*}
\bb E^+_{x, b}  f(X_{w|k}, S_{w|k}) 
& =  \frac{1}{ V_{\alpha}(x,b) }  \bb E_{\bb Q_{x}^\alpha}  \left[ V_{\alpha}(X_k, b + S_k) f(b + S_k) \mathds{1}_{\{b+\min_{1 \leq j \leq k} S_j \geq 0\}}  \right]  \notag\\
& \leq  \frac{c}{1 + b} \bb E_{\bb Q_{x}^\alpha}  \left[ (1 + b + S_k) f(b + S_k) \mathds{1}_{\{b+\min_{1 \leq j \leq k} S_j \geq 0\}}  \right]. 
\end{align*}
Applying Theorem \ref{Thm-Green-function} yields 
\begin{align*}
\lim_{b \to \infty} \frac{1}{1+b} \sum_{k=0}^\infty \bb E_{\bb Q_{x}^\alpha} 
 \Big[ (1 + b+S_k) f(b+S_k) \mathds{1}_{\{b+\min_{1 \leq j \leq k} S_j \geq 0\}}  \Big] =0.
\end{align*}
This proves claim (1). 

Next we prove claim (2). 
We start by showing that, for any $\ee \in (0, 1/6)$, 
we have $S_{w|n} \geq  n^{\ee}$ for  all sufficiently large $n$,  $\bb P^+_{x, b}$-a.s.
Using \eqref{change-measure-PW} and \eqref{sized-BRW-spinal}, we get that 
there exists a constant $c>0$ such that for any $x \in \bb S_+^{d-1}$, $b \geq 0$ and $n \geq 1$, 
\begin{align*}
\bb P^+_{x, b}  \left( S_{w|n} <  n^{\ee}  \right)
& =  \frac{1}{ V_{\alpha}(x,b) }  \bb E_{\bb Q_{x}^\alpha}  
\left[ V_{\alpha}(X_n, b + S_n) \mathds{1}_{\{b + S_n < n^{\ee} \}} \mathds{1}_{\{b + \min_{1 \leq j \leq n} S_j \geq 0\}}  \right]  \notag\\
& \leq  c n^{\ee}  \bb Q_{x}^\alpha  \left( b + S_n \in [0, n^{\ee}),   b + \min_{1 \leq j \leq k} S_j \geq 0  \right). 
\end{align*}
By Lemma \ref{Lem-CLLT-bound}, there exists $c>0$ such that for any $x \in \bb S_+^{d-1}$, $b \geq 0$ and $n \geq 1$, 
\begin{align*}
\bb Q_{x}^\alpha  \left( b + S_n \in [0, n^{\ee}),   b + \min_{1 \leq j \leq k} S_j \geq 0  \right)
\leq  c \frac{1 +b}{ n^{3/2 - 2\ee} },
\end{align*}
so that 
\begin{align*}
\bb P^+_{x, b}  \left( S_{w|n} <  n^{\ee}  \right)
\leq c \frac{1 +b}{ n^{3/2 - 3\ee} }. 
\end{align*}
Since $\ee \in (0, 1/6)$, 
by Borel-Cantelli's  lemma, $S_{w|n} \geq  n^{\ee}$ for all sufficiently large $n$, $\wh{\bb P}^{(b)}_{x}$-a.s.
This implies that, for any $\ee \in (0, 1/6)$, 
we have $S_{w|k} \geq  2^{- \ee} n^{\ee}$ for all $k \in [[n/2] + 1, n]$ and all sufficiently large $n$,  $\bb P^+_{x, b}$-a.s.
Therefore, for all sufficiently large $n$, $\bb P^+_{x, b}$-a.s.
\begin{align*}
Y_n'' \leq  \sum_{k=[n/2] + 1}^{n-1}  \left( \frac{ e^{- \frac{\alpha}{4} S_{w|k}}  L_k }{\ee}    \wedge 1  \right), 
\end{align*}
which converges to $0$ in $\bb P^+_{x, b}$-probability as $n \to \infty$ since in the proof of claim (1) we have shown that 
for any $x \in \bb S_+^{d-1}$ and $b \geq 0$, 
\begin{align*}
  \bb E^+_{x, b}  \sum_{k=0}^{\infty} \left(  \frac{ e^{- \frac{\alpha}{4} S_{w|k}}  L_k }{\ee}    \wedge 1  \right) < \infty
\end{align*}
Hence claim (2) holds and the proof of the lemma is complete. 
\end{proof}

\begin{proposition}\label{Prop-Wn-ee}
Assume conditions \ref{Condi-Furstenberg-Kesten}, \ref{Condi_ms} and \ref{condi:momentsW1}. 
Then, for every $\ee > 0$, there exists a positive measurable function $U$ on $\bb S_+^{d-1} \times \bb R_+$ such that for any $x \in \bb S_+^{d-1}$, 
$\frac{U(x, b)}{V_{\alpha}(x, b)} \to 0$ as $b \to \infty$
and such that the following holds:
for every $b \geq 0$, we have 
\begin{align*}
\limsup_{n \to \infty}  \bb E_{x, b} \left(  \sqrt{n} \widetilde{W}_n  \mathds 1_{ \{ \sqrt{n} \widetilde{W}_n \geq \ee \} }  \right)
\leq  U(x, b) e^{-\alpha b}. 
\end{align*}
\end{proposition}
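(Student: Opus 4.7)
The plan is to combine Lemmas \ref{Lem-minimum-condi}, \ref{Lem-T1-bound} and \ref{Lem-bound-T2}, while carefully tracking the prefactor $e^{-\alpha b} V_\alpha(x,b)$ that is already isolated in the definition of $T_2$. Starting from Lemma \ref{Lem-minimum-condi}, one has
\begin{equation*}
\bb E_{x,b}\bigl(\sqrt n\, \widetilde W_n\, \mathds 1_{\{\sqrt n\, \widetilde W_n \geq \ee\}}\bigr) \leq T_1(x,b,\ee,n) + T_2(x,b,\ee,n).
\end{equation*}
Taking $\limsup_{n \to \infty}$, Lemma \ref{Lem-T1-bound} disposes of $T_1$ entirely, so the entire contribution comes from $\limsup_n T_2$.

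The second (and main) step is to unwind the explicit expression for $T_2$ from Lemma \ref{Lem-minimum-condi}, which has the form
\begin{equation*}
T_2(x,b,\ee,n) = c\, e^{-\alpha b}\, V_\alpha(x,b)\, F_n(x,b,\ee),
\end{equation*}
where $F_n(x,b,\ee)$ is the conditional expectation, given $\{\min_{1 \leq k \leq n} S_{w|k} \geq 0\}$, of a $[0,1]$-valued random variable. The key observation is that the proof of Lemma \ref{Lem-bound-T2} actually delivers a \emph{factorized} bound, not just the abstract $\limsup_n T_2 \leq \tilde h(b)$ stated there: reproducing the decomposition $F_n \leq \sqrt 2\, \wh{\bb E}[Y'_n \mid \cdot] + \wh{\bb E}[Y''_n \mid \cdot]$, then applying Lemma \ref{Lem-conver-Yn-Yinfty} and invoking Theorem \ref{Thm-Green-function}, one obtains
\begin{equation*}
\limsup_{n \to \infty} F_n(x,b,\ee) \leq c'\, \bb E^+_{x,b}(Y'_\infty), \qquad \lim_{b \to \infty} \bb E^+_{x,b}(Y'_\infty) = 0.
\end{equation*}

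Having done this, one simply sets $U(x,b) := c c'\, V_\alpha(x,b)\, \bb E^+_{x,b}(Y'_\infty)$. The two required properties follow at once: $U(x,b)/V_\alpha(x,b) = c c'\, \bb E^+_{x,b}(Y'_\infty) \to 0$ as $b \to \infty$, and combining the steps above yields $\limsup_n \bb E_{x,b}\bigl(\sqrt n\, \widetilde W_n\, \mathds 1_{\{\sqrt n\, \widetilde W_n \geq \ee\}}\bigr) \leq U(x,b)\, e^{-\alpha b}$.

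The only real (and modest) obstacle is organizational: the bare conclusion of Lemma \ref{Lem-bound-T2}, namely $\limsup_n T_2 \leq \tilde h(b)$ with $\tilde h(b) \to 0$, is \emph{not} by itself sufficient. Indeed, attempting to write $\tilde h(b) = U(x,b)\, e^{-\alpha b}$ with $U(x,b)/V_\alpha(x,b) \to 0$ would force $\tilde h(b) = o(b\, e^{-\alpha b})$, which is far stronger than $\tilde h(b) \to 0$. The remedy is precisely to reopen the proof of Lemma \ref{Lem-bound-T2} and read off the sharper factorized bound indicated above, so that the $V_\alpha(x,b)\, e^{-\alpha b}$ inherited from the definition of $T_2$ is kept visible throughout; no new analytic estimate is required.
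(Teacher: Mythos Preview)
Your proposal is correct and follows the same route as the paper, whose proof is the single sentence ``This follows from Lemmas \ref{Lem-minimum-condi}, \ref{Lem-T1-bound} and \ref{Lem-bound-T2}.'' Your observation that the \emph{stated} conclusion of Lemma \ref{Lem-bound-T2} (merely $\limsup_n T_2 \le \tilde h(b)$ with $\tilde h(b)\to 0$) is formally insufficient, and that one must retain the factorized bound $\limsup_n T_2 \le c\,e^{-\alpha b}V_\alpha(x,b)\,\bb E^+_{x,b}(Y'_\infty)$ visible in its proof, is well taken; this is exactly the content the paper is implicitly using (note also that in \eqref{bound-T2-xbn} the prefactor $e^{-\alpha b}V_\alpha(x,b)$ appears to have been inadvertently absorbed into the constant $c$).
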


\begin{proof}
This follows from Lemmas \ref{Lem-minimum-condi}, \ref{Lem-T1-bound} and \ref{Lem-bound-T2}. 
\end{proof}

\begin{proof}[Proof of Theorem \ref{Thm-Seneta-Heyde}]
We first give a lower bound of the conditional Laplace transform of $\widetilde{\widetilde{W}}_{n, k}$ for fixed $k \in \bb N$. 
By \eqref{def-Wn-tilde}, \eqref{def-Wn-tilde-nk} and Jensen's inequality, we have that, for any $\lambda > 0$ and $x \in \bb S_+^{d-1}$, 
\begin{align}\label{inequ-Wnk-lower-1}
\bb E_{x} \Big(   e^{-\lambda \sqrt{n} \widetilde{\widetilde{W}}_{n, k} }  \Big| \scr F_{k}  \Big)
 = \prod_{|u| = k} \bb E_{X_u^x, S_u^x}  \Big(  e^{-\lambda \sqrt{n} \widetilde{W}_{n - k} }   \Big)  
  \geq  \exp  \bigg(  -\lambda \sqrt{n}  \sum_{|u| = k}  \bb E_{X_u^x, S_u^x}   \widetilde{W}_{n - k}   \bigg). 
\end{align}
By Lemma \ref{Lem-expect-mart-tilde}, we have, for any $x \in \bb S_+^{d-1}$ and $b \in \bb R$, 
\begin{align*}
\lim_{n \to \infty} \sqrt{n}  \, \bb E_{x, b}  \widetilde{W}_{n - k}  
= e^{-\alpha b} r_{s}(x)  \frac{ 2 V_{\alpha}(x,b)}{ \sigma_{\alpha} \sqrt{2 \pi} },
\end{align*}
and for all $n \geq 1$, 
\begin{align*}
\sqrt{n}  \, \bb E_{x, b}  \widetilde{W}_{n - k} 
\leq  c_0 e^{-\alpha b} r_{s}(x)  V_{\alpha}(x,b) 
\leq c e^{-\alpha b} (1 + \max\{0, b\}). 
\end{align*}
By the many-to-one formula \eqref{Formula_many_to_one} and the moment assumption $\bb E_{\bb Q_{x}^{\alpha}}  (\max\{0, S_1\}) < \infty$, 
we see that $\sum_{|u| = k}  e^{-\alpha S_u} r_{\alpha} (X_u) V_{\alpha}(X_u, S_u)$ is finite $\bb P_x$-almost surely. 
Hence, by dominated convergence, we get that, $\bb P_x$-almost surely,
\begin{align}\label{inequ-Wnk-lower-2}
\lim_{n \to \infty}  \sqrt{n}  \sum_{|u| = k}  \bb E_{X_u^x, S_u^x}   \widetilde{W}_{n - k} 
=  \sum_{|u| = k}  e^{-\alpha S_u} r_{\alpha} (X_u)  \frac{2 V_{\alpha}(X_u, S_u)}{ \sigma_{\alpha} \sqrt{2 \pi} }. 
\end{align}
Therefore, from \eqref{inequ-Wnk-lower-1} and \eqref{inequ-Wnk-lower-2}, we get that, $\bb P_x$-almost surely, 
\begin{align*}
\liminf_{n \to \infty}
\bb E_{x}  \Big(  e^{-\lambda \sqrt{n} \widetilde{\widetilde{W}}_{n, k} }  \Big| \scr F_{k}  \Big)
\geq  \exp  \bigg(  -\lambda  \sum_{|u| = k}  e^{-\alpha S_u} r_{\alpha} (X_u)  \frac{2 V_{\alpha}(X_u, S_u)}{ \sigma_{\alpha} \sqrt{2 \pi} }   \bigg). 
\end{align*}
Taking into account Lemma \ref{Lem-infinimum-Su} and \eqref{ratio-Su-lu}, we have, 
for every $x \in \bb S_+^{d-1}$, $\bf P_{x}$-a.s., 
\begin{align}\label{limit-derivative-mart-001}
\lim_{k \to \infty} \sum_{|u| = k}  
 e^{-\alpha S_u^x}  r_{s}(X_u^x)  \frac{ 2 V_{\alpha}(X_u^x, S_u^x)}{ \sigma_{\alpha} \sqrt{2 \pi} }
 = \frac{2}{ \sigma_{\alpha} \sqrt{2 \pi} } D_{\infty}, 
\end{align}
so that, $\bf P_{x}$-a.s., 
\begin{align}\label{lower-bound-thm001}
\liminf_{k \to \infty} \liminf_{n \to \infty}
\bb E_{x}  \Big(  e^{-\lambda \sqrt{n} \widetilde{\widetilde{W}}_{n, k} }  \Big| \scr F_{k}  \Big)
\geq  \exp  \bigg(  -\lambda \frac{2}{ \sigma_{\alpha} \sqrt{2 \pi} } D_{\infty}   \bigg). 
\end{align}

We now give an upper bound of the conditional Laplace transform of $\widetilde{\widetilde{W}}_{n, k_0}$. 
For any $\lambda > 0$ and any $\lambda' \in (0, \lambda)$, there exists $\ee > 0$ such that 
$e^{-\lambda t} \leq 1 - \lambda' t$ for any $t \in [0, \ee]$. 
Then we get 
\begin{align*}
\bb E_{x} \Big(  e^{-\lambda \sqrt{n} \widetilde{\widetilde{W}}_{n, k} }  \Big| \scr F_{k}  \Big)
& = \prod_{|u| = k} \bb E_{X_u^x, S_u^x}  \Big(  e^{-\lambda \sqrt{n} \widetilde{W}_{n - k} }  \Big)  \notag\\
& \leq   \prod_{|u| = k} \bb E_{X_u^x, S_u^x} 
\Big[  \exp \Big( -\lambda \sqrt{n} \widetilde{W}_{n - k}  \mathds 1_{ \{ \widetilde{W}_{n - k} < \ee \} }  \Big)  \Big]  \notag\\
& \leq  \prod_{|u| = k}  
\Big[  1 - \lambda'  \bb E_{X_u^x, S_u^x} \Big(  \sqrt{n} \widetilde{W}_{n - k}  \mathds 1_{ \{ \widetilde{W}_{n - k} < \ee \} }  \Big)  \Big]  \notag\\
& \leq  \exp \bigg[ -  \sum_{|u| = k}  \lambda'  \bb E_{X_u^x, S_u^x} \Big(  \sqrt{n} \widetilde{W}_{n - k}  \mathds 1_{ \{ \widetilde{W}_{n - k} < \ee \} }  \Big)  \bigg], 
\end{align*}
where in the last inequality we used $1 - t \leq e^{-t}$ for $t \geq 0$. 
By Fatou's lemma, we have 
\begin{align*}
& \limsup_{n \to \infty} \bb E_{x} \Big( e^{-\lambda \sqrt{n} \widetilde{\widetilde{W}}_{n, k} }  \Big| \scr F_{k}  \Big)  \notag\\
& \leq  \exp \bigg[   \sum_{|u| = k}  - \lambda'  
  \liminf_{n \to \infty} \bb E_{X_u^x, S_u^x} \Big(  \sqrt{n} \widetilde{W}_{n - k}  \mathds 1_{ \{ \widetilde{W}_{n - k} < \ee \} }  \Big)  \bigg]  \notag\\
& =  \exp \bigg( \lambda'    \sum_{|u| = k}  
 \bigg[  - \liminf_{n \to \infty} \bb E_{X_u^x, S_u^x} \Big(  \sqrt{n} \widetilde{W}_{n - k}  \Big)  
   +  \limsup_{n \to \infty} \bb E_{X_u^x, S_u^x} \Big(  \sqrt{n} \widetilde{W}_{n - k}  \mathds 1_{ \{ \widetilde{W}_{n - k} \geq \ee \} }  \Big)   \bigg]  \bigg). 
\end{align*}
By Lemma \ref{Lem-expect-mart-tilde} and Proposition \ref{Prop-Wn-ee}, 
we obtain that, for every $k \in \bb N$ and $x \in \bb S_+^{d-1}$,  $\bb P_x$-a.s.
\begin{align*}
\lim_{n \to \infty} \bb E_{X_u^x, S_u^x} \Big(  \sqrt{n} \widetilde{W}_{n - k}  \Big)  
=   r_{s}(X_u^x)  \frac{ 2 V_{\alpha}(X_u^x, S_u^x)}{ \sigma_{\alpha} \sqrt{2 \pi} }  e^{-\alpha S_u^x}
\end{align*}
and 
\begin{align*}
\limsup_{n \to \infty} \bb E_{X_u^x, S_u^x} \Big(  \sqrt{n} \widetilde{W}_{n - k}  \mathds 1_{ \{ \widetilde{W}_{n - k} \geq \ee \} }  \Big)
\leq  U(X_u^x, S_u^x) e^{-\alpha S_u^x}, 
\end{align*}
where $U$ is a positive measurable function on $\bb S_+^{d-1} \times \bb R_+$ such that for any $x \in \bb S_+^{d-1}$, 
$\frac{U(x, b)}{V_{\alpha}(x, b)} \to 0$ as $b \to \infty$. 
Therefore, for every $k \in \bb N$ and $x \in \bb S_+^{d-1}$,  $\bb P_x$-a.s.,
\begin{align*}
\limsup_{n \to \infty} \bb E_{x} \Big( e^{-\lambda \sqrt{n} \widetilde{\widetilde{W}}_{n, k} }  \Big| \scr F_{k}  \Big)  
\leq  \exp \bigg( \lambda'    \sum_{|u| = k}  
 e^{-\alpha S_u^x} \bigg[  U(X_u^x, S_u^x)  -  r_{s}(X_u^x)  \frac{ 2 V_{\alpha}(X_u^x, S_u^x)}{ \sigma_{\alpha} \sqrt{2 \pi} }
   \bigg]  \bigg).  
\end{align*}
Taking into account Lemma \ref{Lem-infinimum-Su}, \eqref{ratio-Su-lu} and \eqref{limit-derivative-mart-001}, we get, 
for every $x \in \bb S_+^{d-1}$, $\bf P_{x}$-a.s., 
\begin{align*}
\lim_{k \to \infty} \sum_{|u| = k}  
 e^{-\alpha S_u^x}  U(X_u^x, S_u^x) = 0,
\end{align*}
This, together with \eqref{limit-derivative-mart-001}, implies that,  $\bf P_{x}$-a.s.,  
\begin{align*}
\limsup_{k \to \infty}  \limsup_{n \to \infty} 
\bb E_{x} \Big( e^{-\lambda \sqrt{n} \widetilde{\widetilde{W}}_{n, k} }  \Big| \scr F_{k}  \Big) 
\leq \exp \bigg( - \lambda'  \frac{2}{ \sigma_{\alpha} \sqrt{2 \pi} } D_{\infty} \bigg). 
\end{align*}
Combining this with \eqref{lower-bound-thm001} and letting $\lambda'  \to \lambda$, 
we get that, $\bf P_{x}$-a.s.,  
\begin{align*}
\lim_{k \to \infty} \lim_{n \to \infty}
\bb E_{x}  \Big(  e^{-\lambda \sqrt{n} \widetilde{\widetilde{W}}_{n, k} }  \Big| \scr F_{k}  \Big)
=  \exp  \bigg(  -\lambda \frac{2}{ \sigma_{\alpha} \sqrt{2 \pi} } D_{\infty}   \bigg). 
\end{align*}
Using Cantor diagonal extraction, there exists a sequence $(k_n)_{n \geq 1}$ satisfying $k_n \to \infty$ as $n \to \infty$,
such that for any $\lambda \in \bb Q \cap (0, \infty)$, $\bf P_{x}$-a.s.,  
\begin{align*}
\lim_{n \to \infty}
\bb E_{x}  \Big(  e^{-\lambda \sqrt{n} \widetilde{\widetilde{W}}_{n, k_n} }  \Big| \scr F_{k_n}  \Big)
=  \exp  \bigg(  -\lambda \frac{2}{ \sigma_{\alpha} \sqrt{2 \pi} } D_{\infty}   \bigg). 
\end{align*}
Applying \cite[Lemma B.1]{BM19}, we get 
\begin{align*}
\lim_{n \to \infty} \sqrt{n} \widetilde{\widetilde{W}}_{n, k_n} = \left( \frac{2}{\pi \sigma_\alpha^2} \right)^{1/2} D_{\infty}  
\qquad  \mbox{in probability $\bf P_{x}$.}
\end{align*}
By \eqref{proba-Wn-12}, this concludes the proof of Theorem \ref{Thm-Seneta-Heyde}. 
\end{proof}

\section{Duality results for matrix random walks}\label{Sec-Duality}

In this section, we introduce the analogue of the dual (or reversed) random walk \cite[XII.2]{Feller71} in the setting of Markov random walks driven by products of random matrices. In contrast to the classical one-dimensional result, we will not obtain equality in distribution between the original and the dual process, but lower and upper bounds which will be sufficient for many applications. 

Recall that, given a sequence $(g_i)_{i \ge 1}$ of nonnegative random matrices, we have defined the processes 
$S_n = S_0 - \log \| g_n \cdots g_1 X_0 \|$ and $X_n=(g_n \cdots g_1) \cdot X_0$, see \eqref{def-Sn-Xn}. 
In that context, the law of the sequence of the matrices 
$(g_i)_{i \geq 1}$ was given as the projective limit of $\bb Q_{x, b, n}^\alpha$, cf.\ \eqref{def:Qsxn}, where the case $N\equiv 1$ is included by simply omitting summation over $|u|=n$.
The following construction is valid regardless of the specific law of $(g_i)_{i \ge 1}$, and forms the basis for our approximate duality result.

For $1 \leq k \leq n$, we denote $h_{n, k} = g_{n-k+1}^{ * }$, where $g^{ * }$ is the transpose of the matrix $g$. 
Now we introduce the reversed processes $(X_{n, k}^{*}, S_{n, k}^{*})_{0 \leq k \leq n}$: 
let $X_{n, 0}^{*} \in \bb S_+^{d-1}$ and $S_{n, 0}^{*} = 0$, and for any $k \in [1, n]$, 
\begin{align}\label{def-Xni-Sni}
X_{n, k}^{*} =  (h_{n, k} \cdots h_{n, 1}) \cdot X_{n, 0}^*,  \quad   S_{n, k}^{*} =  -\log \| h_{n, k} \cdots h_{n, 1} X_{n, 0}^* \|.  
\end{align}
The following lemma compares the reversed process to the original process. In particular, their values are not equal (as it would be in the one-dimensional case), but still comparable. 

\begin{lemma}\label{Lem-Sn-Snstar}
Assume condition \ref{Condi-Furstenberg-Kesten}. Set $\overline{\varkappa} := 2 \log \varkappa$, where $\varkappa > 1$ is given by condition \ref{Condi-Furstenberg-Kesten}. 
Then, for any $0 \leq k \leq n$ and $X_0, X_{n, 0}^* \in \bb S_+^{d-1}$, 
\begin{align*}
 S_{n, n-k}^* - \overline{\varkappa} - \log d \leq S_n - S_k  
\leq  S_{n, n-k}^* + \overline{\varkappa} + \log d.  
\end{align*}
\end{lemma}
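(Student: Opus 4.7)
The plan is to reduce both sides to expressions of the form $\log\|G y\|$ and $\log\|G^* z\|$ for a single product matrix $G$ and vectors $y, z \in \bb S_+^{d-1}$, and then to compare these two logarithmic norms using only the ratio bound from \ref{Condi-Furstenberg-Kesten}.

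First I will set $G := g_n g_{n-1} \cdots g_{k+1}$. The cocycle property \eqref{eq-cocycle-property}, together with the identity $X_k = (g_k \cdots g_1) \cdot X_0$, yields $S_n - S_k = -\log\|G X_k\|$. On the dual side, combining $(AB)^* = B^*A^*$ with $h_{n,j} = g_{n-j+1}^*$ gives
\begin{align*}
h_{n, n-k}\, h_{n, n-k-1} \cdots h_{n,1} \;=\; g_{k+1}^* g_{k+2}^* \cdots g_n^* \;=\; G^*,
\end{align*}
so that $S^*_{n, n-k} = -\log\|G^* X^*_{n,0}\|$. The lemma is therefore equivalent to the two-sided bound $|\log\|G X_k\| - \log\|G^* X^*_{n,0}\|| \leq \overline{\varkappa} + \log d$.

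For the comparison I would first note that $G \in \Gamma$ by the semigroup property, so $G$ satisfies \ref{Condi-Furstenberg-Kesten} with the same constant $\varkappa$. Its transpose $G^*$ has the same set of entries and hence satisfies the same ratio bound; since the proof of Lemma \ref{lemma kappa 1} only uses this ratio bound (and not membership in $\Gamma$), it applies equally to $G^*$ and gives $\|G\|/\varkappa^2 \leq \|Gy\| \leq \|G\|$ together with the analogous inequality for $G^*$. To compare $\|G\|$ and $\|G^*\|$ themselves, I use that $\|\cdot\|$ on $\bb M$ is the operator norm induced by the $\ell^1$ norm on $\bb R^d$, so $\|G\|$ is the maximum column sum of $G$ and $\|G^*\|$ the maximum row sum; both then lie in the interval $[\max_{i,j} G^{i,j},\, d \cdot \max_{i,j} G^{i,j}]$, whence $|\log\|G\| - \log\|G^*\|| \leq \log d$. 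Chaining these three estimates yields $\log\|Gy\| \leq \log\|G^*z\| + 2\log\varkappa + \log d$, and by symmetry in $G \leftrightarrow G^*$ the reverse inequality also holds; taking negatives and using $\overline{\varkappa} = 2\log\varkappa$ gives the claimed bound.

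There is no real obstacle here; the one conceptual point worth flagging is that $G^*$ need not itself lie in $\Gamma$ (which is the semigroup generated by the $g_u$, not by their transposes), but this is harmless since the Furstenberg-Kesten ratio condition is manifestly preserved under transposition, and that is all that Lemma \ref{lemma kappa 1} actually uses.
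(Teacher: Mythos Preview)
Your proof is correct and follows essentially the same route as the paper: reduce to comparing $\log\|G X_k\|$ with $\log\|G^* X_{n,0}^*\|$ for $G=g_n\cdots g_{k+1}$, then use Lemma~\ref{lemma kappa 1} to pass to $\log\|G\|$ and $\log\|G^*\|$ at the cost of $\overline{\varkappa}$, and compare the $\ell^1$ operator norms of $G$ and $G^*$ at the cost of $\log d$. Your symmetric packaging via the interval $[\max_{i,j}G^{i,j},\,d\max_{i,j}G^{i,j}]$ is a slightly cleaner way of stating the paper's one-sided inequality $\|g^*\|\le d\|g\|$; and your remark that $G^*$ need not lie in $\Gamma$ but still satisfies the ratio bound is exactly the point needed to make the lower bound go through (the paper leaves this implicit when it says the other inequality is ``along the same lines''). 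The only omission is the trivial case $k=n$, where $G$ is the identity and the Furstenberg--Kesten ratio bound fails, but both sides then vanish so the inequality is immediate.
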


\begin{proof}
We only prove the second inequality, the proof of the first one being exactly along the same lines. The inequality  is obvious when $k = n$, so we only consider $0 \leq k \leq n - 1$. 
By Lemma \ref{lemma kappa 1}, it holds that for any $g \in \Gamma$ and $x, x' \in \bb S_+^{d-1}$, 
\begin{align}\label{inequa-log-norm-vec}
\log \| gx \| \geq \log \|g\| - \overline{\varkappa} \geq  \log \| gx' \| - \overline{\varkappa}, 
\end{align} 
where $\overline{\varkappa} = 2 \log \varkappa$ and $\varkappa > 1$ is given by condition \ref{Condi-Furstenberg-Kesten}. 
Using the first inequality in \eqref{inequa-log-norm-vec}, 
we have that for any $X_0 \in \bb S_+^{d-1}$ and $0 \leq k \leq n-1$, 
\begin{align}\label{inequ-BJ01}
S_n - S_k 
 = - \log \| g_n \cdots g_{k+1} (G_k \cdot X_0) \|  
 \leq  - \log \| g_n \cdots g_{k+1} \|  +  \overline{\varkappa}. 
\end{align}
Since $\|g\| = \sup_{x \in \bb S_+^{d-1}} \|gx\| = \max_{1 \leq j \leq d} \sum_{i=1}^d g^{i,j}$, 
it holds that for any $g \in \Gamma$, 
\begin{align}\label{inequa-g-transpose}
\| g^{ * } \| 
= \max_{1 \leq i \leq d} \sum_{j=1}^d g^{i,j} 
\leq  d \max_{1 \leq j \leq d} \sum_{i=1}^d g^{i,j}
=  d \|g\|. 
\end{align}
By \eqref{inequ-BJ01}, \eqref{inequa-g-transpose}
 and Lemma \ref{lemma kappa 1},  
 we get that for any $X_{n, 0}^* \in \bb S_+^{d-1}$ and $0 \leq k \leq n-1$, 
\begin{align*}
S_n - S_k  \leq  - \log \| g_{k+1}^{ * }  \ldots g_n^{ * }  \|  +  \overline{\varkappa} + \log d
& \leq  - \log \| g_{k+1}^{ * }  \ldots g_n^{ * }  X_0^* \|  +  \overline{\varkappa} + \log d  \notag\\
& = - \log \| h_{n, n-k} \cdots h_{n, 1} X_{n, 0}^* \| +  \overline{\varkappa} + \log d  \notag\\
& =  S_{n, n-k}^* +  \overline{\varkappa} + \log d, 
\end{align*}
completing the proof of the lemma. 
\end{proof}

Note that the definition \eqref{def-Xni-Sni} of $(X_{n, k}^{*}, S_{n, k}^{*})_{0 \leq k \leq n}$ depends on $n$. As the next step, we define a dual 
(branching) random walk with the help of the random matrices $(g_u^*)_{u \in \bb T}$, that allows to approximate (in distribution) the reversed process by an infinite sequence, see Lemma \ref{Lem-seq-hn} below. 

Assume condition \ref{Condi_ms2}, which fixes a constant $\alpha>0$. For each $x \in \bb S_+^{d-1}$, define a probability measure $\bb Q^{\alpha,*}_{x}$ as the projective limit of the sequence of measures $(\bb Q_{x,n}^{\alpha,*})_{n \geq 1}$ defined by 
\begin{align}\label{def:Qsstarxn}
& \int f(h_1, \dots, h_n) \, \bb Q_{x,n}^{\alpha,*}( dh_1, \dots, dh_n)    \notag\\
& := \frac{e^{\alpha b}}{r_\alpha^*(x) }
	\bb E \bigg[ 
	\sum_{|u| = n} r_\alpha^*(g_u^* \cdots g_{u|1}^*x) e^{ \alpha \log \|{g_{u|n}^* \cdots g_{u|1}^* x\|}} 
	f \left(g_{u|1}^*, \dots, g_{u|n}^* \right) \bigg], 
\end{align}
where $r_\alpha^*$ is the eigenfunction of $P_\alpha^*$ (cf. \eqref{eq:defnPsstar}).
Again, this definition includes the case $N\equiv 1$ by omitting summation over $|u|=n$.
Writing $(h_n)_{n\geq 1}$ for the coordinate process with respect to the measure $\bb Q_{x}^{\alpha,*}$,
for $n \in \bb N$, we set 
\begin{align}\label{def:dualprocess}
S_n^*:= -\log \norm{h_n \cdots h_1 X_0^*} 
\quad \mbox{and} \quad 
X_n^*:= (h_n \cdots h_1) \cdot X_0^* 
\end{align}
with the convention that $\bb Q^{\alpha,*}_{x}(X^*_0=x,S^*_0=0)=1.$

 The following lemma compares, for every $n \in \bb N$, the law of $(X_{n, k}^{*}, S_{n, k}^{*})_{0 \leq k \leq n}$ with that of the dual (branching) random walk. It relies on the exchangeability of factors in matrix branching random walks, see Lemma \ref{lem:exchangeability} for details.

\begin{lemma}\label{Lem-seq-hn}
Assume conditions \ref{Condi-Furstenberg-Kesten} and \ref{Condi_ms2}. There are constants $c, C >0$ such that  for every $x \in \bb S^{d-1}_+$, every $n \ge 0$ and all  measurable functions $f$, 
\begin{align*}
	 c \bb E_{\bb Q_{x}^{\alpha,*}} \big[ f(S^*_{0}, \dots, S^*_{n},X^*_{0}, \dots, X^*_{n}) \big] 
	 & \leq  \bb E_{\bb Q_{x}^\alpha} \big[ f(S^*_{n,0}, \dots, S^*_{n,n},X^*_{n,0}, \dots, X^*_{n,n}) \big]\\ 
	  & \leq  C \bb E_{\bb Q_{x}^{\alpha,*}} \big[ f(S^*_{0}, \dots, S^*_{n},X^*_{0}, \dots, X^*_{n}) \big].
\end{align*}
\end{lemma}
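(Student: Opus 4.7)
The strategy is to express both sides as expectations against the underlying branching law $\bb P$ of the \emph{same} functional of the random matrices, against positive weights that differ only by bounded multiplicative factors. The two key ingredients are the many-to-one formulas \eqref{def:Qsxn} and \eqref{def:Qsstarxn} defining $\bb Q_x^\alpha$ and $\bb Q_x^{\alpha,*}$, and the exchangeability identity \eqref{eq:exchangeability-002} that allows reversing the order of factors along each branch.

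Fix $X^*_{n,0}=X^*_0=x$. Applying \eqref{def:Qsxn} (using $\mathfrak m(\alpha)=1$ from \ref{Condi_ms2}), one rewrites the right-hand side of the inequality as
\[
\bb E_{\bb Q_x^\alpha}\!\bigl[f\bigl((S^*_{n,k})_k,(X^*_{n,k})_k\bigr)\bigr]=\frac{1}{r_\alpha(x)}\,\bb E\!\left[\sum_{|u|=n}r_\alpha(X_u)\,e^{-\alpha S_u}\,F(g_{u|1},\dots,g_{u|n})\right],
\]
where $F(a_1,\dots,a_n)$ encodes the reversed process built from $h_{n,k}=a_{n-k+1}^*$, so its arguments are the partial products $(a_n\cdots a_{n-k+1})^*$ acting on $x$. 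I then apply \eqref{eq:exchangeability-002} to the full integrand: the map $(g_{u|1},\dots,g_u)\mapsto(g_u,\dots,g_{u|1})$ transforms $r_\alpha(X_u)e^{-\alpha S_u}=r_\alpha((g_u\cdots g_{u|1})\!\cdot\! x)\|g_u\cdots g_{u|1}\,x\|^\alpha$ into $r_\alpha((g_{u|1}\cdots g_u)\!\cdot\! x)\|g_{u|1}\cdots g_u\,x\|^\alpha$, while the partial products $(g_u\cdots g_{u|n-k+1})^*$ inside $F$ become $(g_{u|1}\cdots g_{u|k})^*$. Introducing
\[
G(a_1,\dots,a_n):=f\!\left(\bigl(-\log\|(a_1\cdots a_k)^* x\|\bigr)_{k=0}^n,\ \bigl(((a_1\cdots a_k)^*)\!\cdot\! x\bigr)_{k=0}^n\right),
\]
the right-hand side becomes $\frac{1}{r_\alpha(x)}\bb E\!\left[\sum_{|u|=n} r_\alpha((g_{u|1}\cdots g_u)\!\cdot\! x)\,\|g_{u|1}\cdots g_u\,x\|^\alpha\,G(g_{u|1},\dots,g_{u|n})\right]$.

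In parallel, \eqref{def:Qsstarxn} with the substitution $h_j=g_{u|j}^*$ (so $h_k\cdots h_1=(g_{u|1}\cdots g_{u|k})^*$) yields
\[
\bb E_{\bb Q_x^{\alpha,*}}\!\bigl[f\bigl((S^*_k)_k,(X^*_k)_k\bigr)\bigr]=\frac{1}{r_\alpha^*(x)}\,\bb E\!\left[\sum_{|u|=n}r_\alpha^*\bigl((g_{u|1}\cdots g_u)^*\!\cdot\! x\bigr)\,\|(g_{u|1}\cdots g_u)^* x\|^\alpha\,G(g_{u|1},\dots,g_{u|n})\right],
\]
with the \emph{same} $G$. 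The two expressions therefore differ only through the positive weight ratio
\[
\rho(g,x):=\frac{r_\alpha^*(x)}{r_\alpha(x)}\cdot\frac{r_\alpha(g\!\cdot\! x)\,\|gx\|^\alpha}{r_\alpha^*(g^*\!\cdot\! x)\,\|g^*x\|^\alpha},\qquad g=g_{u|1}\cdots g_u.
\]
By Lemma \ref{lem:ExistenceEigenfunctionsPs}, $r_\alpha$ and $r_\alpha^*$ are continuous and strictly positive on the compact $\bb S^{d-1}_+$, hence bounded above and below by positive constants independent of the argument. Inequality \eqref{inequa-g-transpose} gives $\|g\|\asymp\|g^*\|$ up to the factor $d$, and the proof of Lemma \ref{lemma kappa 1} uses only the bounded max-to-min entry ratio of \ref{Condi-Furstenberg-Kesten}, a property invariant under transposition, so it applies to both $g$ and $g^*$ and yields $\|gx\|\asymp\|g\|$, $\|g^*x\|\asymp\|g^*\|$ up to $\varkappa^2$. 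Consequently $\rho$ is bounded above and below by positive constants independent of $g$, $x$ and $n$, which is exactly the claimed two-sided inequality.

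The main obstacle is the bookkeeping in the exchangeability step: one must verify that reversing the branch factors in the integrand of the $\bb Q_x^\alpha$-expansion converts the many-to-one weight $r_\alpha(X_u)e^{-\alpha S_u}$ into its $g^*$-analogue \emph{and simultaneously} aligns the partial products inside $F$ with those produced by the $\bb Q_x^{\alpha,*}$-expansion, so that the common test-function factor $G$ is literally identical on both sides. Once this alignment is checked, the comparison collapses to the uniform boundedness of $\rho$. The non-branching case $N\equiv 1$ permitted by \ref{Condi_ms2} requires no separate argument: the sums over $|u|=n$ collapse to a single term while the many-to-one and exchangeability identities remain valid (and in fact become trivialities).
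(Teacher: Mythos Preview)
Your proof is correct and follows essentially the same approach as the paper: expand both expectations via the many-to-one formulas \eqref{def:Qsxn} and \eqref{def:Qsstarxn}, use the exchangeability identity \eqref{eq:exchangeability-002} to align the test-function parts, and then compare the weights using the uniform positivity and boundedness of $r_\alpha, r_\alpha^*$ together with the norm comparison $\|gx\|\asymp\|g\|\asymp\|g^*\|\asymp\|g^*x\|$ from Lemma~\ref{lemma kappa 1} and \eqref{inequa-g-transpose}. The only organizational difference is that the paper interleaves the bounding and the exchangeability step (first replacing $\|G_u x\|^\alpha$ by $\|G_u\|^\alpha$, then exchanging, then reinserting $\|G_u^* x\|^\alpha$ and $r_\alpha^*$), while you apply exchangeability once to the full integrand and then bound the resulting weight ratio $\rho(g,x)$ in a single step; the ingredients are identical.
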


\begin{proof} We start by proving the upper bound, {\em i.e.}, the second inequality.
Recall that $G_u=g_u \cdots g_{u|1}$ is the product of the matrices along the branch leading to node $u$. 
Similarly, we denote $G_u^*:=g_u^* \cdots g_{u|1}^*$. 
In the following calculation, we trace the expressions for generic variables $S^*_{n,j}$ and $X^*_{n,j}$. 
By \eqref{def:Qsxn} and \eqref{def-Xni-Sni},
\begin{align*}
& \bb E_{\bb Q_{x}^\alpha} \big[ f(S^*_{n,0}, \dots, S^*_{n,j}, \dots, S^*_{n,n}, \dots, X^*_{n,j}, \dots, X^*_{n,n}) \\ 
& \leq  C \bb E \left[ \sum_{|u| = n} e^{\alpha \log \| g_{u|n} \cdots g_{u|1}\|} 
		f \left(\dots, -\log \|g_{u|n-j+1}^* \cdots g_{u|n}^* x\|, \dots, (g_{u|n-j+1}^* \cdots g_{u|n}^*) \cdot x,\dots \right)   \right] \\
 & =  C \bb E \left[ \sum_{|u| = n} e^{ \alpha \log \| g_{u|1}^* \cdots g_{u|n}^*\| } 
		f \left(\dots, -\log \|g_{u|n-j+1}^* \cdots g_{u|n}^* x\|, \dots, (g_{u|n-j+1}^* \cdots g_{u|n}^*) \cdot x,\dots \right) \right]. 
\end{align*}
By the exchangeability lemma \ref{lem:exchangeability}, we may now reverse the order of the matrices, replacing $g_{u|n-j+1}$ by $g_{u|{j}}$, without changing the value of the expectation. 
We continue
\begin{align*}
& =  C\bb E \left[ \sum_{|u| = n} e^{ \alpha \log \| g_{u|n}^* \cdots g_{u|1}^*\| } 
	f \left(\dots, -\log \|g_{u|j}^* \cdots g_{u|1}^* x\|, \dots, (g_{u|j}^* \cdots g_{u|1}^*) \cdot x,\dots \right) \right] \\
	& =  C \bb E \left[ \sum_{|u| = n} e^{ \alpha (\log \| g_{u|n}^* \cdots g_{u|1}^*x\| +\bar{\varkappa}) } 
	f \left(\dots, -\log \|g_{u|j}^* \cdots g_{u|1}^* x\|, \dots, (g_{u|j}^* \cdots g_{u|1}^*) \cdot x,\dots \right) \right] \\
& \leq  C'  \frac{e^{\alpha \bar{\varkappa}}}{r_\alpha^*(x) }\bb E \left[ \sum_{|u| = n} r_\alpha^*(G_u^*\cdot x) e^{ \alpha \log \| G_u^*x\|} 
	f \left(\dots, -\log \|G_{u|j}^* x\|, \dots, G_{u|j}^* \cdot x,\dots \right) \right] \\
& = C' \bb E_{\bb Q_{x}^{\alpha,*}} \big[ f(S^*_{0}, \dots, S^*_{j}, \dots,S^*_{n},\dots, X^*_{j}, \dots,X^*_{n}) \big]. 
\end{align*} 
The lower bound is proved along the same lines.
\end{proof}

\begin{remark}\label{rem:DualofDual}
	Note that since taking the transpose of a matrix is an involution, the dual process of $(X_n^*,S_n^*)$ w.r.t.~the measure $\bb Q_x^{\alpha,*}$ is again the original process $(X_n,S_n)$ under $\bb Q_x^{\alpha}$.
\end{remark}

Now we are ready to prove an approximate duality result for the laws of the processes $(S_n)_{n \geq 0}$ and $(S_n^*)_{n \geq 0}$; which will serve as a replacement of the distributional identity valid in the classical one-dimensional case, see e.g \cite[Lemma VII.1.2]{Asm03}. 
Let $c_0$ be as in Lemma \ref{Lem-contractivity} and $\bar{\varkappa}:=2 \log \varkappa$, 
where $\varkappa$ is given by \ref{Condi-Furstenberg-Kesten}. 
Introduce
\begin{equation}\label{eq:c1}
	 c_1:=c_0+ \bar{\varkappa} + \log d,
\end{equation}
and for $n \in \bb N$, any nonnegative measurable  function $f:\bb R^n \to \bb R_+$, 
define
\begin{equation}\label{eq:ftilde}
	 \widetilde{f}(x_1, \dots, x_n):=\sup \{f(y_1, \dots y_n) \, : \, |y_i-x_i| \le c_1 \text{ for all } 1 \le i \le n\}.
\end{equation}

\begin{lemma}[Approximate duality]\label{Lem-appro-duality}	
	Assume \ref{Condi-Furstenberg-Kesten} and \ref{Condi_ms2}.
	There exists  $C_1\in (0,\infty)$ such that for all $x,y \in \bb S_+^{d-1}$, $n \geq 1$ and any nonnegative measurable function $f:\bb R^{n} \to \bb R_+$, 
\begin{align*}
 \bb E_{\bb Q_{x}^\alpha} \left[f(S_n-S_{n-1}, \dots,  S_n-S_0)\right] 
	  \leq  C_1\bb E_{\bb Q_{y}^{\alpha,*}}  \left[\widetilde{f}(S_1^*, \dots,  S_n^*)\right]	
\end{align*}
	as well as
		\begin{align*}
\bb E_{\bb Q_{x}^{\alpha,*}} \left[f(S_1^*, \dots,  S_n^*)\right] 
 \leq  C_1\bb E_{\bb Q_{y}^{\alpha}}  \left[\widetilde{f}(S_n-S_{n-1}, \dots,  S_n-S_0)\right].
		\end{align*}
\end{lemma}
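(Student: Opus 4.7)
The plan combines three ingredients: the pathwise comparison of Lemma~\ref{Lem-Sn-Snstar}, the distributional comparison of Lemma~\ref{Lem-seq-hn}, and a uniform comparison between the measures $\bb Q_x^{\alpha,*}$ and $\bb Q_y^{\alpha,*}$ (resp.\ $\bb Q_x^\alpha$ and $\bb Q_y^\alpha$) that handles the change of starting point from $x$ to $y$.

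For the first inequality, I would begin by applying Lemma~\ref{Lem-Sn-Snstar} under $\bb Q_x^\alpha$ with the choice $X_0 = x$ and $X_{n,0}^* = x$. This yields the pathwise bound $|(S_n-S_k) - S_{n,n-k}^*| \leq \bar\varkappa + \log d$ for $0\leq k \leq n-1$, so that $f(S_n-S_{n-1},\ldots,S_n-S_0) \leq \hat f(S_{n,1}^*,\ldots,S_{n,n}^*)$, where $\hat f$ denotes the supremum of $f$ over $L^\infty$-perturbations of size $\bar\varkappa + \log d$. Lemma~\ref{Lem-seq-hn} then transfers this expectation to the dual side:
\begin{equation*}
\bb E_{\bb Q_x^\alpha}\bigl[\hat f(S_{n,1}^*,\ldots,S_{n,n}^*)\bigr] \leq C\,\bb E_{\bb Q_x^{\alpha,*}}\bigl[\hat f(S_1^*,\ldots,S_n^*)\bigr],
\end{equation*}
where under $\bb Q_x^{\alpha,*}$ one has $S_k^* = -\log\|h_k\cdots h_1 x\|$. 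The remaining task is to move from $\bb Q_x^{\alpha,*}$ to $\bb Q_y^{\alpha,*}$ at bounded cost. Using the explicit form~\eqref{def:Qsstarxn}, I would verify that $\bb E_{\bb Q_x^{\alpha,*}}[F] \leq C''\,\bb E_{\bb Q_y^{\alpha,*}}[F]$ for every nonnegative $F$ of $(h_1,\ldots,h_n)$: per branch $u$, the weight $r_\alpha^*(G_u^*\!\cdot\!x)\,e^{\alpha\log\|G_u^*x\|}/r_\alpha^*(x)$ differs from its $y$-counterpart by a factor that is bounded above and below by constants, since $r_\alpha^*$ is continuous and strictly positive on the compact sphere $\bb S_+^{d-1}$ and $\|G_u^*x\|/\|G_u^*y\| \leq e^{c_0}$ by Lemma~\ref{Lem-contractivity}; the resulting constant $C''$ is uniform in $n$. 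Finally, since under $\bb Q_y^{\alpha,*}$ one has $S_k^* = -\log\|h_k\cdots h_1 y\|$, another invocation of Lemma~\ref{Lem-contractivity} (applied to the product $h_k\cdots h_1$) yields $|{-\log\|h_k\cdots h_1 x\|} - S_k^*|\leq c_0$, inflating $\hat f$ into $\widetilde f$ with total tolerance $c_0 + \bar\varkappa + \log d = c_1$. Taking $C_1 := C\cdot C''$ gives the first inequality.

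For the second inequality, I would run the same three-step template with the roles of $\bb Q^\alpha$ and $\bb Q^{\alpha,*}$ exchanged. The lower bound in Lemma~\ref{Lem-seq-hn} first gives $\bb E_{\bb Q_x^{\alpha,*}}[f(S_1^*,\ldots,S_n^*)] \leq c^{-1}\,\bb E_{\bb Q_x^\alpha}[f(S_{n,1}^*,\ldots,S_{n,n}^*)]$; Lemma~\ref{Lem-Sn-Snstar} (with $X_{n,0}^* = x$) then replaces each $S_{n,k}^*$ by $S_n - S_{n-k}$ within $\bar\varkappa+\log d$; finally, the analogous uniform bound between $\bb Q_x^\alpha$ and $\bb Q_y^\alpha$ read off from~\eqref{def:Qsxn}, combined with Lemma~\ref{Lem-contractivity} applied to the single matrix $g_n\cdots g_{n-k+1}$ acting on $X_{n-k}^x$ versus $X_{n-k}^y$ (using the identity $S_n - S_{n-k} = -\log\|g_n\cdots g_{n-k+1}\,X_{n-k}\|$), adds a single $c_0$ to the modulus, matching the budget $c_1$.

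The main obstacle is establishing the uniform-in-$n$ comparison between $\bb Q_x^{\alpha,*}$ and $\bb Q_y^{\alpha,*}$ (and analogously between $\bb Q_x^\alpha$ and $\bb Q_y^\alpha$): a priori the weights in the representation of these measures involve products of many random factors, so uniformity is not immediate. The crucial observation is that in~\eqref{def:Qsstarxn}, the random weight $r_\alpha^*(G_u^*\!\cdot\!x)\,e^{\alpha\log\|G_u^*x\|}/r_\alpha^*(x)$ depends on $x$ only through two evaluations of $r_\alpha^*$ on the compact sphere and through a single matrix--vector norm controlled by Lemma~\ref{Lem-contractivity}; hence the change of starting point collapses into a bounded multiplicative factor independent of the path length.
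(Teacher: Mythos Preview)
Your proof is correct and follows essentially the same approach as the paper: both use Lemma~\ref{Lem-Sn-Snstar} to pass from increments $S_n-S_k$ to the reversed process $S_{n,n-k}^*$, then Lemma~\ref{Lem-seq-hn} to pass to the dual measure $\bb Q_x^{\alpha,*}$, and finally the explicit form of the measure together with the boundedness of $r_\alpha^*$ and Lemma~\ref{Lem-contractivity} to change the starting point from $x$ to $y$. The only cosmetic difference is that the paper performs the change of starting point in the weight and in the arguments simultaneously by unrolling the definition~\eqref{def:Qsstarxn}, whereas you separate these into two steps (first a uniform Radon--Nikodym bound between $\bb Q_x^{\alpha,*}$ and $\bb Q_y^{\alpha,*}$ for functions of the matrices, then a pathwise adjustment of the arguments).
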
 

\begin{proof} We only prove the first inequality, the proof of the second one is along the same lines. 
	By  Lemma \ref{Lem-Sn-Snstar}, for every $n \geq 1$,  $0 \le k \le n$ and  $X_0, X_{n, 0}^* \in \bb S_+^{d-1}$, 
	\begin{equation}\label{eq:compSnSnstar}
		 \abs{(S_n-S_k)-S_{n,n-k}^*}\le \bar{\varkappa}+d .
	\end{equation}
	Setting $\varkappa_1 :=\bar{\varkappa}+\log d$ and 
	\begin{equation*} 
		\widetilde{f}_{\varkappa_1}(x_1, \dots, x_n):=\sup \{f(y_1, \dots y_n) \, : \, |y_i-x_i| \le \varkappa_1 \text{ for all } 1 \le i \le n\},
	\end{equation*}
	we obtain by applying first \eqref{eq:compSnSnstar} and then Lemma \ref{Lem-seq-hn}
	\begin{align*}
		  \bb E_{\bb Q_{x}^\alpha} \left[f(S_n-S_{n-1}, \dots,  S_n-S_0)\right]  
		 \leq   \bb E_{\bb Q_{x}^\alpha} \left[ \widetilde{f}_{\varkappa_1}(S_{n,1}^*, \dots  S_{n,n}^*)\right] 
		  \leq   c \bb E_{\bb Q_{x}^{\alpha,*}} \left[ \widetilde{f}_{\varkappa_1}(S_{1}^*,  \dots, S_{n}^*)\right]. 
	\end{align*}
	In the following calculation, we trace the expression for a generic variable $S^*_{n-k}$. 
	From the definition of $\bb Q_{x}^{\alpha,*}$, the boundedness of $r_\alpha^*$ and  Lemma \ref{Lem-contractivity},
	 there exist $c', C_1 >0$ such that for any $x,y \in \bb S_+^{d-1}$, 
	\begin{align*}
	 & \bb E_{\bb Q_{x}^{\alpha,*}} \left[ \widetilde{f}_{\varkappa_1}(S_{1}^*,  \dots, S^*_{n-k}, \dots, S_{n}^*)\right]  \\
		& =	  \frac{1}{r_\alpha^*(x) }\bb E \left[ \sum_{|u| = n} r_\alpha^*(G_u^*\cdot x) e^{ \alpha \log \| G_u^*x\|} 
		\widetilde{f}_{\varkappa_1}( \dots, -\log \|G_{u|(n-k)}^* x\|, \dots  )  \right] \\
			& \le  	C_1  \frac{1}{r_\alpha^*(y) }\bb E \left[ \sum_{|u| = n} r_\alpha^*(G_u^*\cdot y) e^{ \alpha \log \| G_u^*y\|} 
		\widetilde{f} \left( \dots, -\log \|G_{u|(n-k)}^* y\|, \dots \right) \right] \\
		& \leq   c' \bb E_{\bb Q_{y}^{\alpha,*}}  
		\left[\widetilde{f}( \dots, -\log \|G_{u|(n-k)}^* y\|, \dots )\right]. 	
	\end{align*}
	Note that we have replaced $\widetilde{f}_{\varkappa_1}$ 
	by the larger function $\widetilde{f}$ in the penultimate line, employing Lemma \ref{Lem-contractivity}.
\end{proof}

\subsection{Proofs concerning the renewal theory for products of random matrices}
\label{subsect:renewaltheory}

We have the following approximative version of the duality lemma \cite[XII.2, p. 395]{Feller71} for the renewal measure of oscillating Markov random walks driven by products of positive matrices. 	Let $T_n^+$ (or $T_n^-$, respectively) be the weakly ascending (descending) ladder epochs for $S_n^*$, {\em i.e.}, we set 
$T_0^+ = T_0^- =0$ and define recursively
\begin{align}\label{def-Tn-plus-minus}
T_n^+ = \inf\{ j > T_{n-1}^+ \, :\, S_j^* \geq S_{T_{n-1}^+}^* \}, \qquad T_n^- = \inf\{ j > T_{n-1}^- \, :\, S_j^* \leq S_{T_{n-1}^-}^* \}.
\end{align}

\begin{lemma}\label{lem:dualityrenewal} 
Assume \ref{Condi-Furstenberg-Kesten} and \ref{Condi_ms2}.
 Then, there exist constants $c,C>0$ such that, for all $x,y \in \bb S^{d-1}_+$, $t \in \bb R$  and $a\ge0$, 
	\begin{align}
		& c \bb E_{\bb Q_{y}^{\alpha,*}} \left[ \sum_{j=0}^\infty \mathds{1}_{[t+c_1,t+a-c_1]} (S_{T_j^+}^*)    \right]
		\le \bb E_{\bb Q_{x}^\alpha} \left[ \sum_{n=0}^{\tau_{c_1}^--1} \mathds{1}_{[t,t+a]} (S_{n})    \right], 
		 \label{eq:lowerboundAscending} \\ 
		& \bb E_{\bb Q_{x}^\alpha} \left[ \sum_{n=0}^{\tau_{-c_1}^--1} \mathds{1}_{[t,t+a]} (S_{n})    \right]
		\le C \bb E_{\bb Q_{y}^{\alpha,*}} \left[ \sum_{j=0}^\infty \mathds{1}_{[t-c_1,t+a+c_1]} (S_{T_j^+}^*)    \right], \label{eq:upperboundAscending}
	\end{align}
	as well as
		\begin{align*}
		& c \bb E_{\bb Q_{y}^{\alpha,*}} \left[ \sum_{j=0}^\infty \mathds{1}_{[t+c_1,t+a-c_1]} (-S_{T_j^-}^*)    \right] 
		\le  \bb E_{\bb Q_{x}^\alpha} \left[ \sum_{n=0}^{\tau_{c_1}^+-1} \mathds{1}_{[t,t+a]} (-S_{n})    \right],  \\ 
		&  \bb E_{\bb Q_{x}^\alpha} \left[ \sum_{n=0}^{\tau_{-c_1}^+-1} \mathds{1}_{[t,t+a]} (-S_{n})    \right]
		\le C \bb E_{\bb Q_{y}^{\alpha,*}} \left[ \sum_{j=0}^\infty \mathds{1}_{[t-c_1,t+a+c_1]} (-S_{T_j^-}^*)    \right]. 
	\end{align*}
\end{lemma}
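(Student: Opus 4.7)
The strategy is to lift the classical duality argument \cite[XII.2]{Feller71} (which equates the killed renewal sum with the ascending-ladder renewal sum via the reversal $\widetilde S_k = S_n - S_{n-k}$) to the Markov setting, using Lemma \ref{Lem-appro-duality} in place of the one-dimensional distributional equivalence of $(\widetilde S_k)$ and $(S_k)$. The combinatorial observation I will rely on is: for a sequence $(x_0=0,x_1,\ldots,x_n)$, the condition $\{x_j\ge -c$ for $1\le j\le n\}$ is equivalent, via the reversal $\widetilde x_k := x_n - x_{n-k}$, to $\{\widetilde x_k \le \widetilde x_n + c$ for $0\le k< n\}$; in particular, $\widetilde x_n$ being a weak maximum of the reversed sequence corresponds to the killed event with $c=0$.

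For \eqref{eq:lowerboundAscending}, I would expand
\[
\bb E_{\bb Q_{y}^{\alpha,*}}\Bigl[\sum_{j=0}^\infty \mathds{1}_{[t+c_1,t+a-c_1]}(S_{T_j^+}^*)\Bigr] = \sum_{n=0}^\infty \bb E_{\bb Q_{y}^{\alpha,*}}\bigl[g_n(S_1^*,\ldots,S_n^*)\bigr],
\]
with $g_n(x_1,\ldots,x_n) := \mathds{1}_{[t+c_1,t+a-c_1]}(x_n)\prod_{k=1}^{n-1}\mathds{1}_{\{x_k\le x_n\}}$ encoding the weak-ladder condition at time $n$. Applying the second inequality of Lemma \ref{Lem-appro-duality} term by term gives the bound $C_1\sum_n \bb E_{\bb Q_x^\alpha}\bigl[\widetilde g_n(Y_1,\ldots,Y_n)\bigr]$ where $Y_k := S_n-S_{n-k}$. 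Direct computation of the sup defining $\widetilde g_n$ shows that $\widetilde g_n$ vanishes unless $x_n\in[t,t+a]$ (the $c_1$-shrunken interval $[t+c_1,t+a-c_1]$ is inflated by the $c_1$-box) and $x_k \le x_n+O(c_1)$ for $1\le k<n$ (the weak-ladder condition is relaxed by an $O(c_1)$ fluctuation). Via the reversal, this latter condition becomes $S_{n-k}\ge -O(c_1)$, i.e., $\tau_{c_1}^->n-1$, after possibly redefining the generic constant $c_1$ to absorb the factor. Summing over $n$ yields \eqref{eq:lowerboundAscending}. The upper bound \eqref{eq:upperboundAscending} runs the argument in reverse: starting from the killed sum on the left and applying the first inequality of Lemma \ref{Lem-appro-duality}, the $c_1$-fluctuation enlarges $[t,t+a]$ to $[t-c_1,t+a+c_1]$, and the tightened condition $Y_k\le Y_n-c_1$ (coming from $\tau_{-c_1}^-$) translates to a weak-ladder-type bound for $S^*$. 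The descending ladder versions follow verbatim by replaying the argument for $-S_n$, which interchanges ascending and descending ladder epochs and swaps $\tau^+\leftrightarrow\tau^-$.

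The principal obstacle is the careful bookkeeping of the $c_1$-shifts: each application of the sup defining $\widetilde f$ perturbs every coordinate by $\pm c_1$, so the interval endpoints, the ladder conditions, and the killing thresholds must be chosen consistently so that these shifts cancel in the final bound (with $c_1$ possibly redefined up to a bounded factor). A secondary subtlety in \eqref{eq:upperboundAscending} is that the direct output of the approximate duality is a sum over $c_1$-tolerant weak-ladder times (indices $n$ such that $S_n^* \ge \max_{k\le n} S_k^* - c_1$) rather than over the actual weak-ladder epochs $T_j^+$ appearing in the statement; bounding the expected number of $c_1$-tolerant weak-ladder times per actual ladder epoch requires a separate Markov renewal estimate, either invoking Lemma \ref{Lem-Renewal-thm-bb} or iterating the approximate duality. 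Finally, the boundary term $n=0$ (where $S_0=S_0^*=0$ and $T_0^+=0$) must be checked separately for consistency with the stated intervals.
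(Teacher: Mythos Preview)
Your approach mirrors the paper's: both apply Lemma \ref{Lem-appro-duality} term by term to the indicator of the killed event, then use the path-reversal identity to recognize the weak-ascending-ladder condition on the dual side. The paper's proof is a terse sketch focused on \eqref{eq:upperboundAscending}; it asserts the passage from $\{S_k \ge c_1\ \forall k\le n\}$ directly to $\{S_n^* \ge \max_{k<n} S_k^*\}$ after one application of approximate duality, and says the lower bound and descending variants follow ``along the same lines.''

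Your ``secondary subtlety'' is genuine, and in fact pinpoints a slip in the paper's sketch. Encoding the killed indicator as $f(z_1,\dots,z_n)=\mathds{1}_A(z_n)\prod_{j=0}^{n-1}\mathds{1}_{\{z_n-z_j\ge c_1\}}$ (with the convention $z_0=0$), one computes that $\widetilde f(x)\le\mathds{1}_{\widetilde A}(x_n)\,\mathds{1}_{\{x_n\ge0\}}\prod_{j\ge1}\mathds{1}_{\{x_n-x_j\ge -c_1\}}$: a difference $z_n-z_j$ can move by up to $2c_1$ under the $c_1$-box sup, so the output is only the $c_1$-tolerant ladder condition, not the exact one $\{x_n\ge x_j\}$ the paper writes. (The $j=0$ term is special because $z_0=0$ is fixed, not perturbed; this gives the exact $x_n\ge0$.) The same $2c_1$-versus-$c_1$ mismatch appears in the lower-bound direction.

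Your proposed remedies (a separate Markov renewal estimate via Lemma \ref{Lem-Renewal-thm-bb}, or iterating the duality) would work but are heavier than needed. The cleanest fix is the one you allude to under ``$c_1$ possibly redefined'': replace $c_1$ by $2c_1$ in the stopping-time subscripts (and, for \eqref{eq:lowerboundAscending}, shrink the interval by $2c_1$). With that adjustment the paper's one-line computation goes through verbatim, and all downstream uses (Proposition \ref{thm:boundrenewalfct}, Theorems \ref{Thm-ladder-height} and \ref{Thm-Green-function}) are insensitive to the particular constant. Your remaining bookkeeping remarks (the $n=0$ boundary term, the descending variants by sign flip) are routine and correct.
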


\begin{proof} We focus on the upper bound \eqref{eq:upperboundAscending}. 
	For any Borel set $A \subset \bb R$, it holds by an appeal  to Lemma \ref{Lem-appro-duality}, upon defining $\widetilde{A}=\{y \in \bb R \, : \, |y-x| \le c_1 \text{ for some } x \in A\}$,
	\begin{align*}
		& \bb Q_x^\alpha \Big( S_n \in A, \tau_{-c_1}^- >n \Big) 
		= \bb Q_x^\alpha \Big( S_n \in A, - c_1 + S_k \geq 0, \ \forall \ 1\le k \le n \Big)\\
		& \leq C_1 \bb Q_y^{\alpha,*} \Big( S_{n}^* \in \widetilde{A}, \  S_{n}^*-S_{n-k}^* \geq 0, \ \forall \ 1\le k \le n \Big)
		= C_1  \bb Q_y^{\alpha,*} \Big( S_{n}^* \in \widetilde{A}, \  S_{n}^* \geq  \max_{0\le k < n} S_{k}^*   \Big) \\
		&= C  \bb Q_y^{\alpha,*} \Big( S_{n}^* \in \widetilde{A}, \  n=T_j^+ \text{ for some  } j \ge 1   \Big)  
		= C \sum_{j=0}^\infty  \bb Q_y^{\alpha,*} \Big( S_{n}^* \in \widetilde{A}, \  n=T_j^+    \Big).
	\end{align*}
Summing over $n$, we obtain the assertion. 
The lower bound \eqref{eq:lowerboundAscending} follows along the same lines; 
as does the corrresponding result for the descending ladder process.
\end{proof}

The next lemma is a standard result in renewal theory (see e.g.  
\cite[Theorem V.2.4(iii)]{Asm03}) and provides a uniform bound for the renewal measure of intervals of a given length.
Note that the assertions of the lemma would be valid for any underlying probability measure,
and \ref{Condi-Furstenberg-Kesten} and \ref{Condi_ms2} appear here only to ensure the existence of $\bb Q_{x}^{\alpha,*}$. 

\begin{lemma}\label{lem:uniformboundRenewalMeasure} 
Assume \ref{Condi-Furstenberg-Kesten} and \ref{Condi_ms2}.
For any  $x \in \bb S^{d-1}_+$, $a>0$ and $t\in \bb R$, it holds
	$$ \bb E_{\bb Q_{x}^{\alpha,*}} \left[ \sum_{n=0}^\infty \mathds{1}_{[t,t+a]} (S_{T_n^+}^*)   \right]  \leq \sup_{y \in \bb S^{d-1}_+} \bb E_{\bb Q_{y}^{\alpha,*}} \left[ \sum_{n=0}^\infty \mathds{1}_{[0,a]} (S_{T_n^+}^*)   \right]$$
	as well as
		$$ \bb E_{\bb Q_{x}^{\alpha,*}} \left[ \sum_{n=0}^\infty \mathds{1}_{[t,t+a]} (-S_{T_n^-}^*)   \right]  \leq \sup_{y \in \bb S^{d-1}_+} \bb E_{\bb Q_{y}^{\alpha,*}} \left[ \sum_{n=0}^\infty \mathds{1}_{[0,a]} (-S_{T_n^-}^*)   \right].$$
\end{lemma}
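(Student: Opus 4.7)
The lemma is a standard first-passage reduction in renewal theory, so the plan is to exploit monotonicity of the ladder heights together with the strong Markov property of the Markov random walk $(X_n^*, S_n^*)$. I will prove the ascending case in detail; the descending case is symmetric upon replacing $S_n^*$ with $-S_n^*$ and noting that $(-S_{T_n^-}^*)_{n \ge 0}$ is likewise non-decreasing by the definition \eqref{def-Tn-plus-minus} of weakly descending ladder epochs. Write $U(x, A) := \bb E_{\bb Q_x^{\alpha,*}}\bigl[\sum_{n=0}^\infty \mathds{1}_A(S_{T_n^+}^*)\bigr]$ for the ascending ladder renewal measure. The target inequality is $U(x, [t, t+a]) \le \sup_y U(y, [0, a])$.

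First, I will observe that since $T_n^+$ are weakly ascending ladder times, the sequence $(S_{T_n^+}^*)_{n \ge 0}$ is non-decreasing. Set
\[
\sigma := \inf\{n \ge 0 \, : \, S_{T_n^+}^* \ge t\}.
\]
By monotonicity, no ladder height of index $n < \sigma$ can lie in $[t, t+a]$, so
\[
\sum_{n=0}^\infty \mathds{1}_{[t,t+a]}(S_{T_n^+}^*) \;=\; \mathds{1}_{\{\sigma < \infty\}} \sum_{n=0}^\infty \mathds{1}_{[t,t+a]}(S_{T_{\sigma+n}^+}^*).
\]

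Next, I will invoke the strong Markov property of $(X_n^*, S_n^*)$ at the stopping time $T_\sigma^+$. Since $\bb Q_x^{\alpha,*}$ is constructed (cf.\ \eqref{def:Qsstarxn}) as the projective limit of a transition kernel on $\bb S_+^{d-1}$ with $S_n^*$ an additive cocycle, conditionally on $\{\sigma < \infty\}$ and $(X_{T_\sigma^+}^*, S_{T_\sigma^+}^*) = (y, s)$, the shifted process $(X_{T_\sigma^+ + k}^*, S_{T_\sigma^+ + k}^* - s)_{k \ge 0}$ has law $\bb Q_y^{\alpha,*}$. Hence
\[
\bb E_{\bb Q_x^{\alpha,*}}\Big[\sum_{n=0}^\infty \mathds{1}_{[t,t+a]}(S_{T_{\sigma+n}^+}^*) \,\Big|\, X_{T_\sigma^+}^* = y,\, S_{T_\sigma^+}^* = s\Big] \;=\; U\bigl(y, [t-s, t+a-s]\bigr).
\]
On $\{\sigma < \infty\}$ we have $s \ge t$, hence $t-s \le 0$; moreover $S_{T_n^+}^* \ge S_{T_0^+}^* = 0$ under $\bb Q_y^{\alpha,*}$, so $[t-s, t+a-s] \cap [0,\infty) \subset [0, a]$. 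Therefore
\[
U\bigl(y, [t-s, t+a-s]\bigr) \;\le\; U(y, [0, a]) \;\le\; \sup_{z \in \bb S_+^{d-1}} U(z, [0, a]).
\]

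Integrating and using $\bb Q_x^{\alpha,*}(\sigma < \infty) \le 1$ yields the conclusion. The descending case follows verbatim with $\sigma := \inf\{n \ge 0 \, : \, -S_{T_n^-}^* \ge t\}$. There is no substantive obstacle beyond a careful bookkeeping of the strong Markov property in this Markov random walk setting; both the monotonicity of ladder heights (automatic from \eqref{def-Tn-plus-minus}) and the Markov structure of $\bb Q_y^{\alpha,*}$ (inherent in \eqref{def:Qsstarxn}) are already in place.
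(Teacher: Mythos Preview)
Your proof is correct and follows essentially the same approach as the paper: introduce a first-passage stopping time for the ladder-height sequence, use monotonicity to restrict the sum to indices $\ge \sigma$, and apply the strong Markov property to recenter. The only cosmetic difference is your choice $\sigma = \inf\{n \ge 0 : S_{T_n^+}^* \ge t\}$, whereas the paper takes $\sigma = \inf\{n : S_{T_n^+}^* \in [t,t+a]\}$; both choices yield $S_{T_\sigma^+}^* \ge t$ on $\{\sigma<\infty\}$, which is all the recentering step needs.
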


\begin{proof} For brevity, write $Z_n^*:=S_{T_n^+}^*$ or $Z_n^*:=-S_{T_n^-}^*$ and note that the process $Z_n^*$ is hence ascending.
	Let $\sigma:= \inf \{ n \, :\, Z_n^* \in [t,t+a] \}$ be the first time $Z_n^*$ enters $[t,t+a]$. Then 
\begin{align*}
 & \bb E_{\bb Q_{x}^{\alpha,*}} \left[ \sum_{n=0}^\infty \mathds{1}_{[t,t+a]} (Z_n^*)  \right]   
	 =  \bb E_{\bb Q_{x}^{\alpha,*}}  \left[ \mathds{1}_{\{\sigma<\infty\}} \sum_{n=\sigma}^\infty \mathds{1}_{[t,t+a]} (Z_{n}^*)    \right] \\
& = \bb E_{\bb Q_{x}^{\alpha,*}}  \left[ \mathds{1}_{\{\sigma<\infty\}} \sum_{j=0}^\infty \mathds{1}_{[t,t+a]} (Z_{\sigma+j}^*)    \right] 
		\le \bb E_{\bb Q_{x}^{\alpha,*}}  \left[ \mathds{1}_{\{\sigma<\infty\}} \sum_{j=0}^\infty \mathds{1}_{[0,a]} (Z_{\sigma+j}^*-Z_{\sigma}^*)    \right] \\
& \le \bb E_{\bb Q_{x}^{\alpha,*}} \left(  \mathds{1}_{\{\sigma<\infty\}}\bb  E_{\bb Q_{X_{\sigma}^*}^{\alpha,*}}  \left[ \sum_{j=0}^\infty \mathds{1}_{[0,a]} (Z_j^*)   \right] \right) 
		 \le \sup_{y \in \bb S_+^{d-1}} \bb  E_{\bb Q_{y}^{\alpha,*}}  \left[ \sum_{j=0}^\infty \mathds{1}_{[0,a]} (Z_{j}^*)   \right] .
	\end{align*}
\end{proof}

The following proposition provides upper bounds for the renewal function of the ladder process in the oscillating case. 

\begin{proposition}\label{thm:boundrenewalfct}
	Assume \ref{Condi-Furstenberg-Kesten}, \ref{CondiNonarith} and \ref{Condi_ms2}. 
	Then, there exists a constant $C$ such that for all $x \in \bb S_+^{d-1}$, $t \in \bb R$ and $a >0$, 
\begin{equation}\label{eq:bound renewal ascending}
		\bb E_{\bb Q_{x}^\alpha} \left[ \sum_{n=0}^{\tau_{-c_1}^--1} \mathds{1}_{[t,t+a]} (S_{n})    \right] 
		\leq C \bb E_{\bb Q_{x}^{\alpha,*}} \left[ \sum_{j=0}^\infty \mathds{1}_{[t-c_1,t+a+c_1]} (S_{T_j^+}^*)    \right] 
		\leq C  \max\{a, 1\}
\end{equation}
	as well as
\begin{equation}\label{eq:bound renewal descending}
		\bb E_{\bb Q_{x}^\alpha} \left[ \sum_{n=0}^{\tau_{-c_1}^+-1} \mathds{1}_{[t,t+a]} (-S_{n})    \right] 
		\leq C \bb E_{\bb Q_{x}^{\alpha,*}} \left[ \sum_{j=0}^\infty \mathds{1}_{[t-c_1,t+a+c_1]} (-S_{T_j^-}^*)    \right] 
		\leq C \max\{a, 1\}. 
\end{equation}
\end{proposition}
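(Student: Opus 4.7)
The first inequality in each of \eqref{eq:bound renewal ascending} and \eqref{eq:bound renewal descending} is nothing but the upper bound assertion of Lemma \ref{lem:dualityrenewal}, so the only work left is to prove the uniform bound
$$
 \bb E_{\bb Q_{x}^{\alpha,*}} \left[ \sum_{j=0}^\infty \mathds{1}_{[t-c_1,t+a+c_1]} (S_{T_j^+}^*)    \right] \leq C\max\{a,1\}
$$
(and its descending analog) uniformly in $x \in \bb S^{d-1}_+$ and $t \in \bb R$. My plan is to first eliminate the translation parameter $t$, then transfer the question back to the original process via approximate duality, and finally invoke the already-proved Lemma \ref{Lem-Renewal-thm}.

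For the first step, Lemma \ref{lem:uniformboundRenewalMeasure} (with interval $[0,a+2c_1]$) gives
$$
\bb E_{\bb Q_{x}^{\alpha,*}} \left[ \sum_{j=0}^\infty \mathds{1}_{[t-c_1,t+a+c_1]} (S_{T_j^+}^*)    \right] \leq \sup_{y \in \bb S_+^{d-1}} \bb E_{\bb Q_{y}^{\alpha,*}}\left[\sum_{j=0}^\infty \mathds{1}_{[0, a+2c_1]}(S_{T_j^+}^*)\right],
$$
so it suffices to bound the right-hand side by $C\max\{a,1\}$. For the second step, I apply the first (lower bound) inequality of Lemma \ref{lem:dualityrenewal} with the parameters $t = -c_1$ and $a_{\text{lem}} = a+4c_1$; then $[t+c_1, t+a_{\text{lem}}-c_1] = [0, a+2c_1]$, and the lemma produces, for any $z \in \bb S^{d-1}_+$,
$$
c \,\bb E_{\bb Q_{y}^{\alpha,*}} \left[\sum_{j=0}^\infty \mathds{1}_{[0, a+2c_1]}(S_{T_j^+}^*)\right] \le \bb E_{\bb Q_{z}^\alpha}\left[\sum_{n=0}^{\tau_{c_1}^- -1} \mathds{1}_{[-c_1, \, a+3c_1]}(S_n)\right].
$$
For the third step, rewriting the indicator as $\mathds{1}_{[0,a+4c_1]}(c_1+S_n)$ and applying Lemma \ref{Lem-Renewal-thm} with its parameters $y = c_1$ and $z = a+4c_1$ yields
$$
\bb E_{\bb Q_{z}^\alpha}\left[\sum_{n=0}^{\tau_{c_1}^- -1} \mathds{1}_{[-c_1, \, a+3c_1]}(S_n)\right] \leq c\,(1+a+4c_1)\,(1+\min\{c_1,\,a+4c_1\}) \leq C\max\{a,1\},
$$
which combined with the two previous inequalities concludes \eqref{eq:bound renewal ascending}.

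The descending bound \eqref{eq:bound renewal descending} is obtained by the identical argument using the ``descending'' halves of Lemmas \ref{lem:dualityrenewal} and \ref{lem:uniformboundRenewalMeasure} together with the second inequality of Lemma \ref{Lem-Renewal-thm} (applied to $\tau_y^+$ in place of $\tau_y^-$). The only subtlety in the whole proof is the bookkeeping for the $c_1$-shifts inherent to the approximate duality: these force a slight enlargement of the test interval but have no effect on the linear-in-$a$ bound. No genuinely new probabilistic input is required beyond the three lemmas cited above.
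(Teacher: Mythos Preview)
Your proof is correct and follows essentially the same route as the paper: approximate duality (Lemma \ref{lem:dualityrenewal}) for the first inequality, Lemma \ref{lem:uniformboundRenewalMeasure} to eliminate $t$, then the lower bound of Lemma \ref{lem:dualityrenewal} to return to the original process. The only difference is cosmetic: the paper reduces further to a unit interval and then sums the conditioned local limit bound of Lemma \ref{Lem-CLLT-bound} directly, whereas you invoke the already-packaged Lemma \ref{Lem-Renewal-thm} on the $a$-dependent interval, which is a legitimate shortcut since that lemma encapsulates exactly the same CLLT-plus-summation argument.
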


\begin{proof} We first prove \eqref{eq:bound renewal ascending}.
	Lemma \ref{lem:dualityrenewal} compares the direct process, killed when becoming negative, with the ascending ladder heights for the reversed process and provides us with the first inequality,
	$$\bb E_{\bb Q_{x}^\alpha} \left[ \sum_{n=0}^{\tau_{-c_1}^- -1} \mathds{1}_{[t,t+a]} (S_{n})    \right] \le C \bb E_{\bb Q_{x}^{\alpha,*}} \left[ \sum_{j=0}^\infty \mathds{1}_{[t-c_1,t+a+c_1]} (S_{T_j^+}^*)    \right].$$
	Next,  applying Lemma \ref{lem:uniformboundRenewalMeasure} twice allows to bound this quantity uniformly in $t \in \bb R$,
	\begin{align*}
\bb E_{\bb Q_{x}^{\alpha,*}} \left[ \sum_{j=0}^\infty \mathds{1}_{[t-c_1,t+a+c_1]} (S_{T_j^+}^*)   \right] 
& ~\le~ \sup_{y \in \bb S^{d-1}_+} \bb E_{\bb Q_{y}^{\alpha,*}} \left[ \sum_{j=0}^\infty \mathds{1}_{[0,a+2c_1]} (S_{T_j^+}^*)   \right] \\
& \leq  ( \floor{a + 2 c_1} + 1)
 \sup_{y \in \bb S^{d-1}_+} \bb E_{\bb Q_{y}^{\alpha,*}} \left[ \sum_{j=0}^\infty \mathds{1}_{[0, 1]} (S_{T_j^+}^*)   \right], 
\end{align*}
where $\floor{a'}$ denotes the integer part of $a'$. 
	We then again apply Lemma \ref{lem:dualityrenewal} to return to the direct process: for any $x,y \in \bb S^{d-1}_+$,	
\begin{align*}
 \bb  E_{\bb Q_{y}^{\alpha,*}}  \left[ \sum_{j=0}^\infty \mathds{1}_{[0, 1]} (S_{T_j^+}^*)   \right] 
 & \le C \bb  E_{\bb Q_{x}^{\alpha}}  \left[ \sum_{k=0}^{\tau_{c_1}^-  -1} \mathds{1}_{[-c_1, 1 + c_1]} (S_k)   \right] \\
& =  C \bb  E_{\bb Q_{x}^{\alpha}}  \left[ \sum_{k=0}^\infty \mathds{1}_{[0, 1 + 2c_1]} (S_k+c_1) \mathds{1}_{\{\tau_{c_1}^- >k\}}   \right]. 
\end{align*}
	Using Lemma \ref{Lem-CLLT-bound} with $y=c_1$, $z= 1 + 2c_1$ and $t = 0$, 
	\begin{align*}
		\bb Q_{x}^{\alpha} \left( c_1 + S_k \in [0, 1 + 2c_1], \tau_{c_1}^- > k \right)
		\leq  \frac{c}{k^{3/2}} \left( 1 +  c_1 \right) (1 + 2c_1)^2  \le  \frac{C}{k^{3/2}}.
	\end{align*}
	It holds $\sum_{k=1}^\infty \frac{1}{k^{3/2}} \le 3$; so we finally obtain that for all $t \in \bb R$, there exsits a constant $C>0$ such that
	$$ \bb E_{\bb Q_{x}^\alpha} \left[ \sum_{n=0}^{\tau_{-c_1}^- -1} \mathds{1}_{[t,t+a]} (S_{n})    \right]
	\leq C ( \floor{a + 2 c_1} + 1) \le C'  \max\{a, 1\},$$
	which completes the proof of \eqref{eq:bound renewal ascending}. 
	
	The proof of \eqref{eq:bound renewal descending} goes along similar lines by using the estimate
	\begin{align*}
		\bb Q_{x}^{\alpha} \left( c_1 - S_k \in [0, 1 + 2c_1], \tau_{c_1}^+ > k \right)
		\leq    \frac{c}{k^{3/2}} \left( 1 +  c_1 \right) (1 + 2c_1)^2  \le  \frac{C}{k^{3/2}} 
	\end{align*}
	provided by Lemma \ref{Lem-CLLT-bound}. 
\end{proof}

\begin{lemma}\label{Lem-second-sum-Renewal}
For all $y \geq 0$, there exists a constant $C = C(y) >0$ such that for any $a >0$ and $s \in \bb R$, 
\begin{align*}
\bb E_{\bb Q_{x}^\alpha} \left[ \sum_{n=\tau_{-c_1}^-}^{\tau_{y}^--1} \mathds{1}_{[s, s+a]} (S_{n})    \right] \le C \max\{a, 1\}. 
\end{align*}
\end{lemma}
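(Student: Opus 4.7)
The plan is to prove the stronger uniform-in-$s$ bound
\begin{align*}
	\bar F(M, a) := \sup_{x \in \bb S_+^{d-1},\ s \in \bb R}\, \bb E_{\bb Q_x^\alpha} \bigg[ \sum_{n=0}^{\tau_M^- - 1} \mathds{1}_{[s, s+a]}(S_n) \bigg] \leq C(M)\, \max\{a, 1\}, \qquad M \geq 0,
\end{align*}
from which the lemma follows, taking $M=y$, since $\sum_{n=\tau_{-c_1}^-}^{\tau_y^- - 1} \leq \sum_{n=0}^{\tau_y^- - 1}$ and $\bar F(\cdot, a)$ is monotone non-decreasing in $M$ (a deeper killing threshold produces a longer trajectory).

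The engine is a strong-Markov iteration at the stopping time $\tau_{c_1}^-$, the first time $S_n<-c_1$. On the event $\{\tau_{c_1}^- < \tau_M^-\}$ the overshoot $S_{\tau_{c_1}^-}<-c_1$ guarantees that, after translating the post-$\tau_{c_1}^-$ trajectory to restart at the origin, the residual killing threshold is $-M'$ with $M' = M + S_{\tau_{c_1}^-} < M - c_1$. Combining the strong Markov property with the monotonicity of $\bar F$ yields the recursion
\begin{align*}
	\bar F(M, a) \leq \bb E_{\bb Q_x^\alpha} \bigg[ \sum_{n=0}^{\tau_{c_1}^- - 1} \mathds{1}_{[s, s+a]}(S_n) \bigg] + \bar F(M - c_1, a) \qquad (M > c_1),
\end{align*}
while for $M \leq c_1$ the inequality $\tau_M^- \leq \tau_{c_1}^-$ reduces $\bar F(M, a)$ to the first term alone.

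The first term is bounded uniformly in $x$ and $s$ by $C \max\{a,1\}$, via the same duality scheme used in the proof of Proposition \ref{thm:boundrenewalfct}. The approximate duality of Lemma \ref{Lem-appro-duality} converts the event $\{\tau_{c_1}^- > n\} = \{ S_k \geq -c_1,\ 1 \leq k \leq n\}$ into the dual-walk event $\{M_n^* - S_n^* \leq 3 c_1\}$, where $M_n^* := \max_{0 \leq k \leq n} S_k^*$. Decomposing the resulting dual expectation at the ascending ladder epochs $T_j^+$ of $S^*$, the constraint restricts the dual walk during each excursion to a strip of \emph{constant} width $O(c_1)$ below the current ladder height, so each excursion contributes uniformly $O(1)$ visits to any length-$a$ window (by Proposition \ref{thm:boundrenewalfct} applied to $S^*$); Lemma \ref{lem:uniformboundRenewalMeasure} then bounds the number of relevant ladder heights, namely those falling in an interval of length $a + O(c_1)$, by $O(\max\{a,1\})$. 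Iterating the recursion $\lceil M/c_1 \rceil$ times gives
\begin{align*}
	\bar F(M, a) \leq \big( \lceil M/c_1 \rceil + 1 \big)\, C \max\{a, 1\},
\end{align*}
and taking $M = y$ produces the lemma with $C(y) := (\lceil y/c_1 \rceil + 1)\, C$.

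The main obstacle is the uniform-in-$s$ bound on the first term. A direct use of Lemma \ref{Lem-Renewal-thm-bb} yields an estimate that grows with $|s|$; the merit of the duality route above is precisely that the resulting depth-from-maximum constraint on the dual walk is of constant order $O(c_1)$, which keeps the per-excursion contribution uniformly $O(1)$ rather than $O(a)$ and thereby produces the desired uniform bound.
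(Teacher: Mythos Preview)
Your recursion-plus-duality route is genuinely different from the paper's proof. The paper reduces to $a=1$ and then splits on the value of $s$. For $s \le b+1$ (where $b$ comes from the martingale approximation $|S_n - M_n| \le b$ supplied by \cite{GLP17,Pha18}), Lemma~\ref{Lem-Renewal-thm} handles the sum directly. For $s > b+1$, the paper introduces the two-sided exit time $T_s = \inf\{n \ge 1: S_n > s \text{ or } S_n < -y\}$, uses optional stopping on $M_n$ to obtain $\bb Q_x^\alpha(S_{T_s} > s) \le c/s$, and then combines this with the $O(s+y)$ bound of Lemma~\ref{Lem-Renewal-thm-bb} via the strong Markov property at $T_s$. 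Your approach dispenses with the martingale approximation and the case analysis on $s$, trading them for heavier use of the duality apparatus of Section~\ref{Sec-Duality}; this makes the uniformity in $s$ structural rather than case-by-case.

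There is, however, a gap in your justification of the base case $\bar F(c_1,a)$. The per-excursion claim --- that between consecutive ascending ladder epochs $T_j^+ < T_{j+1}^+$ of $S^*$ the dual walk makes uniformly $O(1)$ expected visits to the strip $[S_{T_j^+}^* - 3c_1, S_{T_j^+}^*]$ --- amounts, after restarting at $T_j^+$, to bounding
\begin{align*}
	\sup_{z \in \bb S_+^{d-1}}\ \bb E_{\bb Q_z^{\alpha,*}}\bigg[\sum_{k=0}^{\hat T_1 - 1} \mathds{1}_{[-3c_1,\, 0]}(S_k^*)\bigg], \qquad \hat T_1 := \inf\{k \ge 1 : S_k^* \ge 0\}.
\end{align*}
Proposition~\ref{thm:boundrenewalfct}, even in an $S^*$-version, only controls visits up to $\tau_{-c_1}^{*,+} = \inf\{k \ge 1: S_k^* > -c_1\}$, which is in general \emph{strictly smaller} than $\hat T_1$; so that citation does not deliver the bound you need. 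What does work is the $S^*$-analogue of Lemma~\ref{Lem-Renewal-thm} (second inequality) with $y=0$ and $z=3c_1$: since $\hat T_1 \le \tau_0^{*,+}$, this yields the uniform constant $c(1+3c_1)$. With that correction, your ladder decomposition gives $\bar F(c_1,a) \le C\max\{a,1\}$ and the recursion in $M$ produces the lemma with $C(y) = (\lceil y/c_1\rceil + 1)\,C$.
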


\begin{proof}
By additivity, it suffices to prove the assertion for $a = 1$. 
By \cite[Lemma 4.3]{GLP17} or \cite[Proposition 2.6]{Pha18}, 
 there is a martingale approximation  for $S_n$, i.e. there is a mean zero martingale $(M_n, \scr F_n)$ 
 such that $|S_n - M_n| \leq b$, $\bb Q_{x}^\alpha$-a.s.
 The burden of the proof will be $s>b+1$, we first rule out the simpler cases where $s \leq b+1$. 

When $s < -y -1$, the sum is empty. 

When $s \in [-y -1, b+1]$, we use Lemma \ref{Lem-Renewal-thm} to get that 
\begin{align*}
\bb E_{\bb Q_{x}^\alpha} \left[ \sum_{n=\tau_{-c_1}^-}^{\tau_{y}^--1} \mathds{1}_{[s, s+1]} (S_{n})    \right]
\leq \bb E_{\bb Q_{x}^\alpha} \left[ \sum_{n=0}^{\tau_{y}^--1} \mathds{1}_{[-y -1, \, b+2]} (S_{n})    \right]
\leq  C(y). 
\end{align*}

When  $s > b+1$, 
we introduce $T_s = \inf\{n \geq 1: S_n > s \ \mbox{or} \ S_n < -y\}$.
First we prove that there exists $c>0$ such that for any $s >b+1$, 
\begin{align}\label{bound-proba-STs}
\bb Q_{x}^\alpha (S_{T_s} > s) \leq  \frac{c}{s}. 
\end{align}
 Consider $\widetilde{T}_s = \inf\{n \geq 1: M_n > s - b  \ \mbox{or} \ M_n < -y - b  \}$. 
 Then we have 
 \begin{align*}
\bb Q_{x}^\alpha (S_{T_s} > s) \leq \bb Q_{x}^\alpha (M_{\widetilde{T}_s} > s - b). 
\end{align*}
 Using the optional stopping theorem, we get 
 \begin{align*}
0 = \bb E_{\bb Q_{x}^\alpha} M_{\widetilde{T}_s}  
& =   \bb E_{\bb Q_{x}^\alpha} M_{\widetilde{T}_s}  \mathds 1_{ \{ M_{\widetilde{T}_s} > s - b \} }
  + \bb E_{\bb Q_{x}^\alpha} M_{\widetilde{T}_s}  \mathds 1_{ \{ M_{\widetilde{T}_s} <  -y - b \} }  \notag\\
  & \geq   (s - b) \bb Q_{x}^\alpha (M_{\widetilde{T}_s} > s - b) 
  + \bb E_{\bb Q_{x}^\alpha} M_{\widetilde{T}_s}  \mathds 1_{ \{ M_{\widetilde{T}_s} <  -y -b \} }. 
\end{align*}
Hence, for any $s>b+1$, 
\begin{align*}
\bb Q_{x}^\alpha (M_{\widetilde{T}_s} > s - b)  
\leq \frac{1}{s-b}  \bb E_{\bb Q_{x}^\alpha} (- M_{\widetilde{T}_s})  \mathds 1_{ \{ M_{\widetilde{T}_s} <  -y -b \} }. 
\end{align*}
Now let $\widehat T_{c} =  \inf\{n \geq 1:  M_n < -c  \}$. Then 
\begin{align*}
\bb E_{\bb Q_{x}^\alpha} (- M_{\widetilde{T}_s})  \mathds 1_{ \{ M_{\widetilde{T}_s} <  -y -b \} }
=  \bb E_{\bb Q_{x}^\alpha} (- M_{ \widehat T_{y + b} })  \mathds 1_{ \{  \widetilde{T}_s = \widehat T_{y + b} \} } 
\leq   \bb E_{\bb Q_{x}^\alpha} (- M_{ \widehat T_{y + b} })
\leq  c (1 + y + b),
\end{align*}
by using \cite[Lemma 5.8]{GLP17} or  \cite[Proposition 3.6]{Pha18}. 
Combining the above estimates, we have 
 \begin{align*}
\bb Q_{x}^\alpha (S_{T_s} > s) \leq \bb Q_{x}^\alpha (M_{\widetilde{T}_s} > s - b)
\leq  \frac{c (1 + y + b)}{s-b},
\end{align*}
which proves \eqref{bound-proba-STs} for $s > b+1$. 

Then, using the strong Markov property, Lemma \ref{Lem-Renewal-thm-bb} (with $y'=y + s$, $t = s + y - 1$ and $z = 2$)
and \eqref{bound-proba-STs}, we get that for $s > b+1$, 
\begin{align*}
\bb E_{\bb Q_{x}^\alpha} \left[ \sum_{n=\tau_{-c_1}^-}^{\tau_{y}^--1} \mathds{1}_{[s, s+1]} (S_{n})    \right]
& \leq  \bb E_{\bb Q_{x}^\alpha} \left[ \mathds{1}_{ \{ S_{T_s} > s \} }
 \bb E_{\bb Q_{X_{T_s}}^\alpha} \left( \sum_{n=0}^{\infty} \mathds{1}_{[-1, 1]} (S_{n})  \mathds{1}_{  \{\tau_{y+s}^- > n\} }     \right) \right]
 \notag\\
 & \leq  C (s + y + 2)  \bb Q_{x}^\alpha (S_{T_s} > s)  \notag\\
 & \leq  C(y),
\end{align*}
which shows the assertion of the lemma. 
\end{proof}

\begin{proof}[Proof of Theorem \ref{Thm-ladder-height}]
The first inequality follows from \eqref{eq:bound renewal ascending} together with Lemma \ref{Lem-second-sum-Renewal}. 
The second one is obtained from \eqref{eq:bound renewal ascending} by exchanging the roles of the original and the dual processes. 
\end{proof}

\subsection{Spitzer identity and proof of Theorem \ref{Thm-Green-function}}

Let $c_1$ be as in \eqref{eq:c1}, and recall the definition \eqref{eq:ftilde} of $\widetilde{f}$ for a nonnegative function $f$ and the definition \eqref{def-tau-y-plus-minus} of $\tau_y^-$.
The following proposition is a replacement of the well-known Spitzer identity for the joint distribution of partial sums and their maxima.

\begin{proposition} \label{thm:SparreAndersen} 
Assume \ref{Condi-Furstenberg-Kesten} and \ref{Condi_ms2}.
	For any $n \geq 0$, set
	\begin{align}\label{def-Ln}
		L_n = \min_{0 \leq k \leq n} S_k. 
	\end{align}
	There is a constant $C \in (0,\infty)$ such that for any $x \in \bb S_+^{d-1}$ and all non-negative measurable functions $\phi, h : \bb R \to [0,\infty) $ it holds 	
	\begin{align}\label{green-function-product}
		\sum_{n = 0}^{\infty} \bb E_{\bb Q_{x}^\alpha} \left[  \phi(L_n) h(S_n - L_n) \right]
		\leq C  \left( \sum_{n = 0}^{\infty}   \bb E_{\bb Q_{x}^{\alpha,*}} \left[  \widetilde{\phi}(S_n^*)  \mathds 1_{ \{ \tau^*_{c_1}  > n  \} }   \right]  \right)
		\left( \sum_{n = 0}^{\infty}   \bb E_{\bb Q_{x}^\alpha} \left[ \widetilde{h} (S_{n})  \mathds 1_{ \{ \tau_{c_1}  > n \} }  \right] \right), 
	\end{align}
	where  $\tau_y^* := \min \{ j \geq 1: S_j^*-y > 0 \}$. 
\end{proposition}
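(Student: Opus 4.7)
The plan is to adapt the classical cycle-lemma proof of the Spitzer identity to the Markovian setting, by decomposing each path $(S_0, \dots, S_n)$ at the time of its last minimum, using the Markov property of $(X_n, S_n)$ to factorize the resulting double sum, and then converting the pre-minimum factor into an expectation under the dual measure $\bb Q_x^{\alpha,*}$ via the approximate duality Lemma \ref{Lem-appro-duality}. The principal novelty relative to the iid case is that the Markovian dependence forces us to bound the post-minimum factor uniformly in its starting direction.

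More precisely, setting $K_n := \max\{k \le n \, :\, S_k = L_n\}$ and writing $n = k+j$, I would decompose
\begin{equation*}
  \sum_{n \ge 0} \bb E_{\bb Q_x^\alpha}\bigl[\phi(L_n) h(S_n - L_n)\bigr]
  = \sum_{k, j \ge 0} \bb E_{\bb Q_x^\alpha}\bigl[\phi(S_k) h(S_{k+j} - S_k) \mathds{1}_{E_k} \mathds{1}_{E'_{k,j}}\bigr],
\end{equation*}
where $E_k = \{S_i \ge S_k: 0 \le i \le k\}$ and $E'_{k,j} = \{S_{k+l} > S_k: 1 \le l \le j\}$. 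Applying the Markov property at time $k$ and using shift invariance of the post-minimum events in $S_k$, the right-hand side rewrites as $\sum_{k \ge 0} \bb E_{\bb Q_x^\alpha}[\phi(S_k) H(X_k) \mathds{1}_{E_k}]$, where $H(y) := \sum_{j \ge 0} \bb E_{\bb Q_y^\alpha}[h(S_j) \mathds{1}_{\{S_l > 0: 1 \le l \le j\}}]$.

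The first main step is to bound $H$ uniformly in $y$. Using the explicit formula \eqref{def:Qsxn} for $\bb Q_y^\alpha$ together with the contraction estimate $|\log\|G_n y\| - \log\|G_n x\|| \le c_0$ of Lemma \ref{Lem-contractivity} and the fact that $r_\alpha$ is bounded away from zero and infinity on $\bb S_+^{d-1}$, the weights and the partial sums under $\bb Q_y^\alpha$ differ from their counterparts under $\bb Q_x^\alpha$ only by bounded multiplicative factors and by additive shifts of size at most $c_0$. Since $c_0 \le c_1$, the substitutions $h(S_l^y) \le \tilde h(S_l^x)$ and $\mathds{1}_{\{S_l^y > 0\}} \le \mathds{1}_{\{S_l^x \ge -c_1\}} = \mathds{1}_{\{\tau_{c_1} > l\}}$ yield
\begin{equation*}
  \sup_{y \in \bb S_+^{d-1}} H(y) \le C \sum_{j \ge 0} \bb E_{\bb Q_x^\alpha}\bigl[\tilde h(S_j) \mathds{1}_{\{\tau_{c_1} > j\}}\bigr].
\end{equation*}

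The second main step handles the pre-minimum factor: with $\hat S_j := S_k - S_{k-j}$, we have $E_k = \{\hat S_j \le 0: 1 \le j \le k\}$ and $\phi(S_k) = \phi(\hat S_k)$, so that $\phi(S_k) \mathds{1}_{E_k} = f(\hat S_1, \dots, \hat S_k)$ with $f(w_1, \dots, w_k) = \phi(w_k) \mathds{1}_{\{w_j \le 0\}}$. A direct computation shows $\tilde f(z_1, \dots, z_k) = \tilde\phi(z_k) \mathds{1}_{\{z_j \le c_1\}}$, and the first inequality of Lemma \ref{Lem-appro-duality}, applied with $y = x$, gives
\begin{equation*}
  \sum_{k \ge 0} \bb E_{\bb Q_x^\alpha}[\phi(S_k) \mathds{1}_{E_k}] \le C_1 \sum_{k \ge 0} \bb E_{\bb Q_x^{\alpha,*}}\bigl[\tilde\phi(S_k^*) \mathds{1}_{\{\tau_{c_1}^* > k\}}\bigr],
\end{equation*}
since $\{S_j^* \le c_1 \text{ for } 1 \le j \le k\} = \{\tau_{c_1}^* > k\}$. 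Multiplying the two estimates yields \eqref{green-function-product}. The main obstacle will be the uniform bound on $H(y)$, as approximate duality cannot be invoked there in the clean form used for the pre-minimum factor, and one has to argue by direct comparison of the change-of-measure weights and the process values under different starting directions; the rest of the proof is essentially a bookkeeping of the cycle decomposition and a single application of approximate duality.
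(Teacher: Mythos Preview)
Your proposal is correct and follows essentially the same approach as the paper: decompose at the time of the minimum, apply the Markov property to split off the post-minimum factor, bound that factor uniformly in the starting direction via Lemma~\ref{Lem-contractivity} and the boundedness of $r_\alpha$, and convert the pre-minimum factor to the dual measure via Lemma~\ref{Lem-appro-duality}. The only cosmetic difference is that you decompose at the \emph{last} minimum (weak inequality before, strict after) whereas the paper uses the \emph{first} minimum (strict before, weak after); also, your claimed equality $\tilde f(z)=\tilde\phi(z_k)\mathds 1_{\{z_j\le c_1\}}$ should really be an inequality $\le$, but this goes the right way.
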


\begin{proof}
We proceed as in the proof of Theorem 4.4 in \cite{KV17} by decomposing according to the value of
 \begin{align*}
 \tau(n) = \min \{k \in \{0, \dots, n\}: S_k = L_n \}. 
 \end{align*}
 We have that for any $x \in \bb S_+^{d-1}$ and $n \geq 0$, 
 \begin{align*}
 I_n(x): & = \bb E_{\bb Q_{x}^\alpha} \left[  \phi(L_n) h(S_n - L_n) \right]  \notag\\
  & = \sum_{k=0}^n  \bb E_{\bb Q_{x}^\alpha} \left[  \phi(S_k) h(S_n - S_k) \mathds 1_{ \{ \tau(n) = k \} } \right]  \notag\\
  & = \sum_{k=0}^n \bb E_{\bb Q_{x}^\alpha} \left[  \phi(S_k)  \mathds 1_{ \{ S_k < \min_{0 \leq j \leq k-1} S_j \} }  h(S_n - S_k) \mathds 1_{ \{ S_k \leq \min_{ k < i \leq n } S_i  \} } \right], 
 \end{align*}
 with the convention $\min \emptyset = \infty$. 
  By the Markov property of $(X_n, S_n)$, we get
   \begin{align*}
  I_n(x)  = \sum_{k=0}^n \bb E_{\bb Q_{x}^\alpha} \left[  \phi(S_k)  \mathds 1_{ \{ S_k < \min_{0 \leq j \leq k-1} S_j \} }  
    \bb E_{\bb Q_{X_k}^\alpha} \left( h (S_{n-k})  \mathds 1_{ \{ S_j \geq 0, 1 \leq j \leq n - k \} }  \right) \right]. 
 \end{align*}
 Using Lemma \ref{Lem-contractivity}, we obtain that for any $x \in \bb S_+^{d-1}$,
         \begin{align*}
          \bb E_{\bb Q_{X_k}^\alpha} \left( h (S_{n-k})  \mathds 1_{ \{ S_j \geq 0, 1 \leq j \leq n - k \} }  \right) 
         & \leq  \bb E_{\bb Q_{x}^\alpha} \left( \widetilde{h} (S_{n-k})  \mathds 1_{ \{ c_1 + S_j \geq 0, 1 \leq j \leq n - k \} }  \right) \notag\\
         & =  \bb E_{\bb Q_{x}^\alpha} \left( \widetilde{h} (S_{n-k})  \mathds 1_{ \{ \tau_{c_1}  > n - k \} }  \right). 
       \end{align*}
Using Lemma \ref{Lem-appro-duality}, we get that for any $k \geq 0$,  
 \begin{align*}
       & \bb E_{\bb Q_{x}^\alpha} \left[  \phi(S_k)  \mathds 1_{ \{ S_k - \min_{0 \leq j \leq k-1} S_j <0 \} }   \right]
   = \bb E_{\bb Q_{x}^\alpha} \left[  \phi(S_k)  \mathds 1_{ \{ \max_{0 \leq j \leq k-1} (S_k-S_j)<0 \} }   \right]   \\ 
   \leq &  C_1\bb E_{\bb Q_{x}^{\alpha,*}} 
  \left[  \widetilde{\phi}(S_{k}^*)  \mathds 1_{ \{ \max_{0 \leq j  \leq k-1}  S_{k-j}^*-c_1  < 0  \} }   \right] = C_1\bb E_{\bb Q_{x}^{\alpha,*}} 
  \left[  \widetilde{\phi}(S_{k}^*)  \mathds 1_{ \{ \max_{1 \leq j \leq k}  S_j^*-c_1  < 0  \} }   \right]. 
       \end{align*}
Therefore, for any $x \in \bb S_+^{d-1}$, 
\begin{align*}
  I_n(x) \leq C \sum_{k=0}^n  
    \bb E_{\bb Q_{x}^{\alpha,*}} \left[  \widetilde{\phi}(S_k^*)  \mathds 1_{ \{ \tau^*_{c_1}  > k  \} }   \right]
    \bb E_{\bb Q_{x}^\alpha} \left[ \widetilde{h} (S_{n-k})  \mathds 1_{ \{ \tau_{c_1}  > n - k \} }  \right].
  \end{align*}
Summing up over $n \in \bb N$ and using the Cauchy product formula, we obtain the assertion. 
\end{proof}
 
 
 \begin{proof}[Proof of Theorem \ref{Thm-Green-function}]
 Let $f: \bb R_+ \to \bb R_+$ be a bounded and non-increasing function satisfying $\int_0^{\infty} y f(y) dy < \infty$. 
 Let $L_n$ be as in \eqref{def-Ln}. Then, for any $x \in \bb S^{d-1}_+$, 
we decompose according to the values of $(S_n-L_n)$ and use the monotonicity properties of $f$ to 
 rewrite the quantity of interest as follows:
 \begin{align*}
 &	 \sum_{n=0}^\infty \bb E_{\bb Q_{x}^\alpha}  \big[ (b+S_n) f(b+S_n) \mathds{1}_{\{b+\min_{0 \leq k \leq n} S_k \geq 0\}}  \big] \\
&  =  \sum_{n=0}^\infty \bb E_{\bb Q_{x}^\alpha}  \Big[ \big(b+L_n +(S_n-L_n)\big) f\big(b+L_n + (S_n-L_n)\big) \mathds{1}_{\{b+L_n \geq 0\}}  \Big] \notag\\
& \leq \sum_{n=0}^\infty \sum_{k=0}^\infty \bb E_{\bb Q_{x}^\alpha}  \Big[  \mathds{1}_{[k,k+1)}(S_n-L_n)  \big(b+L_n +k+1\big) f\big(b+L_n + k\big) \mathds{1}_{\{b+L_n \geq 0\}} \Big]. 
 \end{align*}
We may now apply Eq. \eqref{green-function-product} of Proposition \ref{thm:SparreAndersen} with functions 
\begin{align*}
	\phi_k(y)&=(y+k+1)f(y+k)\mathds{1}_{\{y\ge 0\}},  \notag \\
	 h_k(z)&=\mathds{1}_{[k,k+1)}(z). 
\end{align*}
Using monotonicity, we get explicit upper bounds for $\widetilde{h}_k$ and $\widetilde{\phi}_k$ (cf.\ \eqref{eq:ftilde}):
\begin{align*}
	\widetilde{\phi}_k(y)&\le(y+k+1+c_1)f(y+k-c_1)\mathds{1}_{\{y+c_1\ge 0\}}, \\
	\widetilde{h}_k(z)&\le\mathds{1}_{[k-c_1,k+c_1+1)}(z). 
\end{align*}
Thus, using \eqref{green-function-product} and then Fubini's theorem, 
 \begin{align}
& \sum_{n=0}^\infty \bb E_{\bb Q_{x}^\alpha}  \big[ (b+S_n) f(b+S_n) \mathds{1}_{\{b+\min_{1 \leq k \leq n} S_k \geq 0\}}  \big] 
\notag \\
& \leq C \sum_{k=0}^\infty 
\left( \sum_{n = 0}^{\infty}   \bb E_{\bb Q_{x}^{\alpha,*}} \left[  \widetilde{\phi}_k(b+S_n^*)  \mathds 1_{ \{ \tau^*_{c_1}  > n  \} }   \right]  \right)
   \left( \sum_{n = 0}^{\infty}   \bb E_{\bb Q_{x}^\alpha} \left[ \widetilde{h}_k (S_{n})  \mathds 1_{ \{ \tau_{c_1}  > n \} }  \right] \right),\label{eq:decompRenewal}
\end{align}
with $\tau_y^* := \min \{ j \geq 1: S_j^*-y > 0 \}$. 
By Theorem \ref{Thm-ladder-height}, we have that the second factor is uniformly bounded for all $k$:
$$\sum_{n = 0}^{\infty}   \bb E_{\bb Q_{x}^\alpha} \left[ \widetilde{h}_k (S_{n})  \mathds 1_{ \{ \tau_{c_1}  > n \} }  \right] =    \bb E_{\bb Q_{x}^\alpha} \left[ \sum_{n = 0}^{\infty} \mathds{1}_{[k-c_1,k+c_1+1)}  (S_n)  \mathds 1_{ \{ \tau_{c_1}  > n \} }  \right] \le C.$$ 
It is therefore sufficient to bound the sum over the first factor in \eqref{eq:decompRenewal}, {\em i.e.}, we consider
\begin{align*}
	& \sum_{k=0}^\infty 
	 \sum_{n = 0}^{\infty}   \bb E_{\bb Q_{x}^{\alpha,*}} \left[  \widetilde{\phi}_k(b+S_n^*)  \mathds 1_{ \{ \tau^*_{c_1}  > n  \} }   \right]  \\
	& \leq \sum_{k=0}^\infty 
  \sum_{n = 0}^{\infty}   \bb E_{\bb Q_{x}^{\alpha,*}} \left[  (b+S_n^*+k+1+c_1)f(b+S_n^*+k-c_1)
   \mathds{1}_{\{b+S_n^*+c_1\ge 0\}} \mathds 1_{ \{ \tau^*_{c_1}  > n  \} }   \right]  	\notag\\
	& = 
   \bb E_{\bb Q_{x}^{\alpha,*}} \left[ \sum_{n = 0}^{\infty}\mathds{1}_{\{-b-c_1\le S_n^*\le c_1\}} \mathds 1_{ \{ \tau^*_{c_1}  > n  \} }  \sum_{k=0}^\infty  (b+S_n^*+k+1+c_1)f(b+S_n^*+k-c_1) \right], 
\end{align*}
where we used Fubini's theorem and that $\{\tau^*_{c_1}>n\}  \subset \{S_n^*\le c_1\}$. 
Considering the inner sum 
$$g(s):=\sum_{k=0}^\infty  (s+k+1+c_1)f(s+k-c_1),$$
we obtain by comparison with an integral, using also the monotonicity of $f$, that 
$$ g(s) \le \int_{s-1}^\infty (r +1+2c_1)f(r) dr.$$
Here we used the substitute  $r=s+k-c_1$.
By the integrability condition $\int_0^\infty y f(y) dy<\infty$, it follows that $g(s)$ is uniformly bounded and decreasing with
$$ \lim_{s \to \infty} g(s)=0.$$
Summarizing, we have that
 \begin{align}
	&	\frac1b \sum_{n=0}^\infty \bb E_{\bb Q_{x}^\alpha}  \big[ (b+S_n) f(b+S_n) \mathds{1}_{\{b+\min_{1 \leq k \leq n} S_k \geq 0\}}  \big] \notag \\
	& \leq \frac{C}{b} \bb E_{\bb Q_{x}^{\alpha,*}} \left[ \sum_{n = 0}^{\infty}\mathds{1}_{[-b-2c_1, 0]}(S_n^*-c_1) \mathds 1_{ \{ \tau^*_{c_1}  > n  \} }  g(b+S_n^*) \right].   \label{eq:renewalSumg}
\end{align}
To finish the proof we have to show that the above quantity goes to $0$ for $b \to \infty$. In order to do so, we use that $g$ is decreasing with $\lim_{s \to \infty} g(s)=0$. Fixing any $\epsilon>0$, there is hence $s^*\in \bb R_+$ such that $0 \le g(s) < \epsilon$ for all $s\ge s^*$. On $[0,s^*]$, we further have that $g$ is uniformly bounded by a constant $c_g$.
We decompose at $s^*$ to obtain that the summand in \eqref{eq:renewalSumg} is bounded by
\begin{align*}
& \mathds{1}_{[-b-2c_1, 0]}(S_n^*-c_1) \mathds 1_{ \{ \tau^*_{c_1}  > n  \} }  g(b+S_n^*) \notag\\
& \leq  \mathds{1}_{[-b-2c_1,0]}(S_n^*-c_1)\mathds{1}_{ \{ \tau^*_{c_1}  > n  \} } \Big(c_g\mathds{1}_{[0, s^*]}(b+S_n^*)    + \epsilon \mathds{1}_{[s^*,\infty)}(b+S_n^*) \Big)  \\
& =  \mathds{1}_{[0,b+2c_1]}(c_1-S_n^*)\mathds{1}_{ \{ \tau^*_{c_1}  > n  \} } \Big(c_g\mathds{1}_{[b+c_1-s^*,b+c_1]}(c_1-S_n^*)    + \epsilon \mathds{1}_{(-\infty,b+c_1-s^*]}(c_1-S_n^*) \Big)  \\
& \leq  c_g  \mathds{1}_{[b-s^*,b]}(-S_n^*) \mathds 1_{ \{ \tau^*_{c_1}  > n  \} }   
  +  \epsilon  \mathds{1}_{[-c_1,b-s^*]}(-S_n^*) \mathds 1_{ \{ \tau^*_{c_1}  > n  \} }. 
\end{align*}
Hence
\begin{align*}  
& \frac{C}{b} \bb E_{\bb Q_{x}^{\alpha,*}} \left[ \sum_{n = 0}^{\infty}\mathds{1}_{[-b-2c_1, 0]}(S_n^*-c_1) \mathds 1_{ \{ \tau^*_{c_1}  > n  \} }  g(b+S_n^*) \right] \\
& \le  \frac{C c_g}{b} \bb E_{\bb Q_{x}^{\alpha,*}} \left[ \sum_{n = 0}^{\infty}\mathds{1}_{[b-s^*,b]}(-S_n^*) \mathds 1_{ \{ \tau^*_{c_1}  > n  \} }  \right] + \frac{\epsilon C}{b} \bb E_{\bb Q_{x}^{\alpha,*}} \left[ \sum_{n = 0}^{\infty}\mathds{1}_{[-c_1,b-s^*]}(-S_n^*) \mathds 1_{ \{ \tau^*_{c_1}  > n  \} } \right]. 
\end{align*}
To bound the renewal measure, we apply the first part of Theorem \ref{Thm-ladder-height} together with Remark \ref{rem:DualofDual} to exchange the roles of $S_n$ and $S_n^*$. We observe that the first interval has a fixed length $s^*$, while the second one grows linearly in $b$. Thus, for any $\epsilon >0$,
\begin{align*}
0  & \le \limsup_{b \to \infty} \frac1b \sum_{n=0}^\infty \bb E_{\bb Q_{x}^\alpha}  \big[ (b+S_n) f(b+S_n) \mathds{1}_{\{b+\min_{1 \leq k \leq n} S_k \geq 0\}}  \big] \\
 & \le \limsup_{b \to \infty} \frac{c_g C s^*}{b} + \epsilon \limsup_{b \to \infty} \frac{C (b-s^*+c_1)}{b} = C \epsilon,
\end{align*}
which proves the assertion.
\end{proof}

\begin{appendix}	
	
\section{List of symbols}

\begin{itemize}
	\item[] \textbf{Basic notation}
	\item[$g^*$] Transpose of $g$ 
		\item[$\norm{\cdot}$] For a vector $v \in \bb R^d$: the 1-norm $\norm{v}=\sum_{i=1}^d \abs{\langle v, e_i \rangle}$. For a matrix $g \in \bb M_+$: the operator norm
	$ \norm{g}=\sup_{v \in \bb R_+^d\setminus\{0\}} \frac{\norm{gv}}{\norm{v}}$
	\item[$\iota(g)$] $\iota(g) : = \inf_{v \in \bb R^d_+ \setminus \{0\} } \frac{\| g v \|}{\|v\|}$ 
	\item[$\bb M_+$] semigroup of $d\times d$ nonnegative matrices that are {\em allowable}
	\item[allowable] A nonnegative matrix $g$ is called allowable, if every row and every column has at least one positive entry
	\item[$\bb R_+$] $\bb R_+=[0,\infty)$
	\item[$\bb S_+^{d-1}$] $\bb S_+^{d-1}=\bb R_+^d \cap \bb S^{d-1}$
	\item[$A^\circ$] denotes the interior of a set $A$
	\item[$\overline{A}$] denotes the (topological) closure of a set $A$
	\item[$\sigma$] norm cocycle: $\sigma(g,x)=\log \norm{gx}$, see \eqref{Def-dot-cocycle}
	\item[$g \cdot x$] for allowable $g$ and $x \in \bb S^{d-1}_+$: $g \cdot x= \norm{gx}^{-1} gx \in \bb S_+^{d-1}$
		\item[$\widetilde{f}$] defined in \eqref{eq:ftilde},
	$$\widetilde{f}(x_1, \dots, x_n) =\sup \{f(y_1, \dots y_n) \, : \, |y_i-x_i| \le c_1 \text{ for all } 1 \le i \le n\}$$
		\item[$\Gamma$] the smallest closed subsemigroup of $\bb M_+$ such that $ \bb P\big(\{G_u:  |u|=1\} \subset \Gamma\big)=1$
			\item[$\mu$] intensity measure of $\mathscr{N}$, $\mu(A)= \bb E \big[ \sum_{ |u| = 1 } \mathds{1}_A(G_u) \big]$	
	\item[$\lambda_g, v_g$]  Perron-Frobenius eigenvalue $\lambda_g$ and eigenvector $v_g \in \bb S_+^{d-1}$ of $g \in \bb M_+^\circ$
	\item[$\Lambda(\Gamma)$] $\Lambda (\Gamma) = \overline{\{v_{g} \in \bb S_+^{d-1}: g\in \Gamma \cap \bb M_+^{\circ} \}}$
			\item[$I$]  $I = \Big\{ s \in \bb R_+ \, : \,  \bb E \Big( \sum_{|u| = 1}  \norm{G_u}^s \Big) < \infty  \Big\}$ 

       \item[]
	\item[] \textbf{Basic notation for branching processes}
		\item[$\bb U$] $\bb U:= \bigcup_{n=0}^\infty \bb N^n$
	\item[$|u|=n$] node $u \in \bb U$ of generation $n$: $u \in \bb N^n$ 
	\item[$u|k$] ancestor of node $u$ in generation $k$
	\item[$\mathscr N$] point process on $\bb M_+$
	\item[$\bb T$] random subtree, underlying Galton-Watson tree of the matrix branching random walk
	\item[$u < v$] for $u,v \in \bb T$: $u < v$ if there is $k < |v|$ with $u = v|k$
	and $u \nless v$ otherwise 
	\item[$\overset{\leftarrow}{u}$] parent of  $u \in \bb T \setminus \o$
	\item[${\rm brot}(u)$] the set of brothers of $u$: $v \in {\rm brot}(u)$ iff $v$ and $u$ have the same parent
	and $v \neq u$ 
	\item[{$[\cdot ]_v$}] shift operator, see \eqref{eq:shift-operator}
		\item[$\mathscr S$] survival event

  \item[]
	\item[] \textbf{Transfer operators $P_s$}
		\item[$\scr C(\bb S_+^{d-1})$] space of continuous functions on $\bb S_+^{d-1}$
	\item[$P_s$] $P_s \varphi(x)  = \int e^{s\sigma(g,x)} \varphi(g \cdot x) \mu(dg)$, see \eqref{Def-Ps}
	\item[$P_s^*$] $P_s^* \varphi(x)  = \int e^{s \sigma(g^*,x)} \varphi(g^* \cdot x) \mu(dg)$, see \eqref{eq:defnPsstar}
	\item[$\mathfrak{m}(s)$] spectral radius of $P_s$
	\item[$\nu_s$] unique probability measure on $\bb S_+^{d-1}$  satisfying $P_s \nu_s = \mathfrak{m}(s) \nu_s$
	\item[$r_s$] (up to scaling) unique positive eigenfunction of $P_s$ satisfying $P_s r_s = \mathfrak{m}(s) r_s$
	\item[$\nu_s^*$, $r_s^*$] eigenmeasure and eigenfunction of $P_s^*$, see Lemma \ref{lem:ExistenceEigenfunctionsPs}

	\item[$\mathfrak{M}$] $\mathfrak M(s) =  \log \mathfrak m(s)$, see \eqref{def-psi-s}
	\item[$m(s)$] $m(s)= \lim_{n \to \infty} \Big( \bb E \big( \sum_{|u| = n}  \norm{G_u}^s \big) \Big)^{1/n}$, see Lemma \ref{lem:ExistenceEigenfunctionsPs}
	\item[$q_n^s(x,g)$] 
	$q_n^s(x,g):= \frac{|gx|^s r_s(g \cdot x)}{\mathfrak{m}(s)^n r_s(x)}$, see \eqref{eq:qnsx}

  \item[]
	\item[] \textbf{Constants}
		\item[$c_0$] defined in Lemma \ref{Lem-contractivity} 
	\item[$c_1$] defined in \eqref{eq:c1}, $c_1=c_0+ \bar{\varkappa} + \log d$
	\item[$\delta$] constant fixed by \ref{Condi_harmonic}
		\item[$\varkappa$] constant fixed by assumption \ref{Condi-Furstenberg-Kesten}
		\item[$\overline{\varkappa}$] $\overline{\varkappa} = 2 \log \varkappa$, see \ref{Condi_harmonic} and Lemma \ref{Lem-Sn-Snstar}
		\item[$\alpha$] constant fixed by \ref{Condi_ms}

	  \item[]
	\item[] \textbf{Probability spaces and filtrations}
		\item[$(\Omega, \mathscr{F},\bb P)$] canonical product probability space for the family $(\mathscr{N}_u)_{u \in \bb U}$ of i.i.d.\ copies of $\mathscr{N}$
		\item[$\scr{F}_n$] $\scr{F}_n=\sigma ( X_{\o}, S_{\o}, \{ u, g_u \, : \, u \in \bb T, |u| \leq n\} )$, see \eqref{def-filtration-Fn}
		\item[$\Omega^{(1)}$] $ \Omega^{(1)} = \Omega \times \bb S^{d-1}_+ \times \bb R$ with corresponding product Borel $\sigma$-field $\mathscr{F}^{(1)}$
	\item[$\mathscr{G}_n$] natural filtration of the spine particles and their brothers, see \eqref{eq:filtrationGn}
	$$\mathscr{G}_n = \sigma \left\{ w|k, X_{w|k}, S_{w|k}, (X_u, S_u)_{ u \in {\rm brot}(w|k) } ; 0 \le k \le n \right\}$$ 
	\item[$ \mathscr{G}_{\infty}$]natural filtration of the spine particles and their brothers, see \eqref{def-scr-G-infty} 
	$$	\mathscr{G}_{\infty} = \sigma \left\{ w|k, X_{w|k}, S_{w|k}, (X_u, S_u)_{ u \in {\rm brot}(w|k) } ; k \geq 0 \right\}$$
	\item[$\mathscr{H}_n$] filtration including the spine particles and their children, see \eqref{filtration-spine-child}
	$$\mathscr{H}_n = \sigma \left\{ w|k, X_{w|k}, S_{w|k}, (X_v, S_v)_{ |v| = k+1, \overset{\leftarrow}{v} = w|k };  0 \le k \le n \right\}$$
	\item[$ \mathscr{H}_{\infty}$] 
	the $\sigma$-algebra generated by
	 the spine particles and their children, see \eqref{def-scr-H-infty}
	$$		\mathscr H_{\infty} = \sigma \left\{ w|k, (X_{w|k}, S_{w|k}), (X_v, S_v);  \overset{\leftarrow}{v} = w|k,  k \geq 0 \right\}
	$$	

  \item[]
	\item[] \textbf{Probability measures}
	\item[$\bb P_{x,b}$] fixing initial values for $X_{\o}, S_{\o}$: $\bb P_{x,b}(X_{\o}=x, S_{\o}=b)=1$; $\bb P_x=\bb P_{x,0}$
	\item[$\bf P$] $\bf P=\bb P(\cdot|\mathscr{S})$ conditioned on survival
		\item[$\bf P_{x,b}$] fixing initial values for $X_{\o}, S_{\o}$: $\bf P_{x,b}(X_{\o}=x, S_{\o}=b)=1$;
		 $\bf P_{x,0} = \bf P_x$ 
	\item[$\wh{\bb P}_{x,b}$] probability measure on $(\Omega^{(1)},\mathscr{F}^{(1)})$: For $A \in \mathscr{F}_n$, 
	$\wh{\bb P}_{x,b} (A) 
	=  \frac{ e^{\alpha b} }{  r_{\alpha}(x)  }  \bb E_{x,b}  \left( W_n\mathds 1_A \right) $, see \eqref{def-hat-P-x-b-intro}
	extended onto $\Omega^{(2)}:= \Omega^{(1)} \times \bb N^{\bb N}$ to carry a random sequence $w \in \bb N^{\bb N}$, called the {\em spine}, the law of which is given by
	$$
	\wh{\bb P}_{x, b} \big( w|n = z  \big| \mathscr F_n \big)
	:=  \frac{ r_{\alpha} (X_z)  e^{-\alpha S_z} }{ W_n },  
	$$
	\item[$\wh{\bb P}^h_{x, b}$] probability measure on $\mathscr{F}_\infty$ 
	$$\wh{\bb P}^h_{x, b} (A) 
	=  \frac{1}{  H_{\alpha}(x, b) }  \bb E_{x,b}  \left( M_n^h\mathds 1_A \right), $$ see \eqref{def-hat-P-x-b}.
	Extended on a larger probability space to carry a random variable $w \in \bb N^{\bb N}$ called the spine,  
	the law of which  is  given by the formula
	$$\wh{\bb P}^h_{x, b} \big( w|n = z  \big| \mathscr F_n \big)
	=  \frac{ H_{\alpha} \left( X_z, S_z \right)   \mathds{1}_{ \left\{ S_{z|k} \in B,  \, \forall  \,   0 \leq k \leq n  \right\} } }{ M_n^h}, $$ see \eqref{law-of-spine-new}
	\item[$\wt{\bb P}_{x,b}$] Law of the branching process with spine $(\left( X_u, S_u, N_u \right)_{u \in \bb T}, w )$, constructed in Section \ref{sect:spinal-decomp}
	\item[$\wh{\bb P}^{(\beta)}_{x, b}$] probability measure on $\mathscr F_{\infty}$ such that 
	$$\wh{\bb P}^{(\beta)}_{x, b} (A) 
	=  \frac{ e^{\alpha b} }{  r_{\alpha}(x) V_{\alpha}^{\beta} (x, b)   }  \bb E_{x,b}  \left( M_n^V(\beta) \mathds 1_A \right)$$
	for any $A \in \mathscr F_n$ and $n \geq 0$, see \eqref{def-P-beta}
	\item[$\bb Q_{x,b}^\alpha$] probability measure on $\bb S_+^{d-1} \times \bb R \times (\bb M_+)^{\bb N}$ with $\bb Q_{x,b}^\alpha(X_0=x, S_0=b)=1$ and the law of $(g_i)_{i \ge 1}$ is fixed by \eqref{def:Qsxn}
	\item[$\bb Q^{\alpha,*}_{x}$] Law of the dual matrix branching random walk, defined in \eqref{def:Qsstarxn}

  \item[]
	\item[] \textbf{Random variables and their expectations}
		\item[$\bb (g_u)_{u \in \bb T}$] the matrix branching random walk generated by $\mathscr{N}$, $g_{\o}$ is the identity matrix
		\item[$G_u$] $G_u= g_u g_{u|n-1}\ldots g_{u|2} g_{u|1}$, $u\in \bb T$
			\item[$(X_u, S_u)$] $X_u=G_u \cdot X_{\o}$, $S_u=-\log \norm{G_u X_{\o}} + S_{\o}$, see \eqref{Def-Xku_Sku}
			\item[$h_{n, k}$] $h_{n, k}= g_{n-k+1}^{ *}$
		\item[$(X_{n, k}^{*}, S_{n, k}^{*})$] reversed process, see \eqref{def-Xni-Sni}
		$$X_{n, k}^{*} =  (h_{n, k} \cdots h_{n, 1}) \cdot X_{n, 0}^*,  \quad   S_{n, k}^{*} =  -\log \| h_{n, k} \cdots h_{n, 1} X_{n, 0}^* \|$$
			\item[$(X_n, S_n)$] Markov random walk under $\bb Q_{x,b}^\alpha$, see \eqref{def-Sn-Xn}
		$$S_n = S_0 - \log \| g_n \cdots g_1 X_0 \| \  \text{ and } \ X_n=(g_n \cdots g_1) \cdot X_0$$ 
		\item[$(X_n^*, S_n^*)$] Dual Markov random walk under $\bb Q_{x,b}^{\alpha,*}$, see \eqref{def:dualprocess}
		$$S_n^*= -\log \norm{h_n \cdots h_1 X_0^*} 
		\quad \mbox{and} \quad 
		X_n^*= (h_n \cdots h_1) \cdot X_0^*$$ 
		\item[$L_n$] $L_n = \min_{0 \leq k \leq n} S_k$, see Proposition \ref{thm:SparreAndersen}
		\item[$\ell_\alpha$] $ \ell_{\alpha}(x) = \lim_{n \to \infty} \bb E_{\bb Q_x^\alpha} S_n $, see \eqref{def-ell-alpha-x}
		\item[$\sigma_\alpha^2$] $\sigma^2_{\alpha} = \lim_{n \to \infty} \frac{1}{n} \bb E_{\bb Q_x^\alpha} S_n^2$, 
		positive under \ref{CondiNonarith}, see \eqref{def-sigma-alpha}
			\item[$\Theta_{x,b}$] Point process on $\bb S_+^{d-1} \times \bb R$, see \eqref{def-point-process-root}
		$$\Theta_{x,b} = \sum_{|u|=1} \delta_{ \left( g_u \cdot x,  \,  b + \sigma (g_u, x) \right) }$$
		\item[$\wt{\Theta}_{x, b} $] biased point process, see \eqref{def-wt-Theta-xb}:
		$$\wt{\Theta}_{x, b} (dy, ds)
		= \frac{ H_{\alpha}(y, s)  }{ H_{\alpha}(x, b) } \mathds 1_{ \{ s \in B \} }
		\Theta_{x, b} (dy, ds) $$

  \item[]
	\item[] \textbf{Martingales and related processes}
		\item[$W_n$] additive martingale, see \eqref{def-addi-martigale}, with $\alpha$ from condition \ref{Condi_ms}, 
	$$W_n  = W_n(\alpha) =  \frac{1}{\mathfrak m (\alpha)^n}  \sum_{|u| = n} e^{-\alpha S_u} r_{\alpha}(X_u),   \quad  n \geq 1$$
	\item[$W_\infty$] $\bb P_x$-a.s. limit of $W_n$
	\item[$\widetilde{W}_n$] $\widetilde{W}_n =  \sum_{|u| = n}  r_{\alpha}(X_u)  e^{- \alpha S_u}  \mathds 1_{ \{ \min_{v \leq u} S_v \geq 0 \} }$, see \eqref{def-Wn-tilde}
	\item[$\widetilde{\widetilde{W}}_{n, k}$] $\widetilde{\widetilde{W}}_{n, k} =  \sum_{|u| = n}  r_{\alpha}(X_u)  e^{-\alpha S_u}  \mathds 1_{ \{ \min_{v \leq u, |v| \geq k} S_v \geq 0 \} }$, see \eqref{def-Wn-tilde-nk}
		\item[$D_n$] derivative martingale, see \eqref{def-derivative-martingale}
	$$D_n = \sum_{|u|=n} \big( S_u + \ell_{\alpha} (X_u) \big) e^{- \alpha S_u}  r_{\alpha}(X_u)$$
	\item[$D_\infty$] $\bb P_x$-a.s. limit of $D_n$
		\item[$M_n^h$] $\bb P_x$-martingale defined in terms of an harmonic function $h$, see \eqref{def-martingale-Dn} 
	$$M_n^h= \sum_{ |u| = n }    H_{\alpha} \left( X_u, S_u \right)  \mathds{1}_{ \left\{ S_{u|k} \in B,  \, \forall  \,   0 \leq k \leq n  \right\} }$$  
	\item[$M_n^V(\beta)$] martingale defined in terms of the harmonic 
	function $V_\alpha^\beta$, see \eqref{def-martingale-Dn-V}
	$$M_n^V(\beta) = \sum_{ |u| = n }  r_{\alpha}(X_u)  V_{\alpha}^{\beta}(X_u, S_u)  e^{-\alpha S_u}
	\mathds{1}_{ \left\{  S_{u|k} \geq -\beta,  \, \forall  \,   0 \leq k \leq n  \right\} }$$
	\item[$\widetilde{D}_n^V(\beta)$] is defined in \eqref{def-tilde-D-n-V-beta} 
	$$ \widetilde{D}_n^V(\beta)= \sum_{ |u| = n }   r_{\alpha}(X_u)  V_{\alpha}^{\beta}\left( X_u, S_u \right)  e^{- \alpha S_u}$$

  \item[]
	\item[] \textbf{Stopping times}
	\item[$\tau_y^-, \tau_y^+$] delayed first entrance times into $(-\infty,0)$ and $(0,\infty)$, respectively,  see \eqref{def-tau-y-plus-minus}
	$$ \tau_y^- =\inf\{ k \ge 1:  y + S_k <0\}, \qquad \tau_y^+ =\inf\{ k \ge 1: S_k-y>0\}$$ 
	\item[$T_n^+, T_n^-$] weakly ascending (descending) ladder epochs for $S_n^*$, see \eqref{def-Tn-plus-minus} 
	$$ T_n^+ = \inf\{ j > T_{n-1}^+ \, :\, S_j^* \geq S_{T_{n-1}^+}^* \}, \qquad T_n^- = \inf\{ j > T_{n-1}^- \, :\, S_j^* \leq S_{T_{n-1}^-}^* \}$$ 
	\item[$\tau_y^*$] $\tau_y^* = \min \{ k \geq 1: S_k^*-y > 0 \}$, see Proposition \ref{thm:SparreAndersen}
		\item[$\scr{T}_n$] weakly ascending ladder times for $S_n$, $\scr{T}_n = \inf\{ j > \scr{T}_{n-1} \, :\, S_j \geq S_{\scr{T}_{n-1}} \},$ see \eqref{def-T-n-001} 

  \item[]
	\item[] \textbf{Harmonic functions}
	\item[$h$] harmonic function for $(X_n,S_n)$ under $\bb Q_{x,b}^\alpha$, killed when $S_n$ leaves a set $B$, see \eqref{harmonicity-h}
	\item[$H_\alpha$] Given a harmonic function $h$, $H_{\alpha}(y, s) = r_{\alpha}(y)  h(y,s)  e^{- \alpha s}$, see \eqref{def-H-alpha-ys}

	\item[$V_{\alpha}(x, y)$] harmonic function for $S_n$ started at $y$, killed when going below 0, see \eqref{def-V-alpha-xy} 
			$$V_{\alpha}(x, y) = \lim_{n \to \infty} \bb E_{\bb Q_{x}^{\alpha}} \left( y + S_n; \tau_y > n \right)$$
	\item[$V_{\alpha}^{\beta}$] $V_{\alpha}^{\beta} (x, y)  = V_{\alpha} (x, y + \beta)$, see \eqref{def-V-alpha-beta-xy} 
\end{itemize}
	
\end{appendix}


\end{document}